\documentclass[12pt]{amsart}
\usepackage[margin=3cm]{geometry}
\usepackage{graphicx} % Required for inserting images
\usepackage{adjustbox}
\usepackage{amsmath,amssymb,amsxtra,tikz,stackengine,bm,mathrsfs,bbm,tikz-cd,comment,hyperref,combelow,amsopn,float}
\usepackage{enumitem}
\usepackage{ytableau}
\usepackage{quiver}
\usepackage{marvosym}
\usetikzlibrary{calc}%
\usetikzlibrary{shapes}%
\usetikzlibrary{patterns}%
\usetikzlibrary{positioning}%
\usetikzlibrary{arrows.meta}
\usetikzlibrary{knots}
\usetikzlibrary{decorations.markings}
\usetikzlibrary{hobby}
\usepackage{relsize}

\numberwithin{equation}{section}

\newtheorem{theorem}{Theorem}[section]
\newtheorem{proposition}[theorem]{Proposition}
\newtheorem{corollary}[theorem]{Corollary}
\newtheorem{lemma}[theorem]{Lemma}

\theoremstyle{definition}
\newtheorem{remark}[theorem]{Remark}

\newtheorem{definition}[theorem]{Definition}
\newtheorem{example}[theorem]{Example}
\newtheorem{conjecture}[theorem]{Conjecture}

\newcommand{\C}{\mathbb{C}}

\newcommand{\Z}{\mathbb{Z}}

\newcommand{\seeds}{\mathsf{mut}}
\definecolor{babyblueeyes}{rgb}{0.63, 0.79, 0.95}

\newcommand{\Fl}{\mathcal{F}\ell}
\newcommand{\CF}{\mathcal{F}}
\newcommand{\CG}{\mathcal{G}}

\newcommand{\F}{\mathcal{F}}

\newcommand{\diag}{\mathrm{diag}}

\newcommand{\relpos}[1]{\xrightarrow{#1}}
\newcommand{\wrelpos}[1]{\buildrel #1 \over \dashrightarrow}
\newcommand{\dem}{\star}

\newcommand{\std}{\mathrm{std}}
\newcommand{\ant}{\mathrm{ant}}
\def\br{\beta}
\def\Br{\mathrm{Br}}
\def\BS{\mathrm{BS}}
\def\minor{\Delta}
\def\gl{\mathfrak{gl}}
\def\GL{\mathrm{GL}}
\def\last{\mathrm{last}}
\def\mut{\mathrm{Mut}}
\def\seed{\Sigma}

\def\colour{\mathrm{clr}}
\def\xun{x^{(1)}}
\def\x2{x^{(2)}}
\def\phipul{\Phi_{r_1}^{\ast}}
\def\u1{u^{(1)}}
\def\w0{\Delta}
\def\Br{\mathrm{Br}}

\def\bsbra{\varphi_1}
\def\bsbrb{\varphi_2}
\def\longest{\Delta}
\newcommand{\Wop}{W^{\mathrm{op}}}

\newcommand{\zz}{\mathbf{z}}
\newcommand{\Ext}{\mathrm{Ext}}

\newcommand{\yy}{\mathbf{y}}
\newcommand{\pp}{\mathbf{p}}
\newcommand{\CU}{\mathcal{U}}

\newcommand{\Gr}{\mathrm{Gr}}
\newcommand{\cox}{\mathbf{c}}
\newcommand{\id}{\mathbf{1}}
\newcommand{\borel}{\mathsf{B}}
\newcommand{\unipotent}{\mathsf{U}}

\def\ltrt{\vec{\mathbb{T}}}

\newcommand{\HHH}{\mathrm{HHH}}

\def\QO{Q_{\br^1}\sqcup Q_{\br^2}}
\def\QP{\widehat{Q}_{\br}}

\newcommand{\lift}[1]{\underline{#1}}
\newcommand{\subs}[1]{\langle #1 \rangle}

\newcommand{\newword}[1]{\emph{\textbf{#1}}}

\newcommand{\TS}[1]{{\color{purple}{{\bf Tonie}: #1}}}

\newcommand{\EG}[1]{{\color{red}{{\bf Eugene}: #1}}}

\definecolor{aquamarine}{rgb}{0.5, 1.0, 0.83}
\definecolor{aqua}{rgb}{0.0, 1.0, 1.0}

\title{Splicing braid varieties}
\author[E. Gorsky]{Eugene Gorsky}
\address{Department of Mathematics, University of California Davis\\ One Shields Avenue, Davis CA 95616}
\email{egorskiy@ucdavis.edu}
\author[S. Kim]{Soyeon Kim}
\address{Department of Mathematics, University of California Davis\\ One Shields Avenue, Davis CA 95616}
\email{syxkim@ucdavis.edu}
\author[T. Scroggin]{Tonie Scroggin}
\address{Department of Mathematics, University of California Davis\\ One Shields Avenue, Davis CA 95616}
\email{tmscroggin@ucdavis.edu}
\author[J. Simental]{Jos\'e Simental}
\address{Instituto de Matem\'aticas, Universidad Nacional Aut\'onoma de M\'exico. Ciudad Universitaria, CDMX,  M\'exico}
\email{simental@im.unam.mx}
\date{}

\begin{document}

\begin{abstract}
For a positive braid $\beta \in \mathrm{Br}^{+}_{k}$, we consider the braid variety $X(\beta)$. We define a family of open sets $\mathcal{U}_{r, w}$ in $X(\beta)$, where $w \in S_k$ is a permutation and $r$ is a positive integer no greater than the length of $\beta$. For fixed $r$, the sets $\mathcal{U}_{r, w}$ form an open cover of $X(\beta)$. We conjecture that $\mathcal{U}_{r,w}$ is given by the nonvanishing of some cluster variables in a single cluster for the cluster structure on $\C[X(\beta)]$ constructed in \cite{CGGLSS, GLSB, GLSBS22} and that $\mathcal{U}_{r,w}$ admits a cluster structure given by freezing these variables. Moreover, we show that $\mathcal{U}_{r, w}$ is always isomorphic to the product of two braid varieties, and we conjecture that this isomorphism is quasi-cluster. In some important special cases, we are able to prove our conjectures.
\end{abstract}
\maketitle
%\setcounter{tocdepth}{1}
%\tableofcontents

%{\color{red}
%Some things to do:
%\begin{itemize}
%\item BS splicing map in terms of flags
%\item BS vs braid variety in terms of flags
%\item Use the above two to relate to E+T's paper
%\item Open Schubert for $w,w'$ and $ww'$
%\item Splicing for open Schuberts when $u=1$
%\item Splicing for skew Schuberts?
%\item Comparing various minors on $n$ and $k$ strands (chamber minors for braids on $k$ and $n$ strands, minors for the positroid matrix).  
%\item Other types? This should be very similar. The only ''type A'' argument we use is Cauchy-Binet and maybe there's a replacement in other types. Will check \cite{SW}. 
%\end{itemize}
%}

%\JS{Tentative plan:
%\begin{enumerate}
 %   \item Double Bott-Samelson varieties in terms of matrices and in terms of flags. [J+E]
  %  \item Background on cluster algebras, cluster structure on open BS [J+E]
   % \item Splicing for double Bott-Samelson varieties.
    %Splicing in terms of flags, and in terms of matrices.
    %Thm: splicing for open  BS is a quasi-cluster iso to an open subset [J+E]
    %\item Abstract definition of skew Schubert [T+S]
    %\item Skew Schubert as a braid/ open BS variety, both $n$ and $k$ strands [Eugene]
    %\item Labeling of regions and all that [T+S]
    %\item Cluster structure on skew Schubert, one from open BS, another from SSBW + comparison [T+S]
    %\item Special case: splicing for maximal positroid varieties. Comparison to T+E.
    %\item Special case: splicing for (skew) Schubert varieties in the Grassmannian. Define the splicing via linear algebra and compare to the one for DBS
%\end{enumerate}
%}

\section{Introduction}

%In this paper, we study the relation between two classes of varieties admitting a cluster structure: \emph{skew Schubert varieties} in the Grassmannian, introduced by Serhiyenko--Sherman-Bennett--Williams in \cite{SSBW19}; and \emph{double Bott-Samelson varieties} introduced by Elek--Lu \cite{GY06, EL21}. The latter are a special subclass of \emph{braid varieties} which have been a subject of very active recent research \cite{CGGS,CGGS2,CGGLSS,GLSBS22}. For all these classes of varieties, we introduce a family of {\bf splicing maps} between the product of smaller varieties to an open subset of a larger one, determine their images, and study their cluster-theoretic properties.

In this paper, we study splicing maps between braid varieties, generalizing the constructions of \cite{GKSS,GS}. 

\subsection{Splicing braid varieties}
 
We consider the positive braid monoid $\Br^{+}_{k}$. For an element $\br\in\Br^{+}_{k}$, the \emph{braid variety} $X(\br)$ is a smooth affine algebraic variety defined in terms of configurations of complete flags in the $k$-dimensional space $\C^k$, see Section \ref{sec:braid and dbs} for a precise definition. In recent years, braid varieties have gained considerable attention due to their connections to character varieties, Legendrian link invariants, cluster algebras, and link homology. Additionally, many classical varieties appearing in geometric representation theory, such as open Richardson varieties or double Bruhat cells, are isomorphic to braid varieties.

 From now on, we will assume that $\br$ contains a reduced expression for the longest element $w_0\in S_k$ as a (not necessarily consecutive) subword (by \cite[Lemma 3.4]{CGGLSS}, this is without loss of generality).  
 We consider a decomposition  $\br = \br^1\br^2$ 
 %\JS{we have agreed to use \emph{upper} indices for $\br^1, \br^2$} \SK{I think I fixed all, please check!} 
 of $\br$ as a product of positive braids. 
A \newword{splicing map} is an open embedding
\[
X\left(\widetilde{\br}^1\right) \times X\left(\widetilde{\br}^2\right) \to X(\br),
\]
where $\widetilde{\br}^1$, $\widetilde{\br}^2$ are braids explicitly determined by $\br^1, \br^2$ and some auxiliary data from the splicing map. Motivated by their applications to the study of the Khovanov-Rozansky homology of torus links, splicing maps are studied by the first and third authors in \cite{GS, Scroggin} in the special case of the top positroid variety in the Grassmannian $\Gr(k,n)$. This construction was later generalized by the authors in \cite{GKSS} to the case of \emph{skew shaped positroids}, i.e., those positroids that are open in the Grassmannian Richardson variety that contains them. In a different guise, another example of a splicing map is given by \cite[Definition 3.1]{eberhardt-stroppel}, where these maps are used to construct a convolution product on the compactly supported cohomology of Grassmannian Richardson varieties. Our first construction is a common generalization of these.

Assume that $\br = \sigma_{i_1}\cdots \sigma_{i_r}$, where $\sigma_1, \dots, \sigma_{k-1}$ are the usual simple generators of the positive braid monoid $\Br^{+}_{k}$. For an element $u \in S_k$, we denote by $\lift{u} \in \Br_{k}^{+}$ %\TS{What is $\lift{u}$? Should this be $\lift{w}?$} 
its braid lift of minimal length. For each $r_1 = 1, \dots, r$, we let $\beta^1 = \beta^1(r_1) = \sigma_{i_1}\cdots \sigma_{i_{r_1}}$ and $\br^2 = \br^2(r_1) = \sigma_{i_{r_1+1}}\cdots \sigma_{i_r}$, so that $\br = \br^1\br^2$. For each element $w \in S_k$, we define a principal open set $\CU_{r_1,w} \subseteq X(\br)$ (see \eqref{eq:def-U}) and prove the following. 

%\TS{Would it be useful to use $a$ instead of $r_1$ here so it 'agrees' with the other papers? I know that for the previous papers we were cutting at a column whereas here it is truly splitting the braid but perhaps it might be nice to tie the work together in relation to variables.}

\begin{theorem}\label{thm:main-braid-varieties}
For each $r_1 = 1, \dots, r$ and $w \in S_k$, we have an isomorphism of algebraic varieties
\begin{equation}\label{eq:main-braid-varieties}
\Psi_{r_1, w}: X\left(\lift{(w^{-1}w_0)}\br^1\right) \times X\left(\br^2\lift{w}\right) \xrightarrow{\cong} \CU_{r_1,w}.
\end{equation}
where $w_0$ is the longest element of $S_k$.
\end{theorem}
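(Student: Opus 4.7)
The plan is to construct $\Psi_{r_1,w}$ as an explicit algebraic morphism and verify it is an isomorphism by exhibiting a geometric inverse. I view $X(\br)$ as the moduli of flag chains $(F_0,\ldots,F_r)$ with $F_0 = F_{\text{std}}$, $F_r$ antistandard, and $(F_{j-1},F_j)$ in relative position $\le s_{i_j}$. The first step is to unwind the definition \eqref{eq:def-U} and identify $\CU_{r_1,w}$ as the principal open locus where the $w$-flag-minor of the intermediate flag $F_{r_1}$ is nonzero, equivalently where $F_{r_1}$ lies in the opposite Bruhat cell $B_-wB/B \subset G/B$. Since these cells cover $G/B$ as $w$ varies, the sets $\{\CU_{r_1,w}\}_w$ cover $X(\br)$, consistent with the statement preceding the theorem.

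The next step is to define $\Psi_{r_1,w}$. A point of $X(\lift{w^{-1}w_0}\br^1)$ is a flag chain $(H_0,\ldots,H_{\ell(w^{-1}w_0)+r_1})$ with $H_0$ standard and $H_{\text{end}}$ antistandard. Because $\lift{w^{-1}w_0}$ is a reduced word, the initial segment $H_0,\ldots,H_{\ell(w^{-1}w_0)}$ is canonically determined by its endpoints; the remaining $r_1+1$ flags form a chain following $\br^1$ whose boundary positions are specified. A symmetric description holds for the second factor $X(\br^2\lift{w})$, with the reduced auxiliary chain attached at the end rather than the beginning. The splicing map takes such a pair, uses the uniquely determined reduced chains as rigidifying data to align the ``interface'' flags of the two factors via an explicit change of basis, and concatenates the essential $\br^1$- and $\br^2$-portions into a chain $(F_0,\ldots,F_r)\in\CU_{r_1,w}$. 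The key combinatorial input is the length-additive identity $\ell(w)+\ell(w^{-1}w_0)=\ell(w_0)$, which ensures the Bruhat positions compose correctly so that the endpoints have the required relative position $w_0$.

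The inverse is constructed symmetrically: given $(F_0,\ldots,F_r)\in\CU_{r_1,w}$, the nonvanishing of the $w$-minor at $F_{r_1}$ makes it possible to canonically attach the reduced auxiliary chains prescribed by $\lift{w^{-1}w_0}$ and $\lift{w}$, assembling the two factor-variety points. Mutual inverseness follows from the canonicality of both constructions. The main technical obstacle will be setting up the change of frame algebraically and showing it is well-defined on all of $\CU_{r_1,w}$; this is exactly where the open-cell hypothesis is essential. I expect that the matrix-factorization description of braid varieties will make the bookkeeping transparent: $\CU_{r_1,w}$ corresponds to the partial product $M_1:=B_{i_1}(z_1)\cdots B_{i_{r_1}}(z_{r_1})$ lying in $B_-wB$, and the splicing becomes a factorization identity relating $M_1$, $M_2$, and the auxiliary data from $\lift{w^{-1}w_0}$ and $\lift{w}$, with the Bott--Samelson parametrizations of the relevant Bruhat cells providing algebraicity.
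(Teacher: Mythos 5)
Your plan follows essentially the same route as the paper: define $\CU_{r_1,w}$ by transversality of $\CF^{r_1}$ to the fixed coordinate flag, splice the two flag chains at that interface using the canonical reduced chains coming from $\ell(w)+\ell(w^{-1}w_0)=\ell(w_0)$, and make everything algebraic through the braid-matrix parametrization. However, there are two concrete points where the proposal as written has gaps. First, the identification of the open condition is off: transversality of $\CF^{r_1}$ to $\CF(w_0w)$ is \emph{not} the condition that $\CF^{r_1}$ lies in the opposite Bruhat cell $B_-wB/B$ (that cell is not open; it has codimension $\ell(w)$), nor that $M_1=B_{\br^1}(z_1,\dots,z_{r_1})$ lies in $B_-wB$. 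The correct statement, via Lemma \ref{lem:gen-LU-dec}, is that $M_1$ factors as $w_0ww_0\,L\,U$ with $L$ lower-triangular and $U$ upper-triangular, equivalently that the minors $\minor_{ww_0[i],[i]}(M_1)$, $i=1,\dots,k$, are nonvanishing; this factorization (not membership in $B_-wB$) is exactly what drives the explicit construction, so the bookkeeping you defer to would not go through with the coset as you state it.

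Second, and more seriously, the "canonicality" you invoke for mutual inverseness is not there without extra work. The change-of-frame element $g$ taking the transverse interface pair to $(\CF^{\std},\CF^{\ant})$ is only unique up to left multiplication by the diagonal torus $\borel\cap w_0\borel w_0$, and a torus translate fixes the coordinate endpoint flags but moves all the intermediate flags, hence changes the resulting points of $X\bigl(\lift{w^{-1}w_0}\br^1\bigr)$, $X\bigl(\br^2\lift{w}\bigr)$ and $X(\br)$. So neither your forward map nor your inverse is well defined as described, and "mutual inverseness by canonicality" does not follow. The paper resolves precisely this (see Remark \ref{rmk:ambiguity} and Steps 1--3 of the proof of Theorem \ref{thm:splicing-braid-varieties}): it normalizes the factorization $M_1=w_0ww_0\,L\,U$ by requiring $L$ to be lower unitriangular and takes $g_2=((w_0ww_0)L)^{-1}$, $g_1=w_0g_2$, and in the inverse direction pins down $g$ through a unipotent factor in a $\borel w_0\borel$ decomposition; only with these normalizations (and the sliding of upper-triangular matrices through braid matrices, Lemma \ref{lem: push right}) do the maps become regular and mutually inverse. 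Your proposal would need to carry out this normalization explicitly rather than appeal to uniqueness that does not hold.
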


\begin{remark}
    For simplicity, we choose to state and prove all the results in this paper only in type $A$. However, it is not hard to see from the proof (see Section \ref{sec:splicing-braid} below) that Theorem \ref{thm:main-braid-varieties} holds in arbitrary type. 
\end{remark}

More precisely, the braid variety $X(\br)$ is defined via chains of flags with specified relative positions, and the open set $\CU_{r_1, w}$ is given by the condition that the $r_1$-th flag is transverse to the coordinate flag $\CF(w_0w)$. Schematically, the inverse $\Phi_{r_1, w}$ of the map $\Psi_{r_1, w}$ is given as follows: 

\begin{equation}\label{eq:diagram-intro}
\begin{tikzpicture}
\node at (0,0) {$\CF^{\std}$};
\draw[->] (0.5,0) to (1.5,0);
\node at (1,0.15) {\scriptsize $s_{i_1}$};
\node at (2,0) {$\CF^{1}$};
\draw[->] (2.5,0) to (3.5,0);
\node at (3,0.15) {\scriptsize $s_{i_2}$};
\node at (4,0) {$\cdots$};
\draw[->] (4.5,0) to (5.5,0);
\node at (4.75,0.15) {\scriptsize $s_{i_{r_1}}$};
\node at (6,0) {$\CF^{r_1}$};
\draw[->] (6.5,0) to (7.5,0);
\node at (6.95,0.15) {\scriptsize $s_{i_{r_1+1}}$};
\node at (8,0) {$\cdots$};
\draw[->] (8.5,0) to (9.5,0);
\node at (9,0.15) {\scriptsize $s_{i_{r}}$};
\node at (10,0) {$\CF^{\ant}$};
\draw[double equal sign distance] (10, -0.1) to (10, -0.9);
\node at (10,-1.3) {$\CF^{\ant}$};
\draw[->] (9.5, -1.3) to (8.5, -1.3);
\node at (9,-1.15) {\scriptsize $s_{a_1}$};
\node at (8,-1.3) {$\cdots$};
\draw[->] (7.5, -1.3) to (6.8, -1.3);
\node at (7.1,-1.15) {\scriptsize $s_{a_{\ell(w)}}$};
\node at (6,-1.3) {$\CF(w_0w)$};
\draw[->] (5.2, -1.3) to (4.5, -1.3);
\node at (4,-1.3) {$\cdots$};
\draw[->] (3.5, -1.3) to (2.8, -1.3);
\node at (2,-1.3) {$\widetilde{\CF}^{\ell(w_0)-1}$};
\draw[->] (1.3, -1.3) to (0.5, -1.3);
\node at (0.9,-1.15) {\scriptsize $s_{a_{\ell(w_0)}}$};
\node at (0, -1.3) {$\CF^{\std}$};
\draw[double equal sign distance] (0, -0.1) to (0, -0.9);

\draw[dashed, red] (10.5, 0.5) to (5,0.5) to (5, -1.7) to (10.5, -1.7);
\node at (10.7, -0.6) {\color{red}$\Phi^2$};

\draw[dashed, blue] (-0.5, 0.7) to (7, 0.7) to (7, -2) to (-0.5, -2);
\node at (-0.7, -0.6) {\color{blue}$\Phi^1$};
\end{tikzpicture}
\end{equation}

In \eqref{eq:diagram-intro}, the flags on the bottom row are all coordinate flags, and their successive relative positions spell a reduced word for $w_0$. If $\CF^{r_1}$ is transverse to $\CF(w_0w)$, then the blue part of the diagram belongs, up to an overall shift, to $X\left(\left(\lift{w^{-1}w_0}\right)\br^1\right)$. Similarly, the red part of \eqref{eq:diagram-intro} belongs to $X\left(\br^2\lift{w}\right)$. 

%\begin{example}
%\label{ex: richardson intro}
%Given three permutations $u\le v\le w$ in $S_k$, define $\beta^1=\lift{w}$, $\beta^2=\lift{u^{-1}w_0}$ and $\beta=\beta^1\beta^2$. Then $X(\beta)$ is isomorphic to the open Richardson variety $R(u,w)$ and Theorem \ref{thm:main-braid-varieties} yields an open embedding
%$$
%R(u,v)\times R(v,w)\simeq \CU_{\ell(w),w_0vw_0}\hookrightarrow X(\beta)=R(u,w).
%$$
%\end{example}

Note that it may be that $\CU_{r_1,w} = \emptyset$, in which case the product on the left of \eqref{eq:main-braid-varieties} is also empty. This phenomenon can be easily understood using the notion of the Demazure product (see Section \ref{sec: background}). The open set $\CU_{r_1,w}$ is not empty precisely when both Demazure products $\delta\left(\lift{(w^{-1}w_0})\br^1\right)$ and $\delta\left(\br^2\lift{w}\right)$ are equal to $w_0$. 

One advantage of the maps \eqref{eq:main-braid-varieties} is that, for fixed $r_1 = 1, \dots, r$ we have:
\[
\bigcup_{w \in S_k}\CU_{r_1,w} = X(\br),
\]
%\TS{This equality confuses me, shouldn't we vary $r_1$ for some  fixed $\beta$? Or are we saying that we fix $\beta, r_1$ and then perform all possible braid moves on the entirety of $\beta$ and then the resulting $w$ we obtain from all possible braid moves is how the equality is determined? Also, can't we have that only one of the Demazure products is equal to $w_0$ to still obtain a nonempty open set $\CU_{r_1,w}$ since one component would still be a braid and the other just contribute something trivial?}
i.e., the sets $\CU_{r_1,w}$ form a cover of $X(\br)$ by open subsets which are themselves isomorphic to products of braid varieties for simpler braids. 

A disadvantage is that the properties of the map \eqref{eq:main-braid-varieties} remain mysterious at the moment, especially those regarding the relationship between \eqref{eq:main-braid-varieties} and the cluster structure on $X(\br)$ constructed in \cite{CGGLSS, GLSB, GLSBS22}. The following surprising inequality is an easy consequence of Theorem \ref{thm:main-braid-varieties} (see Remark \ref{rem: frozen inequality}).

\begin{corollary}
\label{cor: frozen inequality intro}
Let $f, f_1,f_2$ respectively denote the numbers of frozen variables for $X(\beta),X\left(\lift{(w^{-1}w_0)}\br^1\right)$ and $X\left(\br^2\lift{w}\right)$. Then we have an inequality
\begin{equation}
\label{eq: frozen inequality intro}
f_1+f_2\ge f.
\end{equation}
\end{corollary}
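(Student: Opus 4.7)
The plan is to derive Corollary \ref{cor: frozen inequality intro} from Theorem \ref{thm:main-braid-varieties} by pairing a dimension count with a combinatorial comparison of cluster quivers. Setting $X_1 = X(\lift{(w^{-1}w_0)}\beta^1)$ and $X_2 = X(\beta^2\lift{w})$, the isomorphism $\Psi_{r_1, w} \colon X_1 \times X_2 \xrightarrow{\cong} \CU_{r_1, w}$, together with the fact that $\CU_{r_1, w}$ is open in $X(\beta)$, gives the additive dimension identity $\dim X(\beta) = \dim X_1 + \dim X_2$. Every cluster of a braid variety in the CGGLSS cluster structure contains exactly $\dim X(\gamma)$ variables (mutable plus frozen), so writing $m, m_1, m_2$ for the numbers of mutable cluster variables, this rearranges to $m + f = (m_1 + f_1) + (m_2 + f_2)$. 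The claimed inequality $f_1 + f_2 \ge f$ is thus equivalent to the reverse-looking inequality $m \ge m_1 + m_2$ on mutable counts.

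To establish $m \ge m_1 + m_2$, I would use the explicit combinatorial description of the cluster quiver from \cite{CGGLSS, GLSB, GLSBS22}: for a braid $\gamma$ of Demazure product $w_0$, the quiver vertices of $X(\gamma)$ are indexed by the positions of $\gamma$ lying outside a chosen Demazure subword for $w_0$, with mutable vertices being those not equal to the rightmost non-Demazure occurrence of their simple reflection. I would choose Demazure subwords compatibly across the three braids: include all of $\lift{(w^{-1}w_0)}$ inside the Demazure subword of $\lift{(w^{-1}w_0)}\beta^1$, completed by a suitable subword of $\beta^1$; analogously include all of $\lift{w}$ inside the Demazure subword of $\beta^2\lift{w}$, completed by a subword of $\beta^2$; and for $\beta$ take the concatenation of the two chosen subwords in $\beta^1$ and $\beta^2$. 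A short Demazure-product check shows this concatenation is a valid Demazure subword of $\beta$ for $w_0$, so that the non-Demazure positions of $X_1$ and $X_2$ are disjoint subsets of the positions of $\beta$ whose union is exactly the non-Demazure positions of $X(\beta)$.

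The key observation is that any position $j$ mutable in $X_1$ or $X_2$ remains mutable in $X(\beta)$: if $j \in \beta^1$ is mutable in $X_1$, then some non-Demazure $j' > j$ in $\beta^1$ shares the simple reflection of $j$, and this same $j'$ witnesses that $j$ is not rightmost among the non-Demazure positions of $\beta$, so $j$ is mutable in $X(\beta)$; the argument for $j \in \beta^2$ mutable in $X_2$ is analogous. Since the mutable-position sets from $X_1$ and $X_2$ are disjoint, this yields $m \ge m_1 + m_2$ and therefore $f_1 + f_2 \ge f$. The main obstacle is verifying the compatibility of the three Demazure-subword choices, which is guaranteed by the nonemptiness hypothesis on $\CU_{r_1, w}$ (equivalent to both $\delta(\lift{(w^{-1}w_0)}\beta^1) = w_0$ and $\delta(\beta^2\lift{w}) = w_0$); the argument in fact yields the stronger quantitative identity that $(f_1 + f_2) - f$ equals the number of simple reflections $\sigma_i$ that occur as a non-Demazure letter in both $\beta^1$ and $\beta^2$.
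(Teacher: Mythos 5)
There is a genuine gap. Your reduction is fine: openness of $\CU_{r_1,w}$ gives $\dim X(\br)=\dim X_1+\dim X_2$, and since every seed of these cluster structures has size equal to the dimension, the claim becomes $m\ge m_1+m_2$ for the mutable counts. But your proof of that inequality rests on a false combinatorial description of the frozen variables of a general braid variety, namely that the seed is indexed by the non-Demazure letters with exactly the rightmost non-Demazure occurrence of each simple reflection frozen. That rule is correct for double Bott--Samelson varieties $\BS(\br)\cong X(\Delta\br)$ (where the frozens are the $x_{\last(s)}$), but it fails for general braid varieties: it would bound the number of frozens by the number of distinct colors, hence by $k-1$, independently of the choice of Demazure subword. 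The paper itself provides counterexamples: in Example \ref{ex:frozens-richardson} the braid variety $R(s_2,\,s_3s_2s_1s_2s_3)\subseteq \Fl_4$ has $f_{v,w}=4>k-1=3$ (it is a $4$-torus, so all cluster variables are frozen), and Remark \ref{rmk:growth-frozens} shows the excess of $f_{v,w}$ over $k-1$ grows linearly in $k$. Since the number of frozens is seed-independent (\cite[Theorem 1.3]{GLS}), no choice of Demazure subword can rescue your rule, so the step ``mutable in $X_1$ or $X_2$ implies mutable in $X(\br)$'' is unsupported, and your claimed refinement that $(f_1+f_2)-f$ equals the number of colors occurring as non-Demazure letters in both $\br^1$ and $\br^2$ is false in general. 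Indeed, the paper emphasizes (Section \ref{sec:richardson}) that computing the number of frozens of a braid variety is a hard open problem, so any argument requiring an explicit frozen count is suspect.

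The paper's own proof (Remark \ref{rem: frozen inequality}) avoids all of this: by \cite[Theorem 1.3]{GLS} the unit group $\C[X]^{\times}$ of a cluster variety is generated by scalars and Laurent monomials in the frozens, so $f$ equals the rank of $\C[X(\br)]^{\times}$ modulo scalars; the dense open inclusion $\CU_{r_1,w}\subseteq X(\br)$ induces an injection $\C[X(\br)]^{\times}\hookrightarrow \C[\CU_{r_1,w}]^{\times}\cong\C\left[X_1\times X_2\right]^{\times}$, whose target has rank $f_1+f_2$, giving $f\le f_1+f_2$ directly from Theorem \ref{thm:main-braid-varieties}. If you want to salvage your approach, you would need to prove $m\ge m_1+m_2$ by a different mechanism (e.g.\ via the unit-group or weave-cycle arguments), at which point you are essentially back to the paper's argument.
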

More precisely, we formulate the following conjecture relating the cluster structures.

\begin{conjecture}\label{conj:braid-varieties}
For each $r_1 = 1, \dots, r$ and $w \in S_k$ such that $\CU_{r_1,w} \not= \emptyset$, there exists a seed $\Sigma = (Q, \mathbf{x})$ in $\C[X(\br)]$ with cluster variables $x_{a_1}, \dots, x_{a_s} \in \mathbf{x}$ such that:
\begin{itemize}
\item[(a)] The open set $\CU_{r_1,w}$ is the common non-vanishing locus of $x_{a_1}, \dots, x_{a_s}$.
\item[(b)] The variety $\CU_{r_1,w}$ is the cluster variety associated to the seed obtained from $\Sigma$ upon freezing the cluster variables $x_{a_1}, \dots, x_{a_s}$.
\item[(c)] The map \eqref{eq:main-braid-varieties} is a cluster quasi-isomorphism. 
\end{itemize}
\end{conjecture}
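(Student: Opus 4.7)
The plan is to exhibit a single seed for $\C[X(\br)]$ whose combinatorics witnesses all three claims simultaneously. I would start from a weave for $\br$ in the sense of \cite{CGGLSS, GLSB} that is adapted to the splicing cut after position $r_1$. Concretely, at the cut I would insert a reduced word for $w_0$, factored as $\lift{(w^{-1}w_0)} \cdot \lift{w}$; since the Demazure product of this insertion is $w_0$, it does not alter $X(\br)$ up to the standard braid equivalences of loc.\ cit., but it enriches the associated seed $\seed = (Q,\mathbf{x})$ with exactly $s = \ell(w_0)$ ``cut'' cluster variables $x_{a_1},\dots,x_{a_s}$, with the property that deleting them splits $Q$ as the disjoint union of the quivers attached to $\lift{(w^{-1}w_0)}\br^1$ and $\br^2\lift{w}$.

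For part (a), I would identify each $x_{a_j}$ with one of the flag minors computing the relative position of $\CF^{r_1}$ with the coordinate flag $\CF(w_0w)$; by construction their common non-vanishing locus is exactly the transversality condition defining $\CU_{r_1,w}$. Part (b) would then follow from the observation that, after localizing at the $x_{a_j}$, the quiver of $\seed$ splits into the two pieces above, and the bipartite seed obtained by freezing matches the product cluster structure on $\C[X(\lift{(w^{-1}w_0)}\br^1)]\otimes \C[X(\br^2\lift{w})]$. To finish, one must verify that no non-frozen cluster variable of $\seed$ vanishes identically on $\CU_{r_1,w}$; this can be arranged by exploiting the freedom in the weave chosen in the previous step, since the cluster variables on the two factor sides are known to be nonzero regular functions on the respective braid varieties.

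The main obstacle is part (c). Once a matching of initial seeds has been produced on both sides of \eqref{eq:main-braid-varieties}, one must verify that $\Psi_{r_1,w}^{\ast}$ sends \emph{every} cluster variable of the product cluster structure to a cluster variable of the frozen $\seed$ times a Laurent monomial in the $x_{a_j}$. My plan is to translate the local moves of the weave across the cut into explicit cluster mutation sequences, and then appeal to the rigidity of cluster structures on braid varieties \cite{GLSBS22} to upgrade a single-seed comparison to a full quasi-cluster isomorphism. Concretely, I would take $\Psi_{r_1,w}^{\ast}$ of each initial cluster variable on the two factors and track the frozen monomial corrections that arise from reconciling the minors computed on either side of the diagram \eqref{eq:diagram-intro}. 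This calculation is tractable in the positroid cases of \cite{GS, GKSS}, where the cut variables are genuine Pl\"ucker coordinates; a uniform argument in general seems to require a compatibility statement between splicing and the Donaldson--Thomas transformation of $X(\br)$, which I expect to be the key technical input needed to settle the conjecture beyond the special cases presently accessible.
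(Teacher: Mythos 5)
The statement you are addressing is a conjecture: the paper does not prove it in general, but only establishes it in special cases (for double Bott--Samelson varieties, Theorems \ref{thm: main-splicing-dbs-intro} and \ref{thm:quasi-cluster-dbs}, and parts (1)--(3) of the refined Conjecture \ref{conj:splicing-braid-varieties} when $w=e$ or $w=w_0$, via the inductive weave seed). Your proposal is likewise a plan rather than a proof, and it contains concrete errors that would need to be repaired before it could even serve as a roadmap. The central one is the claim that inserting a reduced word for $w_0$, factored as $\lift{(w^{-1}w_0)}\cdot\lift{w}$, at the cut ``does not alter $X(\br)$ up to the standard braid equivalences.'' Inserting $\ell(w_0)$ extra letters changes the braid and hence the braid variety: $\dim X(\br^1\lift{(w^{-1}w_0)}\lift{w}\br^2)=\dim X(\br)+\ell(w_0)$, so there is no such equivalence, and consequently your count $s=\ell(w_0)$ of ``cut'' variables is wrong. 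The correct count, forced by Theorem \ref{thm:main-braid-varieties} together with the invariance of the number of frozens (Remark \ref{rem: frozen inequality}), is $s=f_1+f_2-f$, which is typically much smaller than $\ell(w_0)$; in the example of Section \ref{sec:conj-properties} with $k=4$, $r_1=9$, $w=w_0$ one has $s=2$ while $\ell(w_0)=6$. Relatedly, you identify each $x_{a_j}$ with a flag minor $\Delta_{ww_0[i],[i]}(M_{r_1})$, but these $k$ minors are in general reducible; the conjecture is precisely that they are cluster \emph{monomials} in a single seed, with the $x_{a_j}$ appearing among their irreducible factors, and this identification is not something one gets ``by construction'' of a weave seed.

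For part (c) your appeal to ``rigidity of cluster structures on braid varieties'' and to a compatibility with the Donaldson--Thomas transformation is speculative and is not how the paper's partial results proceed. What the paper actually shows (Lemma \ref{lem:conditional-conjecture}) is that, granted (a), the implication (c) $\Rightarrow$ quiver splitting $\Rightarrow$ (b) holds, and that the quiver splitting already yields an \emph{abstract} quasi-isomorphism via the Lam--Speyer exchange-matrix criterion (Lemma \ref{lem:quasi-iso-exchange-matrix}), using that braid-variety exchange matrices have really full rank; whether the specific map $\Psi_{r_1,w}$ is quasi-cluster is a strictly stronger statement, and the paper's example after \eqref{eq:exchange-matrix-after-freezing} warns that the abstract quasi-isomorphism produced this way likely does not coincide with $\Phi_{r_1,e}$. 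The only case where the map itself is verified to be quasi-cluster is the double Bott--Samelson case, and there the argument is an explicit computation: sliding the upper-triangular factor $U_1$ through $B_{\br^2}$, expressing the discrepancy between the two families of cluster variables as monomials $m_\ell$ in the diagonal entries $u^{(1)}_t$ (Lemma \ref{lem:coincidence-up-to-monomial-frozens}), and checking preservation of exchange ratios case by case on the quiver (Theorem \ref{thm:quasi-cluster-dbs}). Any serious attempt at the general conjecture would need an analogue of that computation for the matrices $g_1,g_2$ of Theorem \ref{thm:splicing-braid-varieties}, which your proposal does not supply.
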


In particular, Conjecture \ref{conj:braid-varieties}(b) ensures that $\CU_{r_1,w}$ is a cluster chart in $X(\beta)$ in the sense of Muller \cite{Muller}. The open set $\CU_{r_1,w}$ is defined in $X(\beta)$ by the non-vanishing of  minors $\Delta_{ww_0[i],[i]}(M_{r_1}), i=1,\ldots,k$ where $M_{r_1}$ is a certain matrix related to the flag $\CF^{r_1}$ in \eqref{eq:diagram-intro}. Conjecture \ref{conj:braid-varieties}(b) then implies that these minors are cluster monomials in the seed $\Sigma$, and their irreducible factors (up to monomials in frozen variables) are precisely $x_{a_1}, \dots, x_{a_s}$. The number $s$ of cluster variables that one needs to freeze in Conjecture \ref{conj:braid-varieties} equals 
$$
s=f_1+f_2-f
$$
which is nonnegative by Corollary \ref{cor: frozen inequality intro}. See Sections \ref{sec:conj-properties} and \ref{sec:richardson} for more details and examples. Also, see Lemma \ref{lem:conditional-conjecture} for more details and dependencies between (a)-(c). 

%\SK{Changed (1),(2),(3) to (a),(b),(c).}
%\begin{remark}
%Given an open Richardson variety (represented as a set of square matrices), it is a hard open problem to determine which minors are cluster monomials in some cluster. However, it was recently shown in \cite[Theorem C]{MMSBV} that for open positroid varieties \emph{all} non-zero Pl\"ucker coordinates (that is, $k\times k$ minors) are cluster monomials. 

%If true, Conjecture \ref{conj:braid-varieties} together with Example \ref{ex: richardson intro} would provide a wealth of nontrivial examples of minors of varying sizes which are cluster monomials.
%\end{remark}

\subsection{Splicing open Richardson varieties} Open Richardson varieties are smooth, affine subvarieties of the flag variety, given by specifying relative positions with respect to the standard flag and the antistandard flag, see Section \ref{sec:richardson} for details. For $v \leq w \in S_k$, the open Richardson variety is
\[
R(v,w) := \{\CF \in \Fl(k) \mid \CF^{\std} \xrightarrow{w} \CF \xrightarrow{v^{-1}w_0} \CF^{\ant}\},
\]
see Section \ref{sec: background} for details on relative position and unexplained notation. It is known, see e.g. \cite{CGGLSS, GLSBS22, GLSB} and Section \ref{sec:richardson} below, that open Richardson varieties are special cases of braid varieties. Specializing Theorem \ref{thm:main-braid-varieties} to this setting, we obtain the following result.

\begin{theorem}\label{thm:richardson-intro}
For $u \leq v \leq w \in S_k$, define the set
\[
\CU_{u,v,w} := \left\{\CF \in R(u,w) \mid \CF \xrightarrow{w_0} \CF(vw_0)\right\}.
\]
Then, $\CU_{u,v,w}$ is principal open in $R(u,w)$ and
\[
\CU_{u,v,w} \cong R(u,v) \times R(v,w). 
\]
\end{theorem}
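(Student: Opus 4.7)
The plan is to deduce Theorem \ref{thm:richardson-intro} directly from Theorem \ref{thm:main-braid-varieties}. First I would invoke the standard braid-variety realization of open Richardson varieties: for $u \le w$ in $S_k$, one has $R(u,w) \cong X(\beta)$ with $\beta = \lift{w} \cdot \lift{u^{-1} w_0}$, and under this isomorphism a flag $\CF \in R(u,w)$ corresponds to the intermediate flag $\CF^{r_1}$ of the associated chain at position $r_1 = \ell(w)$; this is reviewed in Section \ref{sec:richardson} following \cite{CGGLSS, GLSBS22, GLSB}. Setting $w' = w_0 v w_0$ so that $\CF(w_0 w') = \CF(v w_0)$, the transversality condition $\CF \xrightarrow{w_0} \CF(v w_0)$ defining $\CU_{u,v,w}$ becomes precisely the one defining $\CU_{r_1, w'}$ in Theorem \ref{thm:main-braid-varieties}.

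Next I apply Theorem \ref{thm:main-braid-varieties} with $\beta^1 = \lift{w}$, $\beta^2 = \lift{u^{-1} w_0}$, $r_1 = \ell(w)$, and $w' = w_0 v w_0$. Since $w'^{-1} w_0 = w_0 v^{-1}$, this yields an isomorphism
\[
\CU_{u,v,w} \cong X\bigl(\lift{w_0 v^{-1}} \cdot \lift{w}\bigr) \times X\bigl(\lift{u^{-1} w_0} \cdot \lift{w_0 v w_0}\bigr).
\]
Each factor is a smooth affine braid variety of the expected dimension ($\ell(w) - \ell(v)$ and $\ell(v) - \ell(u)$ respectively), and a short calculation using $v \le w$ and $u \le v$ confirms that both braids have Demazure product $w_0$, so both factors are nonempty.

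The key remaining step is to identify the two braid varieties above with $R(v,w)$ and $R(u,v)$, respectively. I would establish this by reading off the geometry from the splicing diagram \eqref{eq:diagram-intro}: after normalizing via the $GL_k$-element that sends $\CF(v w_0)$ to $\CF^{\std}$ and $\CF$ to $\CF^{\ant}$ (which exists precisely because of the transversality $\CF \xrightarrow{w_0} \CF(v w_0)$), the blue region becomes a chain whose middle intermediate configuration realizes an element of $R(v, w)$, and the red region similarly realizes an element of $R(u, v)$. Equivalently, one may verify that $\lift{w_0 v^{-1}} \cdot \lift{w}$ and $\lift{u^{-1} w_0} \cdot \lift{w_0 v w_0}$ are valid alternative braid presentations of $R(v, w)$ and $R(u, v)$, as reviewed in Section \ref{sec:richardson}.

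The principal-open claim follows because the condition $\CF \xrightarrow{w_0} \CF(v w_0)$ is cut out by the simultaneous non-vanishing of the minors $\minor_{v w_0[i], [i]}(M)$ for $i = 1, \dots, k-1$, where $M$ is a matrix representative of $\CF$; their product is a single regular function on $R(u,w)$ whose non-vanishing locus is $\CU_{u,v,w}$. The main obstacle is the factor identification: although the dimensions and Demazure products match those of the target Richardson varieties, the braids produced by Theorem \ref{thm:main-braid-varieties} differ in their factor order from the ``standard'' Richardson braids $\lift{w} \cdot \lift{v^{-1} w_0}$ and $\lift{v} \cdot \lift{u^{-1} w_0}$, so recognizing them as presentations of the same Richardson varieties requires careful bookkeeping of reduced expressions, most cleanly achieved through the geometric interpretation of the splicing diagram.
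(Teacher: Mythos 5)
Your overall strategy is the paper's own: realize $R(u,w)$ as $X\left(\lift{w}\cdot\lift{u^{-1}w_0}\right)$, take $r_1=\ell(w)$ and $w'=w_0vw_0$, and apply Theorem \ref{thm:main-braid-varieties} to get $\CU_{u,v,w}\cong X\left(\lift{w_0v^{-1}}\cdot\lift{w}\right)\times X\left(\lift{u^{-1}w_0}\cdot\lift{w_0vw_0}\right)$; your dimension and Demazure-product checks are also correct. The gap is exactly at the step you yourself flag as the ``main obstacle,'' and your proposed fix does not close it. Under the normalization you describe (sending $\CF(vw_0)$ to $\CF^{\std}$ and $\CF$ to $\CF^{\ant}$), the middle flag of the blue region satisfies $\CF^{\std}\relpos{w_0v^{-1}}\mathcal{H}\relpos{w}\CF^{\ant}$, i.e.\ it lies in $R(w_0w^{-1},w_0v^{-1})$, not in $R(v,w)$; similarly the red region produces a point of $R(v^{-1}w_0,u^{-1}w_0)$ rather than $R(u,v)$. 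These are abstractly isomorphic to $R(v,w)$ and $R(u,v)$, but establishing that is precisely the content you defer to ``careful bookkeeping'': one needs either the standard isomorphisms $R(x,y)\cong R(x^{-1},y^{-1})$ and $R(x,y)\cong R(w_0y,w_0x)$ (and their composite), or — as the paper does in the proof of Proposition \ref{prop:Richardson} — the cyclic rotation isomorphisms of braid varieties from \cite[Section 5.5]{CGGLSS}, which give $X\left(\lift{u^{-1}w_0}\cdot\lift{w_0vw_0}\right)\cong X\left(\lift{v}\cdot\lift{u^{-1}w_0}\right)\cong R(u,v)$ and $X\left(\lift{w_0v^{-1}}\cdot\lift{w}\right)\cong X\left(\lift{w}\cdot\lift{v^{-1}w_0}\right)\cong R(v,w)$ in one stroke. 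Your alternative suggestion, that Section \ref{sec:richardson} ``reviews'' these factor-reversed braids as presentations of the same Richardson varieties, is not accurate: only the presentation $R(u,w)\cong X\left(\lift{w}\cdot\lift{u^{-1}w_0}\right)$ is reviewed there, and the factor-reversed presentations are exactly what the cyclic-rotation argument has to supply.

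Two smaller points. First, in the principal-open claim the minors are misindexed: by Lemma \ref{lem: relpos w minors}(a), transversality of $\CF(M)$ to $\CF(vw_0)$ is the nonvanishing of $\minor_{v[i],[i]}(M)$ (substitute $w=vw_0$, so $ww_0=v$), not $\minor_{vw_0[i],[i]}(M)$; with the correct indices your argument that $\CU_{u,v,w}$ is principal open (product of these minors in the coordinates of $R(u,w)$) is fine and matches how the paper uses this in Corollary \ref{cor:frozen-richardson}. Second, the nonemptiness discussion is harmless but unnecessary for the statement, since the last claim of Proposition \ref{prop:Richardson} (nonemptiness forces $u\le v\le w$) is a separate consequence of Corollary \ref{cor: U nonempty}.
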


Note that Theorem \ref{thm:richardson-intro} implies that, if $f_{v,w}$ denotes the number of frozen variables in $R(v,w)$, then we have the inequality $f_{u,v} + f_{v,w} \geq f_{u, w}$. The problem of finding a combinatorial rule to compute the quantity $f_{v,w}$ is an interesting one. In Section \ref{sec:richardson}, we apply Theorem \ref{thm:richardson-intro} to this problem. In particular, we find that $\max\{f_{v,w} \mid v \leq w \in S_k\} - k+1$ grows at least linearly in $k$, see Remark \ref{rmk:growth-frozens}.

We can identify $R(u,w)$ with a locally closed subset of the affine space $\C^{\ell(w)}$, by identifying an explicit matrix form of all flags $\CF$ such that $\CF^{\std} \xrightarrow{w} \CF$. It is a hard problem to determine which minors are cluster monomials. If true, Conjecture \ref{conj:braid-varieties} applied to the Richardson setting would imply that, for every $u \leq v \leq w$, the minors $\minor_{v[i], [i]}$ are cluster monomials in $R(u,w)$ for all $i = 1, \dots, k$. Note that it was recently shown in \cite[Theorem C]{MMSBV} that for open positroid varieties in the Grassmannian $\Gr(r,k)$ \emph{all} non-zero Pl\"ucker coordinates are cluster monomials. Viewed in the Richardson setting, the Pl\"ucker coordinates correspond to minors of the form $\Delta_{I, [r]}$, where $I \subseteq [k]$ is an $r$-element set. 

One can also iterate Theorem \ref{thm:richardson-intro} to obtain the following:
\begin{corollary}
Suppose that $u = v_0 < v_1 < v_2 < ... < v_{\ell} = w$ is a maximal chain from $u$ to $w$ in the Bruhat poset. Then we have an open embedding
\begin{equation}
\label{eq: richadson chain torus}
(\C^{\times})^{\ell}\simeq R(v_0,v_1)\times R(v_1,v_2)\times \cdots \times R(v_{\ell-1},v_{\ell})\hookrightarrow R(u,w).
\end{equation}
\end{corollary}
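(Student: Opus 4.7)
The plan is to iterate Theorem~\ref{thm:richardson-intro} along the maximal chain and then identify each single-step Richardson variety $R(v_{i-1},v_i)$ with $\C^\times$. First, I would apply Theorem~\ref{thm:richardson-intro} to the triple $(v_0,v_1,v_\ell)$, obtaining an open embedding
\[
R(v_0,v_1)\times R(v_1,v_\ell)\;\hookrightarrow\; R(v_0,v_\ell)=R(u,w).
\]
Applying the theorem again to $(v_1,v_2,v_\ell)$ yields an open embedding $R(v_1,v_2)\times R(v_2,v_\ell)\hookrightarrow R(v_1,v_\ell)$. Taking the Cartesian product with $\mathrm{id}_{R(v_0,v_1)}$ and composing produces an open embedding
\[
R(v_0,v_1)\times R(v_1,v_2)\times R(v_2,v_\ell)\;\hookrightarrow\; R(v_0,v_\ell).
\]
Iterating $\ell-1$ more times, splitting off the factor $R(v_i,v_{i+1})$ at the $i$-th step, delivers the desired open embedding $\prod_{i=1}^{\ell}R(v_{i-1},v_i)\hookrightarrow R(u,w)$.

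Second, since the chain is \emph{maximal}, each step $v_{i-1}\lessdot v_i$ is a covering relation in the Bruhat order, so $R(v_{i-1},v_i)$ is a smooth affine variety of dimension $\ell(v_i)-\ell(v_{i-1})=1$. It is classical, and can also be extracted directly from the braid-variety realization of Richardson varieties discussed in Section~\ref{sec:richardson}, that in this case the closure of $R(v_{i-1},v_i)$ inside the flag variety $\Fl(k)$ is isomorphic to $\PP^1$ and $R(v_{i-1},v_i)$ is its complement to two Schubert points, hence isomorphic to $\C^\times$. Multiplying these isomorphisms together gives the identification $(\C^\times)^\ell\simeq\prod_{i=1}^{\ell}R(v_{i-1},v_i)$ in \eqref{eq: richadson chain torus}.

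The main potential obstacle is organizational rather than conceptual: one must check that at each stage of the iteration the open embeddings pull back and compose in the correct way. This is routine, as open embeddings are preserved under Cartesian products with identities and closed under composition, so no additional input is required beyond Theorem~\ref{thm:richardson-intro} and the identification of covering Richardsons with $\C^\times$.
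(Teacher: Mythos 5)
Your proposal is correct and follows the same route as the paper: iterate Theorem \ref{thm:richardson-intro} along the chain to split off one factor $R(v_{i-1},v_i)$ at a time, then invoke the standard fact that a Richardson variety for a covering relation is one-dimensional and isomorphic to $\C^{\times}$. The paper gives exactly this argument (in compressed form), so there is nothing to add.
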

Here we used the fact that one-dimensional Richardson varieties $R(v_{i-1},v_i)$ are isomorphic to $\C^{\times}$. If true, Conjecture \ref{conj:braid-varieties}   would imply that all tori \eqref{eq: richadson chain torus} are cluster tori in $R(u,w)$.

\subsection{Splicing double Bott--Samelson varieties} While we do not have a proof of Conjecture \ref{conj:braid-varieties} in full generality, in some cases we are able to prove it, namely, those cases where we start with a \emph{double Bott--Samelson variety}. For each positive braid $\br \in \Br^{+}_{k}$, the double Bott--Samelson variety $\BS(\br)$ is defined as a space of flag configurations dictated by the braid $\br$ in a way similar to, but subtly different from, the definition of a braid variety. In fact, we have isomorphisms
\begin{equation}\label{eq:iso-bs-braid-intro}
\bsbra: \BS(\br) \to X(\br\Delta), \qquad \bsbrb: \BS(\br) \to X(\Delta\br)
\end{equation}
where $\longest$ is a positive braid lift of the longest element $w_0 \in S_k$. Double Bott--Samelson varieties were introduced by Elek and Lu in \cite{EL21}, and a cluster structure on them was studied in \cite{GY06b, SW}. Given a braid decomposition $\br = \br^1\br^2$ as above, we define an open set $\CU_{r_1}=\CU(\br^1, \br^2) \subseteq \BS(\br)$ and show the following. 

\begin{theorem}\label{thm: main-splicing-dbs-intro}
Let $\br = \br^1\br^2 \in \Br^{+}_{k}$ be a positive braid. Then, we have an isomorphism:
\begin{equation}\label{eq:BS-splicing-intro}
\Psi_{r_1}: \BS(\br^1) \times \BS(\br^2) \xrightarrow{\cong} \CU_{r_1}.
\end{equation}
Moreover, the map \eqref{eq:BS-splicing-intro} and the open set $\CU_{r_1}$ satisfy the properties predicted by Conjecture \ref{conj:braid-varieties}.
\end{theorem}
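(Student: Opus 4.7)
The plan is to reduce Theorem \ref{thm: main-splicing-dbs-intro} to the already-established Theorem \ref{thm:main-braid-varieties} by transporting the problem along the isomorphisms \eqref{eq:iso-bs-braid-intro} that relate double Bott--Samelson varieties to braid varieties. Concretely, I would apply $\bsbra\colon \BS(\br)\to X(\br\longest)$ so that $\BS(\br^1\br^2)$ is identified with $X(\br^1\br^2\longest)$. Choosing $r_1=|\br^1|$ and $w=\id$ in Theorem \ref{thm:main-braid-varieties}, we have $\lift{w^{-1}w_0}=\lift{w_0}=\longest$ and $\lift{w}=\id$, so that theorem supplies an open embedding
\[
X(\longest\,\br^1)\times X(\br^2\longest)\xrightarrow{\ \cong\ }\CU_{r_1,\id}\subseteq X(\br^1\br^2\longest).
\]
Applying $\bsbrb^{-1}$ to the left factor and $\bsbra^{-1}$ to the right factor, and pre-composing with $\bsbra^{-1}$ on the target, defines the candidate map $\Psi_{r_1}$ with image $\CU_{r_1}:=\bsbra^{-1}(\CU_{r_1,\id})$. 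This immediately gives the isomorphism \eqref{eq:BS-splicing-intro}.

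Next I would verify that $\CU_{r_1}\subseteq\BS(\br)$ has an intrinsic description in BS terms, independent of the auxiliary braid variety. In the braid-variety picture, $\CU_{r_1,\id}$ is the locus where the $r_1$-th flag is transverse to $\CF(w_0)$; under $\bsbra$ this translates to an explicit transversality condition on the middle flag of $\BS(\br^1\br^2)$ that does not reference the artificial $\longest$ on the right. This is the open set that will be used in the splicing and in the cluster statement.

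For the cluster part, I would use the cluster structures on $\BS(\br)$ constructed in \cite{GY06b, SW} (and compatible with the ones on braid varieties from \cite{CGGLSS, GLSB, GLSBS22} via $\bsbra$, $\bsbrb$). The combinatorics of these structures are designed to respect the factorization $\br=\br^1\br^2$: there is a seed $\seed$ on $\BS(\br)$ whose quiver contains the quivers of natural seeds $\seed^1,\seed^2$ on $\BS(\br^1)$ and $\BS(\br^2)$ as subquivers, glued along a collection of mutable ``interface'' vertices $x_{a_1},\dots,x_{a_s}$ associated to the $r_1$-th flag. The key claims to verify are: (a) the interface cluster variables cut out $\CU_{r_1}$, i.e.\ they are, up to frozen monomials, the minors detecting transversality of the middle flag to $\CF(w_0)$; (b) freezing $x_{a_1},\dots,x_{a_s}$ in $\seed$ produces a seed which, after deleting frozen-to-frozen arrows as in Muller's conventions, is isomorphic to the disjoint union $\seed^1\sqcup\seed^2$; (c) the pullbacks of cluster variables under $\Psi_{r_1}$ are, up to frozen monomials, cluster variables in either $\seed^1$ or $\seed^2$, so the map is a cluster quasi-isomorphism.

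The hard part will be the cluster portion, namely producing the seed $\seed$ and identifying the precise list $x_{a_1},\dots,x_{a_s}$. The count $s=f_1+f_2-f$ (cf.\ Corollary \ref{cor: frozen inequality intro}) is a useful numerical check, but matching the minors at position $r_1$ to explicit cluster variables requires careful tracking through the construction in \cite{SW}. A secondary obstacle is the verification of (b): one must check that the interface vertices have no arrows with the non-interface mutable vertices on the ``opposite'' side of the splicing, which amounts to a quiver combinatorics argument tied to the weave/reduced-plabic-fence description of the seed. Once (a) and (b) are in place, (c) follows because $\Psi_{r_1}$ pulls back cluster variables to cluster variables on each factor by construction, and frozen variables match on the nose after the freezing step.
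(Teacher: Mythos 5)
The first half of your proposal is sound: applying Theorem \ref{thm:main-braid-varieties} with $w=e$ to the decomposition $\br\Delta=\br^1\cdot(\br^2\Delta)$ and transporting along $\bsbra,\bsbrb$ does yield the isomorphism $\Psi_{r_1}$, and this is genuinely an alternative to the paper's route: the paper instead constructs $\Phi_{r_1}=\Psi_{r_1}^{-1}$ directly in the $z$-coordinates via the LU-decomposition $B_{\br^1}(z_1,\dots,z_{r_1})=L_1U_1$ and sliding $U_1$ across $B_{\br^2}$ (Theorem \ref{thm:splicing-dbs}), and only afterwards proves that this coordinate map agrees with the one obtained from braid-variety splicing (Lemma \ref{lem: bs vs braid splicing compatible 1}). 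The paper's choice is deliberate: the explicit coordinates, and the intrinsic identification of $\CU_{r_1}$ as the nonvanishing locus of the cluster variables $x_{\last^1(s)}$, $s=1,\dots,k-1$ (the principal minors of $B_{\br^1}$, i.e.\ the rightmost crossing of each color in $\br^1$, identified via the Cauchy--Binet Lemma \ref{lem:cauchy-binet-minors}), are the inputs for the cluster analysis. Your proposal leaves exactly this identification as ``careful tracking through \cite{SW}''.

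The genuine gap is in your step (c). You assert that once (a) and (b) hold, the quasi-cluster property of $\Psi_{r_1}$ ``follows because $\Psi_{r_1}$ pulls back cluster variables to cluster variables by construction, and frozen variables match on the nose.'' Neither claim is true, and this is precisely the trap the paper flags in Lemma \ref{lem:conditional-conjecture}: statements (a), (b) and the splitting of the mutable quiver only give an \emph{abstract} quasi-equivalence of the two cluster structures; they do not show that the particular map $\Psi_{r_1}$ is a quasi-cluster isomorphism. Moreover the pullbacks do not match on the nose: by Lemma \ref{lem:coincidence-up-to-monomial-frozens} one has $\phipul\left(\xun_j\right)=x_j$ for $j\le r_1$, but $\phipul\left(\x2_\ell\right)=m_\ell\, x_\ell$, where $m_\ell$ is a nontrivial Laurent monomial in the frozen variables built from the diagonal entries of $U_1$. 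Because of these correction factors, condition (3) of Definition \ref{def:quasi-homo} --- preservation of the exchange ratios $\widehat{y}_i$ --- must be verified separately, and this is the bulk of the paper's Theorem \ref{thm:quasi-cluster-dbs}: a case-by-case analysis of the local shape of the quiver around each mutable vertex of the $\br^2$-part, showing that the products of the $m_\ell$'s over incoming and outgoing arrows cancel. Your proposal contains no mechanism for this computation (it never determines the frozen monomials at all), so as written it would establish the isomorphism and, at best, an abstract quasi-equivalence, but not the full statement that $\Psi_{r_1}$ itself satisfies Conjecture \ref{conj:braid-varieties}(c).
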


We remark that, upon some identifications made possible by \eqref{eq:iso-bs-braid-intro}, the map \eqref{eq:BS-splicing-intro} is a special case of the maps \eqref{eq:main-braid-varieties}. Recall that we denote by $\Phi_{r_1, w}$ the inverse to $\Psi_{r_1, w}$. Similarly, denote by $\Phi_{r_1}$ the inverse to $\Psi_{r_1}$ from \eqref{eq:BS-splicing-intro}. 

\begin{theorem}\label{thm:comparison-braid-dbs-intro}
    Let $\br = \br^1\br^2$ be a positive braid. The following diagrams commute:
    \[
    \begin{tikzcd}
\BS(\br) \supseteq \CU_{r_1} \arrow{rr}{\Phi_{r_{1}}} \arrow{d}{\bsbra} & & \BS(\br^1) \times \BS(\br^2) \arrow{d}{\bsbrb \times \bsbra} \\
X(\br\Delta) \supseteq \CU_{r_1, e} \arrow{rr}{\Phi_{r_{1}, e}} & & X(\Delta\br^1) \times X(\br^2\Delta)
    \end{tikzcd}
    \]
    \[
    \begin{tikzcd}
    \BS(\br) \supseteq \CU_{r_1} \arrow{rr}{\Phi_{r_{1}}} \arrow{d}{\bsbrb} & & \BS(\br^1) \times \BS(\br^2) \arrow{d}{\bsbrb \times \bsbra} \\
X(\Delta\br) \supseteq \CU_{r_1+\ell(w_0), w_0} \arrow{rr}{\Phi_{r_{1}+\ell(w_0), w_0}} & & X(\Delta\br^1) \times X(\br^2\Delta)
    \end{tikzcd}
    \]
    Here $\varphi_1,\varphi_2$ are isomorphisms from \eqref{eq:iso-bs-braid-intro}.
\end{theorem}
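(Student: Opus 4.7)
The plan is to prove both diagrams by direct verification at the level of flag configurations, exploiting the fact that every map in sight is defined by either remembering, inserting, or splitting a chain of flags with prescribed relative positions.

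First I would recall in detail the three ingredients. The isomorphism $\varphi_1: \BS(\br) \to X(\br\longest)$ sends a flag configuration for $\br$ in $\BS(\br)$ to a configuration in $X(\br\longest)$ by appending, at the end, the canonical chain of flags from $\CF^{\ant}$ back down to $\CF^{\std}$ whose successive relative positions spell the fixed reduced word for $w_0$ used to define $\longest$; the isomorphism $\varphi_2$ prepends an analogous chain of coordinate flags at the beginning, turning a configuration for $\br$ into one for $\longest\br$. On the other side, $\Phi_{r_1}$ on $\BS(\br) \supseteq \CU_{r_1}$ cuts the configuration at the $r_1$-th flag, producing a configuration in $\BS(\br^1)$ on the left (everything up to $\CF^{r_1}$) and a configuration in $\BS(\br^2)$ on the right (everything from $\CF^{r_1}$ onward). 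Finally, the braid-variety splicing map $\Phi_{r_1,w}$ is described by the diagram \eqref{eq:diagram-intro}: it uses transversality of $\CF^{r_1}$ to $\CF(w_0w)$ and the canonical chain of coordinate flags to produce two shorter configurations.

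Next I would check the top diagram, with $w=e$. A point of $\CU_{r_1} \subseteq \BS(\br)$ is a chain $\CF^{\std} \to \CF^1 \to \cdots \to \CF^{r_1} \to \cdots \to \CF^{\ant}$; applying $\varphi_1$ appends the standard $w_0$-chain from $\CF^{\ant}$ back to $\CF^{\std}$. The transversality condition $\CF^{r_1}$ against $\CF(w_0 e)=\CF^{\ant}$ appearing in $\Phi_{r_1,e}$ is exactly the condition defining $\CU_{r_1}$ in $\BS$, so passing through $\varphi_1$ lands in $\CU_{r_1,e}$. Now follow both sides of the diagram on this point. Going down then right, $\Phi_{r_1,e}$ splits the extended chain at position $r_1$: the right piece is exactly $\CF^{r_1} \to \cdots \to \CF^{\ant}$ followed by the $w_0$-chain, which is $\varphi_1$ of $\CF^{r_1} \to \cdots \to \CF^{\ant} \in \BS(\br^2)$; the left piece is the coordinate $w_0$-chain followed by $\CF^{\std} \to \CF^1 \to \cdots \to \CF^{r_1}$, which is $\varphi_2$ of $\CF^{\std} \to \cdots \to \CF^{r_1} \in \BS(\br^1)$ (after the harmless cyclic relabeling that places $\CF^{r_1}$ at the antistandard end). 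Going right then down produces the same pair by construction of $\Phi_{r_1}$. Thus the two paths agree.

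The bottom diagram is the mirror image and is handled by the symmetric argument. Now $\varphi_2$ prepends the $w_0$-chain, so the flag in position $r_1 + \ell(w_0)$ of the extended chain is precisely $\CF^{r_1}$ of the original $\BS(\br)$ configuration, and the transversality required by $\Phi_{r_1+\ell(w_0),w_0}$ is transversality of $\CF^{r_1}$ to $\CF(w_0\cdot w_0)=\CF^{\std}$, which again is the defining condition of $\CU_{r_1}$. Cutting the prepended chain at this position and unpacking the definition of $\Phi_{r_1+\ell(w_0),w_0}$ gives exactly the same two output configurations as $\Phi_{r_1}$ followed by $\varphi_2 \times \varphi_1$.

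The hard part is purely bookkeeping: one must be scrupulous about which reduced word for $w_0$ is used to define $\longest$, about the direction (left versus right) in which the $w_0$-chain of coordinate flags is appended by $\varphi_1,\varphi_2$ and removed by $\Phi_{r_1,w}$, and about the fact that $\Phi_{r_1,w}$ produces a configuration on the braid $\lift{w^{-1}w_0}\br^1$, whose initial $w_0$-tail must be matched with the tail introduced by $\varphi_2$ on the $\BS(\br^1)$ factor (and symmetrically on the right factor). Once these identifications are set up, both commutativities reduce to the tautology that cutting a chain of flags at a specified position is the same whether or not one has already adjoined fixed coordinate chains at its two ends.
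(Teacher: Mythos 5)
There is a genuine gap: you treat $\varphi_1,\varphi_2,\Phi_{r_1}$ and $\Phi_{r_1,w}$ as pure ``append a coordinate chain / cut the chain at position $r_1$'' operations, but none of these maps acts that way on the nose — each one translates the flags by a specific element of $\GL(k)$ determined by an LU-type factorization, and matching these translations is the entire content of the paper's proof (Lemmas \ref{lem: bs vs braid splicing compatible 1} and \ref{lem: bs vs braid splicing compatible 2}). Concretely, writing $\CF^{r_1}=\CF(L_1U_1)$ with $L_1$ lower-triangular unipotent, the map $\Phi_{r_1}$ sends $(\CF^0,\dots,\CF^r)$ to $\bigl((\CF^0,\dots,\CF^{r_1}),\,L_1^{-1}(\CF^{r_1},\dots,\CF^r)\bigr)$, not to the untranslated tail; $\varphi_2$ applied to $(\CF^0,\dots,\CF^{r})\in\BS(\br)$ with $\CF^{r}=\CF(L'U')$ replaces every original flag by $w_0(L')^{-1}\CF^j$; and the braid-variety splicing $\Phi_{r_1,w}=(\Phi^1,\Phi^2)$ translates by the particular elements $g_1,g_2$ of \eqref{eq: g1}, \eqref{eq: g2} (which are pinned down precisely because the a priori ambiguity by a diagonal matrix, Remark \ref{rmk:ambiguity}, genuinely moves non-coordinate flags, so the choice matters). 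Consequently several of your intermediate identifications are false as stated: in the bottom square the flag in position $r_1+\ell(w_0)$ of $\varphi_2(\CF^\bullet)$ is $w_0(L')^{-1}\CF^{r_1}$, not $\CF^{r_1}$; in the top square the second factor of $(\bsbrb\times\bsbra)\circ\Phi_{r_1}$ consists of the flags $L_1^{-1}\CF^j$, and the ``harmless cyclic relabeling'' on the first factor is exactly the nontrivial translation by $w_0L_1^{-1}$.

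Your observation that the transversality loci match (e.g.\ $\CF^{r_1}\pitchfork\CF^{\ant}$ iff $w_0(L')^{-1}\CF^{r_1}\pitchfork\CF^{\std}$) is correct and is also the easy part of the paper's argument. What is missing is the verification that the two composite translations around each square coincide on the relevant flags: for the first diagram one must check that the $g_1,g_2$ used by $\Phi_{r_1,e}$ on $\varphi_1(\CF^\bullet)$ are exactly $w_0L_1^{-1}$ and $L_1^{-1}$, and for the second diagram one must compute, from the factorization $w_0(L')^{-1}\CF^{r_1}=\CF(w_0LU)$ with $L=(L')^{-1}L_1$, that $g_2=L_1^{-1}L'w_0$ and $g_1=w_0L_1^{-1}L'w_0$, so that composing with the $\varphi_2$-translation $w_0(L')^{-1}$ again yields $L_1^{-1}$ and $w_0L_1^{-1}$, and then invoke uniqueness of the auxiliary coordinate-flag chains to conclude. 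Without this matrix bookkeeping the diagrams are only verified ``up to a $\GL(k)$-translation,'' which is not commutativity of the stated maps; so the reduction to a ``tautology'' does not hold, and the proposal as written does not prove the theorem.
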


%\EG{How? We need to spell it out in more details. In particular, explain why $U(\beta_1,\beta_2)$ is $U_{r_1,w}$ for some specific $r_1,w$. Also, explain how skew shaped positroids are a special case of \eqref{eq:BS-splicing-intro}}

%\EG{Reference \cite{GKSS} in the intro?} \TS{We mention \cite{GKSS} in the 3 paragraph of the intro, was there more than that that you would like to say?}

We visually describe the splicing map in the following example, but leave the technical details needed to verify that the map is a quasi-cluster isomorphism to Example \ref{ex:intro splice}.
%\TS{Moved details to end of paper, not sure of the best location. Still need to show all exchange ratios?}
\begin{example}\label{ex:intro}
Consider $\br = {\color{blue}\sigma_3^{2}\sigma_2^{2}\sigma_1\sigma_3\sigma_2\sigma_1^{2}}{\color{red}\sigma_3\sigma_2\sigma_1\sigma_2\sigma_3\sigma_2\sigma_1}$, where ${\color{blue} \br^{1}}$ and ${\color{red} \br^{2}}$ are as indicated by colors. We have the quiver $Q_{\br}$:

%\TS{Changed quivers to highlight freezing of variables.}

\begin{center}
    \begin{tikzpicture}[scale=0.9]
    \draw[color=blue] (0,4) to[out=0,in=180] (1, 3) to[out=0,in=180] (2,4) to (5,4) to[out=0,in=180](6,3) to[out=0,in=180](7,2)to[out=0,in=180] (8,1) to[out=0,in=180] (9,2); 
    \draw[color=blue] (0,3) to[out=0,in=180] (1,4) to[out=0,in=180] (2,3) to[out=0,in=180] (3,2) to[out=0,in=180] (4,3) to (5,3) to[out=0,in=180] (6,4) to (9,4);
    \draw[color=blue] (0,2) to (2,2) to[out=0,in=180] (3,3) to[out=0,in=180] (4,2) to[out=0,in=180] (5,1) to (7,1) to[out=0,in=180] (8,2) to[out=0,in=180] (9,1);
    \draw[color=blue] (0,1) to (4,1) to[out=0,in=180] (5,2) to (6,2) to[out=0,in=180] (7,3) to (9,3);

    \draw[color=red] (9,4) to[out=0,in=180] (10,3) to[out=0,in=180] (11,2) to[out=0,in=180] (12,1) to (15,1) to[out=0,in=180] (16,2);
    \draw[color=red] (9,3) to[out=0,in=180] (10,4) to (13,4) to[out=0,in=180] (14,3) to[out=0,in=180] (15,2) to[out=0,in=180] (16,1);
    \draw[color=red] (9,2) to (10,2) to[out=0,in=180] (11,3) to (12,3) to[out=0,in=180] (13, 2) to (14,2) to[out=0,in=180] (15,3) to (16,3);
    \draw[color=red] (9,1) to (11,1) to[out=0,in=180] (12,2) to[out=0,in=180](13,3) to[out=0,in=180] (14,4) to (16,4);

    \node at (1, 3.5) {$1$};
    \node at (2, 3.5) {$2$};
    \node at (6, 3.5) {$6$};
    \node at (10, 3.5) {$10$};
    \node at (14.2, 3.5) {{\color{blue}$\boxed{14}$}};

    \node at (3, 2.5) {$3$};
    \node at (4, 2.5) {$4$};
    \node at (7, 2.5) {$7$};
    \node at (11, 2.5) {$11$};
    \node at (13, 2.5) {$13$};
    \node at (15.2, 2.5) {{\color{blue}$\boxed{15}$}};

    \node at (5, 1.5) {$5$};
    \node at (8, 1.5) {$8$};
    \node at (9, 1.5) {$9$};
    \node at (12, 1.5) {$12$};
    \node at (16.2, 1.5) {{\color{blue}$\boxed{16}$}};

    \draw[thick, ->] (1.2, 3.5) to (1.8, 3.5);
    \draw[thick, ->] (2.2, 3.5) to (5.8, 3.5);
    \draw[thick, ->] (6.2, 3.5) to (9.8, 3.5);
    \draw[thick, ->] (10.2, 3.5) to (13.8, 3.5);

    \draw[thick, ->] (3.2, 2.5) to (3.8, 2.5);
    \draw[thick, ->] (4.2, 2.5) to (6.8, 2.5);
    \draw[thick, ->] (7.2, 2.5) to (10.8, 2.5);
    \draw[thick, ->] (11.2, 2.5) to (12.8, 2.5);
    \draw[thick, ->] (13.2, 2.5) to (14.8, 2.5);

    \draw[thick, ->] (5.2, 1.5) to (7.8, 1.5);
   \draw[thick, ->] (8.2, 1.5) to (8.8, 1.5);
    \draw[thick, ->] (9.2, 1.5) to (11.8, 1.5);
    \draw[thick, ->] (12.2, 1.5) to (15.8, 1.5);

    \draw[thick, ->] (3.8, 2.7) to (2.2, 3.4);
    \draw[thick, ->] (5.8, 3.3) to (4.2, 2.6);
    \draw[thick, ->] (6.8, 2.7) to (6.2, 3.4);
    \draw[thick, ->] (9.8, 3.3) to (7.2, 2.6);
    \draw[thick, ->] (12.8, 2.7) to (10.2, 3.4);
    \draw[thick, ->] (13.8, 3.3) to (13.2, 2.6);

    \draw[thick, ->] (4.8, 1.7) to (4.2, 2.4);
    \draw[thick, ->] (6.8, 2.3) to (5.2, 1.6);
    \draw[thick, ->] (8.8, 1.7) to (7.2, 2.4);
    \draw[thick, ->] (10.8, 2.3) to (9.2, 1.6);
    \draw[thick, ->] (11.8, 1.7) to (11.2, 2.4);
    \draw[thick, ->] (14.8, 2.3) to (12.2, 1.6);
    \end{tikzpicture}
\end{center}

The open set $\CU_{r_1}$ is the cluster variety corresponding to the following quiver, which is obtained by freezing the vertices $6, 7, 9$. Note that these are the vertices on the right of the rightmost appearance of a crossing in ${\color{blue} \br^1}$. 

\begin{center}
    \begin{tikzpicture}[scale=0.9]
    \draw[color=blue] (0,4) to[out=0,in=180] (1, 3) to[out=0,in=180] (2,4) to (5,4) to[out=0,in=180](6,3) to[out=0,in=180](7,2)to[out=0,in=180] (8,1) to[out=0,in=180] (9,2); 
    \draw[color=blue] (0,3) to[out=0,in=180] (1,4) to[out=0,in=180] (2,3) to[out=0,in=180] (3,2) to[out=0,in=180] (4,3) to (5,3) to[out=0,in=180] (6,4) to (9,4);
    \draw[color=blue] (0,2) to (2,2) to[out=0,in=180] (3,3) to[out=0,in=180] (4,2) to[out=0,in=180] (5,1) to (7,1) to[out=0,in=180] (8,2) to[out=0,in=180] (9,1);
    \draw[color=blue] (0,1) to (4,1) to[out=0,in=180] (5,2) to (6,2) to[out=0,in=180] (7,3) to (9,3);

    \draw[color=red] (9,4) to[out=0,in=180] (10,3) to[out=0,in=180] (11,2) to[out=0,in=180] (12,1) to (15,1) to[out=0,in=180] (16,2);
    \draw[color=red] (9,3) to[out=0,in=180] (10,4) to (13,4) to[out=0,in=180] (14,3) to[out=0,in=180] (15,2) to[out=0,in=180] (16,1);
    \draw[color=red] (9,2) to (10,2) to[out=0,in=180] (11,3) to (12,3) to[out=0,in=180] (13, 2) to (14,2) to[out=0,in=180] (15,3) to (16,3);
    \draw[color=red] (9,1) to (11,1) to[out=0,in=180] (12,2) to[out=0,in=180](13,3) to[out=0,in=180] (14,4) to (16,4);

    \node at (1, 3.5) {$1$};
    \node at (2, 3.5) {$2$};
    \node at (6.05, 3.5) {{\color{blue}$\boxed{6}$}};
    \node at (10, 3.5) {$10$};
    \node at (14.2, 3.5) {{\color{blue}$\boxed{14}$}};

    \node at (3, 2.5) {$3$};
    \node at (4, 2.5) {$4$};
    \node at (7.05, 2.5) {{\color{blue}$\boxed{7}$}};
    \node at (11, 2.5) {$11$};
    \node at (13, 2.5) {$13$};
    \node at (15.2, 2.5) {{\color{blue}$\boxed{15}$}};

    \node at (5, 1.5) {$5$};
    \node at (8, 1.5) {$8$};
    \node at (9.05, 1.5) {{\color{blue}$\boxed{9}$}};
    \node at (12, 1.5) {$12$};
    \node at (16.2, 1.5) {{\color{blue}$\boxed{16}$}};

    \draw[thick, ->] (1.2, 3.5) to (1.8, 3.5);
    \draw[thick, ->] (2.2, 3.5) to (5.8, 3.5);
    \draw[thick, ->] (6.3, 3.5) to (9.8, 3.5);
    \draw[thick, ->] (10.2, 3.5) to (13.8, 3.5);

    \draw[thick, ->] (3.2, 2.5) to (3.8, 2.5);
    \draw[thick, ->] (4.2, 2.5) to (6.8, 2.5);
   \draw[thick, ->] (7.3, 2.5) to (10.8, 2.5);
    \draw[thick, ->] (11.2, 2.5) to (12.8, 2.5);
    \draw[thick, ->] (13.2, 2.5) to (14.8, 2.5);

    \draw[thick, ->] (5.2, 1.5) to (7.8, 1.5);
   \draw[thick, ->] (8.2, 1.5) to (8.8, 1.5);
    \draw[thick, ->] (9.3, 1.5) to (11.8, 1.5);
    \draw[thick, ->] (12.2, 1.5) to (15.8, 1.5);

    \draw[thick, ->] (3.8, 2.7) to (2.2, 3.4);
    \draw[thick, ->] (5.8, 3.3) to (4.2, 2.6);
   % \draw[thick, ->] (6.8, 2.7) to (6.2, 3.4);
    \draw[thick, ->] (9.8, 3.3) to (7.3, 2.6);
    \draw[thick, ->] (12.8, 2.7) to (10.2, 3.4);
    \draw[thick, ->] (13.8, 3.3) to (13.2, 2.6);

    \draw[thick, ->] (4.8, 1.7) to (4.2, 2.4);
    \draw[thick, ->] (6.8, 2.3) to (5.2, 1.6);
    %\draw[thick, ->] (8.8, 1.7) to (7.2, 2.4);
    \draw[thick, ->] (10.8, 2.3) to (9.3, 1.6);
    \draw[thick, ->] (11.8, 1.7) to (11.2, 2.4);
    \draw[thick, ->] (14.8, 2.3) to (12.2, 1.6);
    \end{tikzpicture}
\end{center}

On the other hand, the product $\BS(\br^1) \times \BS(\br^2)$ is the cluster variety corresponding to the disjoint union of the quivers $Q_{\br^1}$ and $Q_{\br^2}$:

\begin{center}
    \begin{tikzpicture}[scale=0.9]
    \draw[color=blue] (0,4) to[out=0,in=180] (1, 3) to[out=0,in=180] (2,4) to (5,4) to[out=0,in=180](6,3) to[out=0,in=180](7,2)to[out=0,in=180] (8,1) to[out=0,in=180] (9,2); 
    \draw[color=blue] (0,3) to[out=0,in=180] (1,4) to[out=0,in=180] (2,3) to[out=0,in=180] (3,2) to[out=0,in=180] (4,3) to (5,3) to[out=0,in=180] (6,4) to (9,4);
    \draw[color=blue] (0,2) to (2,2) to[out=0,in=180] (3,3) to[out=0,in=180] (4,2) to[out=0,in=180] (5,1) to (7,1) to[out=0,in=180] (8,2) to[out=0,in=180] (9,1);
    \draw[color=blue] (0,1) to (4,1) to[out=0,in=180] (5,2) to (6,2) to[out=0,in=180] (7,3) to (9,3);

    \draw[color=red] (9,4) to[out=0,in=180] (10,3) to[out=0,in=180] (11,2) to[out=0,in=180] (12,1) to (15,1) to[out=0,in=180] (16,2);
    \draw[color=red] (9,3) to[out=0,in=180] (10,4) to (13,4) to[out=0,in=180] (14,3) to[out=0,in=180] (15,2) to[out=0,in=180] (16,1);
    \draw[color=red] (9,2) to (10,2) to[out=0,in=180] (11,3) to (12,3) to[out=0,in=180] (13, 2) to (14,2) to[out=0,in=180] (15,3) to (16,3);
    \draw[color=red] (9,1) to (11,1) to[out=0,in=180] (12,2) to[out=0,in=180](13,3) to[out=0,in=180] (14,4) to (16,4);

    \node at (1, 3.5) {$1$};
    \node at (2, 3.5) {$2$};
    \node at (6.05, 3.5) {{\color{blue}$\boxed{6}$}};
    \node at (10, 3.5) {$10$};
    \node at (14.2, 3.5) {{\color{blue}$\boxed{14}$}};

    \node at (3, 2.5) {$3$};
    \node at (4, 2.5) {$4$};
    \node at (7.05, 2.5) {{\color{blue}$\boxed{7}$}};
    \node at (11, 2.5) {$11$};
    \node at (13, 2.5) {$13$};
    \node at (15.2, 2.5) {{\color{blue}$\boxed{15}$}};

    \node at (5, 1.5) {$5$};
    \node at (8, 1.5) {$8$};
    \node at (9.05, 1.5) {{\color{blue}$\boxed{9}$}};
    \node at (12, 1.5) {$12$};
    \node at (16.2, 1.5) {{\color{blue}$\boxed{16}$}};

    \draw[dashed] (9,0.5) to (9,1.24);
    \draw[dashed] (9,1.76) to (9, 4.5);

    \draw[thick, ->] (1.2, 3.5) to (1.8, 3.5);
    \draw[thick, ->] (2.2, 3.5) to (5.8, 3.5);
    %\draw[thick, ->] (6.2, 3.5) to (9.8, 3.5);
    \draw[thick, ->] (10.2, 3.5) to (13.8, 3.5);

    \draw[thick, ->] (3.2, 2.5) to (3.8, 2.5);
    \draw[thick, ->] (4.2, 2.5) to (6.8, 2.5);
    %\draw[thick, ->] (7.2, 2.5) to (10.8, 2.5);
    \draw[thick, ->] (11.2, 2.5) to (12.8, 2.5);
    \draw[thick, ->] (13.2, 2.5) to (14.8, 2.5);

    \draw[thick, ->] (5.2, 1.5) to (7.8, 1.5);
   \draw[thick, ->] (8.2, 1.5) to (8.8, 1.5);
    %\draw[thick, ->] (9.2, 1.5) to (11.8, 1.5);
    \draw[thick, ->] (12.2, 1.5) to (15.8, 1.5);

    \draw[thick, ->] (3.8, 2.7) to (2.2, 3.4);
    \draw[thick, ->] (5.8, 3.3) to (4.2, 2.6);
    %\draw[thick, ->] (6.8, 2.7) to (6.2, 3.4);
    %\draw[thick, ->] (9.8, 3.3) to (7.2, 2.6);
    \draw[thick, ->] (12.8, 2.7) to (10.2, 3.4);
    \draw[thick, ->] (13.8, 3.3) to (13.2, 2.6);

    \draw[thick, ->] (4.8, 1.7) to (4.2, 2.4);
    \draw[thick, ->] (6.8, 2.3) to (5.2, 1.6);
    %\draw[thick, ->] (8.8, 1.7) to (7.2, 2.4);
    %\draw[thick, ->] (10.8, 2.3) to (9.2, 1.6);
    \draw[thick, ->] (11.8, 1.7) to (11.2, 2.4);
    \draw[thick, ->] (14.8, 2.3) to (12.2, 1.6);
    \end{tikzpicture}
\end{center}
\end{example}
%\JS{put a vertical line between the two braids in the last picture}\TS{Is it possible / Should we separate the last figure in order to avoid any possible confusion that the braids are still connected?}

%\TS{Should we mention here why we didn't verify the exchange ration for 10 or 12?}

\subsection{Connections to other work}

%\EG{More on Eberhardt-Stroppel, Exts for Vermas}

The splicing maps for double Bott--Samelson varieties are predicted by results in link homology. In particular, the work of Trinh \cite{Trin21} relates the equivariant Borel--Moore homology of $\BS(\br)$ (together with the weight filtration) to the Khovanov--Rozansky homology $\HHH^{a=0}(\beta)$. The corresponding multiplication maps in link homology
$$
\HHH^{a=0}(\beta^1)\otimes \HHH^{a=0}(\beta^2)\to \HHH^{a=0}(\beta^1\beta^2)
$$
are well known and very useful, see e.g. \cite{GH} and references therein.

More generally,    the equivariant Borel-Moore homology (again with the weight filtration) of the variety $X(\br)$ is related to the Khovanov-Rozansky homology $\HHH^{a = 0}(\br\Delta^{-1})$. In the setting of Theorem \ref{thm:main-braid-varieties}, note that the braid  
\[
\left(\Delta^{-1}\lift{w^{-1}w_0}\br^1\right) \cdot \left(\br^2\lift{w}\Delta^{-1}\right)
\]
is conjugate to $\br^1\br^2\Delta^{-1}$, so they represent the same link. Thus, we have a map in link homology
$$
\HHH^{a = 0}\left(\Delta^{-1}\lift{w^{-1}w_0}\br^1\right)\otimes \HHH^{a = 0}\left(\br^2\lift{w}\Delta^{-1}\right)\to \HHH^{a = 0}\left(\br^1\br^2\Delta^{-1}\right)
$$
which suggests the splicing map $X\left(\lift{w^{-1}w_0}\br^1\right) \times X\left(\br^2\lift{w}\right) \to X\left(\br^1\br^2\right)$ as in \eqref{eq:main-braid-varieties}.

Another motivation for splicing maps comes from \cite{eberhardt-stroppel} that studies homology of (parabolic) open Richardson varieties. Let $u,v$ be two permutations in $S_k$ such that $u\le v$ in Bruhat order. As mentioned above, the open Richardson variety $R(u,v)$ is isomorphic to a certain braid variety:
$$
R(u,v)\simeq X\left(\lift{v}\cdot \lift{u^{-1}w_0}\right)
$$
It is known that compactly supported cohomology of $R(u,v)$ is closely related to the  $\Ext$ group between the Verma modules in the category $\mathcal{O}$ for $\mathfrak{gl}_k$:
\begin{equation}
\label{eq: ext}
H^*_c(R(u,v))\simeq \Ext_{\mathcal{O}}^*(\Delta_u,\Delta_v).
\end{equation}
The homological grading and the weight filtration on the left hand side correspond to  two gradings on the right hand side, see \cite[Theorem 12.5]{eberhardt-stroppel}, \cite{GLcatalan} and references therein. 

Given $u\le v\le w$ in $S_k$, \cite[Section 3]{eberhardt-stroppel} constructs a rational map
\begin{equation}
\label{eq: richardson intro}
R(u,v)\times R(v,w)\to R(u,w)
\end{equation}
which corresponds under \eqref{eq: ext} to compositions of extensions
$$
\Ext_{\mathcal{O}}^*(\Delta_u,\Delta_v)\otimes \Ext_{\mathcal{O}}^*(\Delta_v,\Delta_w)\rightarrow 
\Ext_{\mathcal{O}}^*(\Delta_u,\Delta_w),
$$
see \cite[Corollary 3.3]{eberhardt-stroppel}. This can be compared with Theorem \ref{thm:richardson-intro}, however we do not know whether our maps coincide with those of \cite{eberhardt-stroppel} and plan to investigate this in the future.

\subsection{Organization of the paper} Sections \ref{sec: background}--\ref{sec:braid and dbs} are mostly preparatory: in Section \ref{sec: background} we give the necessary background on relative positions of flags and its relation to matrix minors. In Section \ref{sec: cluster} we recall the definition of cluster algebras and cluster quasi-isomorphisms. We define braid and double Bott-Samelson varieties in Section \ref{sec:braid and dbs}. In particular, in Section \ref{sec:cluster-structure} we give details on the cluster structure on braid and double Bott-Samelson varieties obtained in \cite{CGGLSS, GLSBS22, GLSB, SW}.

The technical heart of the paper is Section \ref{sec:splicing-braid}. In Section \ref{sec:splicing}, we define the open sets $\mathcal{U}_{r_1, w}$, see \eqref{eq:def-U}, and prove Theorem \ref{thm:main-braid-varieties} as Theorem \ref{thm:splicing-braid-varieties}. In Section \ref{sec:conj-properties} we elaborate on the conjectural properties of the splicing map \eqref{eq:main-braid-varieties}, giving a more precise version of Conjecture \ref{conj:braid-varieties} as Conjecture \ref{conj:splicing-braid-varieties}. We elaborate on the relations between the different parts of this conjecture, and show a weaker version of the conjecture for the case $w = w_0$, and illustrate this with examples. Finally, in Section \ref{sec:richardson} we specialize the map \eqref{eq:main-braid-varieties} to the case of open Richardson varieties and prove Theorem \ref{thm:richardson-intro}.

In Section \ref{sec:splicing-DBS} we deal with the case of double Bott-Samelson varieties. We show Theorem \ref{thm: main-splicing-dbs-intro} in two parts: first, in Theorem \ref{thm:splicing-dbs} we construct the map \eqref{eq:BS-splicing-intro} and show that it is an isomorphism; second, we study the cluster-theoretic properties of \eqref{eq:BS-splicing-intro} in Section \ref{sec:cluster-properties-splicing}, showing the second part of Theorem \ref{thm: main-splicing-dbs-intro} as Theorem \ref{thm:quasi-cluster-dbs}. Finally, Theorem \ref{thm:comparison-braid-dbs-intro} is proved as Lemmas \ref{lem: bs vs braid splicing compatible 1} and \ref{lem: bs vs braid splicing compatible 2}. 

\subsection{Notations}
%%\EG{Move this somewhere}

Given a sequence of matrices $(A_1,\ldots,A_k)$ and a matrix $M$, we write $M(A_1,\ldots,A_k)=(MA_1,\ldots,MA_k)$. Similarly, given a sequence of flags $(\CF^1,\ldots,\CF^k)$ and a matrix $M$, we write $M(\CF^1,\ldots,\CF^k)=(M\CF^1,\ldots,M\CF^k)$.

\section*{Acknowledgments}

The authors would like to thank Roger Casals, Mikhail Gorsky, Thomas Lam, Melissa Sherman-Bennett, David Speyer, Catharina Stroppel, and Daping Weng for the useful discussions. %\JS{added Catharina since her comment on the Richardson maps is what motivated the general splicing}

Eugene Gorsky, Soyeon Kim and Tonie Scroggin were partially supported by the NSF grant DMS-2302305. Tonie Scroggin was partially supported by UC President's Pre-Professoriate Fellowship (PPPF). Jos\'e Simental was partially supported by CONAHCyT project CF-2023-G-106 and UNAM’s PAPIIT Grant IA102124. 

\section{Background}
\label{sec: background}

We recall some standard facts we will use throughout the paper, in the way setting up notation and conventions.

\subsection{Braids and Demazure product} We will work with the positive braid monoid on $k$ strands $\Br_k^{+}$:
\[
\Br_k^{+} = \left\langle \sigma_1, \dots, \sigma_{k-1} \mid \sigma_i\sigma_j = \sigma_j\sigma_i \; \text{if} \; |i-j|>1, \  \sigma_i\sigma_{i+1}\sigma_i = \sigma_{i+1}\sigma_i\sigma_{i+1}, i = 1, \dots, k-2\right\rangle. 
\]
We have a surjective homomorphism $\pi: \Br_k^{+} \to S_k$ given by $\pi(\sigma_i) = s_i$, where $s_1, \dots, s_{k-1}$ are the simple transpositions in the symmetric group $S_k$. We also have the Demazure product $\delta: \Br_k^{+} \to S_k$, defined inductively by:
\begin{equation}\label{eq:demazure}
\delta(e) = e, \; \delta(\br\sigma_i) = \begin{cases} \delta(\br)s_i & \text{if} \; \delta(\br)s_i > \delta(\br) \\ \delta(\br) & \text{else}, \end{cases}
\end{equation}
Note that, as opposed to $\pi$, the Demazure product $\delta$ is not a morphism of monoids. If $w \in S_k$ we denote by $\lift{w} \in \Br_k^{+}$ the unique lift of minimal length (either under the projection $\pi$ or the Demazure product $\delta$) of $w$ to $\Br_k^{+}$. We will denote by $w_0 \in S_k$ the longest element, and its lift $\lift{w_0}$ will be denoted by $\longest$.  If $v, w \in S_k$ we define their Demazure product by
\begin{equation}\label{eq:demazure-product}
v \dem w := \delta(\lift{v}\cdot\lift{w}).
\end{equation}

We finish this section with the following result.

\begin{lemma}\label{lem:demazure-bounds}
    Let $v, w \in S_k$. The following are equivalent.
    \begin{itemize}
        \item[(a)] $v \dem w = w_0$.
        \item[(b)] $v \geq w_0w^{-1}$ in Bruhat order.
        \item[(c)] $w \geq v^{-1}w_0$ in Bruhat oder.
    \end{itemize}
\end{lemma}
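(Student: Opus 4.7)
The plan is to separate the three-way equivalence into (b)$\Leftrightarrow$(c), which is a formal manipulation with Bruhat order, and (a)$\Leftrightarrow$(b), which invokes the standard maximum characterization of the Demazure product. First I would record two symmetries we will use throughout: inversion $x \mapsto x^{-1}$ preserves Bruhat order on $S_k$, while right (or left) multiplication by $w_0$ reverses it. Then (b)$\Leftrightarrow$(c) follows in two lines:
\[
v \geq w_0 w^{-1} \;\Longleftrightarrow\; v^{-1} \geq ww_0^{-1} = ww_0 \;\Longleftrightarrow\; v^{-1}w_0 \leq w.
\]

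For (a)$\Leftrightarrow$(b), I would invoke the well-known description
\[
v \dem w \;=\; \max_{\leq}\bigl\{v_1 w_1 \,:\, v_1 \leq v,\ w_1 \leq w\bigr\},
\]
where the maximum is taken in Bruhat order. This characterization follows from the subword criterion for Bruhat order applied to the concatenation of reduced expressions for $\lift{v}$ and $\lift{w}$, combined with the greedy recursion \eqref{eq:demazure}, and it is proved by induction on $\ell(w)$ using the lifting property. Granting it, since $w_0$ is the unique Bruhat maximum of $S_k$, the identity $v \dem w = w_0$ is equivalent to the existence of a pair $(v_1, w_1)$ with $v_1 \leq v$, $w_1 \leq w$, and $v_1 w_1 = w_0$. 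Solving the constraint as $v_1 = w_0 w_1^{-1}$ and noting that the map $w_1 \mapsto w_0 w_1^{-1}$ is order-reversing (it is the composition of inversion with left multiplication by $w_0$), the minimum value of $w_0 w_1^{-1}$ as $w_1$ ranges over $\{w_1 \leq w\}$ is attained at $w_1 = w$, where it equals $w_0 w^{-1}$. Hence such a pair exists if and only if $v \geq w_0 w^{-1}$, which is (b).

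The only genuine input is the maximum characterization of the Demazure product; the rest is bookkeeping with Bruhat symmetries. Should one wish to avoid quoting this characterization, the same three-way equivalence can be obtained by a direct induction on $\ell(w)$ from the recursion \eqref{eq:demazure}, carrying (c) as the inductive hypothesis and splitting into the two cases $v\dem(ws_i) = (v\dem w)s_i$ versus $v\dem(ws_i) = v \dem w$ using the lifting property. The main pitfall is purely notational: one must keep careful track of the directions in which inversion and multiplication by $w_0$ act on Bruhat order, and of which of $v, w, v^{-1}, w^{-1}$ is being inverted or translated at each step.
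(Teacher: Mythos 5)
Your proof is correct, but it is organized differently from the paper's. The paper proves (a)$\Leftrightarrow$(b) directly: reading the Demazure product of $\lift{v}\cdot\lift{w}$ greedily from the right, one gets $v\dem w = s_{i_1}\cdots s_{i_k}w$ with $\sigma_{i_1}\cdots\sigma_{i_k}$ a reduced subexpression of $\lift{v}$, so $v\dem w=w_0$ forces $s_{i_1}\cdots s_{i_k}=w_0w^{-1}\leq v$, and conversely a reduced subword for $w_0w^{-1}$ inside $\lift{v}$ concatenates with $\lift{w}$ to give $w_0$ as a subword; the equivalence (a)$\Leftrightarrow$(c) is then done by the mirror-image argument. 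You instead dispose of (b)$\Leftrightarrow$(c) by the formal Bruhat symmetries (inversion preserves the order, multiplication by $w_0$ reverses it), and prove (a)$\Leftrightarrow$(b) by quoting the two-sided maximum characterization $v\dem w=\max\{v_1w_1 : v_1\leq v,\ w_1\leq w\}$, reducing to the pair $w_1=w$, $v_1=w_0w^{-1}$ via the order-reversal of $w_1\mapsto w_0w_1^{-1}$; your reduction argument there is correct in both directions. The trade-off: the paper leans only on the (one-sided) greedy/subword description of the Demazure product, which is essentially what its recursion \eqref{eq:demazure} gives, and repeats the argument for (c); you need only one nontrivial equivalence and get the other by symmetry, but at the cost of invoking the stronger two-sided maximum characterization, which you only sketch. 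Since that characterization is standard (it follows, e.g., from monotonicity of $\dem$ together with $xy\leq x\dem y$, or from the subword criterion as you indicate), this is a citation-level reliance comparable to the paper's own appeal to the subword description, not a genuine gap.
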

\begin{proof}
We show that (a) implies (b). First, we remark that the Demazure product $\delta(\br)$ is the longest reduced word contained in $\br$. Definition \eqref{eq:demazure} is a greedy method to compute it -- another equally effective greedy method is to read the word $\br$ in the opposite direction. That being said, we have $v \dem w = s_{i_1}\cdots s_{i_k}w$, where $\sigma_{i_1}\cdots \sigma_{i_k}$ is a reduced subexpression of $\lift{v}$. If $v \dem w = w_0$, then $s_{i_1}\cdots s_{i_k} = w_0w^{-1}$, so $v \geq w_0w^{-1}$ as needed.
Now we show that (b) implies (a). If $\lift{w_0w^{-1}}$ appears as a reduced subexpression in $\lift{v}$, then a reduced subexpression for $w_0$ appears as a subexpression in $\lift{v}\cdots \lift{w}$, and we obtain $v \dem w = w_0$. This proves that (a) and (b) are equivalent. The equivalence between (a) and (c) is proved similarly. 
\end{proof}

\subsection{Flags and relative position} We will denote by $\Fl_k$ the variety of complete flags in the $k$-dimensional complex vector space $\C^k$. For a matrix $M \in \GL(k)$ with columns $m_1, \dots, m_k \in \C^k$, we define the flag
\[
\CF(M) = \left(\{0\} \subseteq \subs{m_1} \subseteq \subs{m_1,m_2} \subseteq \cdots \subseteq \subs{m_1, \dots, m_{k-1}} \subseteq \subs{m_1, \dots, m_k} = \C^k\right).
\]
The assignment $M \mapsto \CF(M)$ gives rise to the usual identification $\Fl_k = \GL(k)/\borel(k)$, where $\borel(k) \subseteq \GL(k)$ is the subgroup of upper triangular matrices. Note that the group $\GL(k)$ acts on $\Fl_k$ by multiplication on the left, or, equivalently,
\[
g.(\{0\} \subseteq F_1 \subseteq \cdots \subseteq F_{k-1} \subseteq \C^k) = (\{0\} \subseteq g(F_1) \subseteq \cdots \subseteq g(F_{k-1}) \subseteq \C^k),
\]
so that $g.\CF(M) = \CF(gM)$.

We will denote by $e_1, \dots, e_k \in \C^k$ the usual standard basis. Given an element $w \in S_k$, we define the coordinate flag
\[
\CF(w) := (\{0\} \subseteq \subs{e_{w(1)}} \subseteq \subs{e_{w(1)}, e_{w(2)}} \subseteq \cdots \subseteq \subs{e_{w(1)}, \dots, e_{w(k-1)}} \subseteq \C^k),
\]
in particular, we have the standard and antistandard flags:
\[
\CF^{\std} := \CF(e), \qquad \CF^{\ant} := \CF(w_0). 
\]

We say that two flags $\CF$ and $\CF'$ are in (relative) position $w \in S_k$, and write $\CF \relpos{w} \CF'$ if there exists $g \in \GL(k)$ such that $g\CF = \CF^{\std}$ and $g\CF' = \CF(w)$. Such $g$ is uniquely defined up to left multiplication by an element of $\borel(k) \cap w\borel(k)w^{-1}$. 

Note that given two flags $\CF, \CF' \in \Fl_k$ there exists a unique $w \in S_k$ such that $\CF \relpos{w} \CF'$, and such $w$ is determined by
\[
\dim(F_i \cap F'_j) = \#\left(\{1, \dots, i\}\cap\{w(1), \dots, w(j)\}\right).
\]

In particular,% note that \TS{Instead of "note that": we have that two flags are in position $s_i$ given that}
\[
\CF \relpos{s_i} \CF' \; \text{if and only if} \; F_i \neq F'_i \; \text{and} \; F_j = F'_j \; \text{for} \; j \neq i. 
\]

Relative position of flags is closely related to the Bruhat decomposition of the group $\GL(k)$:
\[
\GL(k) = \bigsqcup_{w \in S_k} \borel(k)w\borel(k).
\]
Indeed, note that if $M, N \in \GL(k)$ then $\CF(M) \relpos{w} \CF(M')$ if and only if there exist upper triangular matrices $U_1, U_2 \in \borel(k)$ and $g \in \GL(k)$ such that $gM = U_1, gM' = wU_2$, and this is in turn equivalent to $M^{-1}M \in \borel(k)w\borel(k).$ The Bruhat decomposition satisfies the following multiplicative property. If $w \in S_k$ and $i = 1, \dots, k-1$, we have
\begin{equation}\label{eq:bruhat-dec}
(\borel w \borel)(\borel s_i\borel) = \begin{cases} \borel ws_i\borel & \; \text{if} \; \ell(ws_i) > \ell(w), \\ (\borel ws_i\borel)\sqcup (\borel w \borel) & \text{else}.\end{cases}
\end{equation}

This implies the following standard lemma that we will use repeatedly. For a proof, see e.g. \cite[Lemma 3.2]{CGGLSS}.

\begin{lemma}\label{lem:dec-rel-pos}
Let $\CF, \CF' \in \Fl_k$ and $w \in S_k$. The following are equivalent:
\begin{enumerate}
\item $\CF \relpos{w} \CF'$.
\item There exist a reduced decomposition $w = s_{i_1}\cdots s_{i_r}$ and flags $\CF^1, \dots, \CF^{r-1}$ such that
\[
\CF \relpos{s_{i_1}} \CF^1 \relpos{s_{i_2}} \cdots \relpos{s_{i_{r-1}}} \CF^{r-1} \relpos{s_{i_r}} \CF'. 
\]
\item For any reduced decomposition $w = s_{i_1}\cdots s_{i_r}$ there exist flags $\CF^1, \dots, \CF^{r-1}$ such that
\[
\CF \relpos{s_{i_1}} \CF^1 \relpos{s_{i_2}} \cdots \relpos{s_{i_{r-1}}} \CF^{r-1} \relpos{s_{i_r}} \CF'. 
\]
\end{enumerate}
Moreover, given a reduced decomposition of $w$  the flags $\CF^1, \dots, \CF^{r-1}$ in (3) are unique. %\EG{unique for a given reduced decomposition?}
\end{lemma}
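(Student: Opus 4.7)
The plan is to reduce everything to statements about the Bruhat decomposition of $\GL(k)$, exploiting the characterization that $\CF(M) \relpos{w} \CF(M')$ if and only if $M^{-1}M' \in \borel w \borel$, which was recalled just before the lemma.

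First, I would handle the easy implications. The implication (3) $\Rightarrow$ (2) is immediate (pick any reduced decomposition of $w$; one exists). To prove (2) $\Rightarrow$ (1), I would argue by induction on $r$, the length of the reduced word. Given $M, M^1, \dots, M^{r-1}, M'$ representing the flags in (2), so that $(M^j)^{-1}M^{j+1} \in \borel s_{i_{j+1}} \borel$ (with the convention $M^0 = M$, $M^r = M'$), taking the product $M^{-1}M' = (M^{-1}M^1)(M^1)^{-1}M^2)\cdots((M^{r-1})^{-1}M')$ lands in the product $(\borel s_{i_1}\borel)\cdots(\borel s_{i_r}\borel)$. Because the word $s_{i_1}\cdots s_{i_r}$ is reduced, iterating \eqref{eq:bruhat-dec} shows this product equals $\borel w \borel$, giving $\CF \relpos{w} \CF'$.

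For (1) $\Rightarrow$ (3), I would again use induction on $\ell(w) = r$. Fix a reduced decomposition $w = s_{i_1}\cdots s_{i_r}$; then $w' := ws_{i_r} = s_{i_1}\cdots s_{i_{r-1}}$ satisfies $\ell(w') = r-1$ and $\ell(w's_{i_r}) > \ell(w')$. By \eqref{eq:bruhat-dec} we have $\borel w \borel = (\borel w' \borel)(\borel s_{i_r}\borel)$. Writing $M^{-1}M' = b_1 w' b_2 \cdot b_3 s_{i_r} b_4$ for some $b_j \in \borel$, the flag $\CF^{r-1} := \CF(Mb_1 w' b_2 b_3)$ satisfies $\CF \relpos{w'} \CF^{r-1}$ and $\CF^{r-1} \relpos{s_{i_r}} \CF'$. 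Applying the inductive hypothesis to $\CF \relpos{w'} \CF^{r-1}$ with the reduced word $s_{i_1}\cdots s_{i_{r-1}}$ furnishes the remaining intermediate flags $\CF^1, \dots, \CF^{r-2}$.

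For the uniqueness claim, I would proceed by reverse induction on the index $j$, using the observation that $\CF^{r-1}$ is uniquely determined by $\CF$ and $\CF'$ together with the reduced word: the condition $\CF^{r-1} \relpos{s_{i_r}} \CF'$ forces $F^{r-1}_\ell = F'_\ell$ for all $\ell \neq i_r$, and the remaining piece $F^{r-1}_{i_r}$ is pinned down by $\CF \relpos{w'} \CF^{r-1}$, since $w'$ has strictly shorter length so its $i_r$-th component must coincide with a specific subspace built from $\CF$ via the dimension formula for relative position recalled earlier. Iterating this argument from right to left pins down each $\CF^j$. I expect this uniqueness step to be the trickiest to write carefully; everything else follows cleanly from \eqref{eq:bruhat-dec} and bookkeeping.
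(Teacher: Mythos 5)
Your proposal is correct in substance, but note that the paper itself does not prove this lemma: it is stated as a standard fact with a citation to \cite[Lemma 3.2]{CGGLSS}, so there is no internal proof to compare against. Your route — translating relative position into the Bruhat decomposition via $M^{-1}M' \in \borel w \borel$ and using the multiplicativity rule \eqref{eq:bruhat-dec} for reduced products — is exactly the standard argument behind the cited reference, and your treatment of (3) $\Rightarrow$ (2), (2) $\Rightarrow$ (1) and the existence half of (1) $\Rightarrow$ (3) is complete and correct.

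The only place where you stop short is the uniqueness step, which you yourself flag as only sketched. It does go through, but two small points deserve care. First, the distinguished subspace is built from \emph{both} flags, not from $\CF$ alone: writing $j = i_r$ and $w' = w s_{j}$, the constraints $G_\ell = F'_\ell$ for $\ell \neq j$ together with $\CF \relpos{w'} \CG$ force $G_j = F'_{j-1} + \left(F_{c} \cap F'_{j+1}\right)$ with $c = w'(j)$; one sees this by passing to the two-dimensional quotient $V = F'_{j+1}/F'_{j-1}$, on which $\CF$ induces a filtration jumping at $c = w'(j)$ and $d = w'(j+1)$, and checking via the dimension formula that the line $G_j/F'_{j-1}$ must equal the one-dimensional step of that filtration. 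Second, this is precisely where reducedness enters: $\ell(w's_j) > \ell(w')$ is equivalent to $c = w'(j) < w'(j+1) = d$, which is what guarantees that the forced line differs from $F'_j/F'_{j-1}$ (whose jump is at $d$), so the condition $\CG \relpos{s_j} \CF'$ is automatically compatible and the intermediate flag is unique. With $\CF^{r-1}$ pinned down, your right-to-left iteration, using the already-proved implication (2) $\Rightarrow$ (1) to know $\CF \relpos{w'} \CF^{r-1}$, completes the uniqueness claim by induction on length.
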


%\EG{Added this, not sure if we need to make it a formal remark.}
The set $\Gamma_w$ of pairs of flags $(\CF,\CF')$ such that $\CF\relpos{w}\CF'$ is a locally closed subvariety of $\Fl_k\times \Fl_k$. The intermediate flags $\CF^1,\ldots,\CF^r$ depend on  $(\CF,\CF')$ algebraically, that is, define $(r-1)$ regular maps from $\Gamma_w$ to $\Fl_k$.

%\EG{Added this}
\begin{corollary}
\label{cor: length additive}
(a) Suppose that $w=w_1w_2$ where $w,w_1,w_2\in S_k$, and $\ell(w)=\ell(w_1)+\ell(w_2)$. Then $\CF\relpos{w}\CF'$ if and only if there exists a flag $\CF''$ such that 
$$
\CF\relpos{w_1}\CF''\relpos{w_2}\CF'
$$
In this case, $\CF''$ is unique. \\

(b) Assume that we have $\CF \relpos{w_1} \CF' \relpos{w_2} \CF''$. Then, $\CF \relpos{w} \CF''$ with $w \leq w_1\dem w_2$, where $w_1\dem w_2$ is defined by \eqref{eq:demazure-product}.
\end{corollary}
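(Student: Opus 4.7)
For part (a), my plan is to apply Lemma \ref{lem:dec-rel-pos} directly. Since $\ell(w) = \ell(w_1) + \ell(w_2)$, concatenating reduced decompositions $w_1 = s_{i_1}\cdots s_{i_{r_1}}$ and $w_2 = s_{i_{r_1+1}}\cdots s_{i_r}$ yields a reduced decomposition $w = s_{i_1}\cdots s_{i_r}$. If $\CF \relpos{w} \CF'$, then Lemma \ref{lem:dec-rel-pos}(3) produces a unique chain of flags $\CF = \CF^0, \CF^1, \ldots, \CF^r = \CF'$ with $\CF^{j-1} \relpos{s_{i_j}} \CF^j$, and setting $\CF'' := \CF^{r_1}$ gives the desired decomposition by Lemma \ref{lem:dec-rel-pos}(2). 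Conversely, given any $\CF''$ with $\CF \relpos{w_1} \CF''$ and $\CF'' \relpos{w_2} \CF'$, Lemma \ref{lem:dec-rel-pos}(2) applied to $w_1$ and $w_2$ separately and concatenated produces a chain realizing the reduced decomposition $s_{i_1}\cdots s_{i_r}$ of $w$, which by the implication $(2) \Rightarrow (1)$ of that lemma forces $\CF \relpos{w} \CF'$. Uniqueness of $\CF''$ then follows from the uniqueness in Lemma \ref{lem:dec-rel-pos}(3): any candidate $\CF''$ must sit at position $r_1$ in the unique intermediate sequence for the chosen reduced decomposition of $w$.

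For part (b), the plan is to induct on $\ell(w_2)$. The base case $\ell(w_2)=0$ is trivial. For the inductive step, write $w_2 = w_2' s_j$ with $\ell(w_2') = \ell(w_2) - 1$. By part (a) there is a flag $\widetilde{\CF}$ with $\CF' \relpos{w_2'} \widetilde{\CF} \relpos{s_j} \CF''$, and by induction $\CF \relpos{v} \widetilde{\CF}$ for some $v \leq w_1 \dem w_2'$. To determine the position $w$ from $\CF$ to $\CF''$, I translate to matrices: if $\CF = \CF(M), \widetilde{\CF} = \CF(P), \CF'' = \CF(N)$, then $M^{-1}N = (M^{-1}P)(P^{-1}N) \in (\borel v \borel)(\borel s_j \borel)$. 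By \eqref{eq:bruhat-dec}, this product equals $\borel vs_j \borel$ or $\borel vs_j \borel \sqcup \borel v \borel$, so in either case $w \in \{v, vs_j\} \subseteq \{u : u \leq v \dem s_j\}$. Combined with the monotonicity $v \dem s_j \leq (w_1 \dem w_2') \dem s_j = w_1 \dem w_2$, this closes the induction.

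The main potential obstacle is the Bruhat monotonicity of the Demazure product: if $u \leq v$ and $s$ is a simple reflection, then $u \dem s \leq v \dem s$. This is a standard consequence of the lifting property of Bruhat order, verified by splitting into the four cases according to whether $us$ and $vs$ are longer or shorter than $u$ and $v$, respectively. Beyond this, the argument rests only on Lemma \ref{lem:dec-rel-pos} and the multiplicative rule \eqref{eq:bruhat-dec} for Bruhat cells, both of which are in hand.
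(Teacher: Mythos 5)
Your proof is correct and follows essentially the same route as the paper: part (a) is deduced from Lemma \ref{lem:dec-rel-pos}, and part (b) from translating relative position into Bruhat cosets and applying the multiplication rule \eqref{eq:bruhat-dec}. The only difference is that the paper compresses part (b) into one line ($M^{-1}M'' \in (\borel w_1\borel)(\borel w_2\borel)$, then cite \eqref{eq:bruhat-dec}), whereas you make explicit the induction on $\ell(w_2)$ and the Demazure-product monotonicity needed to justify that step — a useful elaboration, not a different argument.
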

\begin{proof}
Part (a) is immediate from Lemma \ref{lem:dec-rel-pos}. For part (b), assume $\CF = \CF(M)$, $\CF' = \CF(M')$ and $\CF'' = \CF(M'')$. So we have $M^{-1}M' \in \borel w_1\borel$ and $(M')^{-1}M'' \in \borel w_2\borel$. Now $M^{-1}M'' \in (\borel w_1\borel)(\borel w_2\borel)$ and the result follows from \eqref{eq:bruhat-dec}.
\end{proof}

\subsection{Transverse flags} We will say that two flags $\CF$ and $\CF'$ are transverse and write  $\CF \pitchfork \CF'$ if $\CF \relpos{w_0} \CF'$. %we will write to express the fact that $\CF$ and $\CF'$ are transverse. 
Note that
\[
\CF \pitchfork \CF' \; \text{if and only if} \; F_i \cap F'_{k-i} = \{0\} \; \text{for all} \; i, \; \text{if and only if} \; F_i + F'_{k-i} = \C^k \; \text{for all} \; i. 
\]

\begin{lemma}\label{lem:gen-LU-dec}
    Let $M$ be a nonsingular matrix and $w \in S_k$. Then, $\CF(M) \pitchfork \CF(w)$ if and only if the matrix $M$ admits a decomposition of the form
    \[
    M = ww_0LU,
    \]
    where $L$ is lower-triangular and $U$ is upper-triangular. Such decomposition is unique upon requiring that $L$ has $1$'s on the diagonal.
\end{lemma}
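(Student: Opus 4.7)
The strategy is to translate the transversality condition $\CF(M) \pitchfork \CF(w)$ into a Bruhat-type condition on matrices and then manipulate it into the required factorization using the well-known fact that conjugation by $w_0$ swaps upper- and lower-triangular matrices.

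First, I would recall that $\CF(M) \pitchfork \CF(w)$ means $\CF(M) \relpos{w_0} \CF(w)$, which by the relation between relative position and the Bruhat decomposition stated right after \eqref{eq:bruhat-dec} is equivalent to $M^{-1}w \in \borel w_0 \borel$. Writing $M^{-1}w = U_1 w_0 U_2$ with $U_1, U_2 \in \borel$ upper-triangular, I would solve for $M$:
\[
M \;=\; w\, U_2^{-1}\, w_0^{-1}\, U_1^{-1} \;=\; w w_0\, \bigl(w_0 U_2^{-1} w_0\bigr)\, U_1^{-1}.
\]
The key observation is that conjugation by $w_0$ sends upper-triangular matrices to lower-triangular ones, since $(w_0 U w_0)_{ij} = U_{k+1-i,\,k+1-j}$. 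Thus $L := w_0 U_2^{-1} w_0$ is lower-triangular and $U := U_1^{-1}$ is upper-triangular, giving the factorization $M = ww_0 LU$.

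For the converse, I would start from $M = ww_0 LU$ and compute
\[
M^{-1} w \;=\; U^{-1} L^{-1} w_0^{-1} w^{-1} w \;=\; U^{-1}\, w_0\, \bigl(w_0 L^{-1} w_0\bigr),
\]
which lies in $\borel w_0 \borel$ since $w_0 L^{-1} w_0$ is upper-triangular. This shows $\CF(M) \pitchfork \CF(w)$.

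For uniqueness, suppose $ww_0 LU = ww_0 L'U'$ with $L,L'$ lower-triangular and having $1$'s on the diagonal, and $U,U'$ upper-triangular. Then $(L')^{-1}L = U'U^{-1}$ is simultaneously lower- and upper-triangular, hence diagonal; since $(L')^{-1}L$ has only $1$'s on the diagonal, it equals the identity, so $L = L'$ and consequently $U = U'$. The only real care required in the whole argument is the bookkeeping with $w_0$-conjugation and the direction of inverses, which is the step where sign/index errors most easily creep in.
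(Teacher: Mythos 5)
Your proof is correct and follows essentially the same route as the paper's: both translate $\CF(M)\pitchfork\CF(w)$ into the Bruhat condition (the paper solves for the translating element $g$ with $gM\in\borel$, $gw\in w_0\borel$, which is the same as your coset criterion $M^{-1}w\in\borel w_0\borel$), then extract $M=ww_0LU$ via $w_0$-conjugation, and reduce uniqueness to that of the LU-decomposition. The only cosmetic difference is that you prove the uniqueness step directly rather than citing uniqueness of LU, which is fine.
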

\begin{proof}
    We have that $\CF(M) \pitchfork \CF(w)$ if and only if there exists an element $g \in \GL(k)$ such that $gM \in \borel$ and $gw \in w_0\borel$. If such an element exists, then for some $U_1,U_2\in \borel$ we get $g = w_0U_1w^{-1}$   and $M = g^{-1}U_2 = (w_0U_1w^{-1})^{-1}U_2 = wU_1^{-1}w_0U_2 = ww_0(w_0U_1^{-1}w_0)U_2$, so such a decomposition of $M$ exists with $L=w_0U_1^{-1}w_0$. %
    %\SK{Isn't this $U_1^{-1}$ instead of $U_1$?} \JS{yes, thanks!}
    Conversely, if $M = ww_0LU$ then setting $g = L^{-1}w_0w^{-1}$ we have that $gM = U \in \borel$ and $gw = L^{-1}w_0 \in w_0\borel$. Finally, the uniqueness claim follows from the uniqueness of the LU-decomposition. 
\end{proof}

\subsection{Braid matrices}

For $i = 1, \dots, k-1$ and a formal variable $z$ we define the matrix $B_i(z)$ to be
\[
B_i(z) = \begin{pmatrix}1&\cdots&&&\ldots&0\\ \vdots&\ddots&&&&\vdots\\ 0&\cdots&z&-1&\cdots&0\\ 0&\cdots&1&0&\cdots&0\\ \vdots&&&&\ddots&\vdots\\ 0&\cdots&&&\cdots&1\end{pmatrix}
\]
where the non-identity part of $B_i(z)$ is located in the $i$-th and $(i+1)$-st row and columns.

The matrix $B_i(z)$ is important for us due to the following well-known (and easy) lemma. 

\begin{lemma}
\label{lem: relpos via matrices}
Let $M \in \GL(n)$ be a nondegenerate matrix and let $i = 1, \dots, n-1$ Then, $\CF \relpos{s_i} \CF(M)$ if and only if there exists a (necessarily unique) $z \in \C$ such that $\CF = \CF(MB_i(z))$. 
\end{lemma}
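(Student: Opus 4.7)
\emph{Plan.} The strategy is a direct matrix computation. The key preliminary observation is that $\CF(N) = \CF(N')$ if and only if $N$ and $N'$ differ by right multiplication by an element of $\borel(k)$, since right multiplication by an upper triangular matrix corresponds to a column change-of-basis that preserves each successive span. In particular, to prove the ``if'' direction of the lemma, it suffices (by the Bruhat characterization of relative position recalled just before Lemma \ref{lem:dec-rel-pos}) to factor $B_i(z)^{-1}$ as an element of $\borel s_i \borel$. I would do this via the explicit decomposition of the non-identity $2\times 2$ block:
\[
\begin{pmatrix} z & -1 \\ 1 & 0 \end{pmatrix} = \begin{pmatrix} 1 & z \\ 0 & 1 \end{pmatrix}\begin{pmatrix} -1 & 0 \\ 0 & 1 \end{pmatrix} s_i,
\]
which exhibits $B_i(z) \in \borel s_i \borel$ and hence $B_i(z)^{-1} \in \borel s_i \borel$.

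For the converse and uniqueness, I would argue via the description of the intermediate flag. Let $c_1, \dots, c_k$ denote the columns of $M$; a direct computation shows that the columns of $MB_i(z)$ are
\[
c_1, \dots, c_{i-1},\; zc_i + c_{i+1},\; -c_i,\; c_{i+2}, \dots, c_k.
\]
From this description one reads off that $\CF(MB_i(z))_j = \langle c_1, \dots, c_j\rangle = F_j$ for every $j \neq i$, while
\[
\CF(MB_i(z))_i = F_{i-1} \oplus \langle zc_i + c_{i+1}\rangle,
\]
which is distinct from $F_i = F_{i-1} \oplus \langle c_i\rangle$ because $c_{i+1} \notin F_i$. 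This alone confirms $\CF(MB_i(z)) \relpos{s_i} \CF(M)$, giving the ``if'' direction more concretely than the Bruhat-theoretic argument.

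Conversely, suppose $\CF \relpos{s_i} \CF(M)$. Then $\CF$ agrees with $\CF(M)$ in every position $j \neq i$, so $\CF$ is determined by the single datum of its $i$-th piece $F'_i$, which is an $i$-dimensional subspace satisfying $F_{i-1} \subset F'_i \subset F_{i+1}$ and $F'_i \neq F_i$. Passing to the quotient $F_{i+1}/F_{i-1}$, which has basis $\{\bar c_i, \bar c_{i+1}\}$, the line $F'_i/F_{i-1}$ is a $1$-dimensional subspace distinct from $\langle \bar c_i\rangle$, hence can be written uniquely as $\langle z\bar c_i + \bar c_{i+1}\rangle$ for a unique scalar $z \in \C$. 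By the formula above this $z$ is precisely the one for which $\CF = \CF(MB_i(z))$.

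There is no serious obstacle: the entire argument is a routine verification, and the only subtlety is keeping track of the sign conventions in the definition of $B_i(z)$ (which affect the factorization in the first paragraph but not the final parameterization, since uniqueness of $z$ is forced by the quotient argument).
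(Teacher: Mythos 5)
Your proof is correct. The paper states this lemma without proof, calling it well-known and easy, so there is nothing to compare against; your argument supplies exactly the routine verification that is being omitted. The column computation (the columns of $MB_i(z)$ are $c_1,\dots,c_{i-1},\,zc_i+c_{i+1},\,-c_i,\,c_{i+2},\dots,c_k$) together with the paper's explicit criterion that $\CF \relpos{s_i} \CF'$ means $F_j = F'_j$ for $j \neq i$ and $F_i \neq F'_i$ settles the ``if'' direction, and your quotient argument in $F_{i+1}/F_{i-1}$ gives both existence and uniqueness of $z$ in the converse; the opening Bruhat factorization of $B_i(z)$ is a valid but redundant second route to the ``if'' direction.
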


%\EG{Maybe mention relative position $w$ via matrices? Not sure we need it}

The following lemma is also easy to check, see \cite[Corollary 3.9]{CGGLSS} and \cite[Lemma 2.20]{CGGS}.

\begin{lemma}
\label{lem: push right}
If $U$ is an upper-triangular matrix and $z\in \C$ then there exist unique upper-triangular matrix $U'$ and $z'\in \C$ such that 
$$
UB_i(z)=B_i(z')U'
$$
Furthermore, the diagonal entries of $U'$ are permuted from the ones of $U$ by $s_i$.
\end{lemma}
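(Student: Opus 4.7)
The proof is a direct matrix computation, exploiting the fact that $B_i(z)$ and $B_i(z')$ differ from the identity only in the $2\times 2$ block on rows and columns $i, i+1$. Right multiplication by $B_i(z)$ modifies only columns $i, i+1$ of $U$, while left multiplication by $B_i(z')$ modifies only rows $i, i+1$ of $U'$. Hence any candidate $U'$ must agree with $U$ on all rows and columns outside $\{i, i+1\}$; in particular $U'_{j,j} = U_{j,j}$ for $j \neq i, i+1$, and upper-triangularity outside this block is automatic from that of $U$.

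Next I would compute $M := UB_i(z)$ explicitly. Using $U_{i+1, i} = 0$, one checks that $M$ is upper-triangular except at position $(i+1, i)$, where $M_{i+1, i} = U_{i+1, i+1}$. Restricting the equation $UB_i(z) = B_i(z')U'$ to the $(i, i+1)$-block reduces to the identity
\[
\begin{pmatrix} U_{i,i}z + U_{i, i+1} & -U_{i,i} \\ U_{i+1, i+1} & 0 \end{pmatrix} = \begin{pmatrix} z' U'_{i,i} & z' U'_{i, i+1} - U'_{i+1, i+1} \\ U'_{i,i} & U'_{i, i+1} \end{pmatrix}.
\]
Matching entries forces $U'_{i, i+1} = 0$, $U'_{i, i} = U_{i+1, i+1}$, $U'_{i+1, i+1} = U_{i, i}$, together with the scalar equation $z' U_{i+1, i+1} = z U_{i,i} + U_{i, i+1}$. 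Under the standing invertibility assumption on $U$ (which is implicit in the setting $U \in \borel$ where the lemma is applied), $U_{i+1, i+1} \neq 0$, so this determines $z'$ uniquely. The remaining off-block entries of $U'$ in rows $i, i+1$ (at columns $c > i+1$) are then fixed by $U'_{i, c} = U_{i+1, c}$ and $U'_{i+1, c} = z' U_{i+1, c} - U_{i, c}$.

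The assertion that the diagonal of $U'$ is obtained from that of $U$ by the transposition $s_i$ follows immediately from the formulas $U'_{i,i} = U_{i+1, i+1}$, $U'_{i+1, i+1} = U_{i, i}$, and $U'_{j,j} = U_{j,j}$ for $j \neq i, i+1$. There is no conceptual obstacle; the entire lemma is a mechanical calculation in a single $2 \times 2$ block, and the only care required is bookkeeping of indices to confirm that the local computation is compatible with the trivial action of the $B_i$'s outside the block.
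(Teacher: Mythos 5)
Your computation is correct, and it is exactly the standard direct verification that the paper itself omits (it only cites \cite[Corollary 3.9]{CGGLSS} and \cite[Lemma 2.20]{CGGS} for this lemma): everything reduces to the $2\times 2$ block in rows and columns $i,i+1$, yielding $U'_{i,i}=U_{i+1,i+1}$, $U'_{i+1,i+1}=U_{i,i}$, $U'_{i,i+1}=0$ and $z'U_{i+1,i+1}=zU_{i,i}+U_{i,i+1}$. You also correctly flagged the only delicate point, namely that solving for $z'$ needs $U_{i+1,i+1}\neq 0$, which holds because in all uses $U\in\borel(k)$ is invertible.
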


\subsection{Minors}
\label{subsec:minors}

We  use notation $[i]=\{1,\ldots,i\}$ and $u[i]=\{u(1),\ldots,u(i)\}$ for $u\in S_k$. 
If $I, J \subseteq [k]$ are sets of the same cardinality and $M \in \GL(k)$, we denote by  $\minor_{I, J}(M)$ %\TS{What is y here? Should it say ``we denote $\minor_{I,J}(M)$ as the determinant ..."?} \JS{'y' was supposed to be 'by', thanks!}
the determinant of the $|I| \times |J|$-submatrix of $M$ obtained by deleting the rows (resp. columns) not belonging to $I$ (resp. $J$). For $i = 1, \dots, k$, we have the principal minor
\begin{equation}\label{eq:minors-transversality}
\minor_{i}(M) := \minor_{[i], [i]}(M),
\end{equation}
i.e., $\minor_{i}(M)$ is the determinant of the upper-left justified $i \times i$-submatrix of $M$. Thus, for example, $\minor_1(M) = m_{11}$ and $\minor_k(M) = \det(M)$. It is a classical result that a matrix admits an $LU$ decomposition if and only if all its principal minors are nonzero. 

More generally, we have the following.

\begin{lemma}
\label{lem: relpos w minors}
a) For $w \in S_k$ and $M \in \GL(k)$  we have that $\CF(M) \pitchfork \CF(w)$ if and only if the minors %\EG{I am worried by $w$ vs $w^{-1}$ here: if we rewrite Lemma \ref{lem:gen-LU-dec} as $w_0w^{-1}M=LU$ then LHS is governed by row operations for $w^{-1}$. Or $w$?} \ \JS{the way I see this we have the following: since $\CF(M)$ is transverse to $\CF(w)$, the space generated by $m_1$ and $e_{w(1)}, \dots, e_{w(k-1)}$ is $\C^k$, so $m_1$ cannot be in the space generated by $e_{w(1)}, \dots, e_{w(k-1)}$, i.e. the $w(k)$-component of $m_1$ is nonzero, and this is $w(k)$ row, $1$-st column of $M$. Similarly, the space generated by $m_1, m_2$ and $e_{w(1)}, \dots, e_{w(k-2)}$ is $\C^k$, and this gives $\Delta_{\{w(k), w(k-1)\}, \{1,2\}}(M)$ is nonzero, and so on. I think the point is that $\Delta_{I, J}(wM) = sign(w)\Delta_{w^{-1}I, J}(M)$, so in the equation $w_0w^{-1}(M) = LU$ we have $0 \neq \Delta_{[i], [i]}(w_0w^{-1}M) = \Delta_{ww_0[i], [i]}(M)$, in agreement with the equation below. }
\[
\minor_{ww_0[i], [i]}(M) = (-1)^{\ell(w)}\minor_{[i],[i]}(w_0w^{-1}M)
\]
are nonzero for all $i = 1, \dots, k$.   

b) If $\CF^{\std} \xrightarrow{w} \CF(M)$, then the minors $\minor_{w[i], [i]}(M)$ are nonzero for  $i = 1, \dots, k$. %\JS{this is absolutely not true -- only one direction is correct. If $\F^{\std} \relpos{w} \F(M)$ then the minors are nonzero, but the converse is false. For example, take $w = e$. $\F^{\std} \relpos{e} \CF(M)$ if and only if $M$ is upper triangular, and this implies that the principal minors of $M$ are nonzero. But the converse is not true. The error in the proof stems from the fact that $\CF^{\std} \relpos{w} \CF(M)$ is not equivalent to $\CF(ww_0) \pitchfork \CF(M)$.\\
%So, if we have an element in the (left-to-right) inductive torus, we know that some minors are nonvanishing. The converse direction is also correct, but it is subtler. It must be proven inductively, the point is that if we have $\F^{\std} \relpos{w} \CF^1 \relpos{s_i} \CF^2$ then the relative position $\F^{\std} \to \CF^2$ is either $w$ or $ws_{i}$. (in other words,
%\[
%(\borel w\borel)(\borel s_i \borel) = \begin{cases} \borel ws_i \borel & if \ell(ws_i) > \ell(w) \\ (\borel ws_i \borel)\sqcup (\borel w \borel) & if \ell(ws_i) < \ell(w) \end{cases}
%\]
%which is classical). But this is just motivation so I don't want to get into too many details in this paper, unless you really want me to.

%Another point of view, that may be easier to get into: if the minors are nonzero, then we cannot have $\CF^{\std} \relpos{v} \CF(M)$ for some $v < w$ -- because then the distance between $\CF(ww_0)$ and $\CF(M)$ would be less than $w_0$}

c) If $\minor_{w_[i], [i]}(M) \neq 0$ for every $i = 1, \dots, k$, and $\CF^{\std} \relpos{v} \CF(M)$, then $v \not< w$. 
\end{lemma}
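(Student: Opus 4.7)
For part (a), the plan is to invoke Lemma \ref{lem:gen-LU-dec}: the transversality $\CF(M) \pitchfork \CF(w)$ holds if and only if $M = ww_0 LU$ with $L$ lower-triangular and $U$ upper-triangular, equivalently if and only if $w_0 w^{-1} M$ admits an LU decomposition. The latter is the classical criterion that all leading principal minors $\minor_{[i],[i]}(w_0 w^{-1} M)$ be nonzero. Left multiplication by the permutation matrix of $w_0 w^{-1}$ places row $ww_0(j)$ of $M$ at position $j$; reordering rows $ww_0(1), \ldots, ww_0(i)$ in increasing order to form $\minor_{ww_0[i], [i]}(M)$ costs the sign of a row permutation, yielding the identity between the two expressions in the statement up to the indicated $\pm$.

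For part (b), I will use the equivalence $\CF^{\std} \relpos{w} \CF(M)$ iff $M \in \borel w \borel$ and write $M = b_1 w b_2$ with $b_1, b_2$ invertible upper-triangular. Two successive applications of the Cauchy--Binet formula to $\minor_{w[i], [i]}(b_1 w b_2)$ each collapse to a single term: the outer sum forces the middle index set to be $[i]$ because $b_2$ is upper-triangular, and the inner sum then forces it to be $w[i]$ because $w$ is a permutation matrix. This yields
\[
\minor_{w[i], [i]}(M) \;=\; \pm\,\minor_{w[i], w[i]}(b_1)\prod_{j=1}^{i}(b_2)_{jj} \;=\; \pm\prod_{j \in w[i]}(b_1)_{jj}\prod_{j=1}^{i}(b_2)_{jj},
\]
which is nonzero because $b_1, b_2$ are invertible (and $\minor_{w[i], w[i]}$ of an upper-triangular matrix is the product of diagonal entries at those positions).

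For part (c), the same strategy applied to a factorization $M = g v U$ with $g, U$ invertible upper-triangular produces
\[
\minor_{w[i], [i]}(M) \;=\; \pm\,\minor_{w[i], v[i]}(g)\prod_{j=1}^{i}U_{jj}.
\]
Suppose for contradiction that $v < w$. By the tableau (Ehresmann) criterion for Bruhat order, $v \leq w$ is equivalent to the componentwise inequality of the increasing rearrangements of $v[i]$ and $w[i]$ for every $i$; since $v \neq w$, this domination must fail in the opposite direction for some $i$, giving an index $a$ with $v_a < w_a$. For this $i$, any nonzero contribution to $\minor_{w[i], v[i]}(g)$ requires a bijection $w[i] \to v[i]$ sending each row index to a column index not smaller than it, because $g$ is upper-triangular; a Hall-type matching argument shows that such a bijection exists iff the sorted $v$-sequence dominates the sorted $w$-sequence componentwise, which fails by our choice of $i$. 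Hence $\minor_{w[i], v[i]}(g) = 0$, contradicting the hypothesis. The main technical ingredient is this translation between the Bruhat order and the combinatorial matching condition governing nonvanishing of minors of upper-triangular matrices.
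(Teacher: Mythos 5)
Your proposal is correct, and for parts (b) and (c) it takes a genuinely different route from the paper. Part (a) is the same argument in both: Lemma \ref{lem:gen-LU-dec} plus the classical criterion that an LU decomposition exists iff all leading principal minors are nonzero (you treat the exact sign $(-1)^{\ell(w)}$ only loosely, but the nonvanishing equivalence is all that is ever used). For (b) and (c) the paper stays at the level of flags and deduces everything from (a): for (b) it composes $\CF(ww_0)\relpos{w_0w^{-1}}\CF^{\std}\relpos{w}\CF(M)$ and uses length-additivity (Corollary \ref{cor: length additive}(a)) to get $\CF(ww_0)\pitchfork\CF(M)$; for (c) it uses the Demazure-product bound of Corollary \ref{cor: length additive}(b), so that transversality forces $(w_0w^{-1})\dem v=w_0$, which is incompatible with $v<w$ by Lemma \ref{lem:demazure-bounds}. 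You instead argue directly in coordinates: write $M=b_1wb_2$ (resp.\ $M=gvU$) from the Bruhat cell, collapse the minor via two applications of Cauchy--Binet to $\pm\minor_{w[i],w[i]}(b_1)\prod_j (b_2)_{jj}$ (resp.\ $\pm\minor_{w[i],v[i]}(g)\prod_j U_{jj}$), and then for (c) combine the Hall-type criterion for nonvanishing of a minor of an upper-triangular matrix with the Ehresmann tableau criterion for Bruhat order; in fact your argument yields the slightly stronger conclusion $w\le v$ (which the paper's argument also implicitly gives via Lemma \ref{lem:demazure-bounds}). The trade-off is that your route is self-contained linear algebra but imports the tableau criterion as an external combinatorial fact, whereas the paper's route is shorter given its already-established machinery on relative position and Demazure products and stays in the language used throughout the rest of the paper.
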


\begin{proof}
Part (a) follows from  Lemma \ref{lem:gen-LU-dec} and \eqref{eq:minors-transversality}. 

For part (b) observe that the condition $\CF^{\std} \xrightarrow{w} \CF(M)$ implies
\[
\CF(ww_0)\relpos{w_0w^{-1}} \CF^{\std} \relpos{w} \CF(M)
\]
so by Corollary \ref{cor: length additive}(a) we get $\CF(ww_0) \pitchfork \CF(M)$. Now the statement follows from (a). 

For part (c), we have that $\CF(ww_0)\pitchfork \CF(M)$ and $\CF(ww_0) \relpos{w_0w^{-1}} \CF^{\std} \relpos{v} \CF(M)$. By Corollary \ref{cor: length additive}(b) and using the fact that $w_0$ is the longest element in $S_k$, we have $w_0 = (w_0w^{-1}) \dem v$. But if $v < w$ then $(w_0w^{-1}) \dem v < w_0$, so we obtain $v \not< w$. 
\end{proof}

\begin{definition}
For $w \in S_k$ we define the subset
\[
\CU(w) := \{\CF \in \Fl_k \mid \CF \pitchfork \CF(w)\} \subseteq \Fl_k.
\]
\end{definition} 

\begin{lemma}\label{lem:open-cover-flag-variety}
    For all $w \in S_k$, the subset $U(w)$ is open in $\Fl_k$ and moreover
    \[
    \Fl_k = \bigcup_{w \in S_k}\CU(w).
    \]
\end{lemma}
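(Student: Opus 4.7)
Both claims follow essentially from Lemma \ref{lem: relpos w minors}(a), which characterizes $\CF(M) \in \CU(w)$ as the simultaneous non-vanishing of the minors $\minor_{ww_0[i], [i]}(M)$ for $i = 1, \ldots, k$. The plan is to leverage this minor description directly.

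For openness, I would observe that each function $M \mapsto \minor_{ww_0[i], [i]}(M)$ is a polynomial on $\GL(k)$ that is semi-invariant under right multiplication by $\borel(k)$: a short multilinear-algebra calculation shows that right multiplication by an upper triangular matrix with diagonal entries $d_1, \ldots, d_k$ simply rescales this minor by the factor $d_1 \cdots d_i$. Consequently its non-vanishing locus is open and $\borel(k)$-invariant in $\GL(k)$, and so descends to an open subset of $\Fl_k = \GL(k)/\borel(k)$. The set $\CU(w)$ is the intersection of $k$ such open subsets, hence open.

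For the cover, take any flag $\CF \in \Fl_k$ and choose a representative $M \in \GL(k)$ with $\CF = \CF(M)$. The classical $PLU$ decomposition (Gaussian elimination with partial pivoting) provides a permutation $u \in S_k$ such that $u^{-1}M = LU$, with $L$ lower triangular with $1$'s on the diagonal and $U$ upper triangular and invertible. Since all principal minors of $LU$ are nonzero, this translates to $\minor_{u[i], [i]}(M) \neq 0$ for every $i = 1, \ldots, k$. Setting $w := uw_0$, so that $ww_0 = u$, Lemma \ref{lem: relpos w minors}(a) yields $\CF \in \CU(w)$, completing the cover.

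There is no substantial obstacle here: openness follows from the polynomial description, and the covering statement reduces to the existence of a $PLU$ decomposition, which holds for every invertible matrix.
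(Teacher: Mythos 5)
Your proposal is correct. The openness half is essentially the paper's argument: both reduce $\CU(w)$ to the simultaneous non-vanishing of the minors $\minor_{ww_0[i],[i]}$ via Lemma \ref{lem: relpos w minors}(a); you add the (correct, and slightly more careful) observation that right multiplication by $U \in \borel(k)$ rescales $\minor_{I,[i]}$ by $d_1\cdots d_i$, so the locus is well defined and open on $\Fl_k = \GL(k)/\borel(k)$, a point the paper leaves implicit. The covering half, however, takes a genuinely different route. You invoke the classical $PLU$ decomposition (Gaussian elimination with partial pivoting): for a representative $M$ of $\CF$ there is $u \in S_k$ with $u^{-1}M = LU$, hence $\minor_{u[i],[i]}(M) \neq 0$ for all $i$, and $\CF \in \CU(uw_0)$. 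This is a valid external linear-algebra input, and in view of Lemma \ref{lem:gen-LU-dec} it is precisely equivalent to the assertion that every flag is transverse to some coordinate flag. The paper instead stays inside its relative-position formalism: given $\CF$ and an arbitrary $v \in S_k$, it takes $w_1$ with $\CF \relpos{w_1} \CF(v)$, writes $w_0 = w_1w_2$ length-additively, and concludes $\CF \relpos{w_0} \CF(vw_2)$ by Corollary \ref{cor: length additive}(a), i.e.\ $\CF \in \CU(vw_2)$. Your approach is more elementary and self-contained (no Bruhat-order bookkeeping, just pivoting), while the paper's argument reuses machinery already established and yields the mildly finer statement that the transverse coordinate flag can be chosen of the form $\CF(vw_2)$ for any prescribed $v$, which foreshadows how the open sets $\CU_{r_1,w}$ are produced later; for the lemma as stated, either proof suffices.
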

\begin{proof}
    By Lemma \ref{lem: relpos w minors}, $\CU(w)$ is given by the nonvanishing of the minors $\minor_{ww_0[i], [i]}$, $i = 1, \dots, k$, so it is clearly open in $\Fl_k$. Now, consider a flag $\F = \F(M) \in \Fl_k$, and pick any element $v \in S_k$. We have $\CF(M) \relpos{w_1} \CF(v)$ for some $w_1 \in S_k$, and let $w_0 = w_1w_2$ be a length-additive decomposition. 
    %with $w_2 = s_{i_1}\dots s_{i_r}$ a reduced decomposition. We have
    %\[
    %\CF(M) \relpos{w_1} \CF(v) \relpos{s_{i_1}} \CF(vs_{i_1}) \relpos{s_{i_2}} \CF(vs_{i_1}s_{i_2}) \relpos{s_{i_3}} \cdots \relpos{s_{i_{r}}} \CF(vw_2),
    %\]
    We get
    $$
    \CF(M)\relpos{w_1}\CF(v)\relpos{w_2}\CF(vw_2)
    $$
    and by Corollary \ref{cor: length additive}(a) this implies that $\CF(M) \relpos{w_0} \CF(vw_2)$, i.e., $\CF(M) \in \CU(vw_2)$. 
\end{proof}

\begin{lemma}\label{lem:cauchy-binet-minors}
Let $M \in \gl(k)$. Let $i, j = 1, \dots, k-1$ and assume $i \neq j$. Let $I \subseteq [n]$ be a set with $|I| = i$. Then, $\minor_{I,[i]}(MB_j(z)) = \minor_{I,[i]}(M)$. 
\end{lemma}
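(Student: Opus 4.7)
The plan is to exploit that $B_j(z)$ differs from the identity matrix only in columns (and rows) $j$ and $j+1$, so that right-multiplication by $B_j(z)$ only affects columns $j$ and $j+1$ of $M$. Explicitly, writing $M_a$ for the $a$-th column of $M$, the columns of $MB_j(z)$ are $M_1, \dots, M_{j-1}, \, zM_j+M_{j+1}, \, -M_j, \, M_{j+2}, \dots, M_k$.

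First I would split into two cases according to the hypothesis $i \neq j$. If $i < j$, then $j, j+1 \notin [i]$, so the submatrix $(MB_j(z))_{I,[i]}$ coincides with $M_{I,[i]}$ on the nose, and the minors are equal. If $i > j$, then $i \geq j+1$, so both $j$ and $j+1$ belong to $[i]$. In this case the submatrix $(MB_j(z))_{I,[i]}$ has the same columns as $M_{I,[i]}$ except in positions $j$ and $j+1$, where its columns are $(zM_j+M_{j+1})_{I}$ and $(-M_j)_{I}$.

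Next I would expand the minor $\minor_{I,[i]}(MB_j(z))$ by multilinearity of the determinant in column $j$. This produces two terms: one with columns $j$ and $j+1$ equal to $(M_j)_I$ and $-(M_j)_I$, respectively, which vanishes because the two columns are proportional; and a second term with columns $j$ and $j+1$ equal to $(M_{j+1})_I$ and $-(M_j)_I$. Pulling out the sign and swapping these two columns produces exactly $M_{I,[i]}$, so the second term equals $\minor_{I,[i]}(M)$.

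There is no real obstacle here; the content is just carefully tracking the effect of $B_j(z)$ on the relevant columns. An alternative one-line argument would be to apply the Cauchy–Binet formula
\[
\minor_{I,[i]}(MB_j(z)) = \sum_{|K|=i} \minor_{I,K}(M)\,\minor_{K,[i]}(B_j(z)),
\]
and observe that for $i < j$ the only $K$ with $\minor_{K,[i]}(B_j(z)) \neq 0$ is $K=[i]$ (contributing $1$), while for $i > j$ the matrix $(B_j(z))_{K,[i]}$ has a nonzero determinant only when $K=[i]$, since any other $K$ either misses row $j+1$ (forcing a zero row in columns $j, j+1$) or picks up an extra identity row that must be $0$; a direct computation gives $\minor_{[i],[i]}(B_j(z))=1$, recovering the claim.
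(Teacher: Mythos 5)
Your proposal is correct, and in fact it contains two arguments: your primary, column-level argument is genuinely different from the paper's proof, while your ``alternative one-line argument'' is essentially the paper's proof verbatim. The paper proves the lemma by applying the Cauchy--Binet formula and observing that, since $i \neq j$, the only index set $J$ with $\minor_{J,[i]}(B_j(z)) \neq 0$ is $J = [i]$, where the minor equals $1$. Your main argument instead tracks the columns directly: right multiplication by $B_j(z)$ replaces columns $j, j+1$ of $M$ by $zM_j + M_{j+1}$ and $-M_j$, and then either these columns lie outside $[i]$ (case $i<j$) or a one-step multilinearity expansion kills the $zM_j$ term and a column swap restores $\minor_{I,[i]}(M)$ (case $i>j$); this is more elementary and makes the sign bookkeeping completely explicit, at the cost of a short case analysis, whereas Cauchy--Binet packages both cases into one computation of minors of $B_j(z)$. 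One small imprecision in your alternative: when $i>j$ and $K \neq [i]$, the cleanest reason $\minor_{K,[i]}(B_j(z))=0$ is that all columns in $[i]$ of $B_j(z)$ are supported on rows $[i]$, so any $K$ of size $i$ other than $[i]$ must contain a row index greater than $i$ and hence a zero row; your phrasing about ``missing row $j+1$'' describes proportional columns rather than a zero row, though the conclusion is unaffected (and the paper itself asserts this vanishing without proof).
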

\begin{proof}
    By the Cauchy-Binet formula,
    \[
    \minor_i(MB_j(z)) = \sum_{\substack{J \subseteq [n] \\ |J| = i}} \minor_{I, J}(M)\minor_{J, [i]}(B_j(z)). 
    \]
    Since $i \neq j$, we have that $\Delta_{J, [i]}(B_j(z)) \neq 0$ if and only if $J = [i]$, in which case $\Delta_{[i], [i]}(B_j(z)) = 1$. Thus,
    \[
    \minor_{I,[i]}(MB_j(z)) = \minor_{I,[i]}(M)\minor_{[i], [i]}(B_j(z)) = \minor_{I,[i]}(M). 
    \]
\end{proof}

\section{Cluster algebras}
\label{sec: cluster}
\subsection{Definition} We recall the definition of a cluster algebra \cite{FZcluster}. For this paper, we will only need to restrict ourselves to the skew-symmetric case.

An \emph{ice quiver} $Q$ is a quiver with finite vertex set $Q_0$, i.e., a finite directed graph with which we allow multiple edges between vertices but no loops nor directed two cycles. We specify that a special subset $Q_0^{f}$ of the vertices of $Q$ is declared to be \emph{frozen} whereas an element in $Q_0 \setminus Q_0^{f}$ is declared to be \emph{mutable}.

Given that $Q$ is an ice quiver, with $|Q_0| = n+m$, where we distinguish $n$ vertices as mutable and $m$ vertices as frozen. We consider a field $\mathcal{F}$ of transcendence degree $n+m$ over $\C$. A \emph{seed} $\Sigma = (Q, \mathbf{x})$ consists of:
\begin{enumerate}
    \item The ice quiver $Q$, and
    \item A set $\mathbf{x} = \{x_i \mid i \in Q_0\}$ that is a transcendental basis for $\mathcal{F}$, i.e. $\mathcal{F} = \C(x_i \mid i \in Q_0)$. We say that $\mathbf{x}$ is the set of \emph{cluster variables} of $\Sigma$.
\end{enumerate}

Given a seed $\Sigma = (Q, \mathbf{x})$ and a mutable vertex $k \in Q_0$, the \emph{mutation} of $\Sigma$ in the direction $k$ is the seed $\mu_k(\Sigma) = (\mu_k(Q), \mu_k(\mathbf{x}))$ where:
\begin{enumerate}
    \item $\mu_k(\mathbf{x}) = (\mathbf{x}\setminus\{x_k\})\cup\{x'_k\}$, where $x'_k \in \mathcal{F}$ is defined by
    \[
    x_kx_k' = \prod_{i \to k}x_i + \prod_{k \to j}x_j.
    \]
    \item $\mu_k(Q)$ has the same vertex set as $Q$, but the arrows change via the following three-step procedure:
    \begin{enumerate}
        \item Reverse all arrows incident with $k$.
        \item For any pair of arrows $i \to k \to j$ in $Q$, create a new arrow $i \to j$.
        \item If the previous two steps have created any $2$-cycles, then remove the arrows which form a maximal collection of disjoint $2$-cycles. 
    \end{enumerate}
\end{enumerate}
We call two seeds $\Sigma$ and $\Sigma'$ \emph{mutation equivalent} if there is a finite sequence of mutations from one seed to the other. Mutation at a fixed vertex $k$ is an involutive operation and therefore, mutation equivalence is well-defined. We will denote by $\seeds(\Sigma)$ the set of all seeds which are mutation equivalent to $\Sigma$. 

\begin{definition}
    Let $\Sigma$ be a seed. The \emph{cluster algebra} $A(\Sigma)$ is the subalgebra of the field $\mathcal{F}$ generated by the sets of cluster variables in all seeds mutation equivalent to $\Sigma$, as well as by $x_{k}^{-1}$ for $k \in Q_0^{f}$.
    We say that a commutative algebra $A$ \emph{admits a cluster structure} if there exists a seed $\Sigma$ such that $A \cong A(\Sigma)$. Similarly, we say that an affine algebraic variety $X$ admits a cluster structure if the coordinate algebra $\C[X]$ admits a cluster structure.  
\end{definition}

\subsection{Quasi-cluster morphisms} 
\label{sec: quasi cluster}
It is possible that there exist two non-mutation equivalent seeds $\Sigma,\Sigma'$ such that $A(\Sigma)\cong A\cong A(\Sigma')$, i.e., the cluster structures for a commutative algebra $A$ are generally not unique. 

%\TS{Edited section up to this point, please check}

%We remark that a commutative algebra $A$ may admit more than one cluster structure, that is, there may exist two seeds $\Sigma, \Sigma'$ which are not mutation equivalent but such that $A(\Sigma) \cong A \cong A(\Sigma')$.

\begin{example}
\label{ex: quasi cluster}
Let $Q$ be the quiver $${\color{blue}{a}} \to 1 \to {\color{blue} {b}},$$  where the frozen vertices are shown in blue. Let $x_a, x_1, x_b$ be the corresponding cluster variables, then the associated cluster algebra $A(\Sigma)$ is defined as
\[
A(\Sigma) = \C[x_1, x_1', x_a^{\pm 1}, x_b^{\pm 1}]/(x_1x_1' = x_a + x_b)
\]
Now, let $Q'$ be the quiver $$ {\color{blue}{a}} \to 1 \qquad {\color{blue} {b}}$$ Let $y_a, y_1, y_b$ be the corresponding cluster variables, then $A(\Sigma')$ is
\[
A(\Sigma') = \C[y_1, y_1', y_a^{\pm 1}, y_b^{\pm 1}]/(y_1y_1' = 1 + y_a)
\]
Although these quivers are not mutation equivalent, there is an isomorphism between their cluster algebras given by the assignment $y_1 \mapsto x_1x_b^{-1}, y_a \mapsto x_ax_b^{-1}$, $y_b \mapsto x_b$ and $y'_1 \mapsto x'_1$. 
\end{example}

Let $A(\Sigma)$ and $A(\Sigma')$ be the cluster algebras associated to the seeds $\Sigma$ and $\Sigma'$, respectively. As demonstrated in Example \ref{ex: quasi cluster}, the algebras $A(\Sigma)$ and $A(\Sigma')$ may be isomorphic even if the seeds $\Sigma$ and $\Sigma'$ are not mutation equivalent. Following Fraser \cite{Fraser}, see also \cite[Section 5.2]{LamSpeyerI}, we define an interesting class of morphisms between cluster algebras of perhaps non-mutation-equivalent seeds.

Given a seed $\seed$ and a mutable vertex $i$, we define the exchange ratio $\widehat{y}_i$ as the ratio $$\widehat{y}_i=\dfrac{\prod_{j\rightarrow i}x_j^{\#\{j\rightarrow i\}}}{\prod_{i\rightarrow j}x_j^{\#\{i\rightarrow j\}}}.$$ 

\begin{definition}\label{def:quasi-homo}\cite{Fraser,FSB22}
Let $A(\seed)$ and $A(\seed')$ be cluster algebras of rank $n+m$, each with $m$ frozen variables. Let $\mathbf{x} = \{x_1, \dots, x_{n+m}\}$ be the cluster variables of $\seed$, and $\mathbf{x} = \{x'_1, \dots x'_{n+m}\}$ be the cluster variables of $\seed'$. A \newword{quasi-cluster isomorphism} is an algebra isomorphism $f: A(\seed) \to A(\seed')$ satisfying the following conditions:
\begin{enumerate}
\item For each frozen variable $x_j \in \mathbf{x}$, $f(x_j)$ is a Laurent monomial in the frozen variables of $\mathbf{x}'$.
\item For each mutable variable $x_i \in \mathbf{x}$, $f(x_i)$ coincides with $x'_i$, up to multiplication by a Laurent monomial in the frozen variables of $\mathbf{x}'$.
\item The exchange ratios are preserved, i.e., for each mutable variable $x_i$ of $\seed$, $f(\widehat{y}_i) = \widehat{y}'_i$.
\end{enumerate}
\end{definition}

\begin{remark}\label{rmk:Fraser}
By \cite[Corollary 4.5]{Fraser}, if a quasi-isomorphism as in Definition \ref{def:quasi-homo} exists, then the mutable parts of the quivers $Q$ and $Q'$ coincide, i.e., the quivers $Q$ and $Q'$ coincide after deleting all frozen variables (and arrows incident to them). See also \cite[Section 5.2]{LamSpeyerI}.
\end{remark}
%\EG{And mutable parts of quivers $Q$ and $Q'$ coincide?} \JS{make a remark}

One can check that the map in Example \ref{ex: quasi cluster} is a quasi-cluster isomorphism.
We remark that if properties (1)--(3) are stable under mutations, i.e., if (1)--(3) holds for two seeds $\seed, \seed'$, then it also holds for $\mu_i(\seed), \mu_i(\seed')$ for every mutable vertex $i$, see \cite[Section 5.2]{LamSpeyerI}. While a quasi-cluster isomorphism $f: A(\seed) \to A(\seed')$ does not send cluster variables to cluster variables, it does preserve all the cluster-theoretic geometric information such as cluster tori, and induces an isomorphism between upper cluster algebras, see \cite[Section 2.4]{CGSS}.  

%\JS{this is new}

To finish this section, let us see how cluster quasi-morphisms can be obtained from maps between exchange matrices. For this, let us first observe that the information of an ice quiver $Q$ with $n$ mutable vertices $1, \dots, n$ and $m$ frozen ones $n+1, \dots, n+m$ can be codified into an \emph{extended exchange matrix}, that is, an $(n+m) \times n$-matrix $\widetilde{B}$, defined by
\[
\widetilde{b}_{i,j} = \#\{\text{arrows} \; i \to j\} - \#\{\text{arrows} \; j \to i\}.
\]
Note that the top part (formed by the first $n$ rows) of $\widetilde{B}$ is a skew-symmetric matrix $B$, called the \emph{principal part} of $\widetilde{B}$.

\begin{lemma}\label{lem:quasi-iso-exchange-matrix}\cite[Theorem 5.7]{LamSpeyerI}
Let $\Sigma = (Q, \mathbf{x}), \Sigma' = (Q', \mathbf{z})$ be two seeds, both with $n$ mutable and $m$ frozen variables, and extended exchange matrices $\widetilde{B}, \widetilde{B}'$. Assume that there exists an $(n+m)\times (n+m)$ integer matrix $R$ of block triangular form
\begin{equation}\label{eq:block-triangular-R}
R = \begin{pmatrix}
\id_n & 0 \\ P & Q
\end{pmatrix}.
\end{equation}
such that $R\widetilde{B} = \widetilde{B}'$. If  $\det(Q) = \pm 1$, then the assignment
\begin{equation}\label{eq:quasi-iso-matrix}
\Phi(x_j) = \prod_{i = 1}^{n+m}z_{i}^{r_{i,j}}
\end{equation}
extends to a cluster quasi-isomorphism $\Phi: A(\Sigma) \to A(\Sigma')$. Moreover, if $\Phi: A(\Sigma) \to A(\Sigma')$ is a cluster quasi-isomorphism, then there exists a seed $\Sigma''$ mutation equivalent to $\Sigma'$ and a matrix $R$ of the form $\eqref{eq:block-triangular-R}$ such that $R\widetilde{B} = \widetilde{B}''$ and the quasi-isomorphism is given by \eqref{eq:quasi-iso-matrix}.
\end{lemma}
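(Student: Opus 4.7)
The strategy is to verify directly that the map $\Phi$ satisfies the three axioms of Definition \ref{def:quasi-homo} on the initial seeds $(\Sigma,\Sigma')$, then invoke the stability of those axioms under mutation to conclude that $\Phi$ extends to an isomorphism of cluster algebras. From the block form of $R$, for each mutable index $j\in\{1,\dots,n\}$ we have $r_{j,j}=1$ and $r_{i,j}=0$ for $i\le n$, $i\ne j$, so
\[
\Phi(x_j) \;=\; z_j\cdot \prod_{i=n+1}^{n+m} z_i^{r_{i,j}},
\]
i.e., $\Phi(x_j)$ equals $z_j$ up to a Laurent monomial in frozen $z_i$'s; this verifies axiom (2). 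For frozen $j>n$, $r_{i,j}=0$ whenever $i\le n$, so $\Phi(x_j)$ is a pure Laurent monomial in the frozens of $\mathbf{z}$, verifying axiom (1). For axiom (3), using $R\widetilde{B}=\widetilde{B}'$ we compute
\[
\Phi(\widehat{y}_i) \;=\; \prod_j \Phi(x_j)^{\widetilde{b}_{j,i}} \;=\; \prod_k z_k^{\sum_j r_{k,j}\widetilde{b}_{j,i}} \;=\; \prod_k z_k^{(R\widetilde{B})_{k,i}} \;=\; \widehat{y}'_i.
\]

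Next, the condition $\det(Q)=\pm 1$ makes $R$ invertible over $\Z$, and its inverse has the same block-triangular shape (the top-left block remains $\id_n$). Hence $\Phi$ is a monomial isomorphism between the ambient Laurent polynomial rings in $\mathbf{x}$ and $\mathbf{z}$. To conclude that $\Phi$ restricts to an isomorphism $A(\Sigma)\to A(\Sigma')$, one propagates the three axioms through mutation: if $\mu_k(\Sigma)$ has new cluster variable $x'_k$ with $x_kx'_k=\prod_{i\to k}x_i+\prod_{k\to j}x_j$, applying $\Phi$ and factoring out the monomial $\Phi(x_k)/z_k$ yields, via axiom (3), the exchange relation for $z'_k$ multiplied by a Laurent monomial in the frozens of $\mathbf{z}$. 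Thus $\Phi(x'_k)=z'_k\cdot(\text{frozen monomial})$, so axioms (1)--(3) hold for the mutated pair $(\mu_k\Sigma,\mu_k\Sigma')$, and one proceeds inductively over $\seeds(\Sigma)$.

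For the converse, given a quasi-cluster isomorphism $\Phi:A(\Sigma)\to A(\Sigma')$, axiom (2) determines Laurent monomials $M_j$ in frozen $z$'s with $\Phi(x_j)=z_j M_j$ for each mutable $j$, after possibly passing to a seed $\Sigma''$ mutation-equivalent to $\Sigma'$ so that the mutable labels align (such $\Sigma''$ exists because $\Phi$ takes the mutable cluster variables of any seed to cluster variables of some seed, up to frozen monomials). Axiom (1) expresses $\Phi(x_j)$ as a Laurent monomial in frozens for $j>n$. Reading off exponents produces a matrix $R$ of the stated block form, and axiom (3) yields $R\widetilde{B}=\widetilde{B}''$ by the same computation. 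Finally, $\det(Q)=\pm 1$ holds because $\Phi$ must induce a bijection on the group of Laurent monomials in the frozen variables, which is encoded precisely by the bottom-right block $Q$.

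The main obstacle is the mutation-stability step in the forward direction: one must verify that, after mutating at a common index $k$ in both seeds, the new transition matrix $R'$ associated to the pair $(\mu_k\Sigma,\mu_k\Sigma')$ retains the same block-triangular shape with $\det=\pm 1$. This amounts to tracking how exchange ratios transform under mutation and combining this with the identity $R\widetilde{B}=\widetilde{B}'$; it is the technical core of \cite[Sec.~5.2]{LamSpeyerI} and justifies why axioms (1)--(3), together with the exchange-ratio identity, suffice to globalize $\Phi$ from a single seed to the entire mutation class.
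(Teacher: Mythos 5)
The paper itself offers no proof of this lemma: it is quoted verbatim from Lam--Speyer (\cite[Theorem 5.7]{LamSpeyerI}), and the surrounding text likewise defers the key fact that conditions (1)--(3) of Definition \ref{def:quasi-homo} propagate under mutation to \cite[Section 5.2]{LamSpeyerI}. So there is no in-paper argument to compare against; what you have written is a reconstruction of the standard Lam--Speyer proof, and in outline it is correct. Your verification of axioms (1)--(2) from the block shape of $R$, and of axiom (3) from the identity $\Phi(\widehat{y}_i)=\prod_k z_k^{(R\widetilde{B})_{k,i}}=\widehat{y}'_i$, is exactly right (note in particular that the block $\begin{pmatrix}\id_n & 0\end{pmatrix}$ on top forces $B=B'$, which you implicitly use when matching arrows among mutable vertices in the mutation step). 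Two places deserve tightening. First, to conclude that $\Phi$ is an isomorphism onto $A(\Sigma')$ (not merely a map into it), you should say explicitly that $R^{-1}$, which by $\det(Q)=\pm 1$ is integral and of the same block-triangular shape with $R^{-1}\widetilde{B}'=\widetilde{B}$, defines the inverse monomial map and satisfies the same axioms, so the containments go both ways; you state that $R^{-1}$ has the right shape but do not close this loop. Second, in the converse, the claim that $\det(Q)=\pm 1$ "because $\Phi$ induces a bijection on frozen Laurent monomials" needs a reason: either that a quasi-cluster isomorphism admits a quasi-inverse (Fraser), or that the unit group of the cluster algebra consists of frozen Laurent monomials up to scalars (the \cite{GLS} result the paper invokes in Remark \ref{rem: frozen inequality}), either of which shows the bottom-right block has an integral two-sided inverse. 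Finally, you openly defer the mutation-stability of (1)--(3) to the cited reference; since the paper does the same, this is acceptable, but it does mean your argument, like the paper's, is ultimately a citation at its technical core.
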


%Let $\Sigma$ and $\Sigma'$ be seeds of rank $n$ in $\C(X)$. Let $Q,x_i,\widehat{y}_i$ denote the quiver, cluster variables, and exchange ratios in $\Sigma$ and respectively, $Q',x_i',\widehat{y}_i'$ for $\Sigma'$. Suppose that $x_{n+1},\dots,x_{n+m}$ are frozen and let $\bf{L},\bf{L}'\subset\C[X]$ be the group of Lauren monomials in frozen variables of $\Sigma,\Sigma'$. Then $\Sigma$ and $\Sigma'$ are quasi-equivalent, $\Sigma\sim \Sigma'$, if the following hold:
%\begin{enumerate}
 %   \item Each frozen variable $x_i'$ is a Lauren monomial in $\{x_{n+1},\dots,x_{n+m}\}$ and vice versa, in other words, $\bf{L}$ and $\bf{L}'$ coincide.
  %  \item Mutable variables coincide up to multiplication by frozen variables, i.e., for $i\in[r]$ there is a Laurent monomial $m_i\in\bf{L}$ such that $x_i=m_ix_i'\in C(X)$.
  %  \item The exchange rations coincide, i.e., $\widehat{y}_i=\widehat{y}_i'$ for $i\in[n]$.%
%\end{enumerate}
%$$
%\end{definition}

\section{Braid and double Bott-Samelson varieties}\label{sec:braid and dbs}

In this section, we recall the definition of the braid variety, both via configurations of flags and as an affine variety given by an explicit set of equations. 

\subsection{Definition via flags} %We recall the following. 

%\begin{definition}
%Let $\CF_{\bullet}=\{0=F_0\subset F_1\subset \ldots \subset F_k=\C^k\}$ and $\CF'_{\bullet}=\{0=F'_0\subset F'_1\subset \ldots \subset F'_k=\C^k\}$ be two complete flags in $\C^k$. 
%\begin{enumerate}
%    \item For $i = 1, \dots, k-1$, we say that $\CF_{\bullet}$ and $\CF'_{\bullet}$ are in position $i$ if $F_i \neq F'_i$ but $F_j = F'_j$ for $j \neq i$. We write this relationship as $\CF_{\bullet} \relpos{i} \CF'_{\bullet}$. 
 %   \item We say that $\CF_{\bullet}$ and $\CF'_{\bullet}$ are transversal if $F_i\cap F'_{k-i}=0$ for all $i$, or, equivalently, if $F_i + F'_{k-i} = \C^{k}$ for all $i$. We write this as $\CF \pitchfork \CF'$. 
%\end{enumerate}
%\end{definition}

Recall that $\Br_k^{+}$ is the positive braid monoid on $k$ strands, with generators $\sigma_{1}, \dots, \sigma_{k-1}$.

\begin{definition}\label{def:braid-variety}
    Let $\br = \sigma_{i_1}\cdots \sigma_{i_r} \in \Br^{+}_{k}$ be a positive braid. The \newword{braid variety} $X(\br)$ is the variety consisting of $(r+1)$-tuples of flags $(\CF^0, \dots, \CF^{r})$ satisfying the following relative position conditions:
    \begin{enumerate}
        \item $\CF^0 = \CF^{\std}$ and $\CF^r = \CF^{\ant}$.
        \item For every $j = 1, \dots, r$, $\CF^{j-1} \relpos{s_{i_j}} \CF^{j}$. 
    \end{enumerate}
\end{definition}

Let us remark that, up to a canonical isomorphism, $X(\br)$ depends only on the braid $\br$ and not on its presentation as a product of generators, this follows from Lemma \ref{lem:dec-rel-pos}. We also remark that $X(\br)$ is nonempty if and only if we have $\delta(\br) = w_0$, in which case $X(\br)$ is a smooth, affine algebraic variety of dimension $r - \binom{k}{2} = r-\ell(w_0)$, see \cite{CGGS2,CGGLSS}.

\begin{remark}
        We could define the braid variety by replacing condition (1) of Definition \ref{def:braid-variety} by $\CF^0 = \CF^{\std}$ and $\CF^{r} = \CF(\delta(\br))$. With this definition, we always have that $X(\br)$ is affine, smooth, nonempty and of dimension $r - \ell(\delta(\br))$. However, by \cite[Lemma 3.4]{CGGLSS}, we lose no generality by assuming that $\delta(\br) = w_0$.  We will always assume that $\delta(\br) = w_0$.
\end{remark}

\begin{definition}\label{def:dbs-flags}
    Let $\br = \sigma_{i_1}\cdots \sigma_{i_r} \in \Br^{+}_{k}$ be a positive braid. The \newword{double Bott--Samelson variety} $\BS(\br)$ is the variety consisting of $(r+1)$-tuples of flags $(\CF^0, \dots, \CF^{r})$ satisfying the following relative position conditions. 
    \begin{enumerate}
        \item $\CF^0 = \CF^{\std}$ and $\CF^r \pitchfork \CF^{\ant}$.
        \item For every $j = 1, \dots, r$, $\CF^{j-1} \relpos{s_{i_j}} \CF^{j}$. 
    \end{enumerate}
\end{definition}

Note that the definitions of $X(\br)$ and $\BS(\br)$ are superficially very similar, with the crucial difference that in $X(\br)$ we require $\CF^{r} = \CF^{\ant}$ and in $\BS(\br)$ we require $\CF^{r} \pitchfork \CF^{\ant}$. We can, nevertheless, realize every double Bott-Samelson variety as a braid variety in at least two different ways.

\begin{lemma}\label{lem:bs-to-braid}
We have isomorphisms
\[
\bsbra: \BS(\br) \to X(\br\Delta), \qquad \bsbrb: \BS(\br) \to X(\Delta\br).
\]
\end{lemma}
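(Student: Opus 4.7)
The plan is to construct both isomorphisms directly, by extending or transforming the chain of flags defining an element of $\BS(\br)$, with Lemma~\ref{lem:dec-rel-pos} providing the unique interpolating flags and Lemma~\ref{lem:gen-LU-dec} providing the right canonical choice of group element for the transformation.

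For $\bsbra: \BS(\br) \to X(\br\Delta)$, the idea is simply to extend the $\BS(\br)$ chain by interpolation. I would fix a reduced decomposition $\Delta = \sigma_{j_1}\cdots\sigma_{j_{\ell(w_0)}}$. Given a chain $(\widetilde{\CF}^0, \dots, \widetilde{\CF}^r) \in \BS(\br)$, the transversality $\widetilde{\CF}^r \pitchfork \CF^{\ant}$ is equivalent to $\widetilde{\CF}^r \relpos{w_0} \CF^{\ant}$, so Lemma~\ref{lem:dec-rel-pos} produces unique flags $\CF^{r+1}, \ldots, \CF^{r+\ell(w_0)-1}$ interpolating $\widetilde{\CF}^r$ and $\CF^{\ant}$ with the prescribed simple relative positions. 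The inverse simply forgets the last $\ell(w_0)$ flags, with the transversality of the truncated endpoint guaranteed by Corollary~\ref{cor: length additive}(a) applied to the $\Delta$-segment. Both maps are algebraic by the second part of Lemma~\ref{lem:dec-rel-pos}.

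For $\bsbrb: \BS(\br) \to X(\Delta\br)$, the construction is more delicate since we must \emph{prepend} a $\Delta$-chain while preserving $\CF^{\std}$ as the initial flag and $\CF^{\ant}$ as the final one. Given a chain in $\BS(\br)$, I would write $\widetilde{\CF}^r = \CF(M)$ and use Lemma~\ref{lem:gen-LU-dec} with $w = w_0$ to obtain the factorization $M = LU$ with $L$ lower-unipotent and $U$ upper-triangular. The factor $L$ is independent of the Borel-coset representative $M$ (changing $M$ to $MU'$ only changes $U$), and depends algebraically on the input chain. Setting $g := w_0 L^{-1}$, one checks that $g\widetilde{\CF}^r = \CF^{\ant}$ (since $gM = w_0U \in w_0\borel$) and $g\CF^{\std} = \CF(w_0L^{-1})$ is transverse to $\CF^{\std}$ (since $w_0L^{-1} \in \borel w_0$, so $\CF^{\std} \relpos{w_0} g\CF^{\std}$). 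Applying $g$ to the whole chain preserves the simple relative positions between consecutive flags, producing a chain from $g\CF^{\std}$ to $\CF^{\ant}$ with the desired endpoints; Lemma~\ref{lem:dec-rel-pos} then uniquely fills in the interpolating flags from $\CF^{\std}$ to $g\CF^{\std}$ spelled by the chosen reduced decomposition of $\Delta$, giving an element of $X(\Delta\br)$.

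The inverse proceeds symmetrically: for a chain in $X(\Delta\br)$, Corollary~\ref{cor: length additive}(a) yields $\CF^{\ell(w_0)} \pitchfork \CF^{\std}$, and Lemma~\ref{lem:gen-LU-dec} with $w = e$ gives $\CF^{\ell(w_0)} = \CF(N)$ with $N = w_0 L' U'$ for unique lower-unipotent $L'$. Setting $g' := w_0(L')^{-1}$ and applying $(g')^{-1}$ to the last $r+1$ flags yields a chain in $\BS(\br)$. The identities $\bsbrb \circ \bsbrb^{-1} = \mathrm{id}$ and $\bsbrb^{-1} \circ \bsbrb = \mathrm{id}$ reduce to the compatibility $L' = L^{-1}$, which is immediate from the uniqueness of the factorizations. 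The main obstacle is establishing the well-definedness and algebraicity of $g$; both follow from the uniqueness of the LU decomposition on the locus where the principal minors $\minor_i(M)$ are nonzero, which is precisely the transversality locus by Lemma~\ref{lem: relpos w minors}(a). Finally, algebraicity of the interpolating flags from Lemma~\ref{lem:dec-rel-pos} ensures both $\bsbra$ and $\bsbrb$ are morphisms of algebraic varieties.
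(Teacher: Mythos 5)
Your construction of both maps follows the paper's proof essentially verbatim: $\bsbra$ is the unique interpolation provided by Lemma \ref{lem:dec-rel-pos}, and $\bsbrb$ translates the whole chain by $g=w_0L^{-1}$, where $\CF^r=\CF(LU)$ with $L$ lower unipotent; this is exactly the paper's translation by $V_1^{-1}w_0$ after writing $\CF^r=\CF((w_0V_1w_0)V_2)$, i.e.\ $L=w_0V_1w_0$.

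The one genuine slip is in your formula for $\bsbrb^{-1}$. Writing $\CF^{\ell(w_0)}=\CF(w_0L'U')$ with $L'$ lower unipotent, the element you must apply to the last $r+1$ flags is $(L')^{-1}w_0$ (the paper's $w_0U_1^{-1}$, with $U_1=w_0L'w_0$): indeed $(L')^{-1}w_0\cdot w_0L'U'=U'\in\borel$, so the $\ell(w_0)$-th flag is sent to $\CF^{\std}$, and $(L')^{-1}w_0\cdot \CF(w_0)=\CF\bigl((L')^{-1}\bigr)$ is transverse to $\CF^{\ant}$. Your choice $(g')^{-1}=L'w_0$ instead sends $\CF^{\ell(w_0)}$ to $\CF\bigl((L')^{2}U'\bigr)$, which is not $\CF^{\std}$ in general, and with it the compositions fail: even granting the compatibility $L'=L^{-1}$, you would end up translating the original chain by $L^{-2}$ rather than the identity. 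With the corrected element everything you assert goes through — $L'=L^{-1}$ does give $\bsbrb^{-1}\circ\bsbrb=\mathrm{id}$ (and symmetrically), and the well-definedness and algebraicity arguments via uniqueness of the unipotent factor and Lemma \ref{lem:dec-rel-pos} are exactly as in the paper.
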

\begin{proof}
If we have a chain of flags $(\CF^0, \dots, \CF^r) \in \BS(\br)$ then, since $\CF^r \pitchfork \CF(w_0)$ by Lemma \ref{lem:dec-rel-pos} there exists a unique chain $\CF^r \relpos{s_{a_1}} \CF^{r+1} \relpos{s_{a_2}} \cdots \relpos{s_{a_{\ell(w_0)}}} \CF(w_0)$, where $\Delta = \sigma_{a_1}\cdots \sigma_{a_{\ell(w_0)}}$, so setting $\bsbra(\CF^0, \dots, \CF^r)=(\CF^0, \dots, \CF^r, \CF^{r+1}, \dots, \CF(w_0))$ we obtain an element of $X(\br\Delta)$. Conversely, if $(\CF^0, \dots, \CF^{r}, \CF^{r+1}, \dots, \CF(w_0)) \in X(\br\Delta)$, it is easy to see that $(\CF^0, \dots, \CF^{r}) \in \BS(\br)$, so $\bsbra$ is an isomorphism.

Let us now construct the map $\bsbrb^{-1}: X(\Delta\br) \to \BS(\br)$. Take an element \[\left(\CF^0, \dots, \CF^{\ell(w_0)}, \CF^{\ell(w_0)+1}, \dots, \CF^{\ell(w_0) + r} = \CF(w_0)\right) \in X(\Delta\br).\] Note that $\CF^{\ell(w_0)} \in \borel w_0 \borel/\borel$, so it is of the form $\CF(U_1w_0U_2)$,  where $U_1, U_2$ are upper triangular and $U_1$ is unique provided it has $1$'s on the diagonal. Now consider  the sequence of flags
\begin{equation}\label{eq:phi-2-inverse}
w_0U_1^{-1}\left(\CF^{\ell(w_0)}, \dots, \CF^{\ell(w_0) + r}\right). 
\end{equation}
Note that
$$
w_0U_1^{-1}\CF^{\ell(w_0) + r}=w_0U_1^{-1}\CF(w_0)\pitchfork \CF^{\ant}
$$
so \eqref{eq:phi-2-inverse} defines a point in $\BS(\beta)$, and we define it to be $\bsbrb^{-1}(\CF^0, \dots, \CF^{\ell(w_0)+r})$.

To construct the map $\bsbrb$, take an element $(\CF^0, \dots, \CF^r) \in \BS(\br)$. Since $\CF^r \pitchfork \CF(w_0)$, we have $\CF^r = \CF((w_0V_1w_0)V_2)$, where $V_1$, $V_2$ are upper triangular and $V_1$ is unique provided it has $1$'s on the diagonal. Note that $V_1^{-1}w_0\CF^r = \CF(w_0V_2) = \CF(w_0)$, and that $\CF^{\std} \relpos{w_0} V_1^{-1}w_0\CF^0$. So there exist unique flags $\widetilde{\CF}^{1}, \dots, \widetilde{\CF}^{\ell(w_0) - 1}$ so that
\[
\bsbrb(\CF^0, \dots, \CF^r) = \left(\CF^{\std}, \widetilde{\CF}^{1}, \dots, \widetilde{\CF}^{\ell(w_0) - 1}, V_1^{-1}w_0\CF^0, \dots, V_1^{-1}w_0\CF^{r}\right) \in X(\Delta\br).
\]
It is straightforward to see that $\bsbrb$ and $\bsbrb^{-1}$ are indeed inverse maps. %\EG{We have $U_1^{-1}$ both for $\varphi_2$ and $\varphi_2^{-1}$ which is not good.} \JS{check this}
\end{proof}

\subsection{Definition via equations} %Let us recall the standard identification of the flag variety with $\GL(k)/B(k)$.

%\begin{definition}
%Let $M\in \GL(k)$ be a nondegenerate matrix with columns $m_1, \dots, m_k$. We define a flag $\CF(M)$ as $0\subset \langle m_1\rangle\subset \langle m_1,m_2\rangle \subset \ldots$ where $m_i$ is the $i$-th column of $M$.
%\end{definition}

 For a positive braid $\br = \sigma_{i_1}\cdots \sigma_{i_r}$, define the matrix
\[
B_{\br}(z_1, \dots, z_r) = B_{i_1}(z_1)\cdots B_{i_r}(z_r). 
\]

%Note that $\CF^{\std} = \CF(\id_k)$, where $\id_k$ is the $k\times k$ identity matrix. Similarly, $\CF^{\ant} = \CF(M)$ if and only if $M$ belongs to $w_0B(k)$, i.e., $M$ is of the form $w_0U$ for an upper-triangular matrix $U$.
This allows us to present the braid variety $X(\br)$ explicitly via equations. 

\begin{corollary}
\label{cor: braid}
Let $\br=\sigma_{i_1}\cdots \sigma_{i_r}$ be a braid, and assume the Demazure product  is $\delta(\br) = w_0$. Then
$$
X(\br)=\{(z_1,\ldots,z_r):w_0B_{\beta}(z_1,\ldots,z_r)\ \text{is upper-triangular}\}.
$$
\end{corollary}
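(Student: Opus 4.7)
The plan is to parametrize the chain of flags $(\CF^0,\ldots,\CF^r)$ in the definition of $X(\br)$ by matrix products, using Lemma \ref{lem: relpos via matrices} inductively, and then translate the boundary condition $\CF^r=\CF^{\ant}$ into an upper-triangularity statement about $w_0 B_\br$.

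First, I would build the chain of flags inductively. Set $\CF^0=\CF^{\std}=\CF(I)$, and suppose by induction that $\CF^j=\CF(M_j)$ for some $M_j\in\GL(k)$. Since the relative position $s_i$ is symmetric (it is an involution in $S_k$), the condition $\CF^j\relpos{s_{i_{j+1}}}\CF^{j+1}$ is the same as $\CF^{j+1}\relpos{s_{i_{j+1}}}\CF^j=\CF(M_j)$. By Lemma \ref{lem: relpos via matrices} there is then a unique $z_{j+1}\in\C$ with
$$
\CF^{j+1}=\CF\bigl(M_j B_{i_{j+1}}(z_{j+1})\bigr).
$$
Iterating starting from $M_0=I$, one obtains that every chain $\CF^0\relpos{s_{i_1}}\cdots\relpos{s_{i_r}}\CF^r$ starting at $\CF^{\std}$ is of the form
$$
\CF^j=\CF\bigl(B_{i_1}(z_1)\cdots B_{i_j}(z_j)\bigr),\qquad j=0,\ldots,r,
$$
for uniquely determined scalars $z_1,\ldots,z_r$. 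In particular, $\CF^r=\CF\bigl(B_\br(z_1,\ldots,z_r)\bigr)$.

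Second, I would translate the remaining boundary condition. By definition of the map $M\mapsto\CF(M)$, the identity $\CF\bigl(B_\br(z_1,\ldots,z_r)\bigr)=\CF^{\ant}=\CF(w_0)$ is equivalent to the existence of $U\in\borel(k)$ with $B_\br(z_1,\ldots,z_r)=w_0 U$, i.e.\ to $w_0^{-1}B_\br(z_1,\ldots,z_r)$ being upper triangular. Since $w_0$ has order two in $S_k$, this is the same as requiring that $w_0 B_\br(z_1,\ldots,z_r)$ be upper triangular. Combining both steps, the assignment $(\CF^0,\ldots,\CF^r)\mapsto (z_1,\ldots,z_r)$ is a bijection between $X(\br)$ and the set on the right-hand side of the claimed identity, and it is an isomorphism of varieties since the $z_j$ depend algebraically on the chain.

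Essentially no step is a real obstacle here: the corollary is a direct unpacking of Lemma \ref{lem: relpos via matrices}. The only subtlety worth stating explicitly is the use of the involutivity of $\relpos{s_i}$ to orient the lemma in the forward direction of the chain, and the harmless identification $w_0^{-1}=w_0$ in $S_k$ that turns the upper-triangularity of $w_0^{-1}B_\br$ into that of $w_0 B_\br$.
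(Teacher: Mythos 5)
Your proposal is correct and follows the same route the paper takes: the paper justifies this corollary with exactly the observation that Lemma \ref{lem: relpos via matrices} produces the chain $\CF^j=\CF(B_{i_1}(z_1)\cdots B_{i_j}(z_j))$, and the condition $\CF^r=\CF^{\ant}$ becomes $w_0B_{\br}(z_1,\ldots,z_r)$ being upper triangular. Your explicit remarks about the symmetry of the relation $\relpos{s_i}$ and the identification $w_0^{-1}=w_0$ are harmless elaborations of what the paper leaves implicit.
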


Indeed, given $(z_1, \dots, z_r)$, by Lemma \ref{lem: relpos via matrices} one gets the sequence of flags $\CF^0 = \CF^{\std}$, $\CF^1 = \CF(B_{i_1}(z_1))$, $\CF^2 = \CF(B_{i_1}(z_1)B_{i_2}(z_2))$ and so on. To obtain a similar description for the double Bott-Samelson variety $\BS(\br)$, we use Lemma \ref{lem:gen-LU-dec}.

\begin{corollary}
\label{cor: BS}
Let $\br$ be a positive braid. The %(half-decorated) 
double Bott-Samelson variety is
\[
\BS(\br) = \{(z_1, \dots, z_r) \in \C^r \mid B_{\br}(z_1, \dots, z_r) \; \text{admits an LU decomposition}\}. 
\]
\end{corollary}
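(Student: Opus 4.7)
The plan is to combine the parameterization of chains of flags with Lemma \ref{lem:gen-LU-dec}, specialized to $w = w_0$.

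First, I would unwind the definition of $\BS(\br)$ in terms of matrices. Given a point $(\CF^0,\dots,\CF^r) \in \BS(\br)$, the condition $\CF^0 = \CF^{\std}$ together with the successive relative position conditions $\CF^{j-1} \relpos{s_{i_j}} \CF^j$ forces, by repeated application of Lemma \ref{lem: relpos via matrices}, that each $\CF^j$ is of the form $\CF\bigl(B_{i_1}(z_1)\cdots B_{i_j}(z_j)\bigr)$ for unique scalars $z_1,\dots,z_j \in \C$. In particular $\CF^r = \CF\bigl(B_{\br}(z_1,\dots,z_r)\bigr)$, and this produces a bijection between chains of flags satisfying condition (2) of Definition \ref{def:dbs-flags} and tuples $(z_1,\dots,z_r) \in \C^r$.

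Next, I would translate the remaining transversality condition $\CF^r \pitchfork \CF^{\ant}$ into matrix language. Recall $\CF^{\ant} = \CF(w_0)$, so Lemma \ref{lem:gen-LU-dec} applied with $w = w_0$ says that $\CF(M) \pitchfork \CF(w_0)$ if and only if $M$ admits a decomposition $M = w_0 w_0 L U = L U$ with $L$ lower-triangular and $U$ upper-triangular, i.e., precisely if $M$ admits an LU decomposition. Applying this with $M = B_{\br}(z_1,\dots,z_r)$ yields the claimed description.

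There is essentially no obstacle here, and this should be a short proof — everything is just assembling Lemma \ref{lem: relpos via matrices} and the $w=w_0$ case of Lemma \ref{lem:gen-LU-dec}. The only point worth double-checking is the sign/identification $w_0 \cdot w_0 = e$, which makes the prefactor $ww_0$ in Lemma \ref{lem:gen-LU-dec} disappear and reduces the general factorization $M = ww_0 L U$ to the honest LU decomposition $M = LU$; this is exactly parallel to the $w = w_0$ case of Lemma \ref{lem: relpos w minors}(a), which restates transversality to $\CF^{\ant}$ as the nonvanishing of all principal minors.
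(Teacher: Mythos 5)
Your proof is correct and is essentially the paper's argument: the paper likewise parametrizes the chain of flags by braid matrices via Lemma \ref{lem: relpos via matrices} (as explained right after Corollary \ref{cor: braid}) and then invokes Lemma \ref{lem:gen-LU-dec} with $w=w_0$, where $w_0w_0=e$ turns the factorization $M=ww_0LU$ into an honest LU decomposition.
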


Equivalently, using principal minors we have
\[
\BS(\br) = \{ (z_1, \dots, z_r)\in \C^r \mid \minor_{[i],[i]}(B_{\br}(z)) \neq 0 \; \text{for every} \; i = 1, \dots, k\}
\]
so that $\BS(\br)$ is a principal open subvariety of $\C^r$. If $w \in S_k$ then $\BS(\lift{w})$ coincides with the double Bruhat cell of $w$, that is, the open Richardson variety $R(e,w)$.

The following lemmas give some useful examples of explicit braid matrices.
 
%\subsection{Double Bott-Samelson varieties as braid varieties} 
%Similarly to \cite[Lemma 2.7]{CGGS}, in this section we relate double Bott-Samelson and braid varieties.

\begin{lemma}\label{lem:partial-coxeter}
    Consider the partial Coxeter element $\cox(i) = \sigma_{k-1}\cdots \sigma_{k-i}$. Then,
    \[
    B_{\cox(i)}(z_1, \dots, z_i) = \begin{pmatrix}
        \id_{(k-i-1) \times (k-i-1)} & \mathbf{0}_{(k-i-1) \times 1} & \mathbf{0}_{(k-i-1) \times 1} & \dots & \mathbf{0}_{(k-i-1) \times 1} \\
        \mathbf{0}_{1\times (k-i-1)} & z_i & -1 & \dots & 0 \\
        \mathbf{0}_{1 \times(k-i-1)} & z_{i-1} & 0 & \dots & 0 \\
        \vdots & \vdots & \vdots & \ddots & \vdots  \\
        {\mathbf{0}_{1 \times (k-i-1)}} & z_1 & 0 & \dots & -1 \\
        {\mathbf{0}}_{1 \times (k- i-1)} & 1 & 0 & \dots & 0 
            \end{pmatrix}
    \]
    where $\id_{(k-i-1) \times (k-i-1)}$ is the identity matrix of size $k-i-1$, $\mathbf{0}_{1 \times (k-i-1)}$ is the row vector of size $k-i-1$ with only $0$'s, and similarly for $\mathbf{0}_{(k-i-1) \times 1}$. 
\end{lemma}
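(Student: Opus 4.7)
The plan is to proceed by induction on $i$. For the base case $i=1$, we have $B_{\cox(1)}(z_1) = B_{k-1}(z_1)$, and the latter is, by definition, the identity matrix with the $2\times 2$ block $\left(\begin{smallmatrix} z_1 & -1 \\ 1 & 0 \end{smallmatrix}\right)$ placed in rows and columns $k-1, k$. This is exactly the claimed formula when $i=1$.

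For the inductive step, I would use the factorization
\[
B_{\cox(i)}(z_1, \dots, z_i) = B_{\cox(i-1)}(z_1, \dots, z_{i-1}) \cdot B_{k-i}(z_i),
\]
and analyze the effect of right multiplication by $B_{k-i}(z_i)$ on a matrix $M$. From the definition of $B_j(z)$, this operation leaves every column $c \notin \{k-i, k-i+1\}$ unchanged, replaces the $(k-i)$-th column by $z_i M_{\bullet,\,k-i} + M_{\bullet,\,k-i+1}$, and replaces the $(k-i+1)$-th column by $-M_{\bullet,\,k-i}$.

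Applying the inductive hypothesis to $M = B_{\cox(i-1)}(z_1,\dots,z_{i-1})$, the column $M_{\bullet,\,k-i}$ is the standard basis vector $e_{k-i}$ (it lies in the identity block of size $(k-i)\times(k-i)$), while $M_{\bullet,\,k-i+1}$ has entries $z_{i-1}, z_{i-2}, \dots, z_1, 1$ in rows $k-i+1, \dots, k$ and zeros above. Substituting, the new $(k-i)$-th column acquires entries $z_i, z_{i-1}, \dots, z_1, 1$ in rows $k-i, k-i+1, \dots, k$ and zeros above, and the new $(k-i+1)$-th column becomes $-e_{k-i}$, which has a $-1$ in row $k-i$ and zeros elsewhere. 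These are exactly the first two columns of the bottom-right $(i+1)\times(i+1)$ block claimed in the lemma.

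For the remaining columns (indexed $k-i+2, \dots, k$), they are unchanged from $M$. By the inductive hypothesis these columns already contain the ``shifted $-1$'' pattern in rows $k-i+1, \dots, k$; and since row $k-i$ of $M$ equals $e_{k-i}^T$, each of these columns vanishes in row $k-i$. After relabeling, these are precisely the columns with a $-1$ in successive positions that appear in the stated formula. This completes the induction. The argument is essentially pure bookkeeping; the only thing to be careful about is tracking the row indices when shifting from the $(i \times i)$ block in $M$ to the $(i+1)\times(i+1)$ block in $MB_{k-i}(z_i)$, and I expect no substantive obstacle.
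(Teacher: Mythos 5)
Your proof is correct and follows the same route as the paper: induction on $i$, with the base case $i=1$ being the definition of $B_{k-1}(z_1)$ and the inductive step the direct column computation from the factorization $B_{\cox(i)}(z_1,\dots,z_i)=B_{\cox(i-1)}(z_1,\dots,z_{i-1})B_{k-i}(z_i)$ (the paper simply calls this step "a simple computation", which you have carried out correctly).
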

\begin{proof}
    By induction on $i$, the case $i = 1$ is simply the definition of the braid matrix $B_{k-1}(z_1)$ and the induction step is a simple computation.
\end{proof}

The next result is analogous to \cite[(2.5)]{CGGS} and can be shown by induction using Lemma \ref{lem:partial-coxeter} above.

\begin{lemma}\label{lem:braid-matrix-w0}
Consider the braid word $\Delta = (\sigma_{k-1}\cdots \sigma_1)(\sigma_{k-1}\cdots \sigma_2)\cdots (\sigma_{k-1}\sigma_{k-2})\sigma_{k-1}$. Then,
\begin{equation}\label{eq:w0}
B_{\Delta}(z_1, \dots, z_{\binom{k}{2}}) = \begin{pmatrix} z_{k-1} & -z_{2k-3} & z_{3k-6} & \dots & (-1)^{k-1}z_{\binom{k}{2} - 1} & (-1)^{k}z_{\binom{k}{2}} & (-1)^{k+1} \\ z_{k-2} & -z_{2k-4} & z_{3k-7} & \dots & (-1)^{k-1}z_{\binom{k}{2} - 2} & (-1)^{k} &  0 \\ z_{k-3} & -z_{2k-5} & z_{3k-8} & \dots & (-1)^{k-1} & 0 & 0  \\ \vdots & \vdots & \vdots & \ddots  & \vdots & \vdots & \vdots \\ z_{2} & -z_{k} & 1 & \dots & 0 & 0 & 0 \\ z_{1} & -1 & 0 & \dots & 0 & 0 & 0 \\ 1 & 0 & 0 & \dots & 0 & 0 & 0 \end{pmatrix}
\end{equation}
In particular, $w_0B_{\Delta}(z_1, \dots, z_{\binom{k}{2}})$ is a lower-triangular matrix, and $B_{\Delta}(z_1, \dots, z_{\binom{k}{2}})w_0$ is an upper-triangular matrix. 
\end{lemma}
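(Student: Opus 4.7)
The plan is induction on $k$, following the hint in the paper. The base case $k=2$ gives $\Delta = \sigma_1$ and $B_\Delta(z_1) = B_1(z_1) = \left(\begin{smallmatrix} z_1 & -1 \\ 1 & 0\end{smallmatrix}\right)$, which matches the claimed formula with $(-1)^{k+1} = -1$.

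For the inductive step, I would factor the braid word as
\[
\Delta_k \;=\; \cox(k-1) \cdot \widetilde{\Delta}_{k-1},
\]
where $\cox(k-1) = \sigma_{k-1}\sigma_{k-2}\cdots \sigma_1$ and
\[
\widetilde{\Delta}_{k-1} \;=\; (\sigma_{k-1}\cdots \sigma_2)(\sigma_{k-1}\cdots \sigma_3)\cdots (\sigma_{k-1}\sigma_{k-2})\sigma_{k-1}.
\]
The crucial observation is that $\widetilde{\Delta}_{k-1}$ only uses generators $\sigma_2,\ldots,\sigma_{k-1}$, so every factor $B_{\sigma_i}(z)$ appearing in $B_{\widetilde{\Delta}_{k-1}}$ has the block form $\mathrm{diag}(1,B'_{i-1}(z))$ where $B'_j$ is the braid matrix acting in $\GL(k-1)$ on the last $k-1$ rows and columns. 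Consequently, the shift $\sigma_i \leftrightarrow \sigma_{i-1}$ identifies $\widetilde{\Delta}_{k-1}$ with the braid word $\Delta_{k-1}$ in $\mathrm{Br}_{k-1}^+$, and
\[
B_{\widetilde{\Delta}_{k-1}}(z_k,\ldots,z_{\binom{k}{2}}) \;=\; \begin{pmatrix} 1 & 0 \\ 0 & B_{\Delta_{k-1}}(z_k,\ldots,z_{\binom{k}{2}})\end{pmatrix},
\]
where the right-hand factor is the $(k-1)\times(k-1)$ matrix described by the inductive hypothesis.

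With these two pieces in hand, I would multiply out
\[
B_{\Delta_k}(z_1,\ldots,z_{\binom{k}{2}}) \;=\; B_{\cox(k-1)}(z_1,\ldots,z_{k-1}) \cdot \begin{pmatrix} 1 & 0 \\ 0 & B_{\Delta_{k-1}}(z_k,\ldots,z_{\binom{k}{2}})\end{pmatrix}.
\]
By Lemma \ref{lem:partial-coxeter} with $i=k-1$, the first factor has first column $(z_{k-1},z_{k-2},\ldots,z_1,1)^T$, with $-1$'s on the superdiagonal of the remaining block and zeros elsewhere. In the product, the first column of $B_{\Delta_k}$ is then $(z_{k-1},\ldots,z_1,1)^T$, matching column $1$ of the claimed formula. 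Each subsequent column $j\ge 2$ of the product arises from reading off $-(j{-}1)$-th row of $B_{\Delta_{k-1}}$ against the $-1$'s in rows $1,\ldots,k-1$ of $B_{\cox(k-1)}$, which after a sign $(-1)^{j-1}$ shifts precisely one row upward and relabels the $z$-indices by adding $k-1$ (in agreement with the cumulative index counts $(j-1)k - \binom{j}{2}+1,\ldots,jk - \binom{j+1}{2}$ appearing in column $j$ of the target matrix). Verifying this column-by-column match is the one routine but error-prone step.

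Finally, the last two assertions are immediate from the explicit form: all nonzero entries $(i,j)$ of $B_{\Delta}$ satisfy $i+j\le k+1$, so the matrix is anti-lower-triangular in the sense that it has zeros strictly below the anti-diagonal. Left-multiplying by $w_0$ reverses rows, turning the anti-lower-triangular matrix into a lower-triangular one; symmetrically, right-multiplying by $w_0$ reverses columns, yielding an upper-triangular matrix. The main obstacle is purely bookkeeping: tracking the signs $(-1)^{j-1}$ and the index-shift of the $z_m$'s through the inductive step; the underlying algebra is just the multiplication of a very sparse matrix by a block matrix.
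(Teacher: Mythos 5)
Your proposal is correct and follows essentially the same route as the paper, which itself only says the lemma "can be shown by induction using Lemma \ref{lem:partial-coxeter}": you peel off the partial Coxeter factor $\cox(k-1)$, identify the remaining word (which uses only $\sigma_2,\dots,\sigma_{k-1}$) with $\Delta_{k-1}$ embedded block-diagonally, and multiply against the sparse matrix from Lemma \ref{lem:partial-coxeter}. The only blemish is the phrase "reading off the $(j-1)$-th \emph{row} of $B_{\Delta_{k-1}}$", which should say \emph{column}; the sign and index bookkeeping you describe, as well as the anti-triangularity argument for the $w_0$-statements, are all correct.
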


\begin{lemma}\label{lem:going-back-to-standard}
Let $(z_1, \dots, z_r) \in X(\br)$ and let $\widetilde{\Delta}$ be any braid of length $\ell(w_0)$ such that $\pi(\widetilde{\Delta}) = w_0$. Then, there exist unique functions $y_1, \dots, y_{\ell(w_0)} \in \C[X(\br)]$ such that the matrix
\[
B_{\br}(z_1, \dots, z_r)B_{\widetilde{\Delta}}(y_1 \dots, y_{\ell(w_0)})
\]
is diagonal. %\JS{change $\varphi$ to something else, e.g. $y$}
\end{lemma}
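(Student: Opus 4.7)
The plan has two main ingredients. First, I will use the flag interpretation, together with Lemmas \ref{lem:dec-rel-pos} and \ref{lem: relpos via matrices}, to single out unique parameters $y_j \in \C[X(\br)]$ such that $M \cdot B_{\widetilde{\Delta}}(y)$ is upper triangular, where $M := B_{\br}(z_1, \dots, z_r)$. Second, I will show that for \emph{every} $y$, the product $M \cdot B_{\widetilde{\Delta}}(y)$ is automatically lower triangular, so that the $y$ from the first step in fact makes the matrix diagonal.

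For the first step, Corollary \ref{cor: braid} gives $\CF(M) = \CF(w_0) = \CF^{\ant}$, so $\CF^{\ant} \relpos{w_0} \CF^{\std}$. Writing $\widetilde{\Delta} = \sigma_{a_1}\cdots\sigma_{a_{\ell(w_0)}}$ and using that $\widetilde{\Delta}$ is a reduced expression for $w_0$, Lemma \ref{lem:dec-rel-pos} supplies unique intermediate flags $\CF^{r+1}, \ldots, \CF^{r+\ell(w_0)-1}$ with $\CF^{r+j-1} \relpos{s_{a_j}} \CF^{r+j}$ and $\CF^{r+\ell(w_0)} = \CF^{\std}$. Lemma \ref{lem: relpos via matrices} then provides unique regular functions $y_1, \dots, y_{\ell(w_0)} \in \C[X(\br)]$ with $\CF^{r+j} = \CF(M \cdot B_{a_1}(y_1) \cdots B_{a_j}(y_j))$, and the equality $\CF(M \cdot B_{\widetilde{\Delta}}(y)) = \CF^{\std}$ is equivalent to $M \cdot B_{\widetilde{\Delta}}(y)$ being upper triangular.

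For the second step, the key auxiliary claim is that $B_{\widetilde{\Delta}}(y) \in \borel(k) \cdot w_0$ for every $y$. To see this, factor $B_i(y) = (I + y E_{i, i+1}) B_i(0)$ and iteratively conjugate the unipotent factors $I + y_j E_{a_j, a_j+1}$ leftwards across the signed permutation matrices $B_{a_i}(0)$ for $i < j$. The resulting factorization $B_{\widetilde{\Delta}}(y) = U(y) \cdot B_{\widetilde{\Delta}}(0)$ satisfies $U(y) \in \unipotent(k)$, because the conjugated root vectors correspond to the roots $\gamma_j = s_{a_1}\cdots s_{a_{j-1}}(\alpha_{a_j})$, which are all positive since $\widetilde{\Delta}$ is reduced (a standard consequence of the theory of reduced words in a Weyl group). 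Since $\pi(\widetilde{\Delta}) = w_0$, $B_{\widetilde{\Delta}}(0)$ is a signed permutation matrix of shape $w_0$, so $B_{\widetilde{\Delta}}(0) = D_0 w_0$ for a diagonal $D_0$ with $\pm 1$ entries, and therefore $B_{\widetilde{\Delta}}(y) \in \borel(k) w_0$. Combined with $M \in w_0 \borel(k)$, this gives $M \cdot B_{\widetilde{\Delta}}(y) \in w_0 \borel(k) \cdot \borel(k) \cdot w_0 = w_0 \borel(k) w_0$, and a direct computation using $\borel(k) = (\text{diagonal}) \cdot \unipotent(k)$, $w_0 \unipotent(k) w_0^{-1} = (\text{lower unipotent})$, and $w_0^2 = I$ shows that $w_0 \borel(k) w_0$ consists of invertible lower triangular matrices.

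Combining the two steps, for the unique $y$ produced above, $M \cdot B_{\widetilde{\Delta}}(y)$ is simultaneously upper and lower triangular, hence diagonal, and uniqueness follows from uniqueness in the flag construction. The main technical obstacle is the auxiliary claim that $B_{\widetilde{\Delta}}(y) \in \borel(k) w_0$ for every reduced $\widetilde{\Delta}$; this is essentially the standard Bott--Samelson/Lusztig parameterization of $\borel(k) w_0 \subset \GL(k)$ by positive root coordinates. A more elementary alternative is to establish it for the specific reduced word of Lemma \ref{lem:braid-matrix-w0} (where it is visible from the explicit matrix) and propagate to arbitrary reduced expressions via braid moves.
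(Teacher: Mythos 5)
Your argument is correct, but it follows a genuinely different route from the paper. The paper's proof is a two-line reduction: since any two reduced expressions for $w_0$ differ by braid moves, it suffices to treat the single word $\widetilde{\Delta} = (\sigma_{k-1}\cdots\sigma_1)(\sigma_{k-1}\cdots\sigma_2)\cdots\sigma_{k-1}$, for which the explicit antitriangular form of $B_{\Delta}$ in Lemma \ref{lem:braid-matrix-w0} makes the statement immediate (the option you mention only as an ``elementary alternative'' at the end). You instead handle an arbitrary reduced word directly: existence and uniqueness of the $y_j$ come from the flag chain $\CF^{\ant}\relpos{w_0}\CF^{\std}$ via Lemmas \ref{lem:dec-rel-pos} and \ref{lem: relpos via matrices}, and the diagonality is forced by your auxiliary factorization $B_{\widetilde{\Delta}}(y)=U(y)\,n_{w_0}$ with $U(y)$ upper unitriangular (positivity of the roots $s_{a_1}\cdots s_{a_{j-1}}(\alpha_{a_j})$ for a reduced word), which puts $B_{\br}(\zz)B_{\widetilde{\Delta}}(y)\in w_0\borel(k)w_0$, i.e.\ among lower triangular matrices, for every $y$. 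What your route buys is uniformity: you never need to check that the statement (and the accompanying change of variables for the matrices $B_i(z)$) propagates across braid moves, and the argument is type-independent; what the paper's route buys is brevity, since Lemma \ref{lem:braid-matrix-w0} is already available. One point you state a bit too quickly is the regularity of the $y_j$ as elements of $\C[X(\br)]$: Lemma \ref{lem: relpos via matrices} is a pointwise statement, so you should either invoke the paper's remark after Lemma \ref{lem:dec-rel-pos} that the intermediate flags depend regularly on the configuration (and note that extracting the parameter is regular), or observe that your own factorization gives it for free, since the diagonality condition is equivalent to $U(y)=U^{-1}\mathrm{diag}(U)$ with $U=w_0B_{\br}(\zz)$, whose entries are regular on $X(\br)$ (the diagonal entries of $U$ being units), and $y\mapsto U(y)$ is a polynomial isomorphism onto the unipotent group. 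With that sentence added, the proof is complete.
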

\begin{proof}
Since any two reduced expressions for $w_0$ are related by braid moves, it is enough to prove this for a single reduced braid lift of $w_0$, and for $$\widetilde{\Delta} = (\sigma_{k-1}\cdots \sigma_1)(\sigma_{k-1}\cdots \sigma_2)(\sigma_{k-1}\sigma_{k-2})(\sigma_{k-1})$$ this follows immediately from Lemma \ref{lem:braid-matrix-w0}. %\JS{recover this lemma}\EG{And move the recovered lemma and this one to background or section 4}%\EG{And cite Anton to explain why the intermediate flags in Lemma \ref{lem:dec-rel-pos} are algebraic in $\CF,\CF'$}
\end{proof}

We can obtain a description in coordinates of the isomorphism $\varphi_2: \BS(\br) \to X(\Delta\br)$.

\begin{corollary}\label{cor:phi2-coordinates}
There exist unique functions $p_1, \dots, p_{\ell(w_0)} \in \C[\BS(\br)]$ such that the map
\[
\varphi_2: \BS(\br) \to X(\Delta\br), \qquad \varphi_2(z_1, \dots, z_r) = \left(p_1, \dots, p_{\ell(w_0)}, z_1, \dots, z_r\right)
\]
is an isomorphism. 
\end{corollary}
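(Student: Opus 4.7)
The plan is to construct the functions $p_i(z)$ explicitly via the LU decomposition of $B_\br(z)$ and then show that the map $\varphi_2(z) = (p(z), z)$ is an isomorphism by exhibiting its inverse as the projection onto the last $r$ coordinates.

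First, I would set up the two ingredients. By Corollary \ref{cor: BS}, for every $z \in \BS(\br)$ the matrix $B_\br(z)$ admits a unique LU decomposition $B_\br(z) = L(z)U(z)$ with $L(z)$ lower unipotent and $U(z)$ upper triangular; since $\BS(\br)$ is the principal open locus where all principal minors of $B_\br$ are nonzero, the entries of $L(z)$ and of $L(z)^{-1}$ are regular functions on $\BS(\br)$. On the other side, Lemma \ref{lem:braid-matrix-w0} shows that $w_0 B_{\Delta}(a)$ is lower triangular with a fixed $\pm 1$ diagonal pattern $\epsilon = \mathrm{diag}(\epsilon_1, \dots, \epsilon_k)$ independent of $a$, and each coordinate $a_i$ appears exactly once as a strictly-sub-diagonal entry. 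Consequently the assignment $a \mapsto w_0 B_{\Delta}(a)$ is a linear isomorphism from $\C^{\ell(w_0)}$ onto the affine space of lower triangular matrices with diagonal $\epsilon$; here I am implicitly fixing the reduced word for $\Delta$ used in Lemma \ref{lem:braid-matrix-w0}, but the argument adapts to any other reduced lift.

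Next, I would define $p(z)$ to be the unique preimage of $\epsilon\, L(z)^{-1}$ under this isomorphism, which is well-defined because $L(z)^{-1}$ is lower unipotent and hence $\epsilon\, L(z)^{-1}$ is lower triangular with diagonal exactly $\epsilon$. The $p_i(z)$ are regular on $\BS(\br)$ because $L(z)^{-1}$ is. To verify $(p(z), z) \in X(\Delta\br)$, I invoke Corollary \ref{cor: braid}: it suffices to check that $w_0 B_{\Delta}(p(z)) B_\br(z)$ is upper triangular, and the direct calculation
\[
w_0 B_{\Delta}(p(z)) B_\br(z) \;=\; \epsilon\, L(z)^{-1}\, L(z)\, U(z) \;=\; \epsilon\, U(z)
\]
does the job.

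Finally, I would show that the projection $\pi : X(\Delta\br) \to \C^r,\ (a, c) \mapsto c$ is the inverse of $\varphi_2$. Given $(a, c) \in X(\Delta\br)$, writing $w_0 B_{\Delta}(a) = \epsilon \cdot \bigl(\epsilon^{-1} w_0 B_{\Delta}(a)\bigr)$ with $\epsilon^{-1} w_0 B_{\Delta}(a)$ lower unipotent, the upper triangularity of $w_0 B_{\Delta}(a) B_\br(c)$ rearranges into an LU decomposition of $B_\br(c)$, so $c \in \BS(\br)$; the uniqueness of the LU decomposition then forces $a = p(c)$. Hence $\pi \circ \varphi_2 = \mathrm{id}$ and $\varphi_2 \circ \pi = \mathrm{id}$, and $\varphi_2$ is an isomorphism; uniqueness of the $p_i$ is automatic since any $p'$ with $(p', z) \in X(\Delta\br)$ must satisfy $p' = p(z)$ by the same calculation. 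The main subtlety is simply bookkeeping the sign pattern $\epsilon$ and keeping track of which side $\epsilon$ gets multiplied onto; once that is pinned down via Lemma \ref{lem:braid-matrix-w0}, the rest is routine linear algebra on the open Bruhat cell.
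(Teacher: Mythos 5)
Your proof is correct and follows essentially the same route as the paper's: extract the $p_i$ from the lower-triangular factor of the LU decomposition of $B_{\br}(z)$ via Lemma \ref{lem:braid-matrix-w0}, verify membership in $X(\Delta\br)$ by the cancellation $L^{-1}LU$, and show the inverse is projection onto the last $r$ coordinates by rearranging the upper-triangularity condition into an LU decomposition. Your explicit bookkeeping of the diagonal sign pattern $\epsilon$ is only a minor refinement of the paper's argument (whose equation $w_0L^{-1}=B_{\Delta}(p)$ holds up to that diagonal sign matrix), not a genuinely different approach.
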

\begin{proof}
If $(z_1, \dots, z_r) \in \BS(\br)$, then the braid matrix $B_{\br}(z_1, \dots, z_r)$ admits an LU decomposition, say $B_{\br}(z_1, \dots, z_r) = LU$, equivalently $w_0L^{-1}B_{\br}(z_1, \dots, z_r) = w_0U$. By Lemma \ref{lem:braid-matrix-w0}, we can find unique functions $p_1, \dots, p_{\ell(w_0)}$ such that $w_0L^{-1} = B_{\Delta}(p_1, \dots, p_{\ell(w_0)})$ and $\varphi_2: \BS(\br) \to X(\Delta\br)$ is well-defined.

Note that the inverse map is given by 
\[\varphi_2^{-1}(z_1, \dots, z_{\ell(w_0)}, z_{\ell(w_0)+1}, \dots, z_{\ell(w_0)+r}) = (z_{\ell(w_0)+1}, \dots, z_{\ell(w_0)+r}).\]
Indeed, if $B_{\Delta}(z_1, \dots, z_{\ell(w_0)})B_{\br}(z_{\ell(w_0)+1}, \dots, z_{\ell(w_0)+r}) = w_0U_1$ for some upper triangular matrix $U_1$, then $B_{\br}(z_{\ell(w_0)+1}, \dots, z_{\ell(w_0)+r}) = B_{\Delta}^{-1}(z_1,\dots, z_{\ell(w_0)})w_0U_1$, that has an LU-decomposition by Lemma \ref{lem:braid-matrix-w0}. It is straightforward to verify that $\varphi_2, \varphi_2^{-1}$ are indeed inverses of each other.
\end{proof}

\subsection{Cluster structure}\label{sec:cluster-structure} Let us now describe the cluster structure on braid varieties obtained in \cite{CGGLSS}, see also \cite{GLSB, GLSBS22}. This cluster structure generalizes the one obtained for double Bott-Samelson varieties in \cite{SW}, that we will also describe. 

\subsubsection{Cluster structure on braid varieties: variables}  We will define an initial cluster $\mathbf{x}_{X(\br)}$ first by means of its non-vanishing locus. For each $m = 1, \dots, r$, let $\beta_{m} = \sigma_{i_1}\cdots \sigma_{i_m}$, so that $\beta_{r} = \beta$. 

\begin{definition}
The left-to-right inductive torus  (also known as Deodhar torus) $\ltrt_{\br} \subseteq X(\br)$ consists of those elements $(\CF^0 = \CF^{\std}, \CF^1, \dots, \CF^r = \CF^{\ant})$ such that
\[
\CF^{0} \relpos{\delta(\br_{m})} \CF^m
\]
for all $m = 1, \dots, r$.
\end{definition}

We can give a system of coordinates on $\ltrt_{\br}$ as follows. 

\begin{lemma}[Proposition 2.12, \cite{GLSB}]
 The torus $\ltrt_{\br}$ is given by the non-vanishing of the minors    
 \begin{equation}\label{eq:minors-cluster-monomials}
\minor_{\delta(\br_{m})[i_m], [i_m]}\left(B_{\br_{m}}\!\left(z_1, \dots, z_m\right)\right).
\end{equation}
\end{lemma}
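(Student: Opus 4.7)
The plan is to prove both containments. Setting $M_m := B_{\br_m}(z_1,\dots,z_m)$ so that $\CF^m = \CF(M_m)$, and recalling that the Demazure product is multiplicative in the sense $\delta(\br_m) = \delta(\br_{m-1}) \dem s_{i_m}$, the forward direction is immediate: if $(\CF^0,\dots,\CF^r) \in \ltrt_\br$ then $\CF^{\std} \relpos{\delta(\br_m)} \CF^m$, and Lemma \ref{lem: relpos w minors}(b) gives $\minor_{\delta(\br_m)[i],[i]}(M_m) \ne 0$ for every $i$, in particular for $i = i_m$.

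For the reverse containment I would argue by induction on $m$ that, whenever the minors in \eqref{eq:minors-cluster-monomials} are nonvanishing, one has $\CF^{\std} \relpos{\delta(\br_m)} \CF^m$. The base $m=0$ is trivial. For the inductive step, let $v$ denote the actual relative position of $\CF^{\std}$ and $\CF^m$. The inductive hypothesis $\CF^{\std} \relpos{\delta(\br_{m-1})} \CF^{m-1}$ together with $\CF^{m-1} \relpos{s_{i_m}} \CF^m$ yields, via Corollary \ref{cor: length additive}(b), the upper bound $v \le \delta(\br_{m-1}) \dem s_{i_m} = \delta(\br_m)$. To promote this to equality I would apply Lemma \ref{lem: relpos w minors}(c) to rule out $v < \delta(\br_m)$, but this requires nonvanishing of $\minor_{\delta(\br_m)[i],[i]}(M_m)$ for \emph{every} $i$, whereas the hypothesis only supplies the single index $i = i_m$.

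The observation that closes the gap is that the missing minors are automatically nonvanishing. For $i \ne i_m$, right multiplication by $s_{i_m}$ only swaps the values at positions $i_m$ and $i_m+1$ of a permutation, so $\delta(\br_m)[i] = \delta(\br_{m-1})[i]$. Meanwhile Lemma \ref{lem:cauchy-binet-minors} says that $\minor_{I,[i]}$ is invariant under right multiplication by $B_{i_m}(z_m)$ when $i \ne i_m$, so
$$\minor_{\delta(\br_m)[i],[i]}(M_m) = \minor_{\delta(\br_{m-1})[i],[i]}(M_{m-1}),$$
which is nonzero by the induction hypothesis together with Lemma \ref{lem: relpos w minors}(b). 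Combined with the hypothesis at $i = i_m$, this gives the full set of nonvanishing minors needed to invoke Lemma \ref{lem: relpos w minors}(c), and the induction closes.

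The main obstacle is mild and is exactly this bookkeeping step: recognizing that only one new minor per step suffices for the induction, because the braid matrix $B_{i_m}(z_m)$ preserves all leading minors indexed by subsets of size $\ne i_m$, so the inductive data carries the non-vanishing of the remaining minors for free. There is no deeper technical difficulty beyond tracking how $\delta(\br_m)[i]$ and $\delta(\br_{m-1})[i]$ compare for $i \ne i_m$.
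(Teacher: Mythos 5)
Your proof is correct and takes essentially the same route as the paper's: induction on $m$, bounding the relative position of $\CF^{\std}$ and $\CF^m$ via Corollary \ref{cor: length additive}(b), and upgrading to equality with Lemma \ref{lem: relpos w minors}(c), where the minors at indices $i \neq i_m$ come for free from Lemma \ref{lem:cauchy-binet-minors} together with the induction hypothesis and Lemma \ref{lem: relpos w minors}(b). The only cosmetic difference is that the paper handles the case $\delta(\br_m) = \delta(\br_{m-1})s_{i_m}$ separately by Corollary \ref{cor: length additive}(a), whereas you run the minor argument uniformly in both cases, which is equally valid since the hypothesis supplies the minor at $i_m$ for every $m$.
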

\begin{proof}
If $(z_1, \dots, z_r) \in \ltrt_{\br}$ then all minors \eqref{eq:minors-cluster-monomials} are nonzero by Lemma \ref{lem: relpos w minors} (b). Conversely, assume all minors \eqref{eq:minors-cluster-monomials} are nonzero. We show by induction on $m$ that $\CF^{\std} \relpos{\delta(\br_m)} \CF(B_{\br_m}(z_1, \dots, z_m))$.  The base of induction is $m = 0$, where $\br_m$ is the identity braid and $B_{\br_m}(z_1,\dots, z_m)$ is the identity matrix. For brevity, we will denote $\CF^m := \CF(B_{\br_m}(z_1, \dots, z_m))$. 

Assume $\CF^0 \xrightarrow{\delta(\br_m)}\CF^m$. Note that we have
\[
\CF^{0} \xrightarrow{\delta(\br_m)} \CF^m \xrightarrow{s_{i_{m+1}}} \CF^{m+1}.
\]
If $\delta(\br_{m+1}) = \delta(\br_m)s_{i_{m+1}}$ then using Corollary \ref{cor: length additive}(a) we obtain $\CF^0 \xrightarrow{\delta(\br_{m+1})} \CF^{m+1}$, as needed. If, on the other hand, $\delta(\br_{m+1}) = \delta(\br_m)$ then using Corollary \ref{cor: length additive}(b) we have that $\CF^{0} \xrightarrow{v} \CF^{m+1}$ with $v \leq \delta(\br_m)\dem s_{i_{m+1}} = \delta(\br_{m})$. Thus, by Lemma \ref{lem: relpos w minors}(c), to conclude that $v = \delta(\br_m) = \delta(\br_{m+1})$, it is enough to show that $\minor_{\delta(\br_{m})[j], [j]}\!\left(B_{\br_{m}}(z_1, \dots, z_m)B_{s_{i_{m+1}}}(z_{m+1})\right)$ is nonzero for all $j = 1, \dots, k$. But by Lemma \ref{lem:cauchy-binet-minors}, if $j \neq i_{m+1}$ then
\[
\minor_{\delta(\br_{m})[j], [j]}\!\left(B_{\br_{m}}(z_1, \dots, z_m)B_{s_{i_{m+1}}}(z_{m+1})\right) = \minor_{\delta(\br_{m})[j], [j]}\!\left(B_{\br_{m}}(z_1, \dots, z_m)\right) \neq 0,
\]
where the last inequality follows from the induction assumption. If $j = i_{m+1}$, it is our assumption that $\minor_{\delta(\br_{m})[i_{m+1}], [i_{m+1}]}\!\left(B_{\br_{m}}(z_1, \dots, z_m)B_{s_{i_{m+1}}}(z_{m+1})\right) \neq 0$, and the result follows.
\end{proof}%Lemmas \ref{lem:demazure-bound} and \ref{lem: relpos w minors}, together with Lemma \ref{lem:cauchy-binet-minors}, %\TS{Note for Jos\'e not just lemma 2.6, need more} 
%the torus $\ltrt_{\br} \subseteq X(\br)$  is defined by the non-vanishing of the minors \JS{make this a lemma}

One might then expect that the minors \eqref{eq:minors-cluster-monomials} are the cluster variables in an initial seed for $X(\beta)$. 
However, this does not work because in general \eqref{eq:minors-cluster-monomials} is not an irreducible polynomial in $z_1, \dots, z_m$. However, the irreducible factors of \eqref{eq:minors-cluster-monomials} are in fact the cluster variables defining the torus $\ltrt_{\br}$ and the minors \eqref{eq:minors-cluster-monomials} are cluster monomials. Moreover, one can obtain \eqref{eq:minors-cluster-monomials} from the cluster variables in an upper uni-triangular fashion, and we only need to consider the minors \eqref{eq:minors-cluster-monomials} for those $m$ so that $\delta(\br_{m}) = \delta(\br_{m-1})$. To summarize, for each $m = 1, \dots, r$ such that $\delta(\br_{m}) = \delta(\br_{m-1})$, the minor \eqref{eq:minors-cluster-monomials} has a unique irreducible factor that has not appeared in such a minor for a smaller index, this irreducible factor appears with multiplicity one, and it is a cluster variable. See \cite{CGGLSS, GLSB,GLSBS22} and also \cite{Ingermanson} for details.

The quiver $Q$ forming a seed with the cluster described above can be obtained using the \emph{Lusztig cycles} in the left-to-right inductive weave of $\br$. Since we will not need this level of detail we will not go into it, and instead refer the reader to \cite[Section 4]{CGGLSS}. 
%and the quiver can be read of using \emph{Lusztig cycles} in the left inductive weave, as in \cite{CGGLSS}. Since we do not need this level of detail we will not go into it, except in the double Bott--Samelson case, where the construction coincides with the one in \cite{SW}. 
%\EG{Too vague,add more details and formal defns, explain that minors are cluster monomials and cluster vars are obtained from these by some change of variables}

\subsubsection{Cluster structure on double Bott-Samelson varieties: variables}

We use the isomorphism $\varphi_2: \BS(\br) \to X(\Delta\br)$ from Lemma \ref{lem:bs-to-braid} in order to translate the cluster structure on $X(\Delta\br)$ described above to $\BS(\br)$. We remark that the resulting cluster structure is the one obtained by Shen and Weng in \cite{SW}, which predates the construction of cluster structures on general braid varieties, see \cite[Section 4.8 and Proposition 4.20]{CGGLSS}. 

First, we make some simplifications. By Corollary \ref{cor:phi2-coordinates}, %the isomorphism $\varphi_2: \BS(\br) \to X(\Delta\br)$ is of the 
we have $\varphi_2:(z_1, \dots, z_r)\mapsto \left(p_1, \dots, p_{\ell(w_0)}, z_1, \dots, z_r\right)$, where $\pp=\{p_1, \dots, p_{\ell(w_0)}\}$ are uniquely defined functions of $\zz=\{z_1, \dots, z_r\}$.

Second, $\delta((\Delta\br)_{m}) = \delta((\Delta\br)_{m-1})$ if and only if $m > \ell(w_0)$, and in this case the Demazure product is precisely $w_0$. We denote $j=m-\ell(w_0)$, so that the $m$-th letter in $\Delta\beta$ is $i_{j}$.

Therefore the cluster variables are the irreducible factors of
\[
\minor_{w_0[i_j], [i_j]}\left(B_{(\Delta\br)_{m}}\left(p_1, \dots, p_{\ell(w_0)},z_1, \dots, z_j\right)\right), \qquad j =  1, \dots,  \ell(\br).
\]
Note, however, that
\[
\begin{array}{rl}
 \minor_{w_0[i_j],[i_j]}\left(B_{(\Delta\br)_{s}}\!\left(\pp,z_1, \dots, z_j\right)\right) =  & \minor_{w_0[i_j], [i_j]}
 \left(B_{\Delta}(\pp)B_{\br_{j}}\!\left(z_{1}, \dots, z_j\right)\right)  \\
    = & \minor_{w_0[i_j], [i_j]}\left(w_0L(\pp)B_{\br_{j}}\!\left(z_{1}, \dots, z_j\right)\right) \\
    = & \minor_{[i_j], [i_j]}\left(L(\pp)B_{\br_{j}}\!\left(z_{1}, \dots, z_j\right)\right) \\
    = & \minor_{[i_j], [i_j]}\left(B_{\br_{j}}\!\left(z_{1}, \dots, z_j\right)\right)
\end{array}
\]
where we have used that $L(\pp)$ is lower triangular with $1$'s on the diagonal, and Lemma \ref{lem:braid-matrix-w0}. 
We conclude that the cluster variables in $\BS(\br)$ are the irreducible factors of 
\begin{equation}\label{eq:cluster-vars-dbs}
x_j := \minor_{[i_j],[i_j]}(B_{\br_{j}}(z_1, \dots, z_j)). 
\end{equation}
However, by \cite[Lemma 3.30]{SW}, the function $x_j$ is already irreducible. We summarize the previous discussion in the following result.

\begin{lemma}[\cite{SW}]
The cluster variables in $\BS(\br)$ are as in \eqref{eq:cluster-vars-dbs}, with $j = 1, \dots, r$.
\end{lemma}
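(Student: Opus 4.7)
The plan is to pull back the cluster structure on $X(\Delta\br)$ via the isomorphism $\varphi_2 \colon \BS(\br) \to X(\Delta\br)$ of Corollary \ref{cor:phi2-coordinates}, and then explicitly simplify the resulting initial cluster variables using the lower-triangular structure of $B_{\Delta}$ given in Lemma \ref{lem:braid-matrix-w0}.

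First I would identify which crossings of $\Delta\br$ contribute cluster variables. Since $\delta(\Delta) = w_0$, the Demazure product is already maximal after the initial $\Delta$, so for every $m > \ell(w_0)$ we have $\delta((\Delta\br)_m) = \delta((\Delta\br)_{m-1}) = w_0$. Writing $m = \ell(w_0) + j$ with $1 \le j \le r$, the $m$-th letter of $\Delta\br$ is $\sigma_{i_j}$, and by the discussion preceding the lemma (or equivalently by \cite[Section 4.8]{CGGLSS}) the irreducible factors of the minors
\[
\minor_{w_0[i_j],\,[i_j]}\!\left(B_{(\Delta\br)_m}(p_1,\dots,p_{\ell(w_0)},z_1,\dots,z_j)\right)
\]
are precisely the cluster variables in the chosen initial seed of $X(\Delta\br)$.

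Next I would carry out the simplification already outlined in the excerpt: by Lemma \ref{lem:braid-matrix-w0}, $B_{\Delta}(\pp) = w_0 L(\pp)$ with $L(\pp)$ lower triangular with $1$'s on the diagonal, so
\[
\minor_{w_0[i_j],\,[i_j]}\!\left(B_{\Delta}(\pp)B_{\br_j}(z_1,\dots,z_j)\right) = \minor_{[i_j],\,[i_j]}\!\left(L(\pp)B_{\br_j}(z_1,\dots,z_j)\right).
\]
A direct block computation (or the Cauchy--Binet formula) shows that multiplying on the left by a lower-triangular unipotent matrix preserves the top-left $i_j \times i_j$ principal minor, since the top-left $i_j \times i_j$ block of $L(\pp) M$ equals the product of the top-left $i_j \times i_j$ blocks of $L(\pp)$ and $M$, and the former has determinant $1$. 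Hence this minor equals $x_j = \minor_{[i_j],[i_j]}(B_{\br_j}(z_1,\dots,z_j))$, which is an honest function on $\BS(\br)$ and does not depend on the auxiliary variables $p_\bullet$.

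Finally, to conclude that each $x_j$ is itself a cluster variable (and not merely a product of cluster variables and frozens), I would cite \cite[Lemma 3.30]{SW}, which asserts that the polynomial $x_j$ is irreducible as a polynomial in $z_1, \dots, z_j$. Combined with the fact that it is one of the minors appearing in the initial cluster of $X(\Delta\br)$ transported to $\BS(\br)$, this yields the claim. The main conceptual step is the computation involving $B_{\Delta}(\pp)$ and $L(\pp)$; the main outside input is the irreducibility result of Shen--Weng. There is no real obstacle beyond carefully matching conventions, since both the minor identification and the transport of cluster structures are already developed earlier in the section.
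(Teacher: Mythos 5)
Your proposal is correct and follows essentially the same route as the paper: transport the cluster structure along $\varphi_2$, note that only the letters after the initial $\Delta$ contribute (the Demazure product is already $w_0$), reduce the minor $\minor_{w_0[i_j],[i_j]}$ via $B_{\Delta}(\pp)=w_0L(\pp)$ with $L(\pp)$ lower unitriangular to $\minor_{[i_j],[i_j]}(B_{\br_j}(z_1,\dots,z_j))$, and invoke \cite[Lemma 3.30]{SW} for irreducibility so that $x_j$ is itself a cluster variable.
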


%\EG{Explain how this minor is related to the minor in previous displayed equation, with extra $w_0$ and Demazure product etc}

Since $\minor_{[i],[i]}(U) \neq 0$ for any matrix $U \in \borel(k)$ and any $i = 1, \dots, k$, we immediately obtain the following.

\begin{corollary}
    The common nonvanishing locus of $x_j, j = 1, \dots, r$, is
    \[
    \ltrt_{\Delta\br}=\left\{(\CF^0, \CF^1, \dots, \CF^r) \in \BS(\br) \mid \CF^{i} \pitchfork \CF(w_0) \; \text{for all} \; i\right\}.
    \]
\end{corollary}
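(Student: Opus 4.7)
The plan is to establish the claimed equality by relating both sides to the common nonvanishing of all principal minors $\minor_{[i],[i]}(B_{\br_j}(z_1,\dots,z_j))$ as $j$ and $i$ vary. First, by Lemma \ref{lem: relpos w minors}(a) applied with $w = w_0$ (so $w w_0 = e$), a flag $\CF(M)$ is transverse to $\CF(w_0)$ if and only if $\minor_{[i],[i]}(M) \neq 0$ for every $i = 1, \dots, k$. Specializing to $\CF^j = \CF(B_{\br_j}(z_1, \dots, z_j))$, the right-hand side of the claimed equality is exactly the locus inside $\BS(\br)$ on which the principal minor $\minor_{[i],[i]}(B_{\br_j}(z_1, \dots, z_j))$ is nonzero for every $j \in \{0, 1, \dots, r\}$ and every $i \in \{1, \dots, k\}$. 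Note that $\CF^0 = \CF^{\std}$ and $\CF^r \pitchfork \CF(w_0)$ are automatic on $\BS(\br)$, so the real content concerns the intermediate flags.

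The key observation is that by Lemma \ref{lem:cauchy-binet-minors}, whenever $i \neq i_j$ one has
\[
\minor_{[i], [i]}(B_{\br_j}(z_1, \dots, z_j)) = \minor_{[i], [i]}(B_{\br_{j-1}}(z_1, \dots, z_{j-1})).
\]
Hence the $i$-th principal minor can only change at indices $j$ with $i_j = i$, and at such indices it is by definition equal to $x_j$. Since $B_{\br_0}$ is the identity matrix and its principal minors all equal $1$, a straightforward induction on $j$ yields
\[
\minor_{[i], [i]}(B_{\br_j}(z_1, \dots, z_j)) =
\begin{cases} x_{j^*}, & j^* := \max\{j' \leq j : i_{j'} = i\} \text{ exists,} \\ 1, & \text{otherwise.}\end{cases}
\]
Consequently, the common nonvanishing locus of the collection $\{x_j\}_{j=1}^{r}$ coincides with the common nonvanishing locus of all principal minors of all $B_{\br_j}(z_1, \dots, z_j)$, which by the first paragraph is precisely the set of configurations in which every $\CF^j$ is transverse to $\CF(w_0)$, namely $\ltrt_{\Delta\br}$. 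There is no serious obstacle in the argument: the statement reduces to a bookkeeping consequence of the Cauchy--Binet-type identity in Lemma \ref{lem:cauchy-binet-minors} combined with the minor criterion for transversality in Lemma \ref{lem: relpos w minors}(a).
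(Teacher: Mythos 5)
Your argument is correct, but it takes a more self-contained route than the paper. The paper obtains this corollary essentially for free from what precedes it: the Deodhar torus $\ltrt_{\Delta\br}\subseteq X(\Delta\br)$ has already been characterized as the common nonvanishing locus of the minors \eqref{eq:minors-cluster-monomials} (the lemma quoted from \cite{GLSB}, whose proof is a flag-position induction using Corollary \ref{cor: length additive} and Lemma \ref{lem: relpos w minors}(b),(c)), and the computation just before the corollary identifies, via $\varphi_2$, the minors indexed by $m>\ell(w_0)$ with the variables $x_j$ of \eqref{eq:cluster-vars-dbs}, the minors coming from the $\Delta$-prefix being automatically nonvanishing. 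You instead argue directly in the $\BS(\br)$ coordinates: Lemma \ref{lem: relpos w minors}(a) with $w=w_0$ converts transversality of each $\CF^j$ with $\CF(w_0)$ into nonvanishing of all principal minors of $B_{\br_j}(z_1,\dots,z_j)$, and Lemma \ref{lem:cauchy-binet-minors} plus an easy induction shows every such principal minor is either $1$ or the most recent $x_{j^*}$ of the relevant color; in effect you reprove the special case of the Deodhar-torus lemma needed here, avoiding the detour through $X(\Delta\br)$, which is arguably cleaner for this statement since all Demazure products have stabilized at $w_0$. Two small points, neither a genuine gap: for $i=k$ Lemma \ref{lem:cauchy-binet-minors} as stated does not apply, but $\minor_{[k],[k]}(B_{\br_j})=\det B_{\br_j}=1$, so nothing is lost; and you prove the equality with the displayed transversality locus without commenting on why that locus is the preimage under $\varphi_2$ of the Deodhar torus $\ltrt_{\Delta\br}$ (equivalently, why the conditions attached to the $\Delta$-prefix are vacuous) --- this identification is implicit in the paper's notation and follows from the computation preceding the corollary.
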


For $s = 1, \dots, k-1$, let $\last(s)$ be such that $i_{\last(s)} = s$ and $i_j \neq s$ for $j > \last(s)$. In words, $\last(s)$ is the position of the rightmost appearance of $\sigma_s$ in $\br$. If $s$ does not appear in $\br$, we define $\last(s)$ to be a formal symbol $\oslash$. If $\last(s) \neq \oslash$ then by Lemma \ref{lem:cauchy-binet-minors}, we have
\[
x_{\last(s)} = \minor_{[s],[s]}(B_{\br_{\last(s)}}(z_1, \dots, z_{\last(s)})) = \minor_{[s],[s]}(B_{\br}(z_1, \dots, z_r)),
\]
so that, by the definition of $\BS(\br)$, $x_{\last(s)}$ is invertible in $\C[\BS(\br)]$. In fact, the frozen variables in $\BS(\br)$ are $\{x_{\last(s)} \mid s = 1, \dots, k-1, \last(s) \neq \oslash\}$. For convenience and future use, we set $x_{\oslash} = 1$. 

Let us describe the frozen variables in terms of the LU-decomposition of $B_{\br}(z_1, \dots, z_r)$. Let $B_{\br}(z_1, \dots, z_r) = LU$, where $L$ has $1$'s in the diagonal and the diagonal of $U$ is $\diag(u_1, \dots, u_k)$, where each $u_i \in \C[\BS(\br)]$ is a function on $\BS(\br)$. Note that \[\det(B_{\br}(z_1, \dots, z_r)) = 1 = u_1\cdots u_k\] so that each $u_i$ is a unit in $\C[\BS(\br)]$ and, moreover, 
\begin{equation}\label{eq:frozen-in-terms-of-upper}
x_{\last(s)} = \minor_{[s],[s]}(B_{\br}(z_1, \dots, z_r)) = u_1\cdots u_s.
\end{equation}

Expressing $u$'s in terms of the frozen variables, we have $u_1 = x_{\last(1)}$, $u_s = x_{\last(s)}x_{\last(s-1)}^{-1}$ for $s = 2, \dots, k-1$ and $u_k = u_1^{-1}\cdots u_{k-1}^{-1} = x_{\last(k-1)}^{-1}$. 

\subsubsection{Cluster structure on double Bott-Samelson varieties: quiver}
Next, we describe the quiver for double Bott-Samelson variety $\BS(\beta)$.

The quiver $Q_{\br}$ can be read directly from the braid diagram of $\br$ drawn horizontally. The vertices of $Q_{\br}$ are in bijection with the connected components of $\mathbb{R}^2$ minus the braid diagram of $\br$ which are bounded on the left: each connected component corresponds to the crossing directly to its left. Around each crossing, we have the following configuration of half arrows %\EG{half-arrows, correct and move somewhere else}
\begin{comment}
\begin{center}
\begin{tikzpicture}
\draw (0,1)..controls (0.5,1) and (0.5,0)..(1,0);
\draw (0,0)..controls (0.5,0) and (0.5,1)..(1,1);
\draw[->, thick] (0,0.5) to (1,0.5);

\draw[->, thick] (1.1, 0.55) to (0.6,1.1);
\draw[->, thick] (1.1, 0.45) to (0.6,-0.1);
\draw[<-, thick] (-0.1, 0.55) to (0.4,1.1);
\draw[<-, thick] (-0.1, 0.45) to (0.4,-0.1);
\end{tikzpicture}
\end{center}    
\end{comment}

%\SK{Corrected the half arrows, please check!}
\begin{center}
\begin{tikzpicture}
 %\filldraw[color=babyblueeyes] (-0.1,-0.1) -- (-0.1,1.2) -- (1.2,1.2) -- (1.2,-0.1) -- cycle;
\draw [color=lightgray!200] (0,1)..controls (0.55,1.1) and (0.55,0)..(1.1,0);
\draw [color=lightgray!200] (0,0)..controls (0.55,0) and (0.55,1.1)..(1.1,1.1);
\draw[->, thick,dashed] (0.1,0.5) to (1,0.5);
\draw[->, thick,dashed] (0.1,0.6) to (1,0.6); %added one more half arrow
\draw[->, thick,dashed] (1.2, 0.7) to (0.65,1.2);
\draw[->, thick,dashed] (1.2, 0.4) to (0.65,-0.1);
\draw[<-, thick,dashed] (-0.1, 0.7) to (0.45,1.2);
\draw[<-, thick,dashed] (-0.1, 0.4) to (0.45,-0.1);
\end{tikzpicture}
\end{center}

\noindent which produces a quiver that possibly has oriented $2$-cycles, and the quiver $Q_{\br}$ is obtained after removing a maximal collection of disjoint oriented $2$-cycles. %\EG{We will draw half-arrows in blue and arrows in black.}

To see that half arrows add up to an integer number of arrows, it is useful to rephrase the construction of  $Q_{\br}$. %\JS{Maybe just draw the diagram} 
The vertices of the quiver are in bijection with the letters of $\br$. Let us say that the vertex $j$ has color $i$ and write $\colour(j)=i$ if $i_j = i$. The arrows in $Q_{\br}$ are of two types: \emph{mixed} (between vertices of different colors) and \emph{unmixed} (between vertices of the same color). 

If $j_1 < j_2$ are vertices of the same color, then there is an unmixed arrow $j_1 \to j_2$ if and only if there does not exist $j_1 < k < j_2$ of the same color as $j_1, j_2$. Put it more succinctly: there is an unmixed arrow pointing right between two consecutive appearances of the same color. These are all the unmixed arrows.

\begin{center}
\begin{tikzpicture}
 \draw (0,1) to[out=0,in=180] (1,0); 
 \draw (0,0) to[out=0,in=180] (1,1);
 \draw [dashed] (1,1)--(3,1);
 \draw [dashed] (1,0)--(3,0);
  \draw (3,1) to[out=0,in=180] (4,0); 
 \draw (3,0) to[out=0,in=180] (4,1);
 \draw (1,0.5) node {$j_1$};
 \draw (4,0.5) node {$j_2$};
 \draw [->,thick] (1.2,0.5)--(3.8,0.5);
\end{tikzpicture}
\end{center}

Let us now describe the mixed arrows. Assume $j_1 < j_2$ have different colors. If there is a mixed arrow $j_2 \to j_1$, then $\colour(j_2) = \colour(j_1) \pm 1$. Now, let us say that $j_1 < j_2$ and $\colour(j_2) = \colour(j_1) \pm 1$. Then, there is an arrow $j_2 \to j_1$ if and only if there exists $j_1 < j_2 < j'_1$ such that
\begin{itemize}
    \item $\colour(j_1) = \colour(j'_1)$ and, moreover, $j_1$ and $j'_1$ are consecutive appearances of this color.
    \item there does not exist $j_2 < k < j'_1$ of the same color as $j_2$. 
\end{itemize}

\begin{center}
\begin{tikzpicture}
 \draw (0,1) to[out=0,in=180] (1,0); 
 \draw (0,0) to[out=0,in=180] (1,1);
 \draw [dashed] (1,1)--(2,1);
 \draw [dashed] (1,0)--(3,0);
\draw [dashed] (0,2)--(2,2);
 
\draw [dashed] (3,2)--(4,2);
   \draw (2,1) to[out=0,in=180] (3,2); 
 \draw (2,2) to[out=0,in=180] (3,1);
  \draw (3,1) to[out=0,in=180] (4,0); 
 \draw (3,0) to[out=0,in=180] (4,1);
 \draw (1,0.5) node {$j_1$};
 \draw (4,0.5) node {$j'_1$};
 \draw (3,1.5) node {$j_2$};
 \draw [->,thick] (3,1.3)--(1.3,0.5);
\end{tikzpicture}
\qquad
\qquad
\begin{tikzpicture}
 \draw (0,-1) to[out=0,in=180] (1,0); 
 \draw (0,0) to[out=0,in=180] (1,-1);
 \draw [dashed] (1,-1)--(2,-1);
 \draw [dashed] (1,0)--(3,0);
\draw [dashed] (0,-2)--(2,-2);
 
\draw [dashed] (3,-2)--(4,-2);
   \draw (2,-1) to[out=0,in=180] (3,-2); 
 \draw (2,-2) to[out=0,in=180] (3,-1);
  \draw (3,-1) to[out=0,in=180] (4,0); 
 \draw (3,0) to[out=0,in=180] (4,-1);
 \draw (1,-0.5) node {$j_1$};
 \draw (4,-0.5) node {$j'_1$};
 \draw (3,-1.5) node {$j_2$};
 \draw [->,thick] (3,-1.3)--(1.3,-0.5);
\end{tikzpicture}
\end{center}

Finally, the frozen vertices are $\{\last(k) | k = 1, \dots, n-1\}$. \begin{comment}
We will need the following combinatorial result. %\JS{not sure we actually need it...}

\begin{lemma}\label{lem:arrows}
    Let $j$ be a vertex of $Q_{\br}$, and assume that $j$ is not frozen.
    \begin{enumerate}
        \item There exists a vertex $j_1$ of $Q_{\br}$ and an unmixed arrow $j \to j_1$.
        \item If there exists a mixed arrow $j \to j_2$, then there exists a mixed arrow $j_3 \to j$ so that $\colour(j_3) = \colour(j_2)$. 
    \end{enumerate}
\end{lemma}
\begin{proof}
    If $j$ is not frozen then there exists $j < k$ such that $\colour(j) = \colour(k)$. Taking the minimal such $k$ proves (1).
    For (2), since there is a mixed arrow $j \to j_2$, then there exists $j_2'$ such that $j_2 < j < j_2'$, where $\colour(j_2) = \colour(j'_2)$ and there does not exist $j < j' < j'_2$ with $\colour(j) = \colour(j')$. Since $j$ is not frozen, there exist $j < j'$ with $\colour(j) < \colour(j')$, let us say that $j'$ is minimal with this property. By the previous line, $j < j'_2 < j'$. Now let $j'_2 \leq j_3 < j'$ be maximal such that $\colour(j_3) = \colour(j'_2) = \colour(j_2)$. By construction, there is an arrow $j_3 \to j$ and this proves (2). 
\end{proof}
\end{comment}

%\EG{More details? Maybe uncomment the description above since we use it to verify quasi-cluster?}

\section{Splicing braid varieties} \label{sec:splicing-braid}

\subsection{The splicing map}\label{sec:splicing} We now describe splicing maps for general braid varieties.  Let us fix the braid $\br=\sigma_{i_1}\cdots \sigma_{i_r}$. Furthermore, let $\br^1=\sigma_{i_1}\cdots \sigma_{i_{r_1}}$ and $\br^2=\sigma_{i_{r_1+1}}\cdots \sigma_{i_r}$ so that $\br=\br^1\br^2$. For fixed $w \in S_k$, we consider the subset
%\EG{Changed notations to distinguish from $U$ in $LU$} \JS{cool, thanks!}
\begin{equation}\label{eq:def-U}
\CU_{r_1,w}(\br) := \left\{\left(\CF^0, \dots, \CF^r\right) \in X(\br) \mid \CF^{r_1} \pitchfork \CF(w_0w)\right\}.
\end{equation}
By Lemma \ref{lem:open-cover-flag-variety}, $\CU_{r_1, w} \subseteq X(\br)$ is open and
\[
X(\br) = \bigcup_{w \in S_k}\CU_{r_1, w}.
\]
Note, however, that for a fixed $w \in S_k$ the open set $\CU_{r_1, w}$ may be empty. Below, we will obtain necessary and sufficient conditions for $\CU_{r_1,w}(\br)$ to be nonempty, see Corollary \ref{cor: U nonempty}. Assume for the time being that $\CU_{r_1,w}(\br) \not= \emptyset$ and let $(\CF^{0}, \dots, \CF^r) \in \CU_{r_1,w}(\br)$. We fix a reduced expression for $w_0$ that has a reduced word for $w$ as a prefix:
\begin{equation}
\label{eq: choice w0}
w_0 = s_{a_1}\cdots s_{a_{\ell(w_0)}},\quad w = s_{a_1}\cdots s_{a_{\ell(w)}.}
\end{equation}
By Lemma \ref{lem:dec-rel-pos} there exists a unique collection of flags (all of them coordinate flags) $\widetilde{\CF}^1, \dots, \widetilde{\CF}^{\ell(w_0)} = \CF^{\std}$ so that $\widetilde{\CF}^{\ell(w)}=\CF(w_0w)$ and we have the following configuration of flags

% https://q.uiver.app/#q=WzAsMTIsWzAsMCwiXFxtYXRoY2Fse0Z9XntcXG1hdGhybXtzdGR9fSJdLFsxLDAsIlxcbWF0aGNhbHtGfV4xIl0sWzIsMCwiXFxjZG90cyJdLFszLDAsIlxcbWF0aGNhbHtGfV57cl8xfSIsWzAsNjAsNjAsMV1dLFs0LDAsIlxcY2RvdHMiXSxbNSwwLCJcXG1hdGhjYWx7Rn1ee1xcbWF0aHJte2FudH19Il0sWzUsMSwiXFxtYXRoY2Fse0Z9XntcXG1hdGhybXthbnR9fSJdLFs0LDEsIlxcY2RvdHMiXSxbMywxLCJcXG1hdGhjYWx7Rn0od18wdykiLFswLDYwLDYwLDFdXSxbMiwxLCJcXGNkb3RzIl0sWzEsMSwiXFx3aWRldGlsZGV7XFxtYXRoY2Fse0Z9fV57XFxlbGwod18wKS0xfSJdLFswLDEsIlxcbWF0aGNhbHtGfV57XFxtYXRocm17c3RkfX0iXSxbMCwxLCJzX3tpXzF9Il0sWzEsMiwic197aV8yfSJdLFsyLDMsInNfe2lfe3JfMX19Il0sWzMsNCwic197aV97cl8xKzF9fSJdLFs0LDUsInNfe2lfe3J9fSJdLFs1LDYsIiIsMCx7ImxldmVsIjoyLCJzdHlsZSI6eyJoZWFkIjp7Im5hbWUiOiJub25lIn19fV0sWzYsNywic197YV8xfSIsMl0sWzcsOCwic197YV97XFxlbGwodyl9fSIsMl0sWzYsOCwidyIsMCx7ImN1cnZlIjotMn1dLFs4LDldLFs5LDEwXSxbMTAsMTEsInNfe2Ffe1xcZWxsKHdfMCl9fSIsMl0sWzAsMTEsIiIsMSx7ImxldmVsIjoyLCJzdHlsZSI6eyJoZWFkIjp7Im5hbWUiOiJub25lIn19fV1d
\[\begin{tikzcd}
	{\mathcal{F}^{\mathrm{std}}} & {\mathcal{F}^1} & \cdots & \textcolor{blue}{\mathcal{F}^{r_1}} & \cdots & {\mathcal{F}^{\mathrm{ant}}} \\
	{\mathcal{F}^{\mathrm{std}}} & {\widetilde{\mathcal{F}}^{\ell(w_0)-1}} & \cdots & \textcolor{blue}{\mathcal{F}(w_0w)} & \cdots & {\mathcal{F}^{\mathrm{ant}}}
	\arrow["{s_{i_1}}", from=1-1, to=1-2]
	\arrow["{s_{i_2}}", from=1-2, to=1-3]
	\arrow["{s_{i_{r_1}}}", from=1-3, to=1-4]
	\arrow["{s_{i_{r_1+1}}}", from=1-4, to=1-5]
	\arrow["{s_{i_{r}}}", from=1-5, to=1-6]
	\arrow[Rightarrow, no head, from=1-6, to=2-6]
	\arrow["{s_{a_1}}"', from=2-6, to=2-5]
	\arrow["{s_{a_{\ell(w)}}}"', from=2-5, to=2-4]
	\arrow["w", curve={height=-12pt}, from=2-6, to=2-4]
	\arrow[from=2-4, to=2-3]
	\arrow[from=2-3, to=2-2]
	\arrow["{s_{a_{\ell(w_0)}}}"', from=2-2, to=2-1]
	\arrow[Rightarrow, no head, from=1-1, to=2-1]
\end{tikzcd}\]
In particular, $\widetilde{\CF}^{\ell(w)}=\CF(w_0w)$.
Since $\CF^{r_1} \pitchfork \CF(w_0w)$, there exist %\EG{$g_2=w_0g_1$?}:
\begin{enumerate}
\item $g_1 \in \GL(k)$ such that $g_1\CF^{r_1} = \CF^{\ant}$, $g_1\CF(w_0w) = \CF^{\std}$.
    \item $g_2 \in \GL(k)$ such that $g_2\CF^{r_1} = \CF^{\std}$, $g_2\CF(w_0w) = \CF^{\ant}$. 
\end{enumerate}
\begin{remark}\label{rmk:ambiguity}
Given that $\borel\cap w_0\borel w_0$ is the subgroup of diagonal matrices, the elements $g_1, g_2$ are unique up to multiplication on the left by a diagonal matrix. We will clarify this ambiguity in the course of the proof of Theorem \ref{thm:splicing-braid-varieties} below. 
\end{remark}
%\EG{Refer to this remark in the proof}

Now we define: %We specify uniquely $g_1$ and $g_2$ as follows. Note that $\CF^{k} = \CF(B_{\br^1}(z_1, \dots, z_{r_1}))$, so that $\CF^{\std} = \CF(g_2B_{\br^1}(z_1, \dots. z_{r_1}))$, i.e., $g_2B_{\br^1}(z_1, \dots, z_{r_1})$ is upper-triangular. We then specify $g_2$ uniquely by requiring that $g_2B_{\br^1}(z_1, \dots, z_{r_1})$ has $1$'s on the diagonal, and $g_1 = w_0g_2$. Now we define:
\begin{align}\label{eq:Phi1}
\Phi^1\left(\CF^0, \dots, \CF^r\right) = g_1\left(\CF(w_0w), \cdots, \CF_{\std}, \CF^1, \dots, \CF^{r_1}\right) \in X\left(\lift{w^{-1}w_0}\br^1\right),
\\
\label{eq:Phi2} \Phi^2\left(\CF^0, \dots, \CF^r\right) = g_2\left(\CF^{r_1}, \CF^{r_1+1}, \dots, \CF^{r}, \widetilde{\CF}^{1}, \dots, \CF(w_0w)\right) \in X\left(\br^2\lift{w}\right).  
\end{align} 

Schematically, and up to the translation by $g_1$ and $g_2$, the maps $\Phi_1$ and $\Phi_2$ are defined as follows: 
\begin{equation}\label{eq:def-phi1-phi2}
\begin{tikzpicture}
\node at (0,0) {$\CF^{\std}$};
\draw[->] (0.5,0) to (1.5,0);
\node at (1,0.15) {\scriptsize $s_{i_1}$};
\node at (2,0) {$\CF^{1}$};
\draw[->] (2.5,0) to (3.5,0);
\node at (3,0.15) {\scriptsize $s_{i_2}$};
\node at (4,0) {$\cdots$};
\draw[->] (4.5,0) to (5.5,0);
\node at (4.75,0.15) {\scriptsize $s_{i_{r_1}}$};
\node at (6,0) {$\CF^{r_1}$};
\draw[->] (6.5,0) to (7.5,0);
\node at (6.95,0.15) {\scriptsize $s_{i_{r_1+1}}$};
\node at (8,0) {$\cdots$};
\draw[->] (8.5,0) to (9.5,0);
\node at (9,0.15) {\scriptsize $s_{i_{r}}$};
\node at (10,0) {$\CF^{\ant}$};
\draw[double equal sign distance] (10, -0.1) to (10, -0.9);
\node at (10,-1.3) {$\CF^{\ant}$};
\draw[->] (9.5, -1.3) to (8.5, -1.3);
\node at (9,-1.15) {\scriptsize $s_{a_1}$};
\node at (8,-1.3) {$\cdots$};
\draw[->] (7.5, -1.3) to (6.8, -1.3);
\node at (7.1,-1.15) {\scriptsize $s_{a_{\ell(w)}}$};
\node at (6,-1.3) {$\CF(w_0w)$};
\draw[->] (5.2, -1.3) to (4.5, -1.3);
\node at (4,-1.3) {$\cdots$};
\draw[->] (3.5, -1.3) to (2.8, -1.3);
\node at (2,-1.3) {$\widetilde{\CF}^{\ell(w_0)-1}$};
\draw[->] (1.3, -1.3) to (0.5, -1.3);
\node at (0.9,-1.15) {\scriptsize $s_{a_{\ell(w_0)}}$};
\node at (0, -1.3) {$\CF^{\std}$};
\draw[double equal sign distance] (0, -0.1) to (0, -0.9);

\draw[dashed, red] (10.5, 0.5) to (5,0.5) to (5, -1.7) to (10.5, -1.7);
\node at (10.7, -0.6) {\color{red}$\Phi^2$};

\draw[dashed, blue] (-0.5, 0.7) to (7, 0.7) to (7, -2) to (-0.5, -2);
\node at (-0.7, -0.6) {\color{blue}$\Phi^1$};
\end{tikzpicture}
\end{equation}

%\JS{put relative positions in the arrows}
%\EG{Done, had to use tikzpicture instead of tikzcd to make them in small script}\JS{thanks! moved the arrows a bit down because it was triggering some ocd}

\begin{theorem}\label{thm:splicing-braid-varieties}
    The map $\Phi_{r_1, w} = (\Phi^1, \Phi^2): \CU_{r_1,w}(\br) \to X\left(\lift{w^{-1}w_0}\br^1\right) \times X\left(\br^2\lift{w}\right)$ is an isomorphism of algebraic varieties.
\end{theorem}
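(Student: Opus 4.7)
The plan is to resolve the torus ambiguity in $g_1,g_2$ noted in Remark \ref{rmk:ambiguity}, check the images of $\Phi^1,\Phi^2$ lie in the correct braid varieties, and then produce an algebraic two-sided inverse $\Psi_{r_1,w}$.

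For the first stage, the transversality $\CF^{r_1}\pitchfork\CF(w_0 w)$ together with Lemma \ref{lem:gen-LU-dec} yields a canonical lower unitriangular matrix $L=L(\CF^{r_1})$ depending algebraically on the flag: any matrix representative $M$ of $\CF^{r_1}$ satisfies $M=(w_0 w)w_0 LU$ with $U\in\borel$. Setting
\[
g_1:=w_0L^{-1}w_0\cdot w^{-1}w_0,\qquad g_2:=L^{-1}w_0\cdot w^{-1}w_0,
\]
a direct matrix computation confirms $g_1\CF(w_0w)=\CF^{\std}$, $g_1\CF^{r_1}=\CF^{\ant}$, $g_2\CF^{r_1}=\CF^{\std}$, and $g_2\CF(w_0w)=\CF^{\ant}$, so $\Phi^1,\Phi^2$ are algebraic morphisms. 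Since the $\GL_k$-action preserves relative positions and the endpoint conditions are satisfied, the sequence of simple relative positions in $\Phi^1(\CF)$ concatenates to $(s_{a_{\ell(w)+1}},\ldots,s_{a_{\ell(w_0)}},s_{i_1},\ldots,s_{i_{r_1}})$, which is a reduced word for $\lift{w^{-1}w_0}\br^1$ by the choice \eqref{eq: choice w0} of reduced word for $w_0$; hence $\Phi^1(\CF)\in X(\lift{w^{-1}w_0}\br^1)$, and analogously $\Phi^2(\CF)\in X(\br^2\lift{w})$.

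For the inverse, given $(\CG,\mathcal{H})$ in the target, the goal is to reconstruct a unique $\CF\in\CU_{r_1,w}$ with $\Phi_{r_1,w}(\CF)=(\CG,\mathcal{H})$. The construction reads off a canonical lower unitriangular $L$ from the combined data $(\CG^{\ell(w_0)-\ell(w)},\mathcal{H}^{r-r_1})$, which together encode the transversality structure of $\CF^{r_1}$ relative to $\CF(w_0 w)$. One then defines $g_1,g_2$ by the formulas of Stage 1, sets $\CF^{r_1}:=g_1^{-1}\CF^{\ant}=g_2^{-1}\CF^{\std}=\CF(w_0 w w_0 L)$ (both expressions yield the same flag, as a direct calculation using $g_1^{-1}=w_0 w w_0 L w_0$ and $g_2^{-1}=w_0 w w_0 L$ shows), and fills in the remaining $\CF^j:=g_1^{-1}\CG^{\ell(w_0)-\ell(w)+j}$ for $0\le j\le r_1$ and $\CF^{r_1+j}:=g_2^{-1}\mathcal{H}^j$ for $0\le j\le r-r_1$. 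The intermediate auxiliary flags in $\CG$ (the first $\ell(w_0)-\ell(w)$ steps) and in $\mathcal{H}$ (the last $\ell(w)$ steps) are uniquely determined by endpoints and reduced words via Lemma \ref{lem:dec-rel-pos}, so no further data is needed. Verifying $\Psi_{r_1,w}\circ\Phi_{r_1,w}=\mathrm{id}$ and $\Phi_{r_1,w}\circ\Psi_{r_1,w}=\mathrm{id}$ reduces to checking that the $L$ extracted in $\Psi_{r_1,w}$ from $\Phi_{r_1,w}(\CF)$ coincides with the original $L(\CF^{r_1})$.

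The main technical obstacle is the existence and uniqueness of the splicing matrix $L$ from $(\CG,\mathcal{H})$: one must show that the two requirements $g_1\CF^{\std}=\CG^{\ell(w_0)-\ell(w)}$ and $g_2\CF^{\ant}=\mathcal{H}^{r-r_1}$ can be simultaneously satisfied by a single lower unitriangular matrix $L$. This is the place where Bruhat combinatorics and careful matrix bookkeeping are essential. The dimension count $\dim X(\br)=r-\ell(w_0)=(r_1-\ell(w))+(r-r_1+\ell(w)-\ell(w_0))$ is consistent with an isomorphism, and injectivity of $\Phi_{r_1,w}$ follows from the canonical normalization in Stage 1, so the main remaining task is making the inverse explicit.
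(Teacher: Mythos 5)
Your Stage 1 is essentially the paper's Steps 1--2: the same decomposition $M = (w_0ww_0)LU$ from Lemma \ref{lem:gen-LU-dec} and the same normalizations $g_1 = w_0L^{-1}w_0w^{-1}w_0$, $g_2 = L^{-1}w_0w^{-1}w_0$, and your verification that the images land in $X\left(\lift{w^{-1}w_0}\br^1\right)$ and $X\left(\br^2\lift{w}\right)$ is correct. The problem is the inverse, which is the substantive half of the theorem and which you do not actually construct. You reduce it to producing a single lower unitriangular $L$ from the pair of flags $\left(\CG^{\ell(w_0)-\ell(w)}, \mathcal{H}^{r-r_1}\right)$ satisfying the two simultaneous conditions $g_1\CF^{\std} = \CG^{\ell(w_0)-\ell(w)}$ and $g_2\CF^{\ant} = \mathcal{H}^{r-r_1}$, and then you explicitly defer exactly this point ("the main technical obstacle\dots the main remaining task is making the inverse explicit"). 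Existence, uniqueness, and regularity of this $L$ in terms of $(\CG,\mathcal{H})$ is precisely where the content lies; without it you have neither surjectivity nor a two-sided inverse, and your closing dimension count plus the injectivity remark (which itself tacitly relies on being able to reconstruct $L$) cannot substitute for it.

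For comparison, the paper's Step 3 avoids solving for $L$ altogether: it concatenates the two given chains (after translating one by $w_0$) into the configuration \eqref{eq:def-inverse}, observes that the two junction flags are transverse because $\lift{w}\cdot\lift{w^{-1}w_0}$ is a reduced lift of $w_0$, and then normalizes by a translating element $g\in\GL(k)$ which is pinned down uniquely and algebraically via the Bruhat/Gauss factorization $M_2^{-1}M_1 = U'w_0V'$ with $U'$ unipotent, taking $g = w_0(U')^{-1}M_2^{-1}$. Your route could likely be completed --- e.g.\ by factoring the unitriangular group as $\left(\unipotent\cap (w^{-1}w_0)\unipotent_-(w_0^{-1}w)\right)\cdot\left(\unipotent\cap (w^{-1}w_0)\unipotent(w_0^{-1}w)\right)$, where $\unipotent_-$ denotes lower unitriangular matrices, to solve your conditions (i) and (ii) successively --- but as written the proposal asserts rather than proves the key step, so it does not establish the theorem.
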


%The following result guarantees that the map $\Phi_{r_1, w}$ is algebraic.%For the proof of Theorem \ref{thm:splicing-braid-varieties} we will need the following lemma.\EG{Do we need it?} \JS{I think we need it to see that the maps are algebraic, right?}

\begin{proof} 
We break down the proof in several steps. In what follows, we fix the reduced expression \eqref{eq: choice w0} for $w_0$. Let $y_1, \dots, y_{\ell(w_0)} \in \C[X(\br)]$ be the functions constructed in Lemma \ref{lem:going-back-to-standard} for the lift $\lift{w_0}$. We denote $\yy=\{y_1, \dots, y_{\ell(w_0)}\}$ and 
$$
\yy^L=\{y_{\ell(w)+1}, \dots, y_{\ell(w_0)}\},\ \yy^R=\{y_{1}, \dots, y_{\ell(w)}\}.$$ Also, if $\zz = (z_1, \dots, z_r) \in X(\br)$ we will set 
$$
\zz^L=\{z_{1}, \dots, z_{r_1}\},\ \zz^R=\{z_{r_1+1}, \dots, z_{r}\}.$$
and $M = M(\zz^L) = B_{\br^1}(z_1, \dots, z_{r_1})$. Note that, by definition, $\zz \in \CU_{r_1, w}(\beta)$ if and only if $\CF(M(\zz^L)) \pitchfork \CF(w_0w)$. Also, $MB_{\beta^2}(\zz^R)=B_{\beta}(\zz)$.

In the course of defining the maps $\Phi^1, \Phi^2$, we glossed over the fact that the elements $g_1, g_2$ are only defined up to left multiplication by a diagonal matrix. Thus, in the first two steps we carefully choose $g_1$ and $g_2$. 

{\it Step 1: Explicit construction of the map $\Phi^2$.}
  Assume $\zz \in \CU_{r_1, w}(\beta)$. %\EG{Changed notations to emphasize that $\zz$ is a tuple. Maybe drop $\zz$ from notations altogether?}
  By Lemma \ref{lem:gen-LU-dec}, $M$ admits a decomposition
\begin{equation}\label{eq:gen-LU-dec-phi2}
M=M(\zz^L) = (w_0ww_0)L(\zz)U(\zz)=(w_0ww_0)LU
\end{equation}
where $L=L(\zz)$ is lower-triangular with $1$'s on the diagonal and $U=U(\zz)$ is upper-triangular. Note that the entries of $L(\zz)$ and $U(\zz)$ are rational functions on $\C[X(\br)]$ which are regular on $\CU_{r_1,w}(\beta)$. Now we consider the sequence of matrices
\begin{equation}\label{eq:phi2-matrices}
M\left(I, B_{i_{r_1+1}}\left(z_{r_1+1}\right), \dots, B_{\br^2}\left(\zz^R\right), B_{\br^2}\left(\zz^R\right)B_{a_1}(y_1), \dots, B_{\br^2}\left(\zz^R\right)B_{\lift{w}}\left(\yy^R\right)\right),
\end{equation}
%\TS{$a_1$ and $r_1$?}
that projects to the flags on the red part of the diagram \eqref{eq:def-phi1-phi2}. Multiplying these matrices on the left by $((w_0ww_0)L(\zz))^{-1}$ we obtain the sequence of matrices
\begin{equation}\label{eq:phi2-matrices-U}
U\left(I, B_{i_{r_1+1}}\left(z_{r_1+1}\right), \dots, B_{\br^2}\left(\zz^R\right), B_{\br^2}\left(\zz^R\right)B_{a_1}(y_1), \dots, B_{\br^2}\left(\zz^R\right)B_{\lift{w}}\left(\yy^R\right)\right),
\end{equation}
where $U = U(\zz)$ as in \eqref{eq:gen-LU-dec-phi2}. Since $\CF(B_{\br}\left(\zz\right)B_{\lift{w}}(\yy^R)) = \CF(w_0w)$, we have that there exists an upper triangular matrix $U_1$ such that 
$$
B_{\br}\left(\zz\right)B_{\lift{w}}(\yy^R)=MB_{\br^2}\left(\zz^R\right)B_{\lift{w}}\left(\yy^R\right) =w_0wU_1,$$ so that $UB_{\br^2}\left(\zz^R\right)B_{\lift{w}}\left(\yy^R\right) = L^{-1}w_0U_1 \in w_0\borel$, i.e. $\CF(UB_{\br^2}\left(\zz^R\right)B_{\lift{w}}\left(\yy^R\right)) = \CF(w_0)$ and the projection of \eqref{eq:phi2-matrices-U} to the flag variety  defines an element of $X(\br^2\underline{w})$, that we define to be $\Phi^2$. Note that the element $g_2$ from \eqref{eq:Phi2} (see also Remark \ref{rmk:ambiguity}) is precisely 
\begin{equation}
\label{eq: g2}
g_2=((w_0ww_0)L(\zz))^{-1}.    
\end{equation}
In coordinates,  we \lq\lq slide the matrix $U$ to the right\rq\rq \, using Lemma \ref{lem: push right} to obtain regular functions  $\widetilde{\zz^R}=\{\widetilde{z}_{r_1}, \widetilde{z}_{r_1+1}, \dots, \widetilde{z}_r\},\widetilde{\yy^R}=\{ \widetilde{y}_1, \dots, \widetilde{y}_{\ell(w)}\}$ on $U_{r_1,w}$ such that
$$
UB_{\br^2}\left(\zz^R\right)B_{\lift{w}}\left(\yy^R\right)=B_{\br^2\lift{w}}\left(\widetilde{\zz^R},\widetilde{\yy^R}\right)U'
$$
%\[
%\CF\left(UB_{i_{r_1+1}}(z_{r_1+1})\right) = \CF\left(B_{i_{r_1+1}}\left(\widetilde{z}_{r_1+1}\right)\right), \dots, \CF\left(UB_{\br_2}\left(\zz^R\right)B_{\lift{w}}\left(\yy^R\right)\right) = \CF\left(B_{\br_2\lift{w}}\left(\widetilde{\zz^R},\widetilde{\yy^R}\right)\right)
%\]
and we define $\Phi^2(\zz) = \left(\widetilde{z}_{r_1+1}, \dots, \widetilde{z}_r, \widetilde{y}_1, \dots, \widetilde{y}_{r_1}\right) \in X\left(\br^2\lift{w}\right)$. \\

{\it Step 2: Explicit construction of the map $\Phi^1$.} The map $\Phi^1$ is constructed analogously to $\Phi^2$, as follows. We now consider the sequence of matrices
\begin{equation}\label{eqn:flag-sequence-phi1}
\left(B_{\br}(\zz)B_{\lift{w}}(\yy^R), \dots, B_{\br}(\zz)B_{\lift{w_0}}(\yy) = U_2 | B_{i_1}(z_1), \dots, M\right)
\end{equation}
that project down to the flags in the blue part of the diagram \eqref{eq:def-phi1-phi2}. The vertical line indicates the fact that, while the flags $\CF^{\std}$ on the left extreme of \eqref{eq:def-phi1-phi2} are equal, the matrix $B_{\br}(\zz)B_{\lift{w_0}}(\yy)$ does not need to be the identity matrix; we only know that $B_{\br}(\zz)B_{\lift{w_0}}(\yy)$ is an upper triangular matrix $U_2$. %\EG{Changed $U_1$ to $U_2$ since $U_1$ was used in step 1} 
In order to circumvent this problem, we write the matrices $\left(B_{i_1}(z_1), \dots, B_{\br^1}(\zz^L) = M\right)$ as $U_2U_2^{-1}\left(B_{i_1}(z_1), \dots, B_{\br^1}(\zz^L)\right)$ and slide the upper triangular matrix $U_2^{-1}$ to the right, so we obtain regular functions $\zz'^L=\{z'_1, \dots, z'_{r_1}\}$  on $\CU_{r_1, w}(\beta)$ so that the sequence of matrices
\begin{equation}\label{eqn:flag-sequence-phi1-2}
\left(B_{\br}(\zz)B_{\lift{w}}(\yy^R), \dots, B_{\br}(\zz)B_{\lift{w_0}}(\yy) = U_2, U_2B_{i_1}(z'_1), \dots, U_2B_{\br^1}(\zz'^L)\right)
\end{equation}
    project to the same flags as the sequence \eqref{eqn:flag-sequence-phi1}. Note that $U_2B_{\br^1}(\zz'^L) = M(\zz)U_3(\zz)$, where $U_3(\zz)$ is an upper triangular matrix, so we have a decomposition
\begin{equation}\label{eq:LU-dec-phi1}
U_2B_{\br^1}(z'_1, \dots, z'_{r_1}) = (w_0ww_0)L(\zz)U(\zz)U_3(\zz)
\end{equation}
where the matrix $L(\zz)$ is the same as in the decomposition \eqref{eq:gen-LU-dec-phi2}. So we can multiply all matrices in the sequence \eqref{eqn:flag-sequence-phi1-2} by $w_0((w_0ww_0)L(\zz))^{-1}$ and now we proceed as in Step 1. Note that the element $g_1$ from \eqref{eq:Phi1} (see also Remark \ref{rmk:ambiguity}) is given by
\begin{equation}
\label{eq: g1}
g_1=w_0((w_0ww_0)L(\zz))^{-1}.   
\end{equation}
\\

Steps $1$ and $2$ show that $\Phi_{r_1,w}$ is indeed a regular map. Note that, if $g_1$ and $g_2$ denote the translating elements as in \eqref{eq:Phi1} and \eqref{eq:Phi2}, we obtain that $g_2 = w_0g_1$. \\

{\it Step 3. Construction of $\Phi_{r_1,w}^{-1}$.} 
Let us construct the map $\Phi_{r_1,w}^{-1}$. For this, we take
\[
\F = \left(\F_{\std}, \F_1, \dots, \F_{\ell(\br^2)+ \ell(w)} = \F_{\ant}\right) \in X\left(\br^2\lift{w}\right)
\]
\[
\CG = \left(\CG_0 = \F_{\std}, \CG_1, \dots, \CG_{\ell(w^{-1}w_0) + \ell(\br^1)} = \F_{\ant}\right) \in X\left(\lift{w^{-1}w_0}\br^1\right).
\]
and arrange these flags as follows:
\begin{equation}\label{eq:def-inverse}
\begin{tikzcd}
\node at (0,0) {\CF^{\ell(w^{-1}w_0)+1}};
\draw[->] (1,0.15) to (1.5,0.15);
\node at (2,0) {\cdots};
\draw[->] (2.5,0.15) to (3.5,0.15);
\node at (4,0) {\CF^{\ant}};
\draw[double equal sign distance] (4.5,0.15) to (5,0.15);
\node at (5.5,0) {w_0\CG^{0}};
\draw[->] (6,0.15) to (7,0.15);
\node at (7.5,0) {\cdots};
\draw[->] (8,0.15) to (9,0.15);
\node at (9.8,0) {w_0\CG^{\ell(\br^2)}};
\draw[->] (9.8, -0.1) to (9.8, -0.9);
\node at (10,-1.3) {w_0\CG^{\ell(\br^2)+1}};
\draw[->] (9, -1.15) to (8, -1.15);
\node at (7.5,-1.3) {\cdots};
\draw[->] (7, -1.15) to (6.5, -1.15);
\node at (5,-1.3) {\CF^{\std} = w_0\CG^{\ell(\br^2\lift{w})}};
%\draw[->] (5.2, -1.15) to (4.5, -1.15);
%\node at (4,-1.3) {\cdots};
\draw[->] (3.5, -1.15) to (2.8, -1.15);
\node at (2,-1.3) {\cdots};
\draw[->] (1.5, -1.15) to (0.8, -1.15);
\node at (0, -1.3) {\CF^{\ell(w^{-1}w_0)}};
\draw[<-] (0, -0.1) to (0, -0.9);

\node at (2,0.6) {\color{blue} \br^1};
\draw[dashed, color=blue] (-1, -0.2) -- (4.5, -0.2) -- (4.5, 1) -- (-1, 1) -- cycle; 

\node at (2, -1.7) {\color{teal} \lift{w^{-1}w_0}};
\draw[dashed, color=teal] (4.3, -0.7
 ) -- (4.3, -2) -- (-1, -2) -- (-1, 0.5) -- (1
, 0.5) -- (1, -0.7) -- cycle;

\node at (7.5, 0.6) {\color{red} \br^2};
\draw[dashed, color=red] (5,1) -- (10.5,1) -- (10.5, -0.2) -- (5, -0.2) -- cycle;

\node at (7.5, -1.7) {\color{orange} \lift{w}};
\draw[dashed, color=orange] (4.7, -2) -- (11, -2) -- (11, 0.5) -- (9.1, 0.5) -- (9.1, -0.7) -- (4.7, -0.7) -- cycle;
\end{tikzcd}
\end{equation}

Since $\lift{w_{}}\cdot \lift{w^{-1}w_0}$ is a reduced lift of the longest element $w_0$, we see (moving on the orange and then teal parts of \eqref{eq:def-inverse}) that the flags $w_0\CG^{\ell(\br^2)}$ and $\CF^{\ell(w^{-1}w_0)+1}$ %\EG{not sure about $+1$ in the second flag}  \JS{it's the flags on the extremes in the top line of \eqref{eq:def-inverse}, I believe the $+1$ is there} 
are transverse. So we can find $g \in \GL(k)$ such that $g\CF^{\ell(w^{-1}w_0)+1} = \CF^{\std}$ and $gw_0\CG(\ell(\br^2)) = \CF^{\ant}$. Translating all the flags in \eqref{eq:def-inverse} by $g$, we see that:
\begin{itemize}
\item[(1)] The flags on the top row constitute an element of $X(\br^1\br^2) = X(\br)$.
\item[(2)] Since $g\CF^{\ell({w^{-1}w_0})+1} = \CF^{\std}$, $gw_0\CG^{\ell(\br^2)} = \CF^{\ant}$ and $\lift{w_{}}\cdot\lift{w^{-1}w_0}$ is a reduced lift of $w_0$, the translations by $g$ of all the flags in the bottom row are coordinate flags, so the flag $g\CF^{\std}$ is $\CF(w_0w)$ and $g\CF^{\ant} \pitchfork g\CF^{\std} = \CF(w_0w)$, i.e., the element of $X(\br)$ obtained in (1) belongs to $\CU_{r_1,w}(\beta)$. 
\end{itemize}
It only remains to specify the element $g \in \GL(k)$ uniquely, as it is only defined up to multiplication on the left by a diagonal matrix. For this, we specify concrete matrices that project to the flags in \eqref{eq:def-inverse}. For the flags in $\CF$, these are the braid matrices $B_{i_1}(z_1), \dots, B_{\br^1}(z_1, \dots, z_{r_1}) = w_0V$, while the matrices for the flags in $\CG$ are the braid matrices translated by the upper triangular matrix $V$. Once we have specified $M_1$ and $M_2$ such that $\CF^{\ell(w^{-1}w_0)+1} = \CF(M_1)$ and $w_0\CG^{\ell(\br^2)} = \CF(M_2)$, the fact that $\CF(M_1) \pitchfork \CF(M_2)$ means that $M_2^{-1}M_1$ belongs to $\borel w_0 \borel$ and thus there exists a unique decomposition $M_2^{-1}M_1 = U'w_0V'$ with $U'$ unipotent. We then take $g = w_0(U')^{-1}M_2^{-1}$. We leave details to the reader. 
\end{proof}

\begin{remark}
    The proof of Theorem \ref{thm:splicing-braid-varieties} can be simplified by taking a slightly different realization of the braid variety $X(\br)$ using weighted flags as in \cite{GLSB, GLSBS22}, see also \cite{comparisonpaper}. Let $\unipotent \subseteq \GL(k)$ be the subgroup of upper uni-triangular matrices. A weighted flag is an element of $\GL(k)/\unipotent$. Two weighted flags $h\unipotent$ and $h'\unipotent$ are said to be in strong relative position $w \in S_k$ if there exists $g \in \GL(k)$ such that $g(h\unipotent) = \unipotent$ and $g(h'\unipotent) = w\unipotent$. We denote this by $h\unipotent \relpos{w} h'\unipotent$. Similarly, $h\unipotent$ and $h'\unipotent$ are said to be in weak relative position $w \in S_k$ if there exist $g \in \GL(k)$ and a diagonal matrix $t$ such that $g(h\unipotent) = \unipotent$ and $g(h'\unipotent) = tw\unipotent$. We denote this relation by $h\unipotent \wrelpos{w} h'\unipotent$. Given a braid $\br = \sigma_{i_1}\cdots \sigma_{i_r}$ we then have an isomorphism
    \[
    X(\br) \cong \left\{h_0\unipotent \relpos{s_{i_1}} h_1\unipotent \relpos{s_{i_2}} \cdots \relpos{s_{i_r}} h_r\unipotent \wrelpos{w_0} h_0\unipotent\right\}/\GL(k)
    \]
    where $\GL(k)$ acts diagonally on all weighted flags $h_1\unipotent, \dots, h_r\unipotent$. The set $\CU_{r_1, w}(\beta)$ is then defined to consist of those chains of weighted flags such that $h_{r_1}\unipotent \wrelpos{w_0} w_0w\unipotent$. An advantage of working with weighted flags is that, since $\unipotent \cap (w_0\borel w_0) = \{\id\}$, now we have uniqueness of the translating element in all the arguments used above.  
\end{remark}

%\EG{New remark, should we say more?} \JS{Maybe it needs to be more precise? I don't know if the following is annoyingly precise: if we have two reduced expressions $\lift{w_0}, \lift{\lift{w_0}}$ of $w_0$ then we have canonical isomorphisms $X(\br^2\lift{w}) \to X(\br^2\lift{\lift{w}})$ and $X(\lift{w^{-1}w_0}\br^1) \to X(\lift{\lift{w^{-1}w_0}}\br^1)$, and the maps $\Phi$ intertwine these}

\begin{remark}
A priori, the construction of the map $\Phi_{r_1,w}$ depends on the choice of the reduced expression \eqref{eq: choice w0}. However, one can check that different choices of the reduced expression lead to, essentially, the same map. More precisely, if we have two reduced expressions $\lift{w}$, $\lift{w}'$ of $w$, choose reduced expressions $\lift{w_0}, \lift{w_0}'$ that contain $\lift{w}, \lift{w}'$ as a prefix, respectively. We have canonical isomorphisms $X\left(\br^2\lift{w}\right) \to X\left(\br^2\lift{w}'\right)$ and $X\left(\lift{w^{-1}w_0}\br^1\right) \to X\left(\lift{w^{-1}w_0}'\br^1\right)$ such that the following diagram commutes
\begin{equation*}
    \begin{tikzcd}
        \node at (0,0) {\CU_{r_1, w}(\br)};
        \node at (-4, -1) {X\left(\lift{w^{-1}w_0}\br^1\right) \times X\left(\br^2\lift{w}\right)};
        \node at (4, -1) {X\left(\lift{w^{-1}w_0}'\br^1\right) \times X\left(\br^2\lift{w}'\right).};
        \draw[->] (-1.7, -0.85) -- (1.6, -0.85);
        \draw[->] (-1, -0.2) -- (-3.5, -0.7);
        \draw[->] (0.8, -0.2) -- (4.35, -0.65);
    \end{tikzcd}
\end{equation*}
In particular, the flags in the bottom part of the diagram \eqref{eq:def-inverse} are determined (for a given reduced expression \eqref{eq: choice w0}) by $\CF^{\ell(w^{-1}w_0)+1}$ and $w_0\CG^{\ell(\br^2)}.$ Furthermore, the element $g$ is completely determined by these two flags.
\end{remark}

From Theorem \ref{thm:splicing-braid-varieties} and its proof we obtain the following result.

\begin{corollary}
\label{cor: U nonempty}
The open set $\CU_{r_1, w}(\beta) \subseteq X(\br)$ is nonempty if and only if $$\delta\left(\lift{w^{-1}w_0}\br^1\right) = w_0 =\delta\left(\br^2\lift{w}\right).$$
By Lemma \ref{lem:demazure-bounds}, this happens if and only if $\delta(\br^1) \geq w_0ww_0$ and $\delta(\br^2) \geq w_0w^{-1}$.
\end{corollary}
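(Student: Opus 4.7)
The plan is to deduce the corollary as a direct consequence of Theorem \ref{thm:splicing-braid-varieties} combined with the standard criterion (recalled in the excerpt just after Definition \ref{def:braid-variety}) that a braid variety $X(\gamma)$ is nonempty if and only if $\delta(\gamma)=w_0$. Since Theorem \ref{thm:splicing-braid-varieties} exhibits an isomorphism of algebraic varieties
\[
\CU_{r_1,w}(\br) \;\cong\; X\!\left(\lift{w^{-1}w_0}\br^1\right)\times X\!\left(\br^2\lift{w}\right),
\]
the left-hand side is nonempty if and only if both factors on the right are nonempty, which in turn is equivalent to
\[
\delta\!\left(\lift{w^{-1}w_0}\br^1\right)=w_0 \quad\text{and}\quad \delta\!\left(\br^2\lift{w}\right)=w_0.
\]
This proves the first of the two claimed equivalences.

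For the second equivalence, I would translate these Demazure conditions into the stated Bruhat inequalities using Lemma \ref{lem:demazure-bounds}. The key observation is that the Demazure product of a concatenation of braids factors through the Demazure products of the pieces, so
\[
\delta\!\left(\lift{w^{-1}w_0}\br^1\right)=(w^{-1}w_0)\dem \delta(\br^1), \qquad \delta\!\left(\br^2\lift{w}\right)=\delta(\br^2)\dem w.
\]
Applying the equivalence $v\dem u=w_0 \Leftrightarrow u\ge v^{-1}w_0$ from Lemma \ref{lem:demazure-bounds}(c) to the first identity (with $v=w^{-1}w_0$, $u=\delta(\br^1)$) yields $\delta(\br^1)\ge (w^{-1}w_0)^{-1}w_0 = w_0 w w_0$. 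Applying the equivalence $v\dem u=w_0 \Leftrightarrow v\ge w_0 u^{-1}$ from Lemma \ref{lem:demazure-bounds}(b) to the second identity (with $v=\delta(\br^2)$, $u=w$) gives $\delta(\br^2)\ge w_0 w^{-1}$. This matches the claimed inequalities.

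There is essentially no obstacle: both equivalences are immediate from results already established in the excerpt, so the proof is a two-line assembly. The only point that might require a brief justification is the factorization of the Demazure product of a concatenation through the Demazure products of the factors, which follows from the characterization of $\delta(\br)$ as the longest element in the Bruhat order among all products of subsequences of the letters of $\br$ (already noted in the proof of Lemma \ref{lem:demazure-bounds}).
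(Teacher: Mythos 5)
Your proposal is correct and follows essentially the same route as the paper, which derives the corollary directly from Theorem \ref{thm:splicing-braid-varieties} (the set $\CU_{r_1,w}(\br)$ is nonempty iff both factors $X\bigl(\lift{w^{-1}w_0}\br^1\bigr)$ and $X\bigl(\br^2\lift{w}\bigr)$ are, i.e.\ iff both Demazure products equal $w_0$) and then from Lemma \ref{lem:demazure-bounds} for the Bruhat reformulation. Your computations $\delta\bigl(\lift{w^{-1}w_0}\br^1\bigr)=(w^{-1}w_0)\dem\delta(\br^1)$ and $\delta\bigl(\br^2\lift{w}\bigr)=\delta(\br^2)\dem w$, together with the resulting inequalities $\delta(\br^1)\ge w_0ww_0$ and $\delta(\br^2)\ge w_0w^{-1}$, check out, and the multiplicativity of $\delta$ through the Demazure product is indeed justified by the two greedy readings noted in the proof of Lemma \ref{lem:demazure-bounds}.
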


\subsection{Conjectural properties}\label{sec:conj-properties} We conjecture that the set $\CU_{r_1, w}(\beta)$, and the map $\Phi_{r_1,w}=\Phi_1 \times \Phi_2$ satisfy various desirable cluster-theoretic properties. To state this conjecture precisely, we need some notation. If $Q$ is an ice quiver, we denote by $Q^{\mathrm{uf}}$ the quiver obtained by deleting the frozen vertices and all arrows adjacent to them. We also denote by $Q_1$ the ice quiver from the cluster structure on $\C\left[X\left(\lift{w^{-1}w_0}\br^1\right)\right]$, and by $Q_2$ the ice quiver associated to $\C\left[X\left(\br^2\lift{w}\right)\right]$. 

\begin{conjecture}\label{conj:splicing-braid-varieties}
Assume $\CU_{r_1, w}(\br)$ is nonempty, and consider the cluster structure on $\C[X(\br)]$ from Section \ref{sec:cluster-structure}. There exists a seed $\Sigma = (Q, \mathbf{x})$ satisfying the following properties:
\begin{enumerate}
    \item There exist cluster variables $x_{a_1}, \dots, x_{a_s} \in \mathbf{x}$ such that $\CU_{r_1, w}(\br)$ is the common nonvanishing locus of $x_{a_1}, \dots, x_{a_s}$.
    \item The variety $\CU_{r_1, w}(\br)$ admits a cluster structure, and an initial seed $\widehat{\Sigma} = \left(\widehat{Q}, \widehat{\mathbf{x}}\right)$ is given by freezing the variables $x_{a_1}, \dots, x_{a_s}$ in the seed $\Sigma = (Q, \mathbf{x})$.
    \item The quiver $\widehat{Q}^{\mathrm{uf}}$ is mutation equivalent to the disjoint union of the quivers $Q_1^{\mathrm{uf}
    }$ and $Q_2^{\mathrm{uf}}$.%Consider the quiver $\widehat{Q}^{\mathrm{uf}}$ obtained from $\widehat{Q}^{\mathrm{uf}}$ by deleting all frozen vertices (and incident arrows). Similarly, consider the quiver $Q_1^{\mathrm{uf}}$ obtained from the quiver of $X(\lift{w^{-1}w_0}\br^1)$ by deleting all frozen vertices, and $Q_2^{\mathrm{uf}}$ obtained in a similar way from the quiver of $\C[X(\br^2\lift{w})]$. Then, $\widehat{Q}^{\mathrm{uf}}$ is mutation equivalent to the disjoint union $Q_1^{\mathrm{uf}}\sqcup Q_2^{\mathrm{uf}}$.  
    \item Note that the variety $X\left(\lift{w^{-1}w_0}\br^1\right) \times X\left(\br^2\lift{w}\right)$ admits a natural product cluster structure. Then, the map $\Phi_{r_1,w}: \CU_{r_1, w}(\br) \to X\left(\lift{w^{-1}w_0}\br^1\right) \times X\left(\br^2\lift{w}\right)$ is a cluster quasi-isomorphism.
    \end{enumerate}
\end{conjecture}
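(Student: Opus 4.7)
The open subset $\CU_{r_1, w}(\br) \subseteq X(\br)$ is cut out by the non-vanishing of a specific collection of minors of the matrix $M_{r_1} = B_{\br^1}(z_1, \dots, z_{r_1})$, coming from Lemma~\ref{lem: relpos w minors}(a) applied to the transversality condition $\CF^{r_1} \pitchfork \CF(w_0 w)$. By Lemma~\ref{lem:cauchy-binet-minors} these minors depend only on the first $r_1$ letters of $\br$, so my plan is to find a seed $\Sigma$ of $\C[X(\br)]$ in which each such minor is a cluster monomial supported on cluster variables coming from positions $m \le r_1$, and then to read off $x_{a_1}, \dots, x_{a_s}$ as the irreducible factors newly introduced by these minors.

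The natural candidate for $\Sigma$ is the seed produced in \cite{CGGLSS} from a left-to-right inductive weave of $\br$, described in Section~\ref{sec:cluster-structure}: it has the feature that each principal minor $\Delta_{\delta(\br_m)[i_m], [i_m]}(B_{\br_m})$ is already a cluster monomial, with cluster variables appearing upper-unitriangularly in $m$. The first step would be to verify that, when $\CU_{r_1, w}(\br)$ is non-empty so that $\delta(\br^1) \ge w_0 w w_0$ by Corollary~\ref{cor: U nonempty} and Lemma~\ref{lem:demazure-bounds}, the minors cutting out $\CU_{r_1, w}(\br)$ are also cluster monomials in this seed and involve only cluster variables indexed by positions $m \le r_1$. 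Taking $x_{a_1}, \dots, x_{a_s}$ to be the newly occurring irreducible factors would then establish assertions (1) and (2), with count $s = f_1 + f_2 - f$ matching Corollary~\ref{cor: frozen inequality intro}.

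For (3), the strategy is combinatorial: the arrows of $\widehat{Q}$ should be described via the Lusztig cycles of the weave of $\br$, and the freezing of $x_{a_1}, \dots, x_{a_s}$ should correspond precisely to cutting those Lusztig cycles that cross position $r_1$. The goal is to exhibit an explicit bijection between the unfrozen vertices of $\widehat{Q}$ and those of $Q_1 \sqcup Q_2$ that matches Lusztig cycles. Given this, assertion (4) should follow from Lemma~\ref{lem:quasi-iso-exchange-matrix}: the block-triangular matrix $R$ records how each cluster variable of the product seed pulls back under $\Phi_{r_1, w}$ to a Laurent monomial in the cluster variables of $\widehat{\Sigma}$, and the principal part of $R$ is the identity precisely because the unfrozen quivers agree.

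The main obstacle is step (3). Disentangling the Lusztig cycles at an arbitrary cut $r_1$, for generic $w \in S_k$, requires a delicate combinatorial analysis of how the weave of $\br$ restricts to those of $\br^1$ and $\br^2$; it is not obvious a priori that the cycles crossing $r_1$ are in bijection with the frozen variables prescribed by (1). This is presumably why the paper proves the conjecture only in cases where the analysis is tractable: the double Bott--Samelson setting of Theorem~\ref{thm: main-splicing-dbs-intro}, where the relevant minors are literally principal minors and the cycle picture is visible from the braid diagram (see Example~\ref{ex:intro}), and partially the case $w = w_0$ treated in Section~\ref{sec:conj-properties}.
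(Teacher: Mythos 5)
You are addressing a conjecture, and the paper does not prove it in general either: it establishes the implications (4) $\Rightarrow$ (3) $\Rightarrow$ (2) given (1) in Lemma \ref{lem:conditional-conjecture}, verifies (1)--(3) only in the extreme cases $w=w_0$ and $w=e$ (via the left-to-right, respectively right-to-left, inductive seed and the Lusztig-cycle criterion from \cite[Section 7.1]{CGGLSS}), and proves the full set of properties only in the double Bott--Samelson setting of Section \ref{sec:splicing-DBS}. Your outline matches that partial picture in spirit, and you correctly flag the combinatorics of cycles at an arbitrary cut as the open point. But there are two concrete gaps beyond the one you acknowledge. First, your candidate seed is always the left-to-right inductive seed; for general $w$ the minors $\minor_{ww_0[i],[i]}(M_{r_1})$ are not among the chamber minors \eqref{eq:minors-cluster-monomials} of that weave, and there is no reason for them to be cluster monomials in that particular seed. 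The conjecture asks only for \emph{some} seed $\Sigma$, and the paper can exhibit one only at $w=w_0$ (left-to-right) and $w=e$ (right-to-left); proposing the left-to-right seed uniformly does not address generic $w$.

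Second, and more seriously, your route to (4) is incorrect: deducing (4) from (3) via Lemma \ref{lem:quasi-iso-exchange-matrix} only produces an \emph{abstract} quasi-cluster isomorphism between $\C[\CU_{r_1,w}(\br)]$ and the product cluster algebra, with no control that it coincides with the specific map $\Phi_{r_1,w}$. The paper makes exactly this caveat in Lemma \ref{lem:conditional-conjecture}, and its second example in Section \ref{sec:conj-properties} shows an abstract quasi-isomorphism (with $P=0$ and an explicit $Q$) that preserves the mutable variables $x_3,x_6$ and is therefore unlikely to equal $\Phi_{11,e}$. Establishing (4) requires computing the actual pullbacks of cluster variables under $\Phi_{r_1,w}$ and checking that exchange ratios are preserved, which is what the paper does by hand in the Bott--Samelson case (Lemma \ref{lem:coincidence-up-to-monomial-frozens}, with the explicit frozen monomials $m_\ell$, and Theorem \ref{thm:quasi-cluster-dbs}) and cannot yet do for general $(r_1,w)$. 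A smaller point: (2) is obtained from (3) via local acyclicity and Muller's results, not directly from the minors being cluster monomials, so your claim that (1) and (2) follow together from the factorization of the minors is too quick.
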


The items (1)--(4) in Conjecture \ref{conj:splicing-braid-varieties} are not independent.

%\JS{make this less confusing... 1--> 2 and 4 --> 3, essentially}
\begin{lemma}\label{lem:conditional-conjecture}
    Assume (1) of Conjecture \ref{conj:splicing-braid-varieties} holds. Then, we have that (4) $\Rightarrow$ (3) $\Rightarrow$ (2). Moreover, (3) implies that the cluster structures on $X\left(\lift{w^{-1}w_0}\br^1\right) \times X\left(\br^2\lift{w}\right)$ and $\CU_{r_1, w}(\br)$ are abstractly cluster quasi-isomorphic (but it does not guarantee that the map $\Phi_{r_1,w}$ is a cluster quasi-isomorphism.)
\end{lemma}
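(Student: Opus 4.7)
The plan is to reduce everything to Remark \ref{rmk:Fraser} for the forward direction $(4)\Rightarrow(3)$ and to Lemma \ref{lem:quasi-iso-exchange-matrix} for $(3)\Rightarrow(2)$ and the final sentence. Assumption (1) is what allows us to regard $\widehat{\Sigma}$ as an honest seed inside the function field of $\CU_{r_1, w}(\br)$ and, crucially, to identify $\C[\CU_{r_1, w}(\br)]$ with the localization $A(\Sigma)[x_{a_1}^{-1}, \dots, x_{a_s}^{-1}]$, since the nonvanishing locus of the cluster variables $x_{a_1}, \dots, x_{a_s}$ is precisely $\CU_{r_1, w}(\br)$.

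For $(4)\Rightarrow(3)$, I would simply feed (4) into Remark \ref{rmk:Fraser}: the hypothesis asserts that $\Phi_{r_1, w}^*$ is a cluster quasi-isomorphism between $A(\widehat{\Sigma})$ and the product cluster algebra with initial quiver $Q_1 \sqcup Q_2$, and Remark \ref{rmk:Fraser} then forces $\widehat{Q}^{\mathrm{uf}}$ and $(Q_1 \sqcup Q_2)^{\mathrm{uf}} = Q_1^{\mathrm{uf}} \sqcup Q_2^{\mathrm{uf}}$ to agree after suitable finite sequences of mutations on each side, which is exactly (3).

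For $(3)\Rightarrow(2)$, the goal is the equality $A(\widehat{\Sigma}) = \C[\CU_{r_1, w}(\br)]$. The inclusion $A(\widehat{\Sigma}) \subseteq \C[\CU_{r_1, w}(\br)]$ is routine: every cluster of $\widehat{\Sigma}$ arises by mutating $\Sigma$ only at mutable vertices of $\widehat{Q}$, so all its cluster variables already live in $A(\Sigma)[x_{a_1}^{-1}, \dots, x_{a_s}^{-1}]$. For the reverse inclusion I would combine (3) with Lemma \ref{lem:quasi-iso-exchange-matrix} to produce an abstract cluster quasi-isomorphism between $A(\widehat{\Sigma})$ and the product cluster algebra of $X(\lift{w^{-1}w_0}\br^1) \times X(\br^2\lift{w})$; the latter equals the full coordinate ring by the cluster structures of \cite{CGGLSS, GLSB, GLSBS22, SW}, and Theorem \ref{thm:splicing-braid-varieties} then identifies this coordinate ring with $\C[\CU_{r_1, w}(\br)]$. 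The final sentence of the lemma is an immediate re-reading of (3) through Lemma \ref{lem:quasi-iso-exchange-matrix}: that lemma constructs an abstract cluster quasi-isomorphism depending on the choice of the matrix $R$, and there is no a priori reason for any such map to coincide with $\Phi_{r_1, w}^*$.

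I expect the main obstacle to lie in converting the abstract mutation equivalence of (3) into a genuine matrix $R$ of the block-triangular shape required by Lemma \ref{lem:quasi-iso-exchange-matrix}, with $\det Q = \pm 1$ and frozens matched on both sides. The numerical consistency is provided by $s + f = f_1 + f_2$ from Corollary \ref{cor: frozen inequality intro}, but one still must verify that the arrows between mutable and frozen vertices in $\widehat{Q}$ transform correctly under the associated change of basis; this is the only nonformal step, and once such an $R$ is in hand, everything else is a direct application of the two cited results.
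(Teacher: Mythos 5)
Your implication $(4)\Rightarrow(3)$ is exactly the paper's argument via Remark \ref{rmk:Fraser}, but your route from $(3)$ to $(2)$ has a genuine gap. You correctly reduce $(2)$ to the equality $A(\widehat{\Sigma}) = \C[\CU_{r_1,w}(\br)] = A(\Sigma)[x_{a_1}^{-1},\dots,x_{a_s}^{-1}]$, and the inclusion $A(\widehat{\Sigma}) \subseteq \C[\CU_{r_1,w}(\br)]$ is indeed routine. For the reverse inclusion, however, you only produce an \emph{abstract} algebra isomorphism $A(\widehat{\Sigma}) \cong \C\bigl[X\bigl(\lift{w^{-1}w_0}\br^1\bigr) \times X\bigl(\br^2\lift{w}\bigr)\bigr] \cong \C[\CU_{r_1,w}(\br)]$, the last step via Theorem \ref{thm:splicing-braid-varieties}. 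An abstract isomorphism between a subalgebra and the ambient algebra does not force the inclusion to be an equality (compare $\C[x^2]\subset \C[x]$), and nothing in your construction makes the composite isomorphism compatible with the natural inclusion $A(\widehat{\Sigma}) \hookrightarrow \C[\CU_{r_1,w}(\br)]$. What is actually needed is an $A=U$-type statement for $A(\widehat{\Sigma})$, and this is how the paper argues: the cluster structures on $X\bigl(\lift{w^{-1}w_0}\br^1\bigr)$ and $X\bigl(\br^2\lift{w}\bigr)$ are locally acyclic by \cite[Theorem 7.13]{CGGLSS}, hence by $(3)$ so is the cluster structure of $\widehat{\Sigma}$ (\cite[Proposition 3.10]{Muller}), and then Muller's cluster-localization results (\cite[Lemma 3.4 and Theorem 4.1]{Muller}) give $A(\widehat{\Sigma}) = A(\Sigma)[x_{a_1}^{-1},\dots,x_{a_s}^{-1}]$. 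This ingredient is absent from your argument and cannot be replaced by the abstract quasi-isomorphism.

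For the final statement your plan coincides with the paper's (apply Lemma \ref{lem:quasi-iso-exchange-matrix}), but you stop exactly where the content lies: the existence of a block-unitriangular integer matrix $R$ with $R\widetilde{B}^{\times} = \widetilde{B}^{\circ}$ and unimodular lower-right block. The paper obtains this from the \emph{really full rank} property of the extended exchange matrices of braid varieties, \cite[Corollary 8.5]{CGGLSS}: since the rows of $\widetilde{B}_1$ and $\widetilde{B}_2$ span the relevant lattices, the new frozen rows of $\widetilde{B}^{\circ}$ are integer combinations of them, and really full rank of $\widetilde{B}^{\circ}$ (inherited from $\widetilde{B}$) forces the blocks $Q_1, Q_2$ to be invertible over $\Z$. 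Without this input, the step you flag as ``the only nonformal step'' is not a verification but the whole claim. (Also, the equality $s = f_1+f_2-f$ is a consequence of the quasi-isomorphism rather than an available ingredient; Corollary \ref{cor: frozen inequality intro} only gives the inequality.)
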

\begin{proof}
Assume (1). Cluster quasi-isomorphisms do not affect the mutable part of the cluster structure, see Remark \ref{rmk:Fraser}, so (4) implies (3). 

Assume (3) holds. By \cite[Theorem 7.13]{CGGLSS}, the cluster structures on $\C[X(\lift{w^{-1}w_0}\br^1]$ and $\C[X(\br^2\underline{w})]$ are locally acyclic, so the cluster structure given by the seed $\widehat{\Sigma}$ is also locally acyclic, see \cite[Proposition 3.10]{Muller}. %, the cluster structure defined by the seed $\widehat{\Sigma}$ is locally acyclic \EG{This just follows from (1), no?} \JS{apparently not... if you have a locally acyclic cluster algebra and you freeze some vertices, you don't necessarily get something locally acyclic. See Remark 3.11 in \cite{Muller}},
From here,  (2) follows from Lemma 3.4 and Theorem 4.1 in \cite{Muller}. 

Let us show that, in the presence of (1), (3) already implies that the cluster structures on $X(\lift{w^{-1}w_0}\br^1) \times X(\br^w\lift{w}_{})$ and $\CU_{r_1, w}(\br)$ are cluster quasi-isomorphic. For this, we use exchange matrices and Lemma \ref{lem:quasi-iso-exchange-matrix} above. We have the following extended exchange matrices:
\begin{itemize}
\item $\widetilde{B} = \begin{pmatrix} B \\ \hline C\end{pmatrix}$, the extended exchange matrix for the cluster structure on $X(\br)$. By (1) and (2), the extended exchange matrix $\widetilde{B}^{\circ}$ for the cluster structure on $\CU_{r_1, \br}$ is given by deleting some columns on $\widetilde{B}$ and freezing (moving to the bottom) the corresponding rows. By \cite[Corollary 8.5]{CGGLSS}, the exchange matrix $\widetilde{B}$ has really full rank, meaning that its rows span $\Z^{n}$, where $n$ is the number of mutable variables in $X(\br)$. Note that it follows that $\widetilde{B}^{\circ}$ has really full rank as well.
\item $\widetilde{B}_1 = \begin{pmatrix} B_1 \\ \hline C_1\end{pmatrix}$, the extended exchange matrix for the cluster structure on $X(\lift{w^{-1}w_0}\br^1)$. We also have the extended exchange matrix $\widetilde{B}_2$ for the cluster structure on $X(\br^2\lift{w_{}})$. Note that the extended exchange matrix for the cluster structure on the product $X(\lift{w^{-1}w_0}\br^1) \times X(\br^2\lift{w})$ has the form
\[
\widetilde{B}^{\times} = \begin{pmatrix} B_1 & 0 \\ 0 & B_2 \\ \hline C_1 & 0 \\ 0 & C_2  \end{pmatrix}.
\]
\end{itemize} 
Note that by (3), we can assume that, maybe after mutations, the matrix $\widetilde{B}^{\circ}$ has the following form
\[
\widetilde{B}^{\circ} = \begin{pmatrix} B_1 & 0 \\ 0 & B_2 \\ \hline C_{11} & C_{12} \\ C_{21} & C_{22} \end{pmatrix} 
\]
And our job is to produce a matrix $R$ as in Lemma \ref{lem:quasi-iso-exchange-matrix} such that $R\widetilde{B}^{\times} = \widetilde{B}^{\circ}$. We consider first the following matrices
\[
\widetilde{B}_{1}^{\times} := \begin{pmatrix} B_1 \\ 0 \\ \hline C_1 \\ 0\end{pmatrix}, \qquad \widetilde{B}_1^{\circ} = \begin{pmatrix} B_1 \\ 0 \\ \hline C_{11} \\ C_{12} \end{pmatrix}.
\]
Since $\widetilde{B}_1$ has really full rank, the same is true for $\widetilde{B}_1^{\times}$. So we can express the rows of $C_{11}$ and $C_{12}$ as an integer linear combination of the rows of $B_1$ and $C_1$. This means that we can find a matrix $R_1 = \begin{pmatrix} \id & 0 \\ P_1  & Q_1\end{pmatrix}$ so that $R_1\widetilde{B}_1^{\times} = \widetilde{B_1}^{\circ}$. 
Note that $Q_1$ is invertible over $\Z$, since the matrix $\widetilde{B}_1^{\circ}$ has really full rank. Similarly, we can find $R_2 = \begin{pmatrix} \id & 0 \\ P_2 & Q_2\end{pmatrix}$ such that $R_2\widetilde{B}_2^{\times} = \widetilde{B}_2^{\circ}$. It is now easy to see that 
\[R = \begin{pmatrix} \id & 0 & 0 & 0 \\ 0 & \id & 0 & 0 \\ P_1 & 0 & Q_1 & 0 \\ 0 & P_2 & 0 & Q_2\end{pmatrix}
\]
satisfies the required properties.
%Since cluster structures on braid varieties have really full rank, cf. \cite[Corollary 8.5]{CGGLSS}, the last statement in the lemma follows from Proposition 5.11 in \cite{LamSpeyerI}.
\end{proof}

\begin{remark}\label{rem: frozen inequality}
Let $f, f_1,f_2$ respectively denote the numbers of frozen variables for $X(\beta),X\left(\lift{w^{-1}w_0}\br^1\right)$ and $X\left(\br^2\lift{w}\right)$.
By \cite[Theorem 1.3]{GLS} the group $\C[X]^{\times}$ of global invertible functions on a cluster variety $X$ is an %\JS{removed the word free because $\C^{\times}$ has torsion} 
abelian group generated by monomials in frozens and nonzero scalars. In particular, number of frozen variables does not depend on a choice of a seed and is an invariant of a cluster variety. Theorem \ref{thm:splicing-braid-varieties} and the inclusion 
$$
\C[X(\beta)]^{\times}\hookrightarrow \C[\CU_{r_1,w}]^{\times}
$$
imply the inequality
\begin{equation}
\label{eq: frozen inequality}
f_1+f_2\ge f.
\end{equation}

Assuming Conjecture \ref{conj:splicing-braid-varieties}, we can compute the number $s$ of cluster variables $x_{a_1},\ldots,x_{a_s}$ that need to be frozen in (1) or (2).   
 By comparing the number of frozen variables for $\CU_{r_1,w}(\beta)$ and their image under $\Phi_{r_1,w}$, we arrive at the equation 
$
f+s=f_1+f_2,
$
so
\[
s = f_1+f_2-f.
\]
%\begin{equation}
%\label{eq: how many freeze}
%s=f_1+f_2-f.
%\end{equation}
%Note that this implies a curious inequality
%$f_1+f_2\ge f$. \TS{Was $f_1+f_2\ge f$ meant to be commented out?} %\EG{Even for Richardson this is not obvious at all, Soyeon might write a note about this:)} \JS{it seems to me we have this inequality even w/o assuming Conjecture \ref{conj:splicing-braid-varieties}. (but we need the conjecture to say that the difference is the number of variables we freeze)}
\end{remark}

%Let us elaborate on Conjecture \ref{conj:splicing-braid-varieties}. Let us assume that (1) holds, so that we can consider the seed $\widehat{\Sigma}$ obtained from $\Sigma$ as in (2). A priori, it is only guaranteed that $A(\widehat{\Sigma}) \subseteq \C[U_{r_1, w}]$, see e.g. \cite[Proposition 11.12]{Muller}. However, a weaker form of (3) will guarantee that (2) is valid. Indeed, if the quiver $\widehat{Q}$ coincides with the disjoint union of the quivers for $\C[X(\lift{w^{-1}w_0}\br^1]$ and $\C[\br^2\lift{w}]$ \emph{after deleting all frozen vertices and incident arrows} then combining \cite[Proposition 3.10]{Muller} with \cite[Theorem 7.13]{CGGLSS} we have that (2) holds, see \cite[Lemma 3.4]{Muller}. This combinatorial weaker form of (3) is easier to verify in examples. \JS{Mention that this combinatorial form already implies that the cluster structures are abstractly isomorphic and the challenge is to show that $\Phi_1 \times \Phi_2$ is a quasi-isomorphism. This follows from results of Lam and Speyer}

We can verify (1), (2) and (3) of Conjecture \ref{conj:splicing-braid-varieties} in the extreme cases of $w = e$ and $w = w_0$. 
%\EG{Where is the case $w=e?$}
\begin{proposition}
Assume $w = e$ or $w = w_0$. Then, (1), (2) and (3) of Conjecture \ref{conj:splicing-braid-varieties} holds.
\end{proposition}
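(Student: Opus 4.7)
The plan is to treat the case $w=w_0$ in detail; the case $w=e$ will follow by a symmetric argument, interchanging the roles of $\br^1$ and $\br^2$ and using $\bsbrb$ in place of $\bsbra$. By Lemma \ref{lem:conditional-conjecture}, once (1) is established, (3) implies (2), so it suffices to verify (1) and (3).

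For (1), Corollary \ref{cor: U nonempty} forces $\delta(\br^1)=w_0$ whenever $\CU_{r_1,w_0}(\br)\neq\emptyset$, and Lemma \ref{lem: relpos w minors} identifies the defining condition $\CF^{r_1}\pitchfork\CF^{\std}$ with the simultaneous non-vanishing of the $k-1$ minors $\minor_{w_0[i],[i]}(B_{\br^1})$. Cauchy-Binet (Lemma \ref{lem:cauchy-binet-minors}) reduces each such minor to $\minor_{w_0[i],[i]}(B_{\br^1_{\last^1(i)}})$, where $\last^1(i)\leq r_1$ denotes the last appearance of color $i$ in $\br^1$. Setting $m=\last^1(i)$, the suffix of $\br^1$ after position $m$ avoids color $i$, so its Demazure product $v$ lies in the parabolic $\langle s_j\,:\,j\neq i\rangle$, and hence $v^{-1}[i]=[i]$ and $w_0v^{-1}[i]=w_0[i]$. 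Combining this with Lemma \ref{lem:demazure-bounds} (which gives $\delta(\br^1_m)\geq w_0 v^{-1}$) and the sorted-subset characterization of Bruhat order forces $\delta(\br^1_m)[i]=w_0[i]$. The minor then takes precisely the form $\minor_{\delta(\br_m)[i_m],[i_m]}(B_{\br_m})$ that produces cluster variables in the left-to-right inductive seed of $X(\br)$; matching irreducible factorizations identifies a set $\{x_{a_1},\ldots,x_{a_s}\}$ of cluster variables of $X(\br)$ at positions $\leq r_1$ whose common non-vanishing locus is $\CU_{r_1,w_0}$. By construction these are exactly the frozens of $X(\br^1)$ that are not already frozen in $X(\br)$, so $s=f_1+f_2-f$ in agreement with Corollary \ref{cor: frozen inequality intro}.

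For (3), the cluster-variable positions of $X(\br)$ at indices $\leq r_1$ are governed by the same Demazure criterion as those of $X(\br^1)$ and carry identical minor formulas, so the mutable subquiver on these positions matches $Q_{\br^1}^{\mathrm{uf}}$. Via $\bsbra:\BS(\br^2)\xrightarrow{\cong}X(\br^2\Delta)$ (Lemma \ref{lem:bs-to-braid}), the cluster structure on $X(\br^2\Delta)$ is identified with the Shen--Weng structure on $\BS(\br^2)$, whose mutable quiver is read combinatorially off $\br^2$ as in Section \ref{sec:cluster-structure}. What remains is to show that after freezing $\{x_{a_1},\ldots,x_{a_s}\}$ the mutable part of $Q_{\br}$ disconnects across the cut at position $r_1$: this should follow from the compatibility of the Lusztig cycles of \cite[Section 4]{CGGLSS} with braid concatenation, since any cycle crossing position $r_1$ must enter and exit through one of the $a_j$, these being precisely the boundary vertices corresponding to the last cluster-variable occurrence of each color in $\br^1$. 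The hard part will be this final weave-theoretic verification: while structurally transparent from the left-to-right inductive construction, a rigorous argument will likely require either an induction on $\ell(\br^2)$ or a direct local analysis of the horizontal weave moves at the splitting position $r_1$.
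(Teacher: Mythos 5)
Your treatment of part (1) is correct, and it takes a genuinely different route from the paper. The paper establishes (1) by citing \cite[Section 7.1]{CGGLSS}: a cluster variable is nowhere vanishing on $\CU_{r_1,w_0}(\br)$ exactly when its Lusztig cycle crosses the horizontal slice of the left-to-right inductive weave right after $\delta(\br^1)=w_0$ is attained. You instead argue with the lemmas already in the paper: Lemma \ref{lem: relpos w minors}(a) identifies $\CU_{r_1,w_0}$ with the non-vanishing of $\minor_{w_0[i],[i]}(B_{\br^1})$, Lemma \ref{lem:cauchy-binet-minors} pushes each such minor back to the truncation at $\last^1(i)$, and your Bruhat-order argument (Demazure multiplicativity plus Lemma \ref{lem:demazure-bounds} and the tableau criterion, using that the suffix of $\br^1$ after $\last^1(i)$ lies in the parabolic avoiding $s_i$) correctly shows $\delta(\br^1_{\last^1(i)})[i]=w_0[i]$, so these minors are exactly of the form \eqref{eq:minors-cluster-monomials} and hence cluster monomials in the left-to-right seed. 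This is more self-contained than the paper's appeal to the weave/cycle criterion; the paper's route buys, in the same breath, the identification of the frozen-to-be variables with cycles crossing the slice, which is what it then uses for (3). (Your side remark that these variables are ``exactly the frozens of $X(\br^1)$ not already frozen in $X(\br)$'' and the count $s=f_1+f_2-f$ is asserted without proof, but it is not needed for (1).)

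The genuine gap is part (3), and with it (2). You reduce (3) to the statement that, after freezing $x_{a_1},\dots,x_{a_s}$, the mutable part of the left-to-right quiver splits as $Q_1^{\mathrm{uf}}\sqcup Q_2^{\mathrm{uf}}$, and you explicitly defer this (``the hard part will be this final weave-theoretic verification''), offering only the heuristic that any cycle crossing position $r_1$ must pass through one of the $a_j$. That heuristic is not an argument: arrows in the quiver of \cite[Section 4]{CGGLSS} are computed from intersection data of Lusztig cycles in the weave, so one must actually know that (i) the inductive weave for $\br=\br^1\br^2$ is obtained by stacking an inductive weave for $\br^1$ on top of one for $\lift{w_0}\br^2$, (ii) the cycles crossing the separating slice are precisely those frozen in step (1), and (iii) no arrow joins a cycle supported above the slice to one supported below it. The paper disposes of this by invoking exactly this structure of the inductive construction (``by construction, cf.\ \cite[Section 4]{CGGLSS}''), and then gets (2) from Lemma \ref{lem:conditional-conjecture}, as you also plan. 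Since your proposal proves neither (i)--(iii) nor cites a statement that does, it establishes only (1) of Conjecture \ref{conj:splicing-braid-varieties} as written. A smaller point: for $w=e$ the symmetric argument requires the right-to-left inductive seed (the relevant minors become principal minors of prefixes, which are not cluster monomials for the left-to-right seed); your phrasing via $\bsbra,\bsbrb$ belongs to the Bott--Samelson setting and does not literally apply, though the intended word-reversal symmetry is the same as the paper's.
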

\begin{proof}
Assume $w = w_0$, so that $\CU_{r_1, w_0}(\br)$ is given by the condition that $\CF^{r_1}$ is transverse to $\CF^{\std}$, and $\CU_{r_1, w_0}(\br)$ is nonempty if and only if $\delta(\br^1) = w_0$. In this case, the seed $\Sigma$ predicted by Conjecture \ref{conj:splicing-braid-varieties} is the left-to-right inductive seed, whose corresponding cluster torus is precisely the Deodhar torus $\ltrt_{\br}$.  It follows from \cite[Section 7.1]{CGGLSS} that a cluster variable $x_{a}$ is nowhere vanishing on $\CU_{r_1, w_0}(\br)$ if and only if the Lusztig cycle associated to $x_a$ intersects the horizontal slice of the inductive weave right after obtaining $\delta(\br^1) = w_0$, and that these are the cluster variables predicted by Conjecture \ref{conj:splicing-braid-varieties} (1). By construction, cf. \cite[Section 4]{CGGLSS} the  mutable part of the quiver obtained after freezing these cluster variables is equal to the disjoint union of the mutable parts of the quivers $Q_1$ and $Q_2$, so (3) is valid and by Lemma \ref{lem:conditional-conjecture} (2) also holds. Moreover, again by Lemma \ref{lem:conditional-conjecture}, the cluster structures on $\CU_{r_1, w_0}$ and $X(\br^1) \times X(\br^2w_0)$ are abstractly quasi-isomorphic. The case $w = e$ is similar, taking the right-to-left inductive weave instead. 
\end{proof}

\begin{example}
Consider the braid word $\br = \sigma_2\sigma_1\sigma_3\sigma_2\sigma_2\sigma_3\sigma_1\sigma_2\sigma_2\sigma_1\sigma_3\sigma_2$, that was considered in \cite[Section 11.4]{CGGLSS}. As explained in \emph{loc. cit.}, for the left-to-right inductive seed, we have the cluster variables
$$x_1=z_5,\;\, x_2=-z_6z_7 + z_5z_8,\;\, x_3=-z_6z_7z_9+z_5z_8z_9-z_5, \;\,
x_4=-z_6z_9 + z_5z_{10},\;\,x_5=-z_7z_9 + z_5z_{11},$$ $$x_6= z_6z_7z_{10}z_{11} - z_5z_8z_{10}z_{11} - z_6z_7z_9z_{12} + z_5z_8z_9z_{12} - z_8z_9 + z_7z_{10} + z_6z_{11} - z_5z_{12} + 1,$$

the variables $x_4, x_5, x_6$ are frozen and the quiver $Q$ is
\begin{center}
    \begin{tikzcd}
    x_1 \arrow{rr} \arrow{dr} \arrow{drrr} & & x_2 \arrow {rr} & & x_3 \arrow[out=160, in=20, "2"]{llll} \arrow{dr}  &  \\ & {\color{blue} x_4} \arrow{urrr} & & {\color{blue} x_5} \arrow{ur} & & {\color{blue} x_6}.
    \end{tikzcd}
\end{center}

Let us take $r_1 = 9$, so that $\br^1 = \sigma_2\sigma_1\sigma_3\sigma_2\sigma_2\sigma_3\sigma_1\sigma_2\sigma_2$ and $\br^2 = \sigma_1\sigma_3\sigma_2$, and $w = w_0$, so that $\CU_{9, w_0}(\br)$ is given by the condition that $\CF^9$ is transverse to $\CF^{\std}$. The flag $\CF^9$ is the flag associated to the matrix $B_{\beta^1}(z_1, \dots, z_9)$:
\begin{equation}\label{eq:long-example-1}
 \begin{pmatrix} -z_4z_5+z_2z_7+1 & z_4z_6z_9 - z_2z_8z_9 + z_2 & -z_4z_6+z_2z_8 & -z_4 \\ -z_3z_5+z_1z_7 & z_3z_6z_9-z_1z_8z_9+z_1+z_9 & -z_3z_6 + z_1z_8 -1 & -z_3 \\ z_7 & -z_8z_9 + 1 & z_8 & 0 \\ z_5 & -z_6z_9 & z_6 & 1  \end{pmatrix}
\end{equation}
So $\CF^9$ is transverse to $\CF^{\std}$ if and only if the lower-left justified minors of \eqref{eq:long-example-1} are nonzero. These are,
\[
\begin{vmatrix} z_5 \end{vmatrix} = x_1, \quad \begin{vmatrix}z_7 & -z_8z_9+1 \\ z_5 & -z_6z_9\end{vmatrix} = x_3,\] \[\begin{vmatrix} -z_3z_5+z_1z_7 & z_3z_6z_9-z_1z_8z_9+z_1+z_9 & -z_3z_6 + z_1z_8 -1  \\ z_7 & -z_8z_9 + 1 & z_8  \\ z_5 & -z_6z_9 & z_6  \end{vmatrix} = x_1, 
\]

so that $\CU_{9, w_0}(\br)$ is the cluster variety associated to the seed

\begin{center}
    \begin{tikzcd}
    {\color{blue}x_1} \arrow{rr}  & & x_2 \arrow {rr} & & {\color{blue} x_3}   &  \\ & {\color{blue} x_4}  & & {\color{blue} x_5}  & & {\color{blue} x_6}.
    \end{tikzcd}
\end{center}
\vspace{0.25cm}
Note that the top row of this seed is isomorphic to a seed for $X(\br^1)$, while the bottom part of the seed is isomorphic to a seed for $X(\lift{w_0}\br^2)$, so (1), (2) and (3) of Conjecture \ref{conj:splicing-braid-varieties} hold. 

Combinatorially, splicing amounts to cutting the left-to-right inductive weave along the dotted line in Figure \ref{fig:weave-splicing}.

\begin{figure}
    \centering
    \includegraphics[scale=1.2]{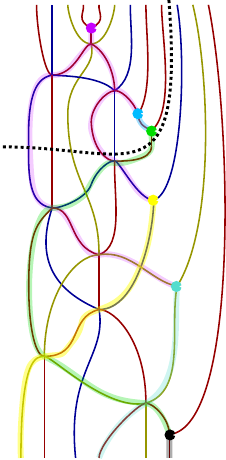}
    \caption{The left-to-right inductive weave for the braid $\br = \sigma_2\sigma_1\sigma_3\sigma_2\sigma_2\sigma_3\sigma_1\sigma_2\sigma_2\sigma_1\sigma_3\sigma_2$. Taking $r_1 = 9$, the part of the weave above the dotted line is an inductive weave for $\br^1$, while the part of the weave below the dotted line is an inductive weave for $\lift{w_0}\br^2$, and the braid varieties $X(\lift{w_0}\br^2)$ and $X(\br^2\lift{w_0})$ are cluster quasi-isomorphic.}
    \label{fig:weave-splicing}
\end{figure}
\end{example}

\begin{example}
Now consider the braid word $\br = \sigma_2\sigma_1\sigma_3\sigma_2\sigma_2\sigma_3\sigma_1\sigma_1\sigma_2\sigma_3\sigma_2\sigma_2\sigma_3\sigma_2\sigma_1$. The quiver for the left-to-right inductive weave is
\begin{center}
    \begin{tikzcd}
        x_1 \arrow[out=20,in=160]{rrrrrr} \ar{drrr} & & x_2 \arrow{dr} & & x_3 \arrow[out=340,in=200]{rrrr} & & x_4 \arrow{ll} \arrow{rd} & & x_5 \arrow{rr} & & x_6 \arrow[out=160, in=20]{llll} \arrow{llllld} \\
         & & & {\color{blue} x_9}  & & {\color{blue} x_8} \arrow{ulll} \arrow{ulllll} & & {\color{blue} x_7} \arrow{urrr} & & & 
    \end{tikzcd}
\end{center}
and the extended exchange matrix is:
\begin{equation}\label{eq:exchange-matrix-example}
\widetilde{B} = \begin{pmatrix}
0 & 0 & 0 & 1 & 0 & 0 \\ 
0 & 0 & 0 & 0 & 0 & 0 \\
0 & 0 & 0 & -1 & 1 & 0 \\ 
-1 & 0 & 1 & 0 & 0 & -1 \\
0 & 0 & -1 & 0 & 0 & 1 \\
0 & 0 & 0 & 1 & -1 & 0 \\
\hline 
0 & 0 & 0 & -1 & 0 & 1 \\
1 & 1 & 0 & 0 & 0 & -1\\
-1 & -1 & 0 & 0 & 0 & 0
\end{pmatrix}.
\end{equation}
Now let us take $r = 11$. The flag $\CF^{11}$ is associated to the matrix $B_{\br_{11}}(z_1, \dots, z_{11})$, and a computation (using e.g. Sage) shows that the lower-left justified minors of this matrix are:
\[
x_1x_2, \qquad x_5, \qquad x_4, \qquad 1.
\]
So that the cluster structure on the open set $\CU_{11,e}$ is obtained by freezing the cluster variables $x_1, x_2, x_4, x_5$. Note that there are no arrows between $x_3$ and $x_6$, so the mutable part of the quiver indeed becomes disconnected after freezing.

Freezing the corresponding rows, and deleting the corresponding columns, in \eqref{eq:exchange-matrix-example} we obtain the following extended exchange matrix
\begin{equation}\label{eq:exchange-matrix-after-freezing}
\widetilde{B}^{\circ} = \begin{matrix} \mathsmaller{\mathsmaller{3}} \\ \mathsmaller{\mathsmaller{6}} \\\hline  \mathsmaller{\mathsmaller{1}}\\ \mathsmaller{\mathsmaller{2}} \\ \mathsmaller{\mathsmaller{4}} \\ \mathsmaller{\mathsmaller{5}} \\ \mathsmaller{\mathsmaller{7}} \\ \mathsmaller{\mathsmaller{8}} \\ \mathsmaller{\mathsmaller{9}} \end{matrix}\!\begin{pmatrix} 0 & 0 \\ 0 & 0 \\ \hline 0 & 0 \\ 0 & 0 \\ 1 & -1 \\ -1 & 1 \\ 0 & 1 \\ 0 & -1 \\ 0 & 0 \end{pmatrix}.
\end{equation}
On the other hand, the extended exchange matrices for the cluster structures on $X(\br^1)$, $X(\Delta\br^2)$, and $X(\br^1) \times X(\Delta\br^2)$ are, respectively:
\[
\widetilde{B}_1 = \begin{pmatrix} 0 \\ \hline 0 \\ 0 \\ 1 \\ -1\end{pmatrix}, \qquad \widetilde{B}_2 = \begin{pmatrix} 0 \\ \hline 1 \\ -1 \\ 0 \end{pmatrix}, \qquad \widetilde{B}^{\times} = \begin{pmatrix} 0 & 0 \\ 0 & 0 \\ \hline 0 & 0 \\ 0 & 0 \\ 1 & 0 \\ -1 & 0 \\ 0 & 1 \\ 0 & -1 \\ 0 & 0\end{pmatrix}.
\]
We verify that $\widetilde{B}^{\circ}$ and $\widetilde{B}^{\times}$ define quasi-equivalent cluster structures. Following the strategy of Lemma \ref{lem:quasi-iso-exchange-matrix}, we need to find a $9 \times 9$ invertible integer matrix $R$ of the form $R = \begin{pmatrix} \id_{2} & 0 \\ P & Q\end{pmatrix}$ such that $R\widetilde{B}^{\circ} = \widetilde{B}^{\times}$. It is easy to see that taking $P = 0$ and
\[
Q = \begin{pmatrix}
  1 & 0 & 0 & 0 & 0 & 0 & 0 \\
  0 & 1 & 0 & 0 & 0 & 0 & 0  \\
  0 & 0 & 1 & 0 & 1 & 0 & 0 \\
  0 & 0 & 0 & 1 & 0 & 1 & 0 \\
  0 & 0 & 0 & 0 & 1 & 0 & 0 \\
  0 & 0 & 0 & 0 & 0 & 1 & 0 \\
  0 & 0 & 0 & 0 & 0 & 0 & 1
\end{pmatrix}
\]
works. Note, however, that the induced quasi-isomorphism preserves the mutable variables $x_3, x_6$, so it is unlikely to coincide with the isomorphism $\Phi_{11, e}$.
\end{example}

\subsection{Open Richardson varieties}\label{sec:richardson} We apply our results to the setting of open Richardson varieties in the flag variety. Recall that if $u \leq w \in S_k$ are permutations, the open Richardson variety is
\[
R(u, w) := \left\{\CF \in \Fl_k \mid \CF^{\std} \relpos{w} \CF \xrightarrow{u^{-1}w_0}\CF^{\ant}\right\}.
\]
The variety $R(u,w)$ is nonempty provided $u \leq w$, in which case it is an affine, smooth variety of dimension $\ell(w) - \ell(u)$. In fact, we have an isomorphism
\begin{equation}\label{eq:iso-braid-richardson}
X\left(\lift{w_{}}\cdot\lift{u^{-1}w_0}\right) \to R(u, w),
\end{equation}
that takes a sequence of flags $(\CF^0 = \CF^{\std}, \CF^1, \dots, \CF^{\ant})$ to the flag $\CF^{\ell(w)}$, cf. \cite[Theorem 3.14]{CGGLSS}.

\begin{proposition}\label{prop:Richardson}
    Let $u \leq v \leq w \in S_k$. Then, there exists an open embedding
\begin{equation}\label{eq:Richardson}
\Psi_{u, v, w}: R(u,v) \times R(v,w) \to R(u,w).
\end{equation}
whose image is the set
\begin{equation}\label{eq:Richardson-image}
\CU_{u, v, w} = \{\CF \in R(u, w) \mid \CF\pitchfork \CF(vw_0)\}.
\end{equation}
Moreover, if $u, v, w \in S_k$ are permutations so that $\CU_{u, v, w}$ is nonempty, then $u \leq v \leq w$. 
\end{proposition}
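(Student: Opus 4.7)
The plan is to obtain Proposition~\ref{prop:Richardson} as a specialization of Theorem~\ref{thm:splicing-braid-varieties}. Using the isomorphism \eqref{eq:iso-braid-richardson}, I would identify $R(u, w)$ with the braid variety $X(\beta)$ for $\beta = \lift{w} \cdot \lift{u^{-1} w_0}$, so that the Richardson flag corresponds to the intermediate flag $\CF^{r_1}$ at position $r_1 = \ell(w)$. Choosing the splicing permutation to be $w' = w_0 v w_0$, the condition $\CF^{r_1} \pitchfork \CF(w_0 w')$ in \eqref{eq:def-U} becomes exactly $\CF \pitchfork \CF(v w_0)$, so $\CU_{u,v,w}$ is identified with $\CU_{r_1, w'}(\beta)$; this shows in particular that $\CU_{u,v,w}$ is principal open in $R(u, w)$.

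Theorem~\ref{thm:splicing-braid-varieties} then produces an isomorphism
\begin{equation*}
\CU_{u, v, w} \;\cong\; X\bigl(\lift{w_0 v^{-1}} \cdot \lift{w}\bigr) \;\times\; X\bigl(\lift{u^{-1} w_0} \cdot \lift{w_0 v w_0}\bigr),
\end{equation*}
where both factors have the expected dimensions $\ell(w) - \ell(v) = \dim R(v, w)$ and $\ell(v) - \ell(u) = \dim R(u, v)$. The main obstacle will be to match the first factor with $R(v, w)$ and the second with $R(u, v)$: the braid words appearing do not literally coincide with the standard braid presentations of Richardson varieties under \eqref{eq:iso-braid-richardson}, so I expect the identification to go through the flag-variety involution $\CF \mapsto w_0 \CF$ (which preserves relative positions while exchanging $\CF^{\std}$ and $\CF^{\ant}$) together with a careful choice of reduced expressions in the splicing construction. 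Tracking the relative positions through these symmetries is the technical heart of the argument.

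For the final claim, I assume $\CU_{u,v,w} \neq \emptyset$ and pick $\CF$ in this set. The inequality $u \leq w$ is immediate since $R(u, w) \supseteq \CU_{u,v,w}$ is nonempty. For $v \leq w$, I consider the three flags $\CF^{\std}$, $\CF$, $\CF(v w_0)$, whose pairwise relative positions are $w$, $v w_0$, and $w_0$, respectively. Applying Corollary~\ref{cor: length additive}(b) to the chain $\CF(v w_0) \to \CF^{\std} \to \CF$ yields $w_0 \leq (w_0 v^{-1}) \dem w$, and since $w_0$ is maximal this forces $(w_0 v^{-1}) \dem w = w_0$; by Lemma~\ref{lem:demazure-bounds} this is equivalent to $v \leq w$. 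The inequality $u \leq v$ follows analogously from the triangle $(\CF^{\ant}, \CF, \CF(v w_0))$, whose pairwise relative positions are $w_0 u$, $w_0 v w_0$, and $w_0$: Corollary~\ref{cor: length additive}(b) gives $w_0 \leq (w_0 v^{-1} w_0) \dem (w_0 u)$, and unwinding Lemma~\ref{lem:demazure-bounds} (using that conjugation by $w_0$ and inversion both preserve Bruhat order) produces $u \leq v$.
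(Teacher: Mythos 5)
Your overall route is the same as the paper's: identify $R(u,w)$ with $X\left(\lift{w}\cdot\lift{u^{-1}w_0}\right)$, take $r_1=\ell(w)$ and splicing permutation $w_0vw_0$, observe that $\CU_{u,v,w}$ corresponds to $\CU_{r_1,\,w_0vw_0}$, and invoke Theorem \ref{thm:splicing-braid-varieties}; your product $X\left(\lift{w_0v^{-1}}\cdot\lift{w}\right)\times X\left(\lift{u^{-1}w_0}\cdot\lift{w_0vw_0}\right)$ agrees with the paper's. The gap is exactly the step you defer as "the technical heart": identifying these two factors with $R(v,w)$ and $R(u,v)$. The paper closes this with the cyclic rotation isomorphisms of braid varieties (\cite[Section 5.5]{CGGLSS}), which move a subword from one end of the braid to the other while conjugating it by $w_0$; this turns $\lift{w_0v^{-1}}$ into $\lift{v^{-1}w_0}$ and $\lift{w_0vw_0}$ into $\lift{v}$, giving $X\left(\lift{w}\cdot\lift{v^{-1}w_0}\right)\cong R(v,w)$ and $X\left(\lift{v}\cdot\lift{u^{-1}w_0}\right)\cong R(u,v)$ on the nose. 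Your proposed substitute, the translation $\CF\mapsto w_0\CF$ (together with reversing the chain of flags), does not quite suffice as stated: it identifies $X\left(\lift{w_0v^{-1}}\cdot\lift{w}\right)$ with the Richardson variety for the inverse permutations, $R(v^{-1},w^{-1})$, so you would still need the standard transpose-inverse duality $R(v^{-1},w^{-1})\cong R(v,w)$ (or simply the cyclic rotation isomorphism) to finish; a dimension count, as you note yourself, does not identify the varieties. So the missing ingredient is known and standard, but as written the identification is asserted rather than proved.

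Your treatment of the final claim is correct and is a mild variant of the paper's: the paper deduces $u\le v\le w$ from Corollary \ref{cor: U nonempty} (itself a consequence of Theorem \ref{thm:splicing-braid-varieties} and Lemma \ref{lem:demazure-bounds}), whereas you argue directly with the triangles of flags $\left(\CF(vw_0),\CF^{\std},\CF\right)$ and $\left(\CF(vw_0),\CF^{\ant},\CF\right)$ via Corollary \ref{cor: length additive}(b) and Lemma \ref{lem:demazure-bounds}. Both derivations are valid and essentially equivalent in content; your version has the small virtue of not passing through the nonemptiness criterion, at the cost of having to check the Bruhat-order bookkeeping (inversion and conjugation by $w_0$) by hand, which you do correctly.
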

\begin{proof}
    We use the isomorphism \eqref{eq:iso-braid-richardson} together with a special case of the maps from Theorem \ref{thm:splicing-braid-varieties}, so we work with the variety $X\left(\lift{w_{}}\cdot \lift{u^{-1}w_0}\right) \cong R(u,w)$.  In the setting of Section \ref{sec:splicing} we set $r_1 = \ell(w)$, so that $\br^1 = \lift{w}$ and $\br^2 = \lift{u^{-1}w_0}$. Consider $v^{\ast} := w_0vw_0$. Then,
\[
\CU_{r_1, v{^{\ast}}} = \left\{\left(\CF^0 = \CF^{\std}, \dots, \CF^{\ant}\right) \in X\left(\lift{w_{}}\cdot\lift{u^{-1}w_0}\right) \mid \CF^{\ell(w)} \pitchfork \CF\left(w_0v^{\ast}\right)\right\} %\cong X(\lift{(v^{\ast})^{-1}w_0}\lift{w}) \times X(\lift{u^{-1}w_0}\lift{v^{\ast}}).
\]
%\TS{Align all $\lift{w_{}}$}\EG{I think I did:)}
maps isomorphically onto $\CU_{u, v, w}$ upon the isomorphism $X\left(\lift{w_{}}\cdot\lift{u^{-1}w_0}\right) \cong R(u,w)$. On the other hand, by Theorem \ref{thm:splicing-braid-varieties} we have
\[
\CU_{r_1, v^{\ast}} \cong X\left(\lift{(v^{\ast})^{-1}w_0}\cdot \lift{w_{}^{}}\right) \times X\left(\lift{u^{-1}w_0^{}}\cdot\lift{v^{\ast}_{}}\right).
\]
Using cyclic rotation isomorphisms (see \cite[Section 5.5]{CGGLSS}) %\EG{reference?} 
we moreover have $$X\left(\lift{u^{-1}w_0^{}}\cdot \lift{v^{\ast}_{}}\right) \cong X\left(\lift{v_{}}\cdot\lift{u^{-1}w_0}\right) \cong R(u,v),$$ while $$X\left(\lift{(v^{\ast})^{-1}w_0}\cdot \lift{w_{}^{}}\right) \cong X\left(\lift{w_{}}\cdot\lift{v^{-1}w_0}\right) \cong R(v,w).$$ So $\CU_{r_1, v^{\ast}} \cong R(u, v) \times R(v,w)$ and we obtain the embedding \eqref{eq:Richardson}. The last claim of the statement of the proposition follows from Corollary \ref{cor: U nonempty}.
\end{proof}

\begin{remark}
    In \cite[Definition 3.1]{eberhardt-stroppel}, an open embedding $R(u,v) \times R(v,w) \to R(u,w)$ is obtained. We do not know if it coincides with the map constructed in Proposition \ref{prop:Richardson}.
\end{remark}

\begin{remark}
\label{rem: reduced is richardson}
    Note that if $\br = \br^1\br^2$, with $\br^1$ and $\br^2$ both reduced words, then the condition $\delta(\br) = w_0$ forces $\br^1 = \lift{w}$ and $\br^2 = \lift{v^{-1}w_0}$ for some $v \leq w$. Thus, any splicing map with $\br^1$ and $\br^2$ both reduced is equivalent to one of the form \eqref{eq:Richardson}. 
\end{remark}

We can use Proposition \ref{prop:Richardson} to give a (non-explicit) formula for the number of frozen variables on Richardson varieties. For $v \leq w$, write $f_{v,w}$ for the number of frozen variables on the Richardson variety $R(v,w)$. 

\begin{corollary}\label{cor:frozen-richardson}
Let $v \leq w \in S_k$. Let $s_{v,w}$ be the number of distinct irreducible factors appearing in the minors $\minor_{v[i], [i]}(B_{\lift{w}}(z_1, \dots, z_{\ell(w)}))$ which are not themselves principal minors of the matrix $B_{\lift{w}}(z_1, \dots, z_{\ell(w)})$. Then,
\begin{equation}\label{eq:Richardson-frozens}
f_{v,w} = f_{
e,w} - f_{e,v} + s_{v,w}.
\end{equation}
\end{corollary}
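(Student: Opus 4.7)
The plan is to apply Proposition \ref{prop:Richardson} with $u = e$ and then compare the groups of global invertible functions on both sides of the resulting isomorphism, using the fact from \cite[Theorem 1.3]{GLS} that on any cluster variety this group, modulo $\C^\times$, is free abelian of rank equal to the number of frozen variables.

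First, setting $u = e$ in Proposition \ref{prop:Richardson} yields an open embedding
\[
R(e,v) \times R(v,w) \xrightarrow{\cong} \CU_{e,v,w} \subseteq R(e,w).
\]
Using the identification $R(e,w) \cong \BS(\lift{w})$ and realizing points of $R(e,w)$ via the matrices $B_{\lift{w}}(z_1, \dots, z_{\ell(w)})$, Lemma \ref{lem: relpos w minors}(a) translates the transversality condition $\CF \pitchfork \CF(vw_0)$ into the non-vanishing of the minors $\minor_{v[i], [i]}(B_{\lift{w}}(z))$ for $i = 1, \dots, k$. For $i = k$ this minor is the determinant of a braid matrix, hence a nonzero constant, so $\CU_{e,v,w}$ is the principal open of $R(e,w)$ obtained by inverting $\minor_{v[i],[i]}(B_{\lift{w}}(z))$ for $i = 1, \dots, k-1$.

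Next, I would count the rank of the group of global invertible functions modulo $\C^\times$ on each side. The left-hand side has rank $f_{e,v} + f_{v,w}$ by \cite[Theorem 1.3]{GLS} applied to each factor. For the right-hand side, one observes that $\C[R(e,w)]$ is a unique factorization domain: it is a localization of the polynomial ring $\C[z_1,\dots,z_{\ell(w)}]$ at a nonzero element, and such localizations of UFDs are again UFDs. In this UFD, the frozen cluster variables of $\C[R(e,w)]$ are precisely the irreducible principal minors $\minor_{[s],[s]}(B_{\lift{w}}(z))$ for those $s$ with $\last(s) \neq \oslash$, with irreducibility supplied by \cite[Lemma 3.30]{SW}. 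The coordinate ring of $\CU_{e,v,w}$ is obtained from $\C[R(e,w)]$ by adjoining the inverses of the minors $\minor_{v[i],[i]}$ for $i=1,\ldots,k-1$, equivalently by inverting each of their irreducible factors. Factors which already happen to be principal minors contribute no new units, whereas each distinct non-principal-minor irreducible factor contributes a new independent generator to the units group modulo $\C^\times$ (by unique factorization). The number of such new generators is exactly $s_{v,w}$ by definition, so the right-hand side has rank $f_{e,w} + s_{v,w}$.

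Equating the two counts yields $f_{e,v} + f_{v,w} = f_{e,w} + s_{v,w}$, which rearranges to the formula \eqref{eq:Richardson-frozens}. The two mildly technical points are the translation of the transversality condition into minor non-vanishing (which is immediate from Lemma \ref{lem: relpos w minors}(a)) and the unit-counting argument in a UFD; neither presents a genuine obstacle once \cite[Theorem 1.3]{GLS} and the UFD property of $\C[R(e,w)]$ are taken for granted.
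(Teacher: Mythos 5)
Your proposal is correct and follows essentially the same route as the paper: identify $R(e,v)\times R(v,w)$ with the open locus $\CU_{e,v,w}\subseteq R(e,w)$ cut out by the nonvanishing of the minors $\minor_{v[i],[i]}(B_{\lift{w}}(z))$ (via Proposition \ref{prop:Richardson} and Lemma \ref{lem: relpos w minors}(a)), and then compare the counts of frozen variables on the two sides using the invariance of the number of frozens coming from \cite[Theorem 1.3]{GLS}, exactly as in Remark \ref{rem: frozen inequality}. Your explicit UFD/unit-counting argument just spells out the step the paper states tersely as ``$s_{v,w}$ is precisely the number of irreducible elements of $\C[R(e,w)]$ that must be localized,'' so there is no substantive difference in method.
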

\begin{proof}
    We have an embedding $R(e,v) \times R(v,w) \to R(e,w)$ whose image is the set $\CU_{e,v,w}$ of flags $\CF \in R(e,w)$ that are transverse to $\CF(vw_0)$. The set of all flags in $R(e,w)$ is parametrized by those matrices $B_{\lift{w}}(z_1, \dots, z_{\ell(w)})$ with nonvanishing principal minors, and a flag $B_{\lift{w}}(z_1, \dots, z_{\ell(w)})$ is transverse to $\CF(w_0v)$ if and only if $\minor_{v[i],[i]}(B_{\lift{w}}(z_1, \dots, z_{\ell(w)})$ is nonzero for every $i = 1, \dots, k$, cf. Lemma \ref{lem: relpos w minors}(a). So $s_{v,w}$ is precisely the number of irreducible elements in $\C[R(e,w)]$ that we have to localize in order to obtain $\C[\CU_{e,v,w}]$ and we get
    \[
    f_{e,v} + f_{v,w} = f_{e,w} + s_{v,w},
    \]
    which proves the result.
\end{proof}

\begin{remark}
    For $w \in S_k$, the number $f_{e,w}$ is simply the number of different simple generators of $S_k$ appearing in one (equivalently, any) reduced expression for $w$. So by \eqref{eq:Richardson-frozens} the computation of $f_{v,w}$ is equivalent to that of $s_{v,w}$, which still remains a challenging problem.
\end{remark}

\begin{example}\label{ex:frozens-richardson}
    Let us consider $k = 4$, $v = s_2$ and $w = s_3s_2s_1s_2s_3$. We have
    \[
    B_{\lift{w}}(z_1, \dots, z_5) = \begin{pmatrix} z_3 & -z_4 & z_5 & -1 \\ z_2 & -1 & 0 & 0 \\ z_1 & 0 & -1 & 0 \\ 1 & 0 & 0 & 0 \end{pmatrix}.
    \]
    Note that $v[i] \neq [i]$ if and only if $i = 2$, so we only need to check the irreducible factors of $\minor_{[1,3],[1,2]}(B_{\lift{w}}(z_1, \dots, z_5)) = z_1z_4$. Neither $z_1$ nor $z_4$ is a principal minor, so we obtain $s_{v,w} = 2$ and $f_{v,w} = 3 - 1 + 2 = 4$. Since $\dim(R(v,w)) = 4$, we obtain that $R(v,w) \cong (\C^{\times})^{4}$ is a $4$-dimensional torus.
    The left-to-right inductive initial seed for the cluster structure on $R(e,w)$ is
    \begin{center}
        \begin{tikzcd}
          {\color{blue} z_3} \arrow{r} & z_2 \arrow{r} & {\color{blue} z_4z_2 - z_3} \arrow{r} & z_1 \arrow{r} &    {\color{blue} z_1z_5 + z_3 - z_2z_4}
        \end{tikzcd}
    \end{center}
    Note that mutating at $z_2$ we obtain $z_2' = z_4$, so there is a seed in $R(e,w)$ with mutable variables $z_1, z_4$. 
\end{example}

\begin{remark}\label{rmk:growth-frozens}
Note that Corollary \ref{cor:frozen-richardson} is the easiest to apply when $v$ is a simple transposition $s_i$: in this case we only need to find irreducible factors of a single minor $\minor_{s_i[i], [i]}$, since $\minor_{s_i[j], [j]}$ is a principal minor for $j \neq i$.

Example \ref{ex:frozens-richardson} can be generalized as follows. Let $k = 2i$, and let $w = [k, 2, 3, \dots, k-1, 1]$, i.e., $w$ is the transposition $(1k)$. Let $v = s_2s_4\cdots s_{k-2}$. Up to signs in the pivotal $1$'s, the matrix $B_{\lift{w}}$ has the following form
\[
B_{\lift{w}} = \begin{pmatrix} p_{k-1} & p_{k} & p_{k+1} & \dots & p_{2k-3} & 1 \\ p_{k-2} & 1 & 0 & \dots & 0 & 0 \\ p_{k-3} & 0 & 1 & \dots & 0 & 0 \\
\vdots & \vdots & \vdots & \ddots & \vdots & \vdots \\ p_1 & 0 & 0 & \dots & 1 & 0 \\ 1 & 0 & 0 & \dots & 0 & 0\end{pmatrix} 
\]
where $p_1, \dots, p_{2k-3}$ are irreducible polynomials in the $z$-variables. Among the minors $\minor_{v[j], [j]}$, the ones that are not already principal minors are (up to signs):
\[
\minor_{v[2], [2]} = p_{k-3}p_{k}, \; \minor_{v[4], [4]} = p_{k-5}p_{k+2}, \dots, \minor_{v[2(i-1)], [2(i-1)]}= p_1p_{2k-2},
\]
so each of these minors contributes with $2$ irreducible factors. It follows that the number of frozen variables in $R(v,w)$ is
\[
f_{v,w} = k-1 - (i-1) +2(i-1) = k-1 + i-1 = k-1+\frac{k}{2} - 1,
\]
which coincides with $\dim(R(v,w))$. In particular, $R(v,w)$ is a torus.
\end{remark}
%\JS{write remark on number of frozens}

\begin{remark}
    For $u, v \in S_k$ with $u \leq v$, let $f_{u,v}$ denote the number of frozen variables in the cluster structure on $R(u,v)$. %If $u \leq v \leq w$, Proposition \ref{prop:Richardson} implies that $\C[R(u,v)] \otimes \C[(R(v,w)]$ is a localization of the algebra $\C[R(u, w)]$. %Then by \cite[Theorem 1.3]{GLS} we have the inequality
    By \eqref{eq: frozen inequality} we have the inequality
    \begin{equation}
    \label{eq:inequality-frozens-richardson}
    f_{u,v} + f_{v,w} \geq f_{u,w}.
    \end{equation}
    In forthcoming work of the second author \cite{soyeon}, $f_{u,v}$ is shown to be equal to the coefficient of the second highest degree of the so-called  R-polynomial $\mathrm{R}_{u,v}(q)$ associated to the pair $(u,v)$ which has interesting combinatorial interpretations, cf. \cite{Patimo}. In particular, \cite{soyeon} provides an alternative combinatorial proof of \eqref{eq:inequality-frozens-richardson}.  
\end{remark}

Note that all the results so far in this section have been independent of the validity of Conjecture \ref{conj:splicing-braid-varieties}. The next result explores the consequences of Conjecture \ref{conj:splicing-braid-varieties} in the Richardson setting.

\begin{proposition}
    Assume Conjecture \ref{conj:splicing-braid-varieties} holds. Then, for every $u \leq v \leq w \in S_k$ and every $i = 1, \dots, k$, the minors $\minor_{v[i], [i]}$ are cluster monomials in $R(u,w)$. Moreover, there exists a single cluster $\mathbf{x}$ so that all said minors are cluster monomials $\mathbf{x}$.
\end{proposition}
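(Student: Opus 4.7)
The plan is to deduce the result from Conjecture \ref{conj:splicing-braid-varieties} via the splicing of Proposition \ref{prop:Richardson}, combined with the factoriality properties of locally acyclic cluster algebras.

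First I would fix $v$ with $u \leq v \leq w$. Under the isomorphism $R(u,w) \cong X(\lift{w}\cdot\lift{u^{-1}w_0})$ of \eqref{eq:iso-braid-richardson}, the open set $\CU_{u,v,w}$ identifies with some $\CU_{\ell(w), v^{\ast}}$, where $v^{\ast} = w_0 v w_0$. Assuming parts (1) and (2) of Conjecture \ref{conj:splicing-braid-varieties}, there is a seed $\Sigma_v = (Q_v, \mathbf{x}_v)$ of $\C[R(u,w)]$ and cluster variables $x_{a_1}^{(v)}, \dots, x_{a_{s_v}}^{(v)} \in \mathbf{x}_v$ whose common nonvanishing locus is precisely $\CU_{u,v,w}$. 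On the other hand, choosing a matrix representative $M$ for a flag $\CF \in R(u,w)$, Lemma \ref{lem: relpos w minors}(a) shows that $\CU_{u,v,w}$ is also cut out by the nonvanishing of the minors $\minor_{v[i], [i]}(M)$ for $i = 1, \dots, k$.

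Next I would exploit factoriality: by \cite[Theorem 7.13]{CGGLSS} the cluster structure on $R(u,w)$ is locally acyclic, so every cluster variable is a prime element of $\C[R(u,w)]$ and $\C[R(u,w)]$ is factorial modulo units (i.e., modulo Laurent monomials in the frozen variables, which by \cite[Theorem 1.3]{GLS} exhaust $\C[R(u,w)]^\times$ up to scalars). Since the zero loci of $\prod_i \minor_{v[i],[i]}$ and of $\prod_j x_{a_j}^{(v)}$ coincide, matching prime divisors forces each $\minor_{v[i],[i]}$ to equal, up to a nonzero scalar and a Laurent monomial in frozens, a monomial in the $x_{a_j}^{(v)}$; in particular, $\minor_{v[i],[i]}$ is a cluster monomial in $\Sigma_v$, and hence a cluster monomial in $R(u,w)$. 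This proves the first assertion for each $v$ separately.

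For the \emph{moreover} claim, I would iterate the splicing of Proposition \ref{prop:Richardson} along a maximal chain $u = v_0 < v_1 < \cdots < v_\ell = w$ in the Bruhat interval $[u,w]$, using Conjecture \ref{conj:splicing-braid-varieties}(2)--(3) at each step to produce a seed of $\C[R(u,w)]$ whose cluster includes the distinguished variables cutting out $\CU_{u,v_j,w}$ for every $j$. Since every $v \in [u,w]$ belongs to some maximal chain, this already shows each individual $\minor_{v[i],[i]}$ is a cluster monomial in \emph{some} seed; the hard part, and the main obstacle, is to verify that the various distinguished cluster variables $\{x_{a_j}^{(v)}\}$ for \emph{all} $v \in [u,w]$ can be realized simultaneously in a single seed, rather than in seeds differing by mutations. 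A natural candidate is the left-to-right inductive seed of \cite[Section 4]{CGGLSS} attached to a well-chosen reduced word of $\lift{w}\cdot\lift{u^{-1}w_0}$: one would identify the Lusztig cycles whose associated cluster variables are exactly the $x_{a_j}^{(v)}$, and then argue, using the quiver-level compatibility supplied by Conjecture \ref{conj:splicing-braid-varieties}(3) across iterated splicings, that no mutation is needed to see all of them at once.
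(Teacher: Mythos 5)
Your argument for the first assertion rests on a factoriality claim that is not available: you assert that every cluster variable is a prime element of $\C[R(u,w)]$ and that $\C[R(u,w)]$ is "factorial modulo units", and you then match prime divisors of $\prod_i \minor_{v[i],[i]}$ against $\prod_j x_{a_j}^{(v)}$. Local acyclicity (via \cite{Muller}) gives finite generation and normality, and \cite{GLS} describes the units, but neither source gives unique factorization of $\C[R(u,w)]$ nor primeness of arbitrary cluster variables in it, so the step "equal zero loci forces each minor to be a monomial in the $x_{a_j}^{(v)}$ up to frozens" is a genuine gap as written (at best it could be rephrased as a Weil-divisor argument on the normal variety $R(u,w)$, but that still needs the unproved primeness of the $x_{a_j}^{(v)}$). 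The paper's proof needs none of this: by Lemma \ref{lem: relpos w minors}(a) each $\minor_{v[i],[i]}$ is nowhere vanishing on $\CU_{u,v,w}$, hence a unit in $\C[\CU_{u,v,w}]$; Conjecture \ref{conj:splicing-braid-varieties}(2) equips $\CU_{u,v,w}$ with a cluster structure whose frozen variables are the frozens of $R(u,w)$ together with $x_{a_1},\dots,x_{a_s}$, so the units theorem of \cite{GLS} says each minor is a monomial in exactly these variables, all of which belong to the single cluster $\mathbf{x}$ furnished by Conjecture \ref{conj:splicing-braid-varieties}(1). Both assertions then follow at once, with no divisor theory.

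The second issue is your reading of the "moreover" clause. You take it to require one cluster serving all $v$ in the interval $[u,w]$ simultaneously, and your proposed remedy (iterated splicing along a maximal Bruhat chain plus identifying Lusztig cycles in an inductive seed) is, by your own admission, left incomplete. The paper claims something weaker: immediately after the statement it records that, with $u$ and $w$ fixed, the cluster $\mathbf{x}$ may depend on $v$. The intended content is only that for each fixed $v$ the $k$ minors $\minor_{v[i],[i]}$, $i=1,\dots,k$, are cluster monomials in a common cluster, which is exactly what the unit argument above produces. So the stronger simultaneous statement you were chasing is neither claimed nor needed, while the statement that is claimed does not follow from your write-up unless the factoriality step is repaired along the lines of the paper's argument.
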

Note that, fixing $u$ and $w$, $\mathbf{x}$ may depend on the permutation $v$. 
\begin{proof}
If Conjecture \ref{conj:splicing-braid-varieties} (1) holds, then there exists a cluster $\mathbf{x}$ and cluster variables $x_{a_1}, \dots, x_{a_s} \in \mathbf{x}$ such that $\CU_{u,v,w}$ is the non-vanishing locus of $x_{a_1}\cdots x_{a_s}$. Moreover, if Conjecture \ref{conj:splicing-braid-varieties} (2) holds, then $\CU_{u,v,w}$ admits a cluster structure whose frozen variables are the frozen variables in $R(u,w)$, plus the variables $x_{a_1}, \dots x_{a_s}$. \\
Now, by Lemma \ref{lem: relpos w minors}(a), for every $i = 1, \dots, k$, the minor $\minor_{v[i], [i]}$ is nowhere vanishing on $\CU_{u, v, w}$. Thus, by \cite[Theorem 2.2]{GLS}, the restriction $\minor_{v[i], [i]}|_{\CU_{u,v,w}}$ is a monomial on the frozen variables of $\CU_{u,v,w}$ that, as we have seen, are frozen variables of $R(u,w)$ plus some cluster variables in $\mathbf{x}$. Now, $\C[R(u,w)] \subseteq \C[\CU_{u,v,w}]$ and the function $\minor_{v[i], [i]}$ is regular on $R(u,w)$. Since $\minor_{v[i],[i]}$ is a cluster monomial on $\CU_{u,v,w}$, it is also a cluster monomial on $R(u,w)$. 
\end{proof}

In the remainder of the paper, we will construct splicing maps satisfying all the properties of Conjecture \ref{conj:splicing-braid-varieties} in %two important 
the special case of double Bott--Samelson varieties.
%and skew Schubert varieties.

\section{Splicing double Bott-Samelson varieties}\label{sec:splicing-DBS}
\subsection{Setup}\label{sec:setup} Let $\br$ be a positive braid, and assume we have a decomposition $\br = \br^1\br^2$, where $\br^1 = \sigma_{i_1}\cdots \sigma_{i_{r_1}}$ and $\br^2 = \sigma_{i_{r_1 + 1}}\cdots \sigma_{i_r}$. For each $s = 1, \dots, k-1$, let $\last^1(s) \in \{1, \dots, r_1\}$ be such that $i_{\last^1(s)} = s$, and $i_j \neq s$ for $\last^1(s) < j \leq r_1$, i.e., $\last^1(s)$ is the rightmost appearance of $\sigma_s$ in $\br^1$. We will consider the cluster variable $x_{\last^1(s)} \in \C[X(\br)]$. If $\sigma_s$ does not appear in $\br^1$, we will simply set $x_{\last^1(s)} = 1$. 

\subsection{Splicing} In the setup of Section \ref{sec:setup}, we have the following result.

\begin{theorem}\label{thm:splicing-dbs}
Let $\CU_{r_1}(\br) \subseteq \BS(\br)$ be the locus where none of the cluster variables $x_{\last^1(s)}$ vanish for $s = 1, \dots, k-1$. Then 
\begin{enumerate}
    \item We have $(z_1, \dots, z_r) \in \CU_{r_1}(\br)$ if and only if  $(z_1, \dots, z_{r_1}) \in \BS\left(\br^1\right)$
    \item We have an isomorphism of algebraic varieties
\[
\Phi_{r_1}: \CU_{r_1}(\br) \buildrel \cong \over \longrightarrow \BS\left(\br^1\right) \times \BS\left(\br^2\right). 
\]
\end{enumerate}
\end{theorem}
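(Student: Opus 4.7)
The plan is to prove Part (1) by explicitly identifying each cluster variable $x_{\last^1(s)}$ with a principal minor of $B_{\br^1}(z_1,\ldots,z_{r_1})$, and to prove Part (2) by exploiting the uniqueness of the LU-decomposition together with the rewriting provided by Lemma~\ref{lem: push right}.

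For Part (1), recall from Section~\ref{sec:cluster-structure} that
\[
x_{\last^1(s)}(z_1,\ldots,z_r) \;=\; \Delta_{[s],[s]}\!\left(B_{\br^1_{\last^1(s)}}(z_1,\ldots,z_{\last^1(s)})\right).
\]
Since $\sigma_s$ does not occur in $\br^1$ at any position strictly greater than $\last^1(s)$, Lemma~\ref{lem:cauchy-binet-minors} applied successively to the remaining letters of $\br^1$ rewrites this as
\[
x_{\last^1(s)} \;=\; \Delta_{[s],[s]}\!\left(B_{\br^1}(z_1,\ldots,z_{r_1})\right).
\]
For $(z_1,\ldots,z_r)\in\BS(\br)$, the simultaneous nonvanishing of these variables for $s=1,\ldots,k-1$ is therefore equivalent to the nonvanishing of every principal minor of $B_{\br^1}(z_1,\ldots,z_{r_1})$, which by Corollary~\ref{cor: BS} is precisely the condition $(z_1,\ldots,z_{r_1})\in\BS(\br^1)$.

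For Part (2), given $(z_1,\ldots,z_r)\in\CU_{r_1}(\br)$, Part (1) provides an LU-decomposition $B_{\br^1}(z_1,\ldots,z_{r_1}) = L_1 U_1$ with $L_1$ lower unitriangular and $U_1$ upper triangular. Iterating Lemma~\ref{lem: push right} across the letters of $\br^2$, I obtain uniquely determined regular functions $\widetilde{z}_{r_1+1},\ldots,\widetilde{z}_r$ on $\CU_{r_1}(\br)$ and an upper-triangular matrix $V$ satisfying
\[
U_1\, B_{\br^2}(z_{r_1+1},\ldots,z_r) \;=\; B_{\br^2}(\widetilde{z}_{r_1+1},\ldots,\widetilde{z}_r)\, V.
\]
Define $\Phi_{r_1}(z_1,\ldots,z_r) := \bigl((z_1,\ldots,z_{r_1}),\,(\widetilde{z}_{r_1+1},\ldots,\widetilde{z}_r)\bigr)$. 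To see that the second factor lies in $\BS(\br^2)$, I write $B_\br(z) = LU$ (which exists since $(z_1,\ldots,z_r)\in\BS(\br)$); then $L_1 B_{\br^2}(\widetilde{z})\, V = LU$, so $B_{\br^2}(\widetilde{z}) = (L_1^{-1}L)(U V^{-1})$ is itself an LU-decomposition, the first factor being lower unitriangular and the second upper triangular.

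The inverse $\Phi_{r_1}^{-1}$ is constructed symmetrically. Given a point $((z_1,\ldots,z_{r_1}),(w_{r_1+1},\ldots,w_r))\in \BS(\br^1)\times\BS(\br^2)$, I again form $B_{\br^1}(z_1,\ldots,z_{r_1})=L_1 U_1$ and invert Lemma~\ref{lem: push right} one letter at a time: at each step the induced scalar map $z\mapsto z'$ in $V_{j-1}B_i(z)=B_i(z')V_j$ is affine linear in $z$ with slope a ratio of two adjacent diagonal entries of $V_{j-1}$, and those diagonal entries are nonzero throughout the iteration since, by Lemma~\ref{lem: push right}, they are obtained by permuting the (nonzero) diagonal entries of $U_1$. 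Solving $z' = w_{r_1+j}$ for $z=z_{r_1+j}$ at each step produces a unique preimage $(z_1,\ldots,z_r)$, and the identity $B_\br(z)= L_1 B_{\br^2}(w_{r_1+1},\ldots,w_r)V'= (L_1 L_2)(U_2 V')$ (where $L_2U_2$ is the LU-decomposition of $B_{\br^2}(w_{r_1+1},\ldots,w_r)$) shows $(z_1,\ldots,z_r)\in\BS(\br)$; by Part (1) it lies in $\CU_{r_1}(\br)$. The main obstacle is the bookkeeping required to carry out and invert the iterated pushing procedure; both maps are then inverse to each other by the uniqueness clauses in Lemma~\ref{lem: push right} and in the LU-decomposition.
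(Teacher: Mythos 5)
Your proof is correct and takes essentially the same route as the paper: Cauchy--Binet identifies each $x_{\last^1(s)}$ with a principal minor of $B_{\br^1}(z_1,\dots,z_{r_1})$, and the forward map is the same slide of $U_1$ through $B_{\br^2}$ via Lemma \ref{lem: push right}, with membership in $\BS(\br^2)$ checked by the same LU factorization. The only cosmetic difference is the inverse: you invert the slide letter-by-letter using the affine-linearity of $z\mapsto z'$ (with nonvanishing slope given by ratios of the permuted diagonal entries of $U_1$), whereas the paper obtains the same map in one stroke by sliding $U_1^{-1}$ through $B_{\br^2}(z'_{r_1+1},\dots,z'_r)$; the two constructions agree by the uniqueness in Lemma \ref{lem: push right}.
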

\begin{proof}
    Let $(z_1, \dots, z_r) \in \CU_{r_1}(\br)$. %First, note that $(z_1, \dots, z_{r_1}) \in \BS(\br^1)$. Indeed, by 
    By Lemma \ref{lem:cauchy-binet-minors} for $s = 1, \dots, k-1$ we have
    \[
    \minor_{s}(B_{\br^1}(z_1, \dots, z_{r_1})) = x_{\last^1(s)}
    \]
    so that the condition $(z_1, \dots, z_r) \in \CU_{r_1}(\br)$ is equivalent to $\minor_{s}(B_{\br^1}(z_1, \dots, z_{r_1}))\neq 0$ for all $s$ which is equivalent to $(z_1, \dots, z_{r_1}) \in \BS\left(\br^1\right)$. Now, since $(z_1, \dots, z_{r_1}) \in \BS\left(\br^1\right)$ we have an LU-decomposition
    \[
    B_{\br^1}(z_1, \dots, z_{r_1}) = L_1U_1
    \]
    where $L_1$ has $1$'s on the diagonal. Since $(z_1, \dots, z_r) \in \BS(\br)$ we also have an LU-decomposition $B_{\beta}(z_1, \dots, z_r) = LU$. Thus, we obtain
    \begin{equation}\label{eq:LUB=LU}
    L_1U_1B_{\beta^2}(z_{r_1 + 1}, \dots, z_r) = LU
    \end{equation}
    Now, by Lemma \ref{lem: push right}  we can write
    \begin{equation}\label{eq:slideU}
    U_1B_{\beta^2}(z_{r_1+1}, \dots, z_r) = B_{\beta^2}(z'_{r_1+1}, \dots, z'_r)U_1'
    \end{equation}
    for some %invertible {\color{red} invertible is kind of ambiguous here, so I added some details below just in case}
    change of variables $z'_{r_1+1}, \dots, z'_r$. It follows from \eqref{eq:LUB=LU} that $(z'_{r_1+1}, \dots, z'_{r}) \in \BS\!\left(\beta^2\right)$. The map $\Phi_{r_1}: (z_1, \dots, z_{r_1}, z_{r_1+1}, \dots, z_r) \mapsto ((z_1, \dots, z_{r_1}), (z'_{r_1+1}, \dots, z'_r))$ gives the desired isomorphism.

To construct the inverse map, suppose that we are given $((z_1, \dots, z_{r_1}), (z'_{r_1+1}, \dots, z'_r))$ such that $B_{\beta^1}(z_1,\ldots,z_{r_1})=L_1U_1$ and $B_{\beta^2}(z'_{r_1+1}, \dots, z'_r)=L_2U_2$. As above, the matrices $L_1,L_2$ are lower-triangular with 1 on diagonal and $U_1,U_2$ are upper triangular, all these matrices are uniquely determined by $((z_1, \dots, z_{r_1}), (z'_{r_1+1}, \dots, z'_r))$.

By \eqref{eq:slideU} there exist $(z_{r_1+1},\ldots,z_{r})$ and an upper-triangular matrix $U''_1$ such that 
$$
U_1^{-1}B_{\beta^2}(z'_{r_1+1}, \dots, z'_r)=
B_{\beta^2}(z_{r_1+1}, \dots, z_r)U_1''
$$
which implies
$$
B_{\beta^2}(z'_{r_1+1}, \dots, z'_r)(U''_1)^{-1}=
U_1B_{\beta^2}(z_{r_1+1}, \dots, z_r).
$$
This determines the inverse map. We can check that the result lands in $\BS(\beta)$:
$$
B_{\beta}(z_1,\ldots,z_r)=B_{\beta^1}(z_1,\ldots,z_{r_1})B_{\beta^2}(z_{r_1+1}, \dots, z_r)=L_1U_1B_{\beta^2}(z_{r_1+1}, \dots, z_r)=
$$
$$
L_1B_{\beta^2}(z'_{r_1+1}, \dots, z'_r)(U''_1)^{-1}=L_1L_2U_2(U''_1)^{-1}=LU,
$$
where $L=L_1L_2$ and $U=U_2(U''_1)^{-1}$.    
\end{proof}

\begin{remark}
    We constructed the map $\Phi_{r_1}$ using coordinates since we will need this in order to show cluster-theoretic properties of $\Phi_{r_1}$. It is useful, however, to have a more conceptual understanding of it using flags, cf. Section \ref{sec:comparison} below. Note that $\CU_{r_1}(\br)$ is the locus of elements $(\CF^0, \dots, \CF^r) \in \BS(\br)$ such that $\CF^{r_1} \pitchfork \CF(w_0)$. In particular, $\CF^{r_1} = \CF(L_1U_1)$, and $L_1$ is uniquely determined provided it has $1$'s on the diagonal. Then,
    \[
    \Phi_{r_1}\left(\CF^0, \dots, \CF^r\right) = \left(\left(\CF^0, \dots, \CF^{r_1}\right), L_1^{-1}\left(\CF^{r_1}, \CF^{r_1+1}, \dots, \CF^r\right)\right) \in \BS\left(\br^1\right) \times \BS\left(\br^2\right). 
    \]
\end{remark}

Let $\Sigma_{\br}$ be the left inductive seed on $\BS(\br)$ as described in Section \ref{sec:cluster-structure}, and let $\widehat{\Sigma}_{\br}$ be the seed obtained upon freezing the variables $x_{\last^1(s)}$, $s = 1, \dots, k-1$. The following lemma says that the set $\CU_{r_1}(\br)$ together with the cluster variables $x_{\last^1(s)}$ satisfy the properties (1)--(3) of Conjecture \ref{conj:splicing-braid-varieties}. 

\begin{lemma}\label{lem:U-cluster-structure}
    We have $A(\widehat{\Sigma}_{\br}) \cong \C[\CU_{r_1}(\br)]$. 
\end{lemma}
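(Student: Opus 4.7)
The plan is to reduce the statement to Muller's principle that, in a locally acyclic cluster algebra, freezing a cluster variable is equivalent to inverting it in the coordinate ring. This is the same strategy already used in the proof of Lemma \ref{lem:conditional-conjecture}.

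First, I would invoke \cite{SW} (equivalently, transport the cluster structure on $X(\Delta\br)$ from \cite{CGGLSS} along the isomorphism $\bsbrb$ of Lemma \ref{lem:bs-to-braid}) to obtain a canonical identification $\C[\BS(\br)] \cong A(\Sigma_{\br})$, under which each $x_{\last^1(s)}$ is one of the cluster variables of $\Sigma_{\br}$ (possibly already frozen, when $\last^1(s) = \last(s)$). This cluster structure is locally acyclic by \cite[Theorem 7.13]{CGGLSS}.

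Next, I would use Theorem \ref{thm:splicing-dbs}(1) to identify $\CU_{r_1}(\br)$ as the principal open subvariety of $\BS(\br)$ on which $\prod_{s=1}^{k-1} x_{\last^1(s)} \neq 0$, so that
\[
\C[\CU_{r_1}(\br)] \;=\; A(\Sigma_{\br})\bigl[x_{\last^1(1)}^{-1}, \dots, x_{\last^1(k-1)}^{-1}\bigr].
\]

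Finally, I would freeze the mutable variables among $x_{\last^1(1)}, \dots, x_{\last^1(k-1)}$ one at a time (those already frozen in $\Sigma_{\br}$ require no action). At each step, \cite[Proposition 3.10]{Muller} guarantees that the seed remains locally acyclic, while \cite[Lemma 3.4 and Theorem 4.1]{Muller} gives that freezing a cluster variable in a locally acyclic cluster algebra coincides with inverting it in the ambient algebra. Iterating yields
\[
A(\widehat{\Sigma}_{\br}) \;=\; A(\Sigma_{\br})\bigl[x_{\last^1(1)}^{-1}, \dots, x_{\last^1(k-1)}^{-1}\bigr] \;\cong\; \C[\CU_{r_1}(\br)],
\]
as claimed. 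The main potential obstacle is confirming that each $x_{\last^1(s)}$ is a genuine cluster variable of the current seed at the moment it is frozen, and that local acyclicity is in fact preserved under each individual freezing; both follow from the cited results together with the fact, recalled in Section \ref{sec:cluster-structure}, that the $x_j$ for $j=1,\dots,r$ are cluster variables of the initial seed $\Sigma_{\br}$. Beyond this bookkeeping, the argument is entirely formal.
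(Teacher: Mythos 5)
Your first two steps are fine and agree with the paper: $\C[\BS(\br)]\cong A(\Sigma_{\br})$ is locally acyclic, and by Theorem \ref{thm:splicing-dbs}(1) one has $\C[\CU_{r_1}(\br)]=A(\Sigma_{\br})\bigl[x_{\last^1(1)}^{-1},\dots,x_{\last^1(k-1)}^{-1}\bigr]$. The gap is in your final step: the assertion that freezing a mutable cluster variable of a locally acyclic cluster algebra again produces a locally acyclic cluster algebra (and hence coincides with localization), applied one variable at a time, is not what the cited results give. As Lemma \ref{lem:conditional-conjecture} makes explicit, Muller's Lemma 3.4 and Theorem 4.1 yield ``freezing $=$ inverting'' only after one knows that the \emph{frozen} seed defines a cluster algebra equal to its upper cluster algebra, e.g.\ because it is locally acyclic; and Proposition 3.10 of \cite{Muller} is used there to transfer local acyclicity between cluster structures with the same mutable part (different frozens), not across a freezing, which deletes vertices from the mutable quiver. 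So your induction is circular: at each step you need local acyclicity of the newly frozen seed, and local acyclicity of $\Sigma_{\br}$ alone does not provide it. Were your general principle available, Conjecture \ref{conj:splicing-braid-varieties}(2) would follow from (1) for \emph{any} freezing, whereas the paper only establishes (3) $\Rightarrow$ (2).

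The missing ingredient, which is the actual content of the paper's (one-line) proof, is the combinatorial observation that after freezing $x_{\last^1(1)},\dots,x_{\last^1(k-1)}$ the mutable part decomposes: $\widehat{Q}_{\br}^{\mathrm{uf}}\cong Q_{\br^1}^{\mathrm{uf}}\sqcup Q_{\br^2}^{\mathrm{uf}}$. Since the cluster structures on $\BS(\br^1)$ and $\BS(\br^2)$ are locally acyclic by \cite{CGGLSS}, and local acyclicity depends only on the mutable part (this is how \cite[Proposition 3.10]{Muller} is invoked), one concludes that $A(\widehat{\Sigma}_{\br})$ is locally acyclic; then Muller's Lemma 3.4 and Theorem 4.1 give $A(\widehat{\Sigma}_{\br})=U(\widehat{\Sigma}_{\br})=A(\Sigma_{\br})\bigl[x_{\last^1(s)}^{-1}\bigr]=\C[\CU_{r_1}(\br)]$, exactly as in the proof of Lemma \ref{lem:conditional-conjecture}. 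To repair your argument, verify the quiver decomposition from the description of $Q_{\br}$ in Section \ref{sec:cluster-structure} (the frozen vertices $\last^1(s)$ cut all arrows between the $\br^1$-part and the $\br^2$-part) and freeze all the variables at once rather than one at a time.
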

\begin{proof}
    From the description of the quiver $Q_{\br}$ in Section \ref{sec:cluster-structure} it is easy to see that $\widehat{Q}_{\br}^{\mathrm{uf}}$ is isomorphic to the disjoint union $Q_{\br^1}^{\mathrm{uf}} \sqcup Q_{\br^2}^{\mathrm{uf}}$, so the result follows as in the proof of Lemma \ref{lem:conditional-conjecture} above.
\end{proof}

\subsection{Comparison to braid variety splicing}\label{sec:comparison} It is natural to ask whether the map $\Phi_{r_1}$ constructed in Theorem \ref{thm:splicing-dbs} is compatible with the braid variety splicing from Theorem \ref{thm:splicing-braid-varieties}. Recall the isomorphisms
$$
\varphi_1:\BS(\beta)\to X(\beta\Delta),\quad 
\varphi_2:\BS(\beta)\to X(\Delta\beta)
$$
from Lemma \ref{lem:bs-to-braid}.
We have the following results.

\begin{lemma}
\label{lem: bs vs braid splicing compatible 1}
    Let $\br = \br^1\br^2$ be a positive braid. The following diagram commutes:
    \[
    \begin{tikzcd}
    \BS(\br) \supseteq \CU_{r_1} \arrow{rr}{\Phi_{r_{1}}} \arrow{d}{\bsbra} & & \BS(\br^1) \times \BS(\br^2) \arrow{d}{\bsbrb \times \bsbra} \\
X(\br\Delta) \supseteq \CU_{r_1, e} \arrow{rr}{\Phi_{r_{1}, e}} & & X(\Delta\br^1) \times X(\br^2\Delta)
    \end{tikzcd}
    \]
\end{lemma}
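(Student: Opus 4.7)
The plan is to fix a single reduced expression for $w_0$ and use it consistently in the definitions of $\varphi_1$, $\varphi_2$, and $\Phi_{r_1, e}$. Then I would chase an element $(\CF^0, \ldots, \CF^r) \in \CU_{r_1}$ around both paths of the diagram in the flag picture and check that both factors of the output agree.

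The key observation is that, since $\CF^{r_1} \pitchfork \CF^{\ant}$, Lemma \ref{lem:gen-LU-dec} supplies a unique decomposition $\CF^{r_1} = \CF(L_1 U_1)$ with $L_1$ lower unitriangular and $U_1$ upper triangular. I would first verify that the translating matrix $g_1$ in the proof of Theorem \ref{thm:splicing-braid-varieties}, specialized via \eqref{eq: g1} to the case $w = e$, equals $w_0 L_1^{-1}$; this is exactly the translation $V_1^{-1} w_0$ implicit in $\varphi_2$ (with $V_1 = w_0 L_1 w_0$). Similarly, $g_2$ from \eqref{eq: g2} specialized to $w = e$ equals $L_1^{-1}$, matching the translation on the second factor of the Bott--Samelson splicing $\Phi_{r_1}$ from Theorem \ref{thm:splicing-dbs}. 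With these identifications in place, the comparison becomes factor-by-factor flag chasing.

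For the first factor in $X(\Delta \br^1)$, the right-then-down path applies $\varphi_2$ to $(\CF^0, \ldots, \CF^{r_1})$, producing a chain that starts with a reduced $w_0$-chain from $\CF^{\std}$ to $w_0 L_1^{-1} \CF^{\std}$ and continues with $w_0 L_1^{-1}(\CF^0, \ldots, \CF^{r_1})$. The down-then-right path applies $\Phi^1_{r_1, e}$ to the $\varphi_1$-completion; since $w = e$ makes $\CF(w_0 w) = \CF^{\ant}$, the diagram \eqref{eq:def-phi1-phi2} simplifies and, after multiplication by $g_1 = w_0 L_1^{-1}$, produces the same chain: the reduced $w_0$-chain prepended to $w_0 L_1^{-1}(\CF^0, \ldots, \CF^{r_1})$. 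Uniqueness of the intermediate flags along a reduced chain (Lemma \ref{lem:dec-rel-pos}) forces the two chains to coincide. A parallel argument handles the second factor in $X(\br^2 \Delta)$: both paths produce $L_1^{-1}(\CF^{r_1}, \ldots, \CF^r)$ followed by the unique reduced $w_0$-extension from $L_1^{-1} \CF^r$ to $\CF^{\ant}$, using that $L_1^{-1}$ preserves $\CF^{\ant}$ since $L_1$ is lower triangular.

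The main obstacle will be pinning down the translating elements $g_1, g_2$, which are only defined up to left multiplication by a diagonal matrix per Remark \ref{rmk:ambiguity}. Matching the specific representatives chosen in \eqref{eq: g1} and \eqref{eq: g2} to the representatives implicit in the LU-style decompositions defining $\varphi_1$ and $\varphi_2$ requires careful bookkeeping with the upper/lower triangular conventions used throughout, but once done the flag comparison is routine.
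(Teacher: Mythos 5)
Your proposal follows essentially the same route as the paper's proof: it writes $\CF^{r_1}=\CF(L_1U_1)$, specializes \eqref{eq: g1} and \eqref{eq: g2} at $w=e$ to get $g_1=w_0L_1^{-1}$ and $g_2=L_1^{-1}$, matches these with the translations $V_1^{-1}w_0=w_0L_1^{-1}$ (with $V_1=w_0L_1w_0$) and $L_1^{-1}$ implicit in $\bsbrb$ and $\Phi_{r_1}$, and then concludes factor by factor using the uniqueness of intermediate flags in reduced chains (Lemma \ref{lem:dec-rel-pos}). This is exactly the paper's argument, so the outline is correct and only the routine flag-chasing remains to be written out.
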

\begin{proof}
    Let $(\CF^0, \dots, \CF^r) \in \CU_{r_1}$, so that $\bsbra(\CF^0, \dots, \CF^r) = (\CF^0, \dots, \CF^r, \widetilde{\CF}^{r+1}, \dots, \widetilde{\CF}^{r+\ell(w_0)})$, where $\widetilde{\CF}^{r+1}, \dots, \CF^{r+\ell(w_0)}$ are uniquely determined. Note that both $\CU_{r_1}$ and $\CU_{r_1,e}$ are defined by $\CF^{r_1}\pitchfork \CF(w_0)$, i.e., the isomorphism $\bsbra$ identifies $\CU_{r_1}$ with $\CU_{r_1, e}$.

    Let us write $\CF^{r_1} = \CF(L_1U_1)$, where $L_1$ is uniquely determined by the condition that it has $1$'s on the diagonal. Then,
    \begin{equation}\label{eq:one-side-square}
    \begin{array}{rl}
    (\bsbrb \times \bsbra)\circ\Phi_{r_1}(\CF^0, \dots, \CF^r) = & \left(\bsbrb\left(\CF^0, \dots, \CF^{r_1}\right), \bsbra\left(L_1^{-1}\CF^{r_1}, \dots, L_1^{-1}\CF^{r}\right)\right) \\
    = & \biggl(\left(\CG^0, \dots, \CG^{\ell(w_0)-1}, w_0L_1^{-1}\CF^0, \dots, w_0L_1^{-1}\CF^{r_1}\right), \\
     & \left(L_1^{-1}\CF^{r_1}, \dots, L_1^{-1}\CF^{r}, \widetilde{\CG}^1, \dots, \widetilde{\CG}^{\ell(w_0)}\right)\biggr)
    \end{array}
    \end{equation}
    where the flags $\CG^{0} = \CF^{\std}, \dots, \CG^{\ell(w_0)-1}$ and $\widetilde{\CG}^{1}, \dots, \widetilde{\CG}^{\ell(w_0)} = \CF^{\ant}$ are uniquely determined. 
    Indeed, to compute $\varphi_2(\CF^0, \dots, \CF^{r_1})$ by Lemma \ref{lem:bs-to-braid} we first write $\CF^{r_1}=\CF(L_1U_1)=\CF((w_0U'w_0)U_1)$ where $U'=w_0L_1w_0$. Then $(U')^{-1}w_0\CF^j=w_0L_1^{-1}\CF^j$ for $j=0,\ldots,r_1$.

    Now we have to compare \eqref{eq:one-side-square} with $\Phi_{r_{1},e}(\CF^0, \dots, \CF^{r}, \widetilde{\CF}^{r+1}, \dots, \widetilde{\CF}^{r + \ell(w_0)})$. First, we factor $\beta\Delta=\beta^1(\beta^2\Delta)$.
    To get the $X(\br^2\Delta)$ component we need to find an element $g_2$ so that simultaneously $g_2\CF^{r_1} = \CF^{\std}$ and $g_2\widetilde{\CF}^{r+\ell(w_0)} = \CF^{\ant}$.
   By \eqref{eq: g2} we get  $g_2 = L_1^{-1}$ and thus
$$\Phi^2\left(\CF^0, \dots, \CF^{r}, \widetilde{\CF}^{r+1}, \dots, \widetilde{\CF}^{r + \ell(w_0)}\right)=
g_2\left(\CF^{r_1}, \CF^{r_1+1}, \dots, \CF^{r}, \widetilde{\CF}^{r+1}, \dots, \widetilde{\CF}^{r+\ell(w_0)}\right)=$$
$$
\left(L_1^{-1}\CF^{r_1}, \dots, L_1^{-1}\CF^{r}, \widetilde{\CG}^1, \dots, \widetilde{\CG}^{\ell(w_0)}\right)=\varphi_2\circ \Phi_{r_1}(\CF^0,\ldots,\CF^r).
$$
 by the uniqueness of the flags $\widetilde{\CG}^1, \dots, \widetilde{\CG}^{\ell(w_0)}$.
 
    %to take $(\CF^{r_1}, \CF^{r_1+1}, \dots, \CF^{r}, \widetilde{\CF}^{r_1+1}, \dots, \widetilde{\CF}^{r_1+\ell(w_0)})$ and translate by an element $g$ so that simultaneously $g\CF^{r} = \CF^{\std}$ and $g\CF^{r+\ell(w_0)} = \CF^{\ant}$.  (by the uniqueness of the flags $\widetilde{\CG}^1, \dots, \widetilde{\CG}^{\ell(w_0)}$) we obtain the second component in \eqref{eq:one-side-square}, as needed.

    Let us now examine the $X(\Delta\br^1)$ component of $\Phi_{r_{1},e}(\CF^0, \dots, \CF^{r}, \widetilde{\CF}^{r+1}, \dots, \widetilde{\CF}^{r+\ell(w_0)})$. For this, we obtain a unique sequence of flags going from $\widetilde{\CF}^{r+\ell(w_0)} = \CF^{\ant}$ to $\CF^{\std}$ using the letters in $\Delta$, say $\widehat{\CF}^{1}, \dots, \widehat{\CF}^{\ell(w_0)-1}$ and choose $g_1=w_0L_1^{-1}$ by \eqref{eq: g1}. Then
    $$
    \Phi^1\left(\CF^0, \dots, \CF^{r}, \widetilde{\CF}^{r+1}, \dots, \widetilde{\CF}^{r+\ell(w_0)}\right)=
    g_1\left(\CF^{r+\ell(w_0)}, \widehat{\CF}^{1}, \dots, \widehat{\CF}^{\ell(w_0)-1}, \CF^{0}, \dots, \CF^{r_1}\right)=
    $$
    $$
    \left(\CG^0, \dots, \CG^{\ell(w_0)-1}, w_0L_1^{-1}\CF^0, \dots, w_0L_1^{-1}\CF^{r_1}\right)=\varphi_1\circ \Phi_{r_1}(\CF^0,\ldots,\CF^r).
    $$
by the uniqueness of the flags $\CG^0,\ldots,\CG^{\ell(w_0)-1}$. This finishes the proof.
    
    %translate the sequence $$ by an element $h$ so that $h\CF^{r + \ell(w_0)} = \CF^{\std}$ and $h\CF^{r_1} = \CF^{\ant}$. It is easy to see that $h = w_0L_1^{-1}$, and we see that we obtain the first component in \eqref{eq:one-side-square}. This finishes the proof.
\end{proof}

\begin{lemma}
\label{lem: bs vs braid splicing compatible 2}
    Let $\br = \br^1\br^2$ be a positive braid. The following diagram commutes:
    \[
    \begin{tikzcd}
    \BS(\br) \supseteq \CU_{r_1} \arrow{rr}{\Phi_{r_{1}}} \arrow{d}{\bsbrb} & & \BS(\br^1) \times \BS(\br^2) \arrow{d}{\bsbrb \times \bsbra} \\
X(\Delta\br) \supseteq \CU_{r_1+\ell(w_0), w_0} \arrow{rr}{\Phi_{r_{1}+\ell(w_0), w_0}} & & X(\Delta\br^1) \times X(\br^2\Delta)
    \end{tikzcd}
    \]
    Note that at the bottom we use factorization $\Delta\beta=(\Delta\beta^1)\beta^2$ and $w=w_0$. 
\end{lemma}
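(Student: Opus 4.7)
The strategy parallels Lemma \ref{lem: bs vs braid splicing compatible 1}, but requires different matrix bookkeeping since $\bsbrb$ prepends, rather than appends, the $\Delta$-chain of coordinate flags. First I will write $B_{\br}(\zz) = LU$ and $B_{\br^1}(z_1,\ldots,z_{r_1}) = L_1U_1$ as standard $LU$ decompositions, with $L, L_1$ lower triangular unipotent. By Corollary \ref{cor:phi2-coordinates}, the $(r_1+\ell(w_0))$-th flag of $\bsbrb(\CF^0,\ldots,\CF^r)$ is represented by the matrix $w_0L^{-1}L_1U_1$. Lemma \ref{lem:gen-LU-dec} then shows that its transversality to $\CF^{\std} = \CF(w_0\cdot w_0)$ is equivalent to the $LU$-decomposability of $L^{-1}L_1U_1$, which holds automatically because $L^{-1}L_1$ is already lower triangular unipotent. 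This identifies $\bsbrb(\CU_{r_1}) = \CU_{r_1+\ell(w_0),w_0}$, as needed for the diagram to make sense.

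Next I will compute the translating elements for the splicing $\Phi_{r_1+\ell(w_0),w_0}$ on $X(\Delta\br)$. With the parameter $w = w_0$ we have $w_0ww_0 = w_0$, so the decomposition \eqref{eq:gen-LU-dec-phi2} of the splice-flag matrix reads $w_0L^{-1}L_1U_1 = w_0\cdot(L^{-1}L_1)\cdot U_1$, yielding $L(\zz) = L^{-1}L_1$ in the notation of that proof. Substituting into \eqref{eq: g1} and \eqref{eq: g2} gives
\[
g_1 = w_0L_1^{-1}Lw_0, \qquad g_2 = L_1^{-1}Lw_0.
\]
A crucial observation is that $g_1$ is upper triangular unipotent, being the $w_0$-conjugate of the lower triangular unipotent matrix $L_1^{-1}L$. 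Hence $g_1$ fixes both $\CF^{\std}$ and $\CF^{\ant}$, and by the uniqueness of intermediate flags in Lemma \ref{lem:dec-rel-pos}, it fixes every coordinate flag $\widetilde{\CF}^j$ comprising the initial $\Delta$-chain of $\bsbrb(\CF^0,\ldots,\CF^r)$.

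For the $\Phi^1$ component, the bottom row of diagram \eqref{eq:def-phi1-phi2} is trivial since $w^{-1}w_0 = e$, so $\Phi^1\circ\bsbrb(\CF^0,\ldots,\CF^r)$ is the $g_1$-translate of the first $r_1+\ell(w_0)+1$ flags of $\bsbrb(\CF^0,\ldots,\CF^r)$. The first $\ell(w_0)$ of these are fixed by $g_1$, while the remaining $r_1+1$ flags have the form $V_1^{-1}w_0\CF^j = w_0L^{-1}\CF^j$ (for $j = 0,\ldots,r_1$), and a direct computation gives $g_1\cdot w_0L^{-1}\CF^j = w_0L_1^{-1}\CF^j$. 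These are precisely the flags appearing in $\bsbrb(\CF^0,\ldots,\CF^{r_1})$, since for $\br^1$ alone one has $V_1 = w_0L_1w_0$ and hence $V_1^{-1}w_0 = w_0L_1^{-1}$.

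For the $\Phi^2$ component, the $g_2$-translates of the top-row flags simplify to $L_1^{-1}\CF^{r_1+i}$ for $i = 0,\ldots,r-r_1$, starting at $\CF^{\std}$ and ending at $\CF(L_1^{-1}L)$. The bottom $\Delta$-row from $\CF^{\ant}$ to $\CF^{\std}$ is translated by $g_2$ into a $\Delta$-chain whose endpoints $g_2\CF^{\ant} = \CF(L_1^{-1}L)$ and $g_2\CF^{\std} = \CF^{\ant}$ match those of the unique $\Delta$-chain produced by $\bsbra$ extending $L_1^{-1}(\CF^{r_1},\ldots,\CF^r)$ to $\CF^{\ant}$. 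By Lemma \ref{lem:dec-rel-pos}, the two chains coincide, completing the verification. The main obstacle is the careful matrix bookkeeping: correctly identifying $V_1 = w_0Lw_0$ in terms of the two $LU$ decompositions, and verifying that the single element $g_1$ simultaneously stabilizes the coordinate flags of the $\Delta$-chain and implements the expected translation $w_0L^{-1}\mapsto w_0L_1^{-1}$ on the remaining flags.
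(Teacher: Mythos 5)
Your matrix bookkeeping (the identifications $L(\zz)=L^{-1}L_1$, $g_1=w_0L_1^{-1}Lw_0$, $g_2=L_1^{-1}Lw_0$, and the computations $g_1\,w_0L^{-1}\CF^j=w_0L_1^{-1}\CF^j$, $g_2\,w_0L^{-1}\CF^j=L_1^{-1}\CF^j$) agrees with the paper, and your treatment of the $\Phi^2$ component is exactly the paper's argument: match the two endpoints and invoke uniqueness of the interpolating $\Delta$-chain. The genuine gap is in the ``crucial observation'' that drives your $\Phi^1$ component. First, an upper-triangular unipotent matrix stabilizes $\CF^{\std}$ but \emph{not} $\CF^{\ant}$: for $g_1=w_0L_1^{-1}Lw_0$ one has $g_1\CF^{\ant}=\CF(w_0L_1^{-1}L)$, which equals $\CF^{\ant}$ only when $L=L_1$. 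Second, the initial $\Delta$-chain of $\bsbrb(\CF^0,\dots,\CF^r)$ does \emph{not} consist of coordinate flags and does not end at $\CF^{\ant}$: by Lemma \ref{lem:bs-to-braid} it interpolates between $\CF^{\std}$ and $V_1^{-1}w_0\CF^0=\CF(w_0L^{-1})$. So the claim that $g_1$ fixes these flags is false; worse, even if it were true it would prove the wrong thing, since commutativity requires $g_1$ to carry this chain onto the initial $\Delta$-chain of $\bsbrb(\CF^0,\dots,\CF^{r_1})$, which interpolates $\CF^{\std}$ and $\CF(w_0L_1^{-1})$ and is in general a \emph{different} chain. As written, your argument would produce a tuple whose head still ends at $\CF(w_0L^{-1})$ while its tail starts at $w_0L_1^{-1}\CF^0$, and the asserted identification with $\bsbrb(\CF^0,\dots,\CF^{r_1})$ does not follow.

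The repair is the same device you correctly use for $\Phi^2$, and it is what the paper does: $g_1$ translates the whole chain, it fixes the initial flag $\CF^{\std}$ (being upper triangular), and it sends the terminal flag $w_0L^{-1}\CF^0$ to $w_0L_1^{-1}\CF^0$ because $g_1w_0L^{-1}=w_0L_1^{-1}$; since the translated intermediate flags realize the reduced word for $w_0$ between these two endpoints, the uniqueness statement in Lemma \ref{lem:dec-rel-pos} forces them to coincide with the uniquely determined flags $\CG^1,\dots,\CG^{\ell(w_0)-1}$ appearing in $\bsbrb(\CF^0,\dots,\CF^{r_1})$ (equivalently, in \eqref{eq:one-side-square}). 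With that one sentence replacing the false ``$g_1$ fixes the initial chain'' step, your proof becomes the paper's.
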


\begin{proof}
The composition $(\varphi_2\times \varphi_1)\circ \Phi_{r_1}$ is unchanged, so we need to compute the composition $\Phi_{r_1+\ell(w_0),w_0}\circ \varphi_2$ and compare it with \eqref{eq:one-side-square}. We follow the notations $\CF^{r_1}=\CF(L_1U_1)$ from Lemma \ref{lem: bs vs braid splicing compatible 1}.

Given $(\CF^0,\ldots,\CF^r)\in \BS(\beta)$, we have $\CF^r=\CF(L'U')$ where $L'$ is unique provided that it has 1's on diagonal. Then by Lemma \ref{lem:bs-to-braid} we get
$$
\varphi_2(\CF^0,\ldots,\CF^r)=\left(\CF^{\std},\ldots,\widetilde{\CF}^{\ell(w_0)-1},w_0(L')^{-1}\CF^0,\ldots,w_0(L')^{-1}\CF^r\right).
$$
Next, we look at the $(\ell(w)+r_1)$-st flag $$
w_0(L')^{-1}\CF^{r_1}=w_0(L')^{-1}\CF(L_1U_1)=\CF(M),\quad 
M=w_0(L')^{-1}L_1U_1.
$$
Next, according to \eqref{eq:gen-LU-dec-phi2} we need to write:
$M=w_0LU$
so that $L=(L')^{-1}L_1$ and $U=U_1$. By \eqref{eq: g2} and \eqref{eq: g1} we compute
$$
g_2=(w_0L)^{-1}=L_1^{-1}L'w_0,\ g_1=w_0L_1^{-1}L'w_0
$$
and 
$$
g_2(w_0(L')^{-1}\CF^j)=L_1^{-1}\CF^j,\ g_1(w_0(L')^{-1}\CF^j)=w_0L_1^{-1}\CF^j.
$$
Finally, by \eqref{eq:Phi1} and \eqref{eq:Phi2} we get
$$
\Phi^1\left(\CF^{\std},\ldots,\widetilde{\CF}^{\ell(w_0)-1},w_0(L')^{-1}\CF^0,\ldots,w_0(L')^{-1}\CF^r\right)=
$$
$$
g_1\left(\CF^{\std},\ldots,\widetilde{\CF}^{\ell(w_0)-1},w_0(L')^{-1}\CF^0,\ldots,w_0(L')^{-1}\CF^{r_1}\right)=
$$
$$
\left(\CG^0,\ldots,\CG^{\ell(w_0)-1},w_0L_1^{-1}\CF^0,\ldots,w_0L_1^{-1}\CF^{r_1}\right),
$$
and
$$
\Phi^2\left(\CF^{\std},\ldots,\widetilde{\CF}^{\ell(w_0)-1},w_0(L')^{-1}\CF^0,\ldots,w_0(L')^{-1}\CF^r\right)=
$$
$$
g_2\left(w_0(L')^{-1}\CF^{r_1},\ldots,w_0(L')^{-1}\CF^r,\widehat{\CF^1},\ldots,\widehat{\CF}^{\ell(w_0)}\right)=
$$
$$
\left(L_1^{-1}\CF^r_1,\ldots,L_1^{-1}\CF^{r},\widetilde{\CG}^1,\ldots,\widetilde{\CG}^{\ell(w_0)}\right)
$$
by uniqueness of the flags $\CG,\widetilde{\CG}$. Therefore
$\Phi_{r_1+\ell(w_0),w_0}\circ \varphi_2(\CF^0,\ldots,\CF^r)$ is also given by \eqref{eq:one-side-square}.
\end{proof}

 In \cite{GKSS} we defined a subclass of open positroid varieties $S^{\circ,1}_{\lambda/\mu}\subset \Gr(k,n)$ called {\em skew shaped positroids} and labeled by skew Young diagrams $\lambda/\mu$. By \cite[Theorem 3.5.13]{GKSS} we have an isomorphism
 $$
S^{\circ,1}_{\lambda/\mu}\simeq X(\Delta\beta)\simeq \BS(\beta).
 $$
 where $\beta$ is a certain $k$-strand braid determined by $\lambda/\mu$.
 
 We also considered cutting the diagram $\lambda/\mu$ into two skew diagrams $\lambda^{a,L}/\mu^{a,L}$ and $\lambda^{a,R}/\mu^{a,R}$. These correspond to braids $\beta_L$ and $\beta_R$ and we showed that in fact $\beta=\beta_L\beta_R$. Then we defined a splicing map
 $$
 S^{\circ,1}_{\lambda^L/\mu^L}\times S^{\circ,1}_{\lambda^R/\mu^R}\to \CU_a\subset S^{\circ,1}_{\lambda/\mu}
 $$
or equivalently
\begin{equation}
\label{eq: GKSS splicing}
X(\Delta\beta_L)\times X(\Delta\beta_R)\to \CU_a\subset X(\Delta\beta).
\end{equation}
Here $\CU_a$ is a certain open subset in $S^{\circ,1}_{\lambda/\mu}$. By \cite[Theorem 5.6.1]{GKSS} the map \eqref{eq: GKSS splicing} is a quasi-cluster equivalence.
We invite the reader to explore more combinatorial and linear-algebraic properties of the map  \eqref{eq: GKSS splicing} in \cite{GKSS}.

We claim that the map \eqref{eq: GKSS splicing} agrees with the map $\Phi_r$ from Theorem \ref{thm:splicing-dbs} in this special case. We  focus on \cite[Definition 5.2.1]{GKSS}. 
The braid variety $X(\Delta\beta)$ parametrizes chains of (framed) flags
$$
\Omega=\left[\CF^{W}\stackrel{\longest}{\dashrightarrow}\CF^{\Wop}\stackrel{\beta_{L}}{\dashrightarrow}\CF^a\stackrel{\beta_{R}}{\dashrightarrow}\CF^0\right],
$$
see \cite{GKSS} for unexplained notations.
Such a chain belongs to $\CU_a$ if and only if
 $\CF^a\pitchfork \CF^W$. In this case,  the  splicing map  sends $\Omega$ to
$$
\Omega^L=\left[\CF^{W}\stackrel{\longest}{\dashrightarrow}\CF^{\Wop}\stackrel{\beta_{L}}{\dashrightarrow}\CF^a\right],\quad  \Omega^R=\left[\CF^{W}\stackrel{\longest}{\dashrightarrow}\CF^{a}\stackrel{\beta_{R}}{\dashrightarrow}\CF^0\right].
$$
We can identify these with points in $X(\Delta\beta_L)$ and $X(\Delta\beta_R)$ respectively, up to multiplication by some $g\in \GL(k)$. Similarly to Lemmas \ref{lem: bs vs braid splicing compatible 1} and \ref{lem: bs vs braid splicing compatible 2} this agrees with $\Phi_{r_1}$ after applying the isomorphism $X(\Delta\beta_R)\simeq X(\beta_R\Delta)$.

\subsection{Cluster-theoretic properties of the map $\Phi_{r_1}$.}\label{sec:cluster-properties-splicing} By Lemma \ref{lem:U-cluster-structure} the algebra $\C[\CU_{r_1}(\br)]$ admits a natural cluster structure. We then examine the cluster-theoretic properties of the map $\Phi_{r_1}$. We consider the pullback
\[
\Phi_{r_1}^{\ast}: \C\!\left[\BS(\br^1) \times \BS(\br^2)\right] \to \C[\CU_{r_1}(\br)]
\]
that is an isomorphism of algebras.  We will denote the coordinates on $\BS(\br^1)$ by $z_1, \dots, z_{r_1}$ and the coordinates on $\BS(\br^2)$ by $z_{r_1+1}, \dots, z_r$. Then the isomorphism $\Phi_{r_1}^{\ast}$ is determined by:
\begin{equation}\label{eq:pullback}
\Phi^{\ast}_{r_1}(z_j) = z_j, j = 1, \dots, r_1, \qquad \Phi^{\ast}_{r_1\textbf{}}(z_j) = z'_j, j = r_1+1, \dots, r. 
\end{equation}
where $z'_{r_1+1}, \dots, z'_r$ are determined by \eqref{eq:slideU}.

On $\C[\CU_{r_1}(\br)]$ we have the cluster algebra structure obtained in Lemma \ref{lem:U-cluster-structure}. Since this comes comes from the open embedding $\CU_{r_1}(\br) \to \BS(\br)$, we call this the \emph{open cluster structure} on $\CU_{r_1}(\br)$, with quiver $Q^{\circ}_{\br}$. The quiver $Q^{\circ}_{\br}$ is obtained from the quiver $Q_{\br}$ by freezing the vertices corresponding to the rightmost appearance of a crossing in $\br^1$. 

On the other hand, on $\C\left[\BS(\br^1) \times \BS(\br^2)\right]$ we have a natural cluster structure, with cluster variables $\xun_1, \dots, \xun_{r_1}$ (coming from $\BS(\br^1)$) and $\x2_{r_1+1}, \dots, \x2_{r}$ (coming from $\BS(\br^2)$). Thus, we obtain a cluster structure on $\C[\CU_{r_1}(\br)]$, with cluster variables being $\phipul\left(\xun_{j}\right)$, $\phipul\left(\x2_{\ell}\right)$, $j = 1, \dots, r_1, \ell = r_{1}+1, \dots, r$. We call this the \emph{product cluster structure} on $\C[\CU_{r_1}(\br)]$. We denote by $\QP$ the quiver corresponding to this product cluster structure: it is simply the disjoint union of the quivers $Q_{\br^1}$ and $Q_{\br^2}$.  

Our goal is to relate the open and the product cluster structures. We start with some useful notations and definitions.

\begin{definition}
Let $B_{\br^1}(z_1,\ldots,z_{r_1})=L_1U_1$ as above. We denote the diagonal elements of $U_1$ by 
$\left(\u1_1, \dots, \u1_k\right)$. 
\end{definition}

Recall that by \eqref{eq:frozen-in-terms-of-upper} all $\u1_j$ are cluster monomials in the frozen variables $x_{\last^1(s)}$, $s = 1, \dots, k-1$. We can combine these to build more cluster monomials.

\begin{definition}
For $l=r_1+1,\ldots,r$ we define:
\begin{equation}\label{eq:monomial}
m_\ell = \prod_{t = 1}^{i_\ell}\left(\u1_{s_{i_{r_1+1}}\cdots s_{i_\ell}(t)}\right)^{-1}
\end{equation}
\end{definition}

%\EG{I think you should start with $s_{i_{r_1+1}}$ which is the first letter of $\beta^2$}

Let us interpret this pictorially. Draw the braid $\br^2$ and look at the $\ell-r_1$-th crossing, %\EG{probably don't need +1}
counting from left to right. Right after this crossing, look at the bottom $i_\ell$-strands, and follow them to the left. The end labels of the strands are precisely the subindices of the $\u1$-factors appearing in \eqref{eq:monomial}. See Figure \ref{fig:computing-m} for an example. 

\begin{figure}[ht!]
    \centering
    \begin{tikzpicture}
        \node at (-0.2,5) {$\mathbf{5}$};
        \node at (-0.2,4) {$\mathbf{4}$};
        \node at (-0.2,3) {$\mathbf{3}$};
        \node at (-0.2,2) {$\mathbf{2}$};
        \node at (-0.2,1) {$\mathbf{1}$};

        \draw (0,5) to[out=0, in=180] (1,4) to[out=0, in=180] (2,3) to (4,3) to[out=0, in=180] (5,2) to (7,2) to[out=0, in=180] (8,3) to (10,3);
        \draw (0,4) to[out=0, in=180] (1,5) to (6,5) to[out=0, in=180] (7,4) to[out=0, in=180] (8,5) to (10,5);
        \draw (0,3) to (1,3) to[out=0, in=180] (2,4) to (5,4) to[out=0, in=180] (6,3) to (7,3) to[out=0, in=180] (8,2) to (10,2);
        \draw (0,2) to (2,2) to[out=0, in=180] (3,1) to[out=0, in=180] (4,2) to[out=0, in=180] (5,3) to[out=0, in=180] (6,4) to[out=0, in=180] (7,5) to[out=0, in=180] (8,4) to (10,4);
        \draw (0,1) to (2,1) to[out=0, in=180] (3,2) to[out=0, in=180] (4,1) to (10,1);

        \draw[color=red, thick] (0,5) to[out=0, in=180] (1,4) to[out=0, in=180] (2,3) to (4,3) to[out=0, in=180] (5,2) to (6,2);

        \draw[color=red, thick] (0,3) to (1,3) to[out=0, in=180] (2,4) to (5,4) to[out=0, in=180] (6,3);

         \draw[color=red, thick] (0,1) to (2,1) to[out=0, in=180] (3,2) to[out=0, in=180] (4,1) to (6,1);

         \draw[dashed, color=blue] (0, 0.5) to (0, 5.5);

         \draw (-3, 1) to[out=0, in=180] (-2, 2) to[out=0, in=180] (-1, 3) to[out=0, in=180] (0,4);
         \draw (-3, 2) to[out=0, in=180] (-2, 1) to[out=0, in=180] (0,1);
         \draw (-3, 3) to (-2, 3) to[out=0, in=180] (-1, 2) to (0,2);
         \draw (-3, 4) to[out=0, in=180] (-2, 5) to[out=0, in=180] (0, 5);
         \draw (-3, 5) to[out=0, in=180] (-2, 4) to (-1, 4) to[out=0, in=180] (0,3);

         \node at (0.7, 4.5) {\tiny{1}};
         \node at (1.7, 3.5) {\tiny{2}};
         \node at (2.7, 1.5) {\tiny{3}};
         \node at (3.7, 1.5) {\tiny{4}};
         \node at (4.7, 2.5) {\tiny{5}};
         \node at (5.7, 3.5) {\tiny{6}};

         \draw (-3, 0.8) to[out=270, in=120] (-1.5, 0.2);
         \draw (-1.5, 0.2) to[out=60, in=270] (0, 0.8);
         \node at (-1.5, 0) {$r_1$ crossings};
    \end{tikzpicture}
    \caption{The monomial $m_{r_1+6}$  equals $\left(\u1_{3}\u1_5\u1_1\right)^{-1}$.}
    \label{fig:computing-m}
\end{figure}
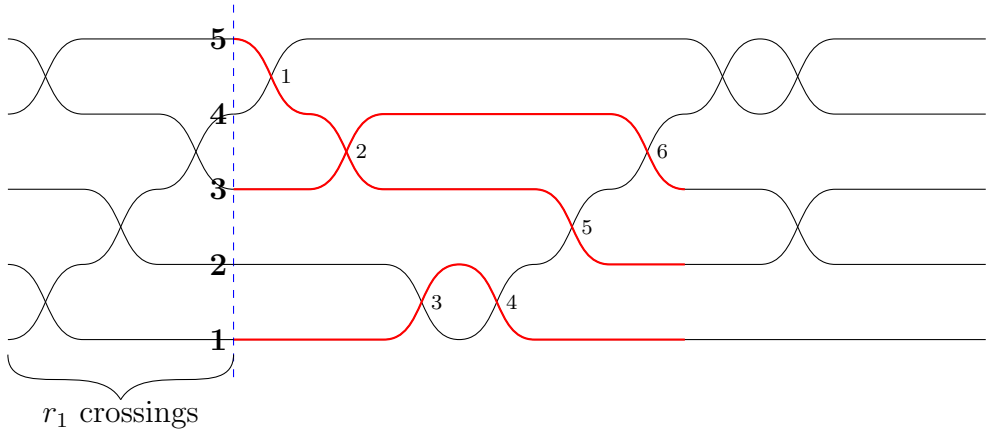

\begin{lemma}\label{lem:coincidence-up-to-monomial-frozens}
Under the isomorphism $\phipul: \C\left[\BS(\br^1) \times \BS(\br^2)\right] \to \C[\CU_{r_1}(\br)]$ we have:
\begin{enumerate}
    \item  $\phipul\left(\xun_j\right) = x_j$ for $j = 1, \dots, r_1$.
    \item $\phipul\left(\x2_\ell\right) = m_\ell x_\ell$ for $\ell = r_1 + 1, \dots, r$, where $m_\ell$ is given by \eqref{eq:monomial}.
    %a Laurent monomial in the frozen variables $x_{\last^1(s)}$, $s = 1, \dots, k-1$. 
\end{enumerate}
\end{lemma}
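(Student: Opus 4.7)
Part (1) is immediate: for $j \leq r_1$, the cluster variable $\xun_j = \minor_{[i_j],[i_j]}(B_{\br^1_j}(z_1,\dots,z_j))$ is a polynomial in $z_1,\dots,z_j$, and by \eqref{eq:pullback} we have $\phipul(z_t) = z_t$ for $t \leq r_1$. Hence $\phipul(\xun_j) = \minor_{[i_j],[i_j]}(B_{\br_j}(z_1,\dots,z_j)) = x_j$, since the two matrices agree for $j \leq r_1$.

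For Part (2), the plan is to compare the two cluster variables using the LU decomposition $B_{\br^1}(z_1,\dots,z_{r_1}) = L_1U_1$ and the sliding relation \eqref{eq:slideU}. First, I would observe that truncating the sliding relation to the first $\ell-r_1$ letters yields
\[
U_1 \, B_{\br^2_{\ell-r_1}}(z_{r_1+1},\dots,z_\ell) = B_{\br^2_{\ell-r_1}}(z'_{r_1+1},\dots,z'_\ell) \, U_1''
\]
for some upper-triangular matrix $U_1''$, obtained by iteratively applying Lemma \ref{lem: push right}. Multiplying on the left by $L_1$ gives
\[
B_{\br_\ell}(z_1,\dots,z_\ell) = L_1 \, B_{\br^2_{\ell-r_1}}(z'_{r_1+1},\dots,z'_\ell) \, U_1''.
\]

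Next, I would take the principal minor $\minor_{[i_\ell],[i_\ell]}$ of both sides. Using Cauchy--Binet as in Lemma \ref{lem:cauchy-binet-minors}, one sees that for any lower-triangular $L$ with $1$'s on the diagonal and any upper-triangular $U$,
\[
\minor_{[i],[i]}(LAU) = \minor_{[i],[i]}(A) \cdot \prod_{t=1}^{i} U_{tt},
\]
because only $J = [i]$ gives a nonzero contribution to the relevant sums. Applying this with $A = B_{\br^2_{\ell-r_1}}(z'_{r_1+1},\dots,z'_\ell)$, we obtain
\[
x_\ell = \phipul(\x2_\ell) \cdot \prod_{t=1}^{i_\ell} (U_1'')_{tt},
\]
so that the problem reduces to identifying $\prod_{t=1}^{i_\ell}(U_1'')_{tt}$ with $m_\ell^{-1}$.

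For this identification, I would track the diagonal of $U_1$ as it is slid past the braid matrices. A direct computation (or the second claim of Lemma \ref{lem: push right}) shows that sliding a diagonal $(v_1,\dots,v_k)$ past $B_{i_j}(z)$ produces the diagonal $(v_{s_{i_j}(1)},\dots,v_{s_{i_j}(k)})$. Starting from $\diag(U_1) = (\u1_1,\dots,\u1_k)$ and iterating through $\sigma_{i_{r_1+1}},\dots,\sigma_{i_\ell}$, the diagonal of $U_1''$ becomes $\bigl(\u1_{(s_{i_{r_1+1}}\cdots s_{i_\ell})(t)}\bigr)_{t=1}^{k}$. Therefore
\[
\prod_{t=1}^{i_\ell} (U_1'')_{tt} = \prod_{t=1}^{i_\ell} \u1_{s_{i_{r_1+1}}\cdots s_{i_\ell}(t)} = m_\ell^{-1},
\]
which yields $\phipul(\x2_\ell) = m_\ell x_\ell$, as desired. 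The main subtlety is bookkeeping the direction in which the permutation $s_{i_{r_1+1}}\cdots s_{i_\ell}$ acts on the indices; this is straightforward but must be checked carefully against the convention in Lemma \ref{lem: push right}.
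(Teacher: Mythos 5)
Your proposal is correct and follows essentially the same route as the paper's proof: both parts use the pullback formula \eqref{eq:pullback}, the LU decomposition $B_{\br^1}=L_1U_1$, the sliding relation from Lemma \ref{lem: push right}, and Cauchy--Binet for principal minors, identifying the diagonal product of the slid upper-triangular factor with $m_\ell^{-1}$. Your explicit introduction of the truncated factor $U_1''$ and the check that its diagonal is $\bigl(\u1_{s_{i_{r_1+1}}\cdots s_{i_\ell}(t)}\bigr)_t$ only makes precise what the paper does implicitly, and the permutation bookkeeping you flag does come out consistent with \eqref{eq:monomial}.
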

\begin{proof}
By \eqref{eq:pullback}, we have
\[
\phipul\left(\xun_j\right) = \phipul\left(\minor_{i_j}\!\left(B_{\br^1_j}(z_1, \dots, z_j)\right)\right) = \minor_{i_j}\!\left(B_{\br_j}(z_1, \dots, z_j)\right) = x_j
\]
where $j = 1, \dots, r_1$. On the other hand, if $\ell = r_1+1, \dots, r$ we have
\[
\phipul\left(\x2_\ell\right) = \phipul\left(\minor_{i_\ell}\!
\left(B_{\br^2_{\ell-r_1}}(z_{r_1+1}, \dots, z_\ell)\right)\right) = \minor_{i_\ell}\!\left(B_{\br^2_{\ell-r_1}}(z'_{r_1+1}, \dots, z'_\ell)\right).
\]
and our job is to compare this to $x_\ell = \minor_{i_\ell}\!\left(B_{\br_\ell}(z_1, \dots, z_\ell)\right)$. Note however that
\[
\begin{array}{rl}
x_\ell = \minor_{i_\ell}\!\left(B_{\br_\ell}(z_1, \dots, z_\ell)\right) = & \minor_{i_\ell}\!\left(B_{\br^1}(z_1, \dots, z_{r_1})B_{\br^2_{\ell-r_1}}\!(z_{r_1+1}, \dots, z_{\ell})\right) \\
= & \minor_{i_\ell}\!\left(L_1U_1B_{\br^2_{\ell-r_1}}(z_{r_1+1}, \dots, z_\ell)\right) \\
= & \minor_{i_\ell}\!\left(U_1B_{\br^2_{\ell-r_1}}(z_{r_1+1}, \dots, z_\ell)\right) \\
= & \minor_{i_\ell}\!\left(B_{\br^2_{\ell-r_1}}(z_{r_1+1}', \dots, z'_\ell)U_1'\right) \\
= & \minor_{i_\ell}\!\left(B_{\br^2_{\ell-r_1}}(z'_{r_1+1}, \dots, z'_{\ell})\right)\minor_{i_\ell}(U_1') \\
= & \phipul\!\left(\x2_\ell\right)\minor_{i_\ell}(U_1').
\end{array}
\]
Since $U'_1$ is upper triangular, its $i_\ell$-th principal minor is just the product of the upper left $i_\ell$-entries of the diagonal of $U'_1$. By Lemma \ref{lem: push right}, these coincide with the corresponding entries of $U_1$ up to permutation by $\pi(\beta^2_{\ell-r_1})=s_{i_{r_1+1}}\cdots s_{i_{\ell}}$. So
$$
\minor_{i_\ell}(U_1')=m_\ell^{-1},\quad x_{\ell}=\phipul\left(\x2_\ell\right)m_{\ell}^{-1}
$$
and the result follows. 
%But it follows from \cite[Lemma 2.20]{CGGS} that the diagonal entries of $U'_1$ coincide with those of $U_1$ up to a permutation. Since each diagonal element of $U_1$ is a cluster monomial in $\xun_{\last(t)}$, the same is true for $\minor_{i_\ell}(U_1')$ and the result follows.
\end{proof}

%\begin{remark}\label{rmk:monomial-computation}
%    We can give an explicit form of the monomial $m_\ell$ appearing in Lemma \ref{lem:coincidence-up-to-monomial-frozens} in terms of the diagonal elements $\left(\u1_1, \dots, \u1_k\right)$ of $U_1$. We have:
%\begin{equation}\label{eq:monomial}
%m_\ell = \prod_{t = 1}^{i_\ell}\left(\u1_{s_{i_{r_1}}\cdots s_{i_\ell}(t)}\right)^{-1}
%\end{equation}
%\end{remark}

We are now ready to prove the following result. 

\begin{theorem}\label{thm:quasi-cluster-dbs}
    The product and the open cluster structures on $\CU_{r_1}(\br)$ are quasi-cluster equivalent. 
\end{theorem}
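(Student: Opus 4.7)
The plan is to verify that $\phipul$ satisfies the three conditions of Definition \ref{def:quasi-homo}. Conditions (1) and (2) follow directly from Lemma \ref{lem:coincidence-up-to-monomial-frozens}: mutable cluster variables $\xun_j$ and $\x2_\ell$ are mapped to $x_j$ and $m_\ell x_\ell$ respectively, and by \eqref{eq:frozen-in-terms-of-upper} each diagonal entry $\u1_t$ (hence each $m_\ell$, and each image $\phipul(\x2_{\last^2(s)}) = m_{\last^2(s)} x_{\last(s)}$ of a frozen) is a Laurent monomial in the frozen variables $\{x_{\last^1(s)}\}_s$ of the open cluster structure on $\CU_{r_1}(\br)$. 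So mutable goes to mutable up to Laurent monomial in frozens, and frozen goes to Laurent monomial in frozens.

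For condition (3), my plan is to apply the matrix criterion of Lemma \ref{lem:quasi-iso-exchange-matrix}. I will construct an explicit block-triangular integer matrix
\[
R = \begin{pmatrix} I_n & 0 \\ P & Q \end{pmatrix},
\]
whose columns record $\phipul$ on the cluster variables of $\Sigma^\times$. The identity block $I_n$ reflects the bijection between mutables given by Lemma \ref{lem:coincidence-up-to-monomial-frozens}. The entry $P_{\last^1(s),\,\ell}$ is the exponent of $x_{\last^1(s)}$ in $m_\ell$, which I compute from \eqref{eq:monomial} and \eqref{eq:frozen-in-terms-of-upper} to be $\mathbb{1}[w_\ell^{-1}(s+1) \leq i_\ell] - \mathbb{1}[w_\ell^{-1}(s) \leq i_\ell]$, where $w_\ell = s_{i_{r_1+1}}\cdots s_{i_\ell}$. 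The block $Q$, encoding the map on frozens, is unit-triangular after an appropriate ordering of frozens (using $\phipul(\xun_{\last^1(s)}) = x_{\last^1(s)}$ and $\phipul(\x2_{\last^2(s)}) = m_{\last^2(s)} x_{\last(s)}$), hence $\det Q = \pm 1$.

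The mutable--mutable part of the required identity $R\widetilde{B}^{\times} = \widetilde{B}^{\circ}$ is automatic from Lemma \ref{lem:U-cluster-structure}, which asserts $\widehat{Q}_\br^{\mathrm{uf}} = Q_{\br^1}^{\mathrm{uf}} \sqcup Q_{\br^2}^{\mathrm{uf}}$; the rows corresponding to frozens of $\br^1$ also check out immediately since $Q_{\br^1}$ embeds into $Q_\br$ without modification. The heart of the argument, and the main obstacle, is the verification on the rows for the new frozens $x_{\last^1(s)}$ (for $s$ appearing in both $\br^1$ and $\br^2$) against the columns for mutable $\ell > r_1$. This amounts to the combinatorial identity
\[
\widetilde{B}^{\br}_{\last^1(s),\,\ell} \;=\; \sum_{k \in \br^2} P_{\last^1(s),\,k} \cdot \widetilde{B}^{\br^2}_{k,\,\ell},
\]
relating boundary-crossing arrows in $Q_{\br}$ to the arrow structure within $Q_{\br^2}$ weighted by the strand-permutation data encoded in $P$.

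My plan to establish this identity is to proceed by induction on the length of $\br^2$. The base case $\br^2 = \sigma_i$ reduces to a single explicit formula: $m_{r_1+1} = x_{\last^1(i)}/(x_{\last^1(i-1)} x_{\last^1(i+1)})$, which matches the unmixed arrow $\last^1(i) \to r_1+1$ together with the mixed arrows from $\last^1(i \pm 1)$ dictated by the quiver description of Section \ref{sec:cluster-structure}. For the inductive step, I will exploit the associativity of splicing — splicing $\BS(\br)$ first at $r-1$ and then at $r_1$ produces the same iterated splicing as splicing first at $r_1$ and then at $r-1$ within $\BS(\br^2)$ — to peel off one crossing from the end of $\br^2$ at a time and reduce to shorter $\br^2$. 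Once the identity is established, Lemma \ref{lem:quasi-iso-exchange-matrix} immediately gives the desired quasi-cluster isomorphism.
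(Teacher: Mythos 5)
Your reduction of the problem is sound and essentially equivalent to what the paper does: conditions (1)--(2) of Definition \ref{def:quasi-homo} do follow from Lemma \ref{lem:coincidence-up-to-monomial-frozens} together with \eqref{eq:frozen-in-terms-of-upper}, and checking condition (3) (preservation of exchange ratios at each mutable vertex) is exactly the column-by-column content of the matrix identity $R\widetilde{B}^{\times}=\widetilde{B}^{\circ}$ you set up via Lemma \ref{lem:quasi-iso-exchange-matrix}; your formula for the entries of $P$ and the unitriangularity of $Q$ are also correct. The difficulty is that you have only reformulated, not proved, the crucial combinatorial identity. Two smaller inaccuracies first: as written, your identity $\widetilde{B}^{\br}_{\last^1(s),\ell}=\sum_{k\in\br^2}P_{\last^1(s),k}\widetilde{B}^{\br^2}_{k,\ell}$ omits the $Q$-block contributions coming from the frozens of $\br^2$ (their images $m_{\last^2(s')}x_{\last(s')}$ also carry $x_{\last^1(s)}$-factors, so these rows of $R$ are not unit vectors), and you must also treat the rows of frozens $x_{\last^1(s)}=x_{\last(s)}$ whose color appears only in $\br^1$, where the same type of identity with vanishing left-hand side has to be checked.

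The genuine gap is the proposed induction. Your base case $\br^2=\sigma_i$ verifies nothing: the crossing $r_1+1$ is then the last appearance of its color, hence frozen in $\BS(\br^2)$ and in $Q^{\circ}_{\br}$, so it is not a column of either extended exchange matrix and the identity is vacuous there. For the inductive step, "associativity of splicing" at the level of varieties does not obviously descend to the exchange matrices: appending or removing the final crossing of $\br^2$ changes which vertices are frozen in $Q_{\br^2}$ and in $Q^{\circ}_{\br}$, changes the arrows near the end of the braid, and changes the monomials $m_\ell$, and you do not explain how the identity for the shorter braid implies it for the longer one. What actually has to be done — and what the paper does — is a direct local verification: for each mutable $\ell>r_1$ one classifies the possible configurations of the braid and quiver around $\ell$ (consecutive same-color crossings inside $\br^2$; the case where the previous same-color crossing is $\last^1(\colour(\ell))$; the configurations with one or two mixed arrows crossing the cut) and checks in each case, using the strand-tracing description of $m_\ell$ from \eqref{eq:monomial} and Figure \ref{fig:computing-m}, that the relevant products of $m$'s cancel, e.g.\ $m_{\ell'}m_j=m_{\ell''}m_{j'}$. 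That finite but nontrivial case analysis is the mathematical heart of the theorem, and your proposal leaves it unestablished.
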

\begin{proof}
    The fact that the cluster variables in both structures differ by monomials in frozens follows directly from Lemma \ref{lem:coincidence-up-to-monomial-frozens}. Now we need to verify that the exchange ratios in both cluster structures agree. The mutable cluster variables in both cluster structures are indexed by $\mut := \{1, \dots, r\}\setminus\{\last^1(s), \last(s) \mid s = 1, \dots, k-1\}$. %\EG{$M$ was mostly used for matrices before}

Let us take $j \in \mut$ such that $j \leq r_1$. We claim that if $x_s$ is adjacent to $x_j$ then $s \leq r_1$. We have two cases. If $s$ and $j$ are the same color (i.e., $i_s = i_j$) then either $s \leq j \leq r_1$, or $j < s \leq \last^1(i_j) \leq r_1$, where the last equation follows since there cannot be $j < t < s$ with $i_j = i_t$ and, $x_j$ being mutable, $j$ cannot be the last appearance of $i_j$ in $\br^1$. Now assume $s$ and $j$ are not of the same color. If there is an arrow $j \to s$ then $s \leq j \leq r_1$, so we assume there is an arrow $s \to j$. So $s$ must be strictly between $j$ and $\last^1(j) \leq r_1$. Thus, $s \leq r_1$ and we have proven our claim. By Lemma \ref{lem:coincidence-up-to-monomial-frozens}(1), the exchange ratio associated to the cluster variable $x_j$ in the open structure coincides with the exchange ratio associated to $\phipul\left(\xun_j\right)$ in the product structure. 

Now we take $\ell \in \mut$ with $r_1 < \ell$. Our goal is to show that the exchange ratio associated to $x_\ell$ in the open structure coincides with the exchange ratio of $\phipul\left(\x2_\ell\right)$ in the product structure. Since $x_\ell$ is not frozen, there exists $\ell < \ell' \leq r$ such that $\colour(\ell) = \colour(\ell')$. In particular, we have an arrow $\ell \to \ell'$ both in $\widehat{Q}_{\br}$ and in $Q_{\br^1}\sqcup Q_{\br^2}$. We have several cases. 
%\JS{do we need more cases? Technically there are more, if for example $\ell''$ becomes frozen, but I don't know if it's worth writing all of them}\SK{Do we need more cases? I thought that we added more.} \\

\emph{Case 1. The quiver $Q^{\circ}_{\br}$ locally around $\ell$ looks as follows}. %\JS{Need to change a lot of $k$'s by $\ell$'s...} 
%\SK{Should we just need to simply replace $k'$ with $\ell$? If then, I can take care of that!} \JS{It's more like replacing $k$ by $\ell$, $k'$ by $\ell'$ and so on. I'll do it now, thanks! (NB: Done!)}

%Then the only arrow coming out of $k$ in $\QP$ is $k \to k'$. If there exists $r_1 \leq k'' < k$ with $\colour(k'') = \colour(k)$, then both quivers $\QO$ and $\QP$ look
%\[
%k'' \to k \to k'
%\]
%locally around $k$, and we have to show that $x_{k'}/x_{k''} = \phipul(\x2_{k'})/\phipul(\x2_{k''})$. By Lemma \ref{lem:coincidence-up-to-monomial-frozens}, this is equivalent to showing that $m_{k'} = m_{k''}$. But this follows immediately, since the braid $\br^2$ looks locally as follows

\begin{center}
    \begin{tikzpicture}
        \draw (0,0) to[out=0,in=180] (2,2);
        \draw (0,2) to[out=0,in=180] (2,0);
        \node at (2, 1) {$\ell''$};
        \draw[dotted] (2,2) to (4,2);
        \draw[dotted] (2,0) to (4,0);
        \draw (4,2) to[out=0,in=180] (6,0);
        \draw (4,0) to[out=0,in=180] (6,2);
        \node at (6,1) {$\ell$};
        \draw[dotted] (6,0) to (8,0);
        \draw[dotted] (6,2) to (8,2);
        \draw (8,2) to[out=0,in=180] (10,0);
        \draw (8,0) to[out=0,in=180] (10,2);
        \node at (10,1) {$\ell'$};

        \draw[thick, ->] (2.5, 1) to (5.5,1);
        \draw[thick, ->] (6.5, 1) to (9.5,1);
    \end{tikzpicture}
\end{center}

\noindent Since $\ell''$ is not frozen in $Q^{\circ}_{\br}$, we have $r_1 < \ell''$, so that both arrows also belong to $\QP$. So we have to show that  $x_{\ell'}/x_{\ell''} = \phipul\left(\x2_{\ell'}\right)/\phipul\left(\x2_{\ell''}\right)$. By Lemma \ref{lem:coincidence-up-to-monomial-frozens}, this is equivalent to showing that $m_{\ell'} = m_{\ell''}$. But this follows immediately from  \eqref{eq:monomial}\\% which implies that $m_{k''} = m_{k'}$.

\emph{Case 2. The quiver $\QO$ locally around $\ell$ looks as follows.}

\begin{center}
    \begin{tikzpicture}
       % \draw (0,0) to[out=0,in=180] (2,2);
        %\draw (0,2) to[out=0,in=180] (2,0);
        %\node at (2, 1) {$k''$};
      %  \draw (1.5, 1.5) to (2.5, 1.5) to (2.5, 0.5) to (1.5, 0.5) to (1.5,1.5);
       % \draw[dotted] (2,2) to (4,2);
       % \draw[dotted] (2,0) to (4,0);
        \draw (4,2) to[out=0,in=180] (6,0);
        \draw (4,0) to[out=0,in=180] (6,2);
        \node at (6,1) {$\ell$};
        \draw[dotted] (6,0) to (8,0);
        \draw[dotted] (6,2) to (8,2);
        \draw (8,2) to[out=0,in=180] (10,0);
        \draw (8,0) to[out=0,in=180] (10,2);
        \node at (10,1) {$\ell'$};

       % \draw[thick, ->] (2.5, 1) to (5.5,1);
        \draw[thick, ->] (6.5, 1) to (9.5,1);
    \end{tikzpicture}
\end{center}

So that the quiver $Q^{\circ}_{\br}$ locally around $\ell$ simply looks as $\ell'' \to \ell \to \ell'$, and $\ell'' = \last^1(\colour(\ell))$, so that $\ell''$ is frozen. The exchange ratio in the product cluster structure is then simply $\phipul\left(\x2_{\ell'}\right) = m_{\ell'}x_{\ell'}$. But $m_{\ell'} = \left(u_1\cdots u_{\last^1(\colour(\ell))}\right)^{-1} = x_{\last^1(\colour(\ell))}^{-1} = x_{\ell''}^{-1}$ and the result follows.

If the quiver $Q^{\circ}_{\br}$ looks $\ell \to \ell'$ locally around $\ell$, then $$x_{\last^1(\colour(\ell))} = 1 = \left(u_1\cdots u_{\last^1(\colour(\ell))}\right)^{-1} = m_{\ell'},$$ so this case is similar.  \\

\emph{Case 3. The quiver $Q^{\circ}_{\br}$ locally around $\ell$ looks as follows.}
\begin{center}
    \begin{tikzpicture}
        \draw (0,0) to[out=0,in=180] (1,1);
        \draw (0,1) to[out=0,in=180] (1,0);
        \draw[dotted] (1,0) to (2,0);
        \draw[dotted] (1,1) to (2,1);
        \draw[dotted] (0,2) to (2,2);
        \draw (2,2) to[out=0,in=180] (3,1);
        \draw (2,1) to[out=0,in=180] (3,2);
        \draw[dotted] (2,0) to (3,0);
        \draw (3,0) to (4,0);
        \draw (3,1) to (4,1);
        \draw[dotted] (3,2) to (4,2);
        \draw (4,0) to[out=0,in=180] (5,1);
        \draw (4,1) to[out=0,in=180] (5,0);
        \draw[dotted] (4,2) to (5,2);
        \draw[dotted] (5,0) to (6,0);
        \draw[dotted] (5,1) to (6,1);
        \draw[dotted] (5,2) to (6,2);
        \draw (6,2) to[out=0,in=180] (7,1);
        \draw (6,1) to[out=0,in=180] (7,2);
        \draw[dotted] (6,0) to (7,0);
        \draw (7,0) to (8,0);
        \draw (7,1) to (8,1);
        \draw[dotted] (7,2) to (8,2);
        \draw (8,0) to[out=0,in=180] (9,1);
        \draw (8,1) to[out=0,in=180] (9,0);
        \draw[dotted] (8,2) to (9,2);

        \node at (1, 0.5) {$\ell''$};
        \node at (5, 0.5) {$\ell$};
        \node at (9, 0.5) {$\ell'$};
        \node at (3, 1.5) {$j$};
        \node at (7, 1.5) {$j'$};

        \draw[thick, ->] (1.5, 0.5) to (4.7, 0.5);
         \draw[thick, ->] (5.5, 0.5) to (8.7, 0.5);
         \draw[thick, ->] (6.8, 1.5) to (5.2, 0.7);
         \draw[thick, ->] (4.8, 0.7) to (3.2, 1.5);
    \end{tikzpicture}
\end{center}

So we must show that $m_{\ell'}m_{j} = m_{\ell''}m_{j'}$. This follows easily from the pictorial interpretation of \eqref{eq:monomial}. Let us color the strands contributing to a monomial $m_{i}$ with the same color as $i$ in the picture below: %\EG{Can't really see the colors here}\TS{Made changes, please let me know if further adjustments should be made.}

\begin{center}
    \begin{tikzpicture}
        \draw (0,0) to[out=0,in=180] (1,1);
        %\draw (0,1) to[out=0,in=180] (1,0);
        \draw[dashed, thick, color=red] (0,1) to[out=0,in=180] (0.9,-0.1);
        \draw[dotted] (1,1) to (2,1);
        \draw[dashed, thick, color=blue] (0,2) to (2,2) to[out=0,in=180] (2.9,0.9);
        \draw (2,1) to[out=0,in=180] (3,2);
        %%\draw[dotted, thick, color=blue] (2,0) to (3,0);
        \draw[dashed, thick, color=blue] (0,1.1) to (0.1, 1.1) to[out=0,in=180] (1,0) to (3,0);
        \draw (3,0) to (4,0);
        \draw[dashed, thick, color=orange] (3,1) to (4,1);
        \draw[dashed, thick, color=orange] (2.1, 2.1) to[out=0,in=180] (3,1) to (4,1) to[out=0,in=180] (5,0) to (7,0);
        \draw[dotted] (3,2) to (4,2);
        \draw (4,0) to[out=0,in=180] (5,1);
        \draw[dotted] (4,2) to (5,2);
        \draw[dotted] (5,1) to (6,1);
        \draw[dotted] (5,2) to (6,2);
        \draw[dashed, thick, color=orange] (6,2) to[out=0,in=180] (6.9,0.9);
        \draw (6,1) to[out=0,in=180] (7,2);
        %\draw[color=orange, thick, dotted] (6,0) to (7,0);
        \draw (7,0) to (8,0);
        %\draw[dashed, thick, color=cyan] (7,1) to (8,1);
        \draw[dashed, thick, color=cyan] (6,2.1) to (6.1, 2.1) to[out=0, in=180] (7, 1) to (8,1) to[out=0,in=180] (9,0);
        \draw[dotted] (7,2) to (8,2);
        \draw (8,0) to[out=0,in=180] (9,1);
        %\draw[dashed, thick, color=cyan] (8,1) to
        \draw[dotted] (8,2) to (9,2);

        \node at (1, 0.5) {{\color{red}$\ell''$}};
        \node at (5, 0.5) {$\ell$};
        \node at (9, 0.5) {{\color{cyan}$\ell'$}};
        \node at (3, 1.5) {{\color{blue}$j$}};
        \node at (7, 1.5) {{\color{orange}$j'$}};

        \draw[thick, ->] (1.5, 0.5) to (4.7, 0.5);
         \draw[thick, ->] (5.5, 0.5) to (8.7, 0.5);
         \draw[thick, ->] (6.8, 1.5) to (5.2, 0.7);
         \draw[thick, ->] (4.8, 0.7) to (3.2, 1.5);
    \end{tikzpicture}
\end{center}

%\TS{Top case to add}

Note that we have used that there is no crossing of color $\colour(j')$ %\TS{Probably overthinking this but I don't like how we say color color(j)... should we made consider changing color(j) to say clr(j)?} \JS{This could work, thankfully $\colour$ is a macro that can be easily changed. Thanks for the changes in the figures!} 
between $j'$ and $\ell'$, no crossing of color $\colour(\ell) - 1$ and no crossing of color $\colour(j)$ between $j$ and $\ell$. Note that every $u$ factor appearing in $m_{\ell'}m_{j}$ cancels with one appearing in $m_{\ell''}m_{j'}$ and vice versa, so the result follows. 

The case when $\ell'' < r_1$ (so that the quiver $Q_{\br^2}$ does not have the vertex $\ell''$ above, and $\ell'$ becomes frozen in $Q^{\circ}_{\br}$) is similar, after noticing that in this case $\last^1(\colour(\ell)) = \ell''$. 

\begin{comment}
\emph{Case 4. The quiver $\QP$ locally around $\ell$ looks as follows.}\TS{Top case- TO DO}
\begin{center}
    \begin{tikzpicture}
        %Strand 1
        \draw (0,0) to[out=0,in=180] (1,1) to (2,1) to[out=0,in=180] (3,2);
        \draw[dotted] (3,2) to (6,2);
        \draw (6,2) to[out=0,in=180] (7,1);

        %Strand 2
        \draw (0,1) to[out=0,in=180] (1,0);
        \draw[dotted] (1,0) to (4,0);
        \draw (4,0) to[out=0,in=180] (5,1);
        \draw[dotted] (5,1) to (6,1);
        \draw (6,1) to[out=0,in=180] (7,2);

        %Strand 3
        \draw[dotted] (0,2) to (2,2);
        \draw (2,2) to[out=0,in=180] (3,1) to (4,1) to[out=0,in=180] (5,0);
        \draw[dotted] (5,0) to (7,0);
    \end{tikzpicture}
\end{center}
\end{comment}
We note that when the quiver locally looks like a horizontal reflection, we obtain a similar result.\\ %\TS{Is this wording okay?}\\

\emph{Case 4. The quiver $Q^{\circ}_{\br}$ locally around $\ell$ looks as follows.}
\begin{center}
    \begin{tikzpicture}
        \draw (0,0) to[out=0,in=180] (1,1);
        \draw (0,1) to[out=0,in=180] (1,0);
        \draw[dotted] (1,0) to (2,0);
        \draw[dotted] (1,1) to (2,1);
        \draw[dotted] (0,2) to (2,2);
        \draw (2,2) to[out=0,in=180] (3,1);
        \draw (2,1) to[out=0,in=180] (3,2);
        %\draw[dotted] (2,0) to (3,0);
        \draw (3,0) to (4,0);
        \draw (3,1) to (4,1);
        \draw[dotted] (3,2) to (4,2);
        \draw (4,0) to[out=0,in=180] (5,1);
        \draw (4,1) to[out=0,in=180] (5,0);
        \draw[dotted] (4,2) to (5,2);
        \draw[dotted] (5,0) to (6,0);
        \draw[dotted] (5,1) to (6,1);
        \draw[dotted] (5,2) to (6,2);
        \draw (6,2) to[out=0,in=180] (7,1);
        \draw (6,1) to[out=0,in=180] (7,2);
       % \draw[dotted] (6,0) to (7,0);
        \draw (7,0) to (8,0);
        \draw (7,1) to (8,1);
        \draw[dotted] (7,2) to (8,2);
        \draw (8,0) to[out=0,in=180] (9,1);
        \draw (8,1) to[out=0,in=180] (9,0);
        \draw[dotted] (8,2) to (9,2);

        \draw[dotted] (0, -1) to (2, -1);
        \draw (2,-1) to[out=0, in=180] (3,0);
        \draw (2,0) to[out=0,in=180] (3,-1);
        \draw[dotted] (3,-1) to (6,-1);
        \draw (6,-1) to[out=0,in=180] (7, 0);
        \draw (6,0) to[out=0,in=180] (7, -1);
        \draw[dotted] (7,-1) to (9,-1);

        \node at (1, 0.5) {$\ell''$};
        \node at (5, 0.5) {$\ell$};
        \node at (9, 0.5) {$\ell'$};
        \node at (3, 1.5) {$j$};
        \node at (7, 1.5) {$j'$};
        \node at (3, -0.5) {$t$};
        \node at (7, -0.5) {$t'$};

        \draw[thick, ->] (1.5, 0.5) to (4.7, 0.5);
         \draw[thick, ->] (5.5, 0.5) to (8.7, 0.5);
         \draw[thick, ->] (6.8, 1.5) to (5.2, 0.7);
         \draw[thick, ->] (4.8, 0.7) to (3.2, 1.5);
         \draw[thick, ->] (6.8, -0.5) to (5.2, 0.2);
         \draw[thick, ->] (4.9, 0.2) to (3.2, -0.5);

         \draw[thick, dashed] (7, -1.5) to (7, 2.5);
    \end{tikzpicture}
\end{center}

In this case, we need to prove that
\[
m_{\ell''}m_{j'}m_{t'} = m_{\ell'}m_{j}m_{t},
\]
or, equivalently, that
\[
\frac{m_{j'}m_{t'}}{m_{\ell'}} = \frac{m_{j}m_{t}}{m_{\ell''}}
\]

Let $i = \colour(\ell)$ and consider the truncation of the braid $\br^{2}$ on the dashed vertical line as depicted above. Let $\tau$ be the resulting permutation. Then
\begin{equation}\label{eq:exchange-ratio-last-case}
\frac{m_{j'}m_{t'}}{m_{\ell'}} = \prod_{s \leq i}\left(\u1_{\tau^{-1}(s)}\right)^{-1}
\end{equation}

Now consider the truncation of the braid $\br^{2}$ on the red dashed vertical line below:

\begin{center}
    \begin{tikzpicture}
        \draw (0,0) to[out=0,in=180] (1,1);
        \draw (0,1) to[out=0,in=180] (1,0);
        \draw[dotted] (1,0) to (2,0);
        \draw[dotted] (1,1) to (2,1);
        \draw[dotted] (0,2) to (2,2);
        \draw (2,2) to[out=0,in=180] (3,1);
        \draw (2,1) to[out=0,in=180] (3,2);
        %\draw[dotted] (2,0) to (3,0);
        \draw (3,0) to (4,0);
        \draw (3,1) to (4,1);
        \draw[dotted] (3,2) to (4,2);
        \draw (4,0) to[out=0,in=180] (5,1);
        \draw (4,1) to[out=0,in=180] (5,0);
        \draw[dotted] (4,2) to (5,2);
        \draw[dotted] (5,0) to (6,0);
        \draw[dotted] (5,1) to (6,1);
        \draw[dotted] (5,2) to (6,2);
        \draw (6,2) to[out=0,in=180] (7,1);
        \draw (6,1) to[out=0,in=180] (7,2);
       % \draw[dotted] (6,0) to (7,0);
        \draw (7,0) to (8,0);
        \draw (7,1) to (8,1);
        \draw[dotted] (7,2) to (8,2);
        \draw (8,0) to[out=0,in=180] (9,1);
        \draw (8,1) to[out=0,in=180] (9,0);
        \draw[dotted] (8,2) to (9,2);

        \draw[dotted] (0, -1) to (2, -1);
        \draw (2,-1) to[out=0, in=180] (3,0);
        \draw (2,0) to[out=0,in=180] (3,-1);
        \draw[dotted] (3,-1) to (6,-1);
        \draw (6,-1) to[out=0,in=180] (7, 0);
        \draw (6,0) to[out=0,in=180] (7, -1);
        \draw[dotted] (7,-1) to (9,-1);

        \node at (1, 0.5) {$\ell''$};
        \node at (5, 0.5) {$\ell$};
        \node at (9, 0.5) {$\ell'$};
        \node at (3, 1.5) {$j$};
        \node at (7, 1.5) {$j'$};
        \node at (3, -0.5) {$t$};
        \node at (7, -0.5) {$t'$};

        \draw[thick, ->] (1.5, 0.5) to (4.7, 0.5);
         \draw[thick, ->] (5.5, 0.5) to (8.7, 0.5);
         \draw[thick, ->] (6.8, 1.5) to (5.2, 0.7);
         \draw[thick, ->] (4.8, 0.7) to (3.2, 1.5);
         \draw[thick, ->] (6.8, -0.5) to (5.2, 0.2);
         \draw[thick, ->] (4.9, 0.2) to (3.2, -0.5);

         \draw[color=red, thick, dashed] (4, -1.5) to (4, 2.5);
    \end{tikzpicture}
\end{center}
and let $\tau_1$ be the resulting permutation. Since there are no crossings of color $i = \colour(\ell)$ between $\ell$ and $\ell'$, we have that \eqref{eq:exchange-ratio-last-case} can  be rewritten as

\[
\frac{m_{j'}m_{t'}}{m_{\ell'}} = \left(\u1_{\tau_1^{-1}(i+1)}\right)^{-1}\prod_{s < i}\left(\u1_{\tau_1^{-1}(s)}\right)^{-1}.
\]

Which can easily seen to be $\frac{m_jm_{t}}{m_{\ell''}}$ and the result follows. Variations of this case (where, for example, the $t$-crossing belongs to $\br^{1}$) are proved similarly. 
\end{proof}

%\TS{Moved intro example details here... not sure where it should go.}

We now show that Example \ref{ex:intro} given in the introduction is a quasi-cluster isomorphism. 

\begin{example}
\label{ex:intro splice}
Recall Example \ref{ex:intro}. We verify the preservation of exchange ratios at each one of the vertices $10$--$13$. Note that the preservation of exchange ratios at the mutable vertices $1$--$8$ is immediate by construction. To lighten the notation, we denote $x_k' = \phipul\left(x_k^{(2)}\right)$.
\begin{itemize}
    \item Vertex $10$. In the open cluster structure, the exchange ratio is $\displaystyle{\frac{x_6x_{13}}{x_{7}x_{14}}}$. In the product cluster structure, this exchange ratio is $\displaystyle{\frac{x'_{13}}{x'_{14}}}$. According to Lemma \ref{lem:coincidence-up-to-monomial-frozens}, we have $x'_{14} = \left(\u1_{2}\u1_{3}\u1_{4}\right)^{-1}x_{14}$ and $x'_{13} = \left(\u1_{2}\u1_{4}\right)^{-1}x_{13}$. Now we note that $\displaystyle{\frac{x_6}{x_7}} = \frac{\u1_3\u1_2\u1_1}{\u1_2\u1_1} = \u1_3$, which shows the desired equality. 
    \item Vertex $11$. In the open cluster structure, the exchange ratio is $\displaystyle{\frac{x_7x_{12}}{x_9x_{13}}}$ while in the product cluster structure the exchange ratio is simply $\displaystyle{\frac{x'_{12}}{x'_{13}}}$. Now, according to Lemma \ref{lem:coincidence-up-to-monomial-frozens},
\[
x'_{12} = \left(\u1_{4}\right)^{-1}x_{12}, \qquad x'_{13} = \left(\u1_{4}\u1_{2}\right)^{-1}x_{13}, \qquad \text{so that} \qquad \frac{x'_{12}}{x'_{13}} = \u1_2\frac{x_{12}}{x_{13}}
\]
and it remains to notice that $\displaystyle{\frac{x_{7}}{x_{9}} = \frac{\u1_1\u1_2}{\u1_1} = \u1_2}$. 

\item Vertex $12$. In the open cluster structure, the exchange ratio is $\displaystyle{\frac{x_9x_{15}}{x_{11}x_{16}}}$ and in the product cluster structure it is $\displaystyle{\frac{x'_{15}}{x'_{11}x'_{16}}}$. Now,
\[
\frac{x'_{15}}{x'_{11}x'_{16}} = \frac{\left(\u1_3\u1_4\right)^{-1}x_{15}}{\left(\u1_4\u1_1\right)^{-1}x_{11}\left(\u1_3\right)^{-1}x_{16}} = \frac{\u1_1 x_{15}}{x_{11}x_{16}} = \frac{x_9x_{15}}{x_{11}x_{16}}. 
\]
\item Vertex $13$. We must verify that $\displaystyle{\frac{x_{11}x_{14}}{x_{10}x_{15}}}$ coincides with $\displaystyle{\frac{x'_{11}x'_{14}}{x'_{10}x'_{15}}}$. 
\[
\frac{x'_{11}x'_{14}}{x'_{10}x'_{15}} = \frac{\left(\u1_4\u1_1\right)^{-1}x_{11}\left(\u1_{2}\u1_{3}\u1_{4}\right)^{-1}x_{14}}{\left(\u1_{4}\u1_{2}\u1_{1}\right)^{-1}x_{10}{\left(\u1_3\u1_4\right)^{-1}x_{15}}} = \frac{x_{11}x_{14}}{x_{10}x_{15}}. 
\]
\end{itemize} 

\end{example}

\bibliographystyle{plain}
\bibliography{bibliography.bib}

\end{document}